\numberwithin{equation}{section}
\theoremstyle{plain}
\newtheorem{theorem}{Theorem}[section]
\newtheorem*{theorem*}{Theorem}
\newtheorem{lemma}{Lemma}[section]
\newtheorem{corollary}{Corollary}[section]
\newtheorem{proposition}{Proposition}[section]
\newtheorem{claim}{Claim}[section]
\theoremstyle{definition}
\newtheorem{definition}{Definition}[section]
\newtheorem*{definition*}{Definition}
\newtheorem{example}{Example}[section]
\newtheorem{remark}{Remark}[section]
\begin{document}
 
 \title{On the Epstein zeta function and the zeros of a class of Dirichlet series}

\author{{Pedro Ribeiro,\quad    Semyon  Yakubovich} } 
\thanks{
{\textit{ Keywords}} :  {Dirichlet series;  Epstein zeta function; Modified Bessel functions; Selberg-Chowla Formula; Hardy's Theorem}

{\textit{2020 Mathematics Subject Classification} }: {11E45, 11M41, 44A15, 33C10}

Department of Mathematics, Faculty of Sciences of University of Porto, Rua do Campo Alegre,  687; 4169-007 Porto (Portugal). 

\,\,\,\,\,E-mail: up201403460@edu.fc.up.pt}
\date{}
 
\maketitle

\begin{abstract} By generalizing the classical Selberg-Chowla formula, we establish the analytic continuation and functional equation for a large class of Epstein zeta functions. This continuation is studied in order to provide new classes of theorems regarding the distribution of zeros of Dirichlet series in their critical lines and to produce a new method for the study of these problems. Due to the symmetries provided by the representation via the Selberg-Chowla formula, some generalizations of well-known formulas in  analytic number theory are also deduced as examples.
\end{abstract}

\tableofcontents

\newpage

\begin{center}\section{Introduction}\end{center}

Let $Q(x,y)=ax^{2}+bxy+cy^{2}$ be a real and positive definite quadratic
form. The classical Epstein zeta function is defined as the Dirichlet series \cite{epstein_I, epstein_II} 
\begin{equation}
Z_{2}(s,\,Q)=\sum_{m,n\neq0}\frac{1}{Q(m,n)^{s}},\,\,\,\,\,\text{Re}(s)>1,\label{Classical Epstein}
\end{equation}
where the notation given in the subscript, $m,\,n\neq0$, here means
that only the term $m=n=0$ is omitted from the infinite series. 

In 1949, S. Chowla and A. Selberg \cite{Selberg_Chowla I} announced the following formula for (\ref{Classical Epstein}),
valid in the entire complex plane, 
\begin{align}
a^{s}\Gamma(s)\,Z_{2}(s,\,Q) & =2\Gamma(s)\zeta(2s)+2k^{1-2s}\pi^{1/2}\Gamma\left(s-\frac{1}{2}\right)\zeta(2s-1)\nonumber \\
 & \,\,\,\,\,\,+\,8k^{\frac{1}{2}-s}\pi^{s}\sum_{n=1}^{\infty}n^{s-\frac{1}{2}}\sigma_{1-2s}(n)\,\cos\left(n\pi b/a\right)K_{s-\frac{1}{2}}\left(2\pi k\,n\right),\label{Selberg Chowla Formula}
\end{align}
where $d:=b^{2}-4ac$ is the discriminant of the quadratic form, $k^{2}:=|d|/4a^{2}$
and $\sigma_{\nu}(n)=\sum_{d|n}d^{\nu}$ is the generalized divisor
function of index $\nu$. Also, $\zeta(s)$ denotes the classical
Riemann zeta function and $K_{\nu}(z)$ is the modified Bessel function. 

According with Berndt's revision \cite{dirichletserisVI}, after its first announcement, (\ref{Selberg Chowla Formula})
was proved by Rankin \cite{Rankin}, Bateman
and Grosswald \cite{Bateman_Epstein}, Chowla and
Selberg \cite{selberg_chowla} (in a paper which
appeared 18 years after the statement of (\ref{Selberg Chowla Formula}))
and Motohashi \cite{motohashi}, although the latter
author had the main goal of proving directly the first Kronecker limit
formula. As it should be expected, all these proofs used the theory of Fourier
series and invoke directly the Poisson summation formula. 

Berndt himself \cite{dirichletserisVI} gave a very interesting proof of (\ref{Selberg Chowla Formula}),
which was based on a previous theorem due to him regarding the Bessel
expansion of the so called generalized Dirichlet series \cite{dirichlet and hecke}. In fact,
Berndt's proof of (\ref{Selberg Chowla Formula}) was considered for
a more general Epstein zeta function of the form 
\[
Z_{2}(s,\,Q;\,g,\,h)=\sum_{m,n\neq0}\frac{e^{2\pi i(h_{1}m+h_{2}n)}}{Q(m+g_{1},\,n+g_{2})^{s}},\,\,\,\,\,\,\text{Re}(s)>1,
\]
where $g_{1}$, $g_{2}$, $h_{1}$ and $h_{2}$ are real. Although
only written for the particular case (\ref{Selberg Chowla Formula}),
a simplified version of Berndt's argument was later given by Kuzumaki \cite{Kuzumaki}. It is remarkable
that Kuzumaki's proof, as well as Berndt's, depends explicitly on
the functional equation for the Hurwitz zeta function $\zeta(s,\,a)$.
Although this functional equation can be derived immediately from
the reflection formula for the $\theta-$function \cite{fine}
\begin{equation}
\frac{1}{2}+\sum_{n=1}^{\infty}e^{-\alpha n^{2}}\cos(\beta n)=\frac{1}{2}\sqrt{\frac{\pi}{\alpha}}\,\sum_{n=-\infty}^{\infty}e^{-\frac{1}{\alpha}\left(\pi n+\frac{\beta}{2}\right)^{2}},\label{Theta Hurwitz}
\end{equation}
(which is obviously connected with the Poisson summation formula, thus with all original proofs of (\ref{Selberg Chowla Formula})) it
can nevertheless be established in strikingly different ways \footnote{
see, for instance, a very recent paper by A.\ Dixit and R.\ Kumar \cite{dixit_kumar},
which established the functional equation for $\zeta(s,\,\alpha)$ via its Hermite
integral representation. See also the discussion in Remark \ref{hurwitz remark}.}. 

The generalization of (\ref{Selberg Chowla Formula}) for Epstein
zeta functions attached to quadratic forms with $n$ variables was
given by Terras \cite{terras_epstein}, whose proof employed a multidimensional
version of the Poisson summation formula. Suzuki \cite{suzuki} provided
analogues of (\ref{Selberg Chowla Formula}) for other Dirichlet series
which apriori have nothing to do with (\ref{Classical Epstein}).
These Dirichlet series include the Riemann $\zeta-$function, the
Dirichlet $L-$function and $L-$functions attached to holomorphic cusp forms.

The idea behind a Bessel expansion of a Dirichlet series of the form
(\ref{Classical Epstein}) seems to have been motivated by a formula
discovered by Watson [\cite{watson_reciprocal}, p. 99, eq. (4)], which was later generalized
by H. Kober \cite{kober_epstein} (see Example \ref{Watsonformula as Example}), and perhaps the first explicit
form of (\ref{Selberg Chowla Formula}) appearing in the literature
is due to Taylor \cite{taylor_epstein}. Although Taylor himself attributed
(\ref{Selberg Chowla Formula}) to Kober, it is in Taylor's paper
where the continuation of the Epstein zeta function via (\ref{Selberg Chowla Formula}) is made explicit
for the first time. Taylor's proof employed an elegant Mellin-Barnes representation [\cite{taylor_epstein}, p. 181]
of the left-hand side of (\ref{Selberg Chowla Formula}), involving the Gauss hypergeometric function\footnote{Although fairly unknown, Taylor's approach has found interesting applications
in some aspects of spectral theory. Since Maass cusp forms have Fourier
expansions analogous to (\ref{Selberg Chowla Formula}), a representation involving the hypergeometric function similar to Taylor's holds for these forms,
allowing to construct $\psi_{j}(z)$ from the $L-$function attached
to it, $L_{j}(s)$. This representation is one of the main results
in {[}\cite{maass_forms_zeros}, p. 120, eq. (4.3){]}, where it is used
to prove an analogue of Hardy-Littlewood's Theorem to $L-$functions
attached to Maass cusp forms. }.
Although Taylor did not prove the functional equation for $Z_{2}(s,\,Q)$
directly from (\ref{Selberg Chowla Formula}), as Selberg and Chowla
did, his derivation of both results used the same ideas and this seems
to be the first indication of the connection between (\ref{Selberg Chowla Formula})
and the symmetric properties of the continuation of $Z_{2}(s,\,Q)$. 

\medskip{}

The Selberg-Chowla formula has found many applications in Number Theory.
By using it, Selberg and Chowla \cite{selberg_chowla} proved the existence
of a real zero for $Z_{2}(s,\,Q)$ in the open interval $(\frac{1}{2},\,1)$,
when the quadratic form is $Q(x,\,y)=x^{2}+cy^{2}$ and $c$ is large
enough. This result was later improved by Bateman and Grosswald [\cite{Bateman_Epstein},
p. 367, Thm 3.], who have shown that, for a general positive definite and real quadratic
form $Q$, $Z_{2}(s,\,Q)$ has a real zero between $\frac{1}{2}$
and $1$ if $k:=\sqrt{|d|}/2a>7.0556$. A generalization of this argument to Epstein zeta functions attached to ternary quadratic forms was furnished by Terras \cite{terras_ternary}, who employed prior results due to her concerning a multidimensional version of (\ref{Selberg Chowla Formula}) (see [\cite{terras_epstein}, p. 480, eq. (2.3)] and (\ref{Selberg Chowla diagonal Multidimensional}) below).  

With the aid of (\ref{Selberg Chowla Formula}), Stark \cite{stark_zeros_epstein}
and Fujii \cite{fujii_I} gave very detailed descriptions about the distribution
of the complex zeros of $Z_{2}(s,\,Q)$ in bounded regions. Building on the work of many previous authors, H. Ki \cite{ki_all_but} proved a remarkable result regarding the zeros of each finite truncation of the right-hand side of (\ref{Selberg Chowla Formula}), showing that all but finitely many of them lie on the critical line $\text{Re}(s)=1/2$. The idea of studying the zeros of the Epstein zeta function via equivalent
representations to (\ref{Selberg Chowla Formula}) seems to have its origin in Deuring's work \cite{deuring_1935}. 

In a very interesting paper by the same author \cite{deuring}, in which most of the
approaches later used to prove (\ref{Selberg Chowla Formula}) were already given\footnote{For example, a closer look at Deuring's argument shows also that the
fundamental ideas employed by Motohashi in his proof of the Kronecker limit formula
(see [\cite{motohashi}, p. 615] and the evaluation of (5) by using the
Cahen-Mellin representation (\ref{Cahen Mellin integral for applications}))
were already been used by Deuring [\cite{deuring}, p. 587, eq. (5), (6)].}, Deuring connected the Fourier expansion for the simple Epstein $\zeta-$function
$Z_{2}(s,\,\mathbb{I}_{2})$ (where $b=0$, $a=c=1$) with the existence
of infinitely many zeros of $\zeta(s)$ on the critical line $\text{Re}(s)=\frac{1}{2}$.
Using this relation between two drastically different Dirichlet series, he was able to deduce Hardy's Theorem for $\zeta(s)$.
Although Deuring's proof of Hardy's Theorem requires more motivation
than the standard ones given by Hardy \cite{hardy_note}, Landau \cite{landau_hardy} or Fekete \cite{fekete_zeros}, it is nevertheless an interesting
proof, since the way it seeks the contradiction is implied from a subconvex 
estimate for the Epstein zeta function on the critical line, due to E.\ C.\ Titchmarsh \cite{epstein_titchmarsh}, 
\begin{equation}
Z_{2}\left(\frac{1}{2}+it,\,\mathbb{I}_{2}\right)=O\left(|t|^{\frac{1}{3}+\epsilon}\right),\label{Connection Deuring}
\end{equation}
which in its turn is analogous to the classical Van der Corput \footnote{As indicated in (\ref{condition critical line for r=00005Cleq1}) in Theorem \ref{deuring to hold}, we do not need the full strength of Titchmarsh's estimate (\ref{Connection Deuring}) when the Dirichlet series $\phi(s)$ is the Riemann zeta function.} estimate
for $\zeta\left(\frac{1}{2}+it\right)$ \cite{ivic}.

In the most part of the classical proofs of Hardy's Theorem, one arrives
at a contradiction by using the reflection formula for the Jacobi
$\theta-$function, which ensures a suitable decay for $\theta\left(e^{i\left(\frac{\pi}{4}-\epsilon\right)}\right)$,
as $\epsilon\rightarrow0^{+}$ \cite{landau_hardy, hardy_note}. Standard variations of these arguments \cite{chadrasekharan_narasimhan_ideal classes, Titchmarsh_Potter, selberg class hardy}
employ the evaluation of certain exponential integrals and sums, by contrasting
the behavior of the integrals $\int_{T}^{2T}\left|\xi\left(\frac{1}{2}+it\right)\right|\,dt$
and $\left|\int_{T}^{2T}\xi\left(\frac{1}{2}+it\right)\,dt\right|$ as $T\rightarrow \infty$. 

Although much more difficult to ensure than the asymptotic properties
of Jacobi's $\theta-$function, a condition like (\ref{Connection Deuring})
is deep since it connects two fundamental properties of two distinct Dirichlet series. 

From the correspondence between the zeros of $\zeta(s)$ and the order
of magnitude of $Z_{2}(s,\,\mathbb{I}_{2})$, motivated by the Selberg-Chowla
formula, one gains an additional degree of freedom, which is the separate study of the properties of the latter Dirichlet series.

\medskip{}

Our goal in this paper is to study this additional degree of freedom in a sufficiently general framework. One of the first steps towards a generalization of Deuring's proof is to extend the scope of the Selberg-Chowla
formula (\ref{Selberg Chowla Formula}). We start by making a distinction between two cases: 
\begin{enumerate}
\item \textbf{The diagonal Epstein zeta function}: for $\text{Re}(s)$ sufficiently
large, one of the goals of this paper is to study an infinite series
of the form
\[
\sum_{m,n\neq0}^{\infty}\frac{a_{1}(m)\,a_{2}(n)}{(\lambda_{m}+\lambda_{n}^{\prime})^{s}},
\]
where $a_{1}(m)$, $a_{2}(n)$, $\lambda_{m}$
and $\lambda_{n}^{\prime}$ will be specified later. This double series \footnote{Throughout this paper we shall use the term "double series" or "multiple series" to denote Dirichlet series attached to more than one arithmetical function. This slightly contrasts with the standard terminology, in which these terms are applied to Dirichlet series with more than one complex variable.} was
also studied in Berndt's paper \cite{dirichletserisVI} and a formula
of Selberg-Chowla type for it has been indicated there. Under additional
assumptions, in section \ref{section 2} we give two formulas analogous to (\ref{Selberg Chowla Formula})
and use them to arrive at a functional equation for this double infinite series.  
\item \textbf{The non-diagonal Epstein zeta function}: analogously to the previous
item, for $\text{Re}(s)$ sufficiently large, we study a double infinite
series of the form 
\[
\sum_{m,n\neq0}\frac{a_{1}(m)\,a_{2}(n)}{Q(\lambda_{m},\lambda_{n}^{\prime})^{s}},
\]
where $Q(x,y)$ denotes, just as in the classical case (\ref{Classical Epstein}), a real and positive definite Quadratic Form. We study some conditions ensuring the analytic continuation and functional
equation of such double series. 
\end{enumerate}

The study of the double series outlined above is made in Sections
\ref{section 2} and \ref{section 3} respectively. By proving analogues of the Selberg-Chowla formula (\ref{Selberg Chowla Formula}), we establish functional equations for these series. In section \ref{section 4} we use the obtained Selberg-Chowla formulas to generalize Deuring's argument \cite{deuring} to a Class of Dirichlet series. In section \ref{section 5} we furnish particular examples of diagonal and non-diagonal Epstein zeta functions and a description of their analytic continuation is given. 


\medskip{}

Most of our principal results are given in section \ref{section 4} and generalize the Theorems presented in \cite{berndt_zeros_(i), berndt_zeros_(ii), Hecke_middle_line}, where analogues of Hardy's Theorem are proved for large classes of Dirichlet series. The corollaries derived from our main Theorem \ref{deuring to hold} contain also some of the key observations
used in most of the well-known proofs of Hardy's Theorem. In fact, by simplifying Deuring's argument for $\zeta(s)$, we shall see how his proof of Hardy's Theorem, as well as our general and "inductive" approach, extends to all the other proofs available in the literature, which usually involve the asymptotic behavior of the Jacobi $\theta-$function or classical estimates of some exponential integrals \cite{titchmarsh_zetafunction, hardy_note, landau_hardy, fekete_zeros, Titchmarsh_Potter, selberg class hardy, chadrasekharan_narasimhan_ideal classes}. 

Although written in the general setting of generalized Dirichlet series,
the method used by us to prove the existence of infinitely many zeros
of $\phi(s)$ on the line $\text{Re}(s)=\frac{r}{2}$ seems to possess
certain advantages. The first of these is its uniform applicability
to a wide class of Dirichlet series, allowing to generalize certain results due to Berndt and Hecke \cite{berndt_zeros_(ii), Hecke_middle_line}
(see Corollaries \ref{corollary on theta} and \ref{corollary the one with residue}). A simple application of it also allowed us to reprove a known
theorem due to Berndt \cite{berndt_zeros_(i)} (see corollary \ref{narrow abcissa corollary}) via ``elementary'' means, as well as
to prove that a certain combination of bounded shifts of $\Gamma(s)\,\phi(s)$
has infinitely many zeros on the line $\text{Re}(s)=\frac{r}{2}$, extending a result by A. Dixit, N. Robles, A. Roy and A. Zaharescu \cite{combinations_dixit}.  

The second advantage is the fact that it provides a certain degree of freedom
in applications, since in order to check that Hardy's theorem is valid for
a particular Dirichlet series, it suffices to verify (independently)
that it holds for at least one (out of the infinitely many!) Epstein
zeta functions attached to it. As proved by Suzuki \cite{suzuki}, it is possible
to obtain analogues of the Selberg-Chowla formula for $\zeta(s)$
(or $L(s,\,\chi)$ and $L(s,\,a)$), so that we can find Dirichlet
series to which $\zeta(s)$ (or $L(s,\,\chi)$ and $L(s,\,a)$) plays
the same role as the Epstein zeta function (\ref{Classical Epstein}) does with $\zeta(s)$.
Thus, the results described by Theorem \ref{deuring to hold} and Corollary \ref{quantitative Classic}
can be employed, for example, to study the zeros of the Dirichlet appearing in Suzuki's paper. \bigskip{}

As a rather simple example of our method, we will see that the final conclusion of Hardy's Theorem for $\zeta(s)$ is implied by the celebrated Jacobi's 4-square Theorem, which constitutes a curious connection between a purely analytic theorem involving deep properties of $\zeta(s)$ and a beautiful arithmetical property of $r_{4}(n)$ (see Examples \ref{hardy and jacobi} and \ref{epstein example}). 

\begin{center}\subsection{Notation and Definitions}\label{notation and def to refer} \end{center} 

To state the two generalizations of the Selberg-Chowla formula given
in this paper, we need the following definition: 

\begin{definition} \label{definition 1.1.}
Let $\left(\lambda_{n}\right)_{n\in\mathbb{N}}$ and $\left(\mu_{n}\right)_{n\in\mathbb{N}}$
be two sequences of positive numbers strictly increasing to $\infty$
and $\left(a(n)\right)_{n\in\mathbb{N}}$ and $\left(b(n)\right)_{n\in\mathbb{N}}$
two sequences of complex numbers not identically zero. Consider the
functions $\phi(s)$ and $\psi(s)$ representable as Dirichlet series 
\begin{equation}
\phi(s)=\sum_{n=1}^{\infty}\frac{a(n)}{\lambda_{n}^{s}}\,\,\,\,\,\,\text{and }\,\,\,\,\,\psi(s)=\sum_{n=1}^{\infty}\frac{b(n)}{\mu_{n}^{s}}\label{representable as Dirichlet series in first definition ever}
\end{equation}
with finite abcissas of absolute convergence $\sigma_{a}$
and $\sigma_{b}$ respectively. Let $\Delta(s)$ denote
one of the following three gamma factors: $\Gamma(s)$, $\Gamma\left(\frac{s}{2}\right)$
and $\Gamma\left(\frac{s+1}{2}\right)$ and $r$ be an arbitrary positive
real number in the first case and $1$ in the other two. We say that
$\phi$ and $\psi$ satisfy the functional equation 
\begin{equation}
\Delta(s)\,\phi(s)=\Delta(r-s)\,\psi(r-s),\label{functional equation multi}
\end{equation}
if there exists a meromorphic function $\chi(s)$ with the following
properties:
\begin{enumerate}
\item $\chi(s)=\Delta(s)\,\phi(s)$ for $\text{Re}(s)>\sigma_{a}$
and $\chi(s)=\Delta(r-s)\,\psi(r-s)$ for $\text{Re}(s)<r-\sigma_{b}$;
\item $\lim_{|\text{Im}(s)|\rightarrow\infty}\chi(s)=0$ uniformly in every
interval $-\infty<\sigma_{1}\leq\text{Re}(s)\leq\sigma_{2}<\infty$. 
\item The singularities $\chi(s)$ are at most poles and are confined to
some compact set. 
\end{enumerate}
Moreover, if $\Delta(s)=\Gamma\left(\frac{s+\delta}{2}\right)$, $\delta\in\{0,1\}$,
we take a convention which extends $a(n)$ to the negative
integers as follows: $a(-n)=(-1)^{\delta}a(n)$.
\end{definition}

We say that the pair of functions $\left(\phi,\,\psi\right)$ representable
as Dirichlet series (\ref{representable as Dirichlet series in first definition ever})
satisfy \textbf{Hecke's functional equation} if they satisfy the conditions
of the previous definition for $\Delta(s)=\Gamma(s)$. The particular
case of (\ref{functional equation multi}) reads
\begin{equation}
\Gamma(s)\,\phi(s)=\Gamma(r-s)\,\psi(r-s),\,\,\,\,\,\,r>0.\label{Hecke Dirichlet series Functional}
\end{equation}

Similarly, we say that the pair of functions $\left(\phi,\,\psi\right)$ representable
as (\ref{representable as Dirichlet series in first definition ever})
with finite abcissas of absolute convergence $\sigma_{a}$
and $\sigma_{b}$ satisfy \textbf{Bochner's functional equation} if
they satisfy the functional equation 
\begin{equation}
\Gamma\left(\frac{s+\delta}{2}\right)\phi(s)=\Gamma\left(\frac{1+\delta-s}{2}\right)\,\psi\left(1-s\right),\label{This is the first Bochner ever}
\end{equation}
in the sense of Definition \ref{definition 1.1.}. Moreover, if the pair $(\phi,\,\psi)$ satisfies
(\ref{This is the first Bochner ever}) with $\delta=0$, then we
will say that $\phi(s)$ and $\psi(s)$ are even Dirichlet series
belonging to the Bochner class. Otherwise, if (\ref{This is the first Bochner ever}) is satisfied for $\delta=1$, we will say that $\phi(s)$ and $\psi(s)$ are odd Dirichlet series belonging to the Bochner class. 


\bigskip{}

Although under slight variations, the class of Dirichlet series covered by Definition
\ref{definition 1.1.} was considered by a large number of authors. The Dirichlet series
satisfying the functional equation (\ref{This is the first Bochner ever})
were studied by S. Bochner and K. Chandrasekharan in \cite{bochner_chrandrasekharan},
where considerations of general analogues of Hamburger's theorem were given.
K. Chandrasekharan and R. Narasimhan \cite{arithmetical identities} proved the equivalence
between several identities in Number theory and Hecke's functional
equation (\ref{Hecke Dirichlet series Functional}).  Included as a subclass in the context of generalized Dirichlet series are the classical Dirichlet series with signature $(\lambda,\, r,\, \gamma)$, considered
in Hecke's works \cite{dirichlet and hecke}.
\\

Before proceeding further, we shall denote $\sum_{m,n\neq0}$ as the infinite sum over all integers
$m,\,n$ not simultaneously zero. If we assume that $m$ and $n$
are positive, we shall write $\sum_{m,n\neq0}^{\infty}$ instead and we will always use the convention $\lambda_{0}=\mu_{0}=0$ and $\lambda_{-n}=-\lambda_{n}$, $\mu_{-n}=-\mu_{n}$. Also, we will always write $\sigma_{a}:=\max\left\{ \sigma_{a_{1}},\,\sigma_{a_{2}}\right\}$ and, analogously, $\sigma_{b}:=\max\left\{ \sigma_{b_{1}},\,\sigma_{b_{2}}\right\}$.   

We will now define a double Dirichlet series which exhibits a similar
behavior as the classical Epstein zeta function (\ref{Classical Epstein}). We will often call
it by the name ``diagonal Epstein zeta function''. 

\begin{definition} \label{definition double diagonal}
Let $\left(\lambda_{n},\,\lambda_{n}^{\prime}\right)_{n\in\mathbb{N}}$
be a pair of sequences of positive numbers strictly increasing to
$\infty$ and $\left(a_{1}(n),\,a_{2}(n)\right)_{n\in\mathbb{N}}$
a pair of sequences of complex numbers not identically zero. Also,
let $\phi_{1}(s)$ and $\phi_{2}(s)$ be two functions representable
as Dirichlet series 
\begin{equation}
\phi_{1}(s)=\sum_{n=1}^{\infty}\frac{a_{1}(n)}{\lambda_{n}^{s}},\,\,\,\,\,\,\phi_{2}(s)=\sum_{n=1}^{\infty}\frac{a_{2}(n)}{\lambda_{n}^{\prime s}},\label{Dirichlet Series in Definition}
\end{equation}
with finite abcissas of absolute convergence $\sigma_{a_{1}}$
and $\sigma_{a_{2}}$ respectively. For the pair $(\phi_{1},\,\phi_{2})$,
we define the generalized diagonal Epstein zeta function as the double
Dirichlet series 
\begin{equation}
\mathcal{Z}_{2}\left(s,\,a_{1},\,a_{2},\,\lambda,\,\lambda^{\prime}\right)=\sum_{m,n\neq0}^{\infty}\frac{a_{1}(m)\,a_{2}(n)}{\left(\lambda_{m}+\lambda_{n}^{\prime}\right)^{s}},\,\,\,\,\,\text{Re}(s)>2\,\max\left\{ \sigma_{a_{1}},\,\sigma_{a_{2}}\right\} .\label{Epstein as double series}
\end{equation}
\end{definition}
\bigskip{}

It is easily seen that the double series (\ref{Epstein as double series})
is absolutely convergent if $\text{Re}(s)>2\,\max\left\{ \sigma_{a_{1}},\,\sigma_{a_{2}}\right\}=2\sigma_{a}$.
Examples of (\ref{Epstein as double series}) include the classical
Dirichlet associated with the sum of two squares 
\begin{equation}
\frac{1}{4}\,\zeta_{2}(s)=\frac{1}{4}\,\sum_{n=1}^{\infty}\frac{r_{2}(n)}{n^{s}}=\sum_{m,n\neq0}^{\infty}\frac{1}{(m^{2}+n^{2})^{s}},\,\,\,\,\text{Re}(s)>1\label{sum of two squres funnnnction dirichhhhlet sssseeeeeiiiiires}
\end{equation}
which can be constructed if we take $\phi_{1}(s)=\phi_{2}(s)=\zeta(2s)$. 

Just as (\ref{sum of two squres funnnnction dirichhhhlet sssseeeeeiiiiires}),
we may represent a general diagonal Epstein $\mathcal{Z}_{2}\left(s;\,a_{1},\,a_{2};\,\lambda,\,\lambda^{\prime}\right)$
via the single Dirichlet series 
\begin{equation}
\mathcal{Z}_{2}\left(s,\,a_{1},\,a_{2},\,\lambda,\,\lambda^{\prime}\right)=\sum_{m,n\neq0}^{\infty}\frac{a_{1}(m)\,a_{2}(n)}{\left(\lambda_{m}+\lambda_{n}^{\prime}\right)^{s}}=\sum_{n=1}^{\infty}\frac{\mathfrak{U}_{2}(n)}{\Lambda_{n}^{s}},\,\,\,\,\,\,\text{Re}(s)>2\,\sigma_{a}\label{representation Epstein with arithmetica}
\end{equation}
where $\mathfrak{U}_{2}(n)$ generalizes the arithmetical function
$r_{2}(n)$ in the following way: 
\begin{equation}
\mathfrak{U}_{2}(n)=\sum_{j_{1},j_{2}\,:\lambda_{j_{1}}+\lambda_{j_{2}}^{\prime}=\Lambda_{n}}a_{1}(j_{1})\,a_{2}(j_{2}),\label{definition general 2 square}
\end{equation}
and the sequence $\Lambda_{n}$ is taken as $\Lambda_{n}:=\left(\lambda_{j_{1}}+\lambda_{j_{2}}^{\prime}\right)_{j_{1},j_{2}\in\mathbb{N}_{0}}$
and then rearranged in increasing order. 

Since (\ref{sum of two squres funnnnction dirichhhhlet sssseeeeeiiiiires})
constitutes a particular case of the Epstein zeta function $Z_{2}(s,\,Q)$
when $Q$ is a diagonal quadratic form, we shall call to the double
series (\ref{Epstein as double series}) by the name ``diagonal Epstein
zeta function''. In section \ref{section 2} we prove that, if $\phi_{1}$ and $\phi_{2}$
satisfy definition \ref{definition 1.1.} with $\Delta(s)=\Gamma(s)$, then it is possible
to write two distinct Selberg-Chowla formulas for (\ref{Epstein as double series}).
We will also introduce a subclass of Dirichlet, named class $\mathcal{A}$,
for which (\ref{Epstein as double series}) will also obey to Hecke's
functional equation (see definition \ref{class A definition}). 
\bigskip{}

In section \ref{section 3} we will study the analytic continuation of the following
double Dirichlet series: 
\begin{definition} \label{definition non-diagonal epstein}
Let $Q$ be a real, binary and positive definite quadratic form given
by $Q(x,\,y)=ax^{2}+bxy+cy^{2}$ and $(\phi_{1},\,\phi_{2})$ a pair
of Dirichlet series similar to (\ref{Dirichlet Series in Definition}) with abcissas
of absolute convergence $\sigma_{a_{1}}$ and $\sigma_{a_{2}}$.
Assume also that their arithmetical functions $a_{i}(n)$
can be extended to the negative integers as $a_{i}(n)=(-1)^{\delta}a_{i}(n)$,
$\delta\in\{0,\,1\}$, and take the convention $\lambda_{0}=\lambda^{\prime}_{0}=0$ and $\lambda_{-n}=-\lambda_{n}$, $\lambda^{\prime}_{-n}=-\lambda^{\prime}_{n}$.

We define the non-diagonal Epstein zeta function associated with $\phi_{1}$
and $\phi_{2}$ as the double Dirichlet series 
\begin{equation}
\mathcal{Z}_{2}\left(s;\,Q;\,a_{1},\,a_{2};\,\lambda,\,\lambda^{\prime}\right)=\sum_{m,n\neq0}\frac{a_{1}(m)\,a_{2}(n)}{\left(a\lambda_{m}^{2}+b\,\lambda_{m}\lambda_{n}^{\prime}+c\lambda_{n}^{\prime2}\right)^{s}},\,\,\,\,\text{Re}(s)>\sigma_{a}:=\max\{\sigma_{a_{1}},\,\sigma_{a_{2}}\}\label{Epstein zeta function non diagonal-1}
\end{equation}
where the infinite series is taken over the pair of integers which
are not both zero. 

\end{definition}

It is easily seen that the double Dirichlet series on the right-hand
side converges absolutely if $\text{Re}(s)>\max\left\{ \sigma_{a_{1}},\,\sigma_{a_{2}}\right\} $. We will always adopt the convention that $a_{1}(n)$ and
$a_{2}(n)$ have the same parity since, otherwise, the
previous double series would be identically zero. 

In section \ref{section 3} we prove analogues of the Selberg-Chowla formula for (\ref{Epstein zeta function non diagonal-1}) when $\phi_{1}$ and $\phi_{2}$ satisfy definition \ref{definition 1.1.} with $\Delta(s)=\Gamma\left(\frac{s+\delta}{2}\right)$. 

\medskip{}



Before giving the preliminary results, we remark that throughout this paper we shall use the following notation and conventions:
\begin{itemize}
\item Let $Q$ denote the matrix representation of the Quadratic form $Q(x,\,y)$
given in definition \ref{definition non-diagonal epstein}. If $d$ is the discriminant of $Q$, we will
define the inverse quadratic form as $Q^{-1}(\mathbf{x})=-\frac{d}{4}\,\mathbf{x}^{T}\,Q^{-1}\,\mathbf{x}$.
Note that $Q^{-1}(x,\,y)=cx^{2}-bxy+ay^{2}$. 
\item In order to make our general formulas resemble the original formulation
(\ref{Selberg Chowla Formula}), we will use the symbol $\sigma_{z}(\nu_{j};\,b_{1},\,a_{2};\,\mu,\,\lambda^{\prime})$
to denote a generalized version of the divisor function (see Definition
\ref{definition generalized divisor 2.1.}). At some points (see Example \ref{Watsonformula as Example}), we will simplify the notation and reduce it
to $\sigma_{z}(\nu_{j};\,b_{1},\,a_{2})$. 
\item Whenever we use the term ``critical line'' for a given Dirichlet
series $\phi(s)$ satisfying Definition \ref{definition 1.1.} we will be referring to $\text{Re}(s)=\frac{r}{2}$
if $\phi(s)$ satisfies Hecke's functional equation (\ref{Hecke Dirichlet series Functional})
and to $\text{Re}(s)=\frac{1}{2}$ if $\phi(s)$ satisfies Bochner's
functional equation (\ref{This is the first Bochner ever}). The property
``$\phi(s)$ has infinitely many zeros at its critical line'' will
sometimes be reformulated as ``$\phi(s)$ satisfies Hardy's Theorem''. 
\item We will write $H_{r_{1}}\left(s;\,b_{1},\,a_{2};\,\mu,\,\lambda^{\prime}\right)$
and $H_{r_{2}}(s;\,b_{2},\,a_{1};\,\mu^{\prime},\,\lambda)$
to denote the entire complex functions (\ref{H1}) and (\ref{H2})
appearing in the Selberg-Chowla representations for the diagonal Epstein
zeta function. For the entire complex functions appearing in (\ref{First Selberg Chowla even non diagonal})
and (\ref{First Selberg Chowla odd}), we shall use alternatively
$H_{1}\left(s;\,Q;\,b_{1},\,a_{2};\,\mu,\,\lambda^{\prime}\right)$
and $H_{2}\left(s;\,Q;\,b_{2},\,a_{1},\,\mu^{\prime},\,\lambda\right)$.
In analogy with the generalized divisor function, we will sometimes
simplify the notation by writing these entire functions as $H_{1}\left(s;\,Q;\,b_{1},\,a_{2}\right)$
or $H_{2}\left(s;\,Q;\,b_{2},\,a_{1}\right)$. 
\item If, in Definitions \ref{definition double diagonal} and \ref{definition non-diagonal epstein}, the Dirichlet series are such that $\phi_{1}=\phi_{2}=\phi$, we will
write $\mathcal{Z}_{2}(s;\,a;\,\lambda)$ and $\mathcal{Z}_{2}(s;\,Q;\,a;\,\lambda)$
to respectively denote (\ref{Epstein as double series}) and (\ref{Epstein zeta function non diagonal-1}). 
\item  We shall use $R_{\phi}(s)$ and $R_{\psi}(s)$ to denote respectively 
$\Gamma(s)\,\phi(s)$ and $\Gamma(s)\,\psi(s)$. This notation will be employed mainly during the proof of Theorem \ref{deuring to hold}. Under some conditions given there, $R_{\phi}(s)$ and $R_{\psi}(s)$ will represent real functions on the critical line $\text{Re}(s)=r/2$. 
\item Throughout the proof of Theorem \ref{deuring to hold}, $c$ will represent the least positive integer such that $a(c)\neq0$, where $a(n)$ is the arithmetical function attached to a Dirichlet series satisfying definition \ref{definition 1.1.}. This integer also appears in the statement of Corollary \ref{corollary the one with residue}.
\end{itemize}

\begin{center}\subsection{Preliminary results}\end{center}

In several occasions throughout this paper,
we shall need to estimate the asymptotic order of certain integrals
involving a combination of Gamma functions and Dirichlet series, $\phi(s)$,
satisfying Definition \ref{definition 1.1.}. To justify most of the steps, we will often
invoke the following version of Stirling's formula 
\begin{equation}
\Gamma(\sigma+it)=(2\pi)^{\frac{1}{2}}\,t^{\sigma+it-\frac{1}{2}}\,e^{-\frac{\pi t}{2}-it+\frac{i\pi}{2}(\sigma-\frac{1}{2})}\left(1+1/12(\sigma+it)+O\left(1/t^{2}\right)\right),\label{Stirling exact form on Introduction}
\end{equation}
as $t\rightarrow\infty$, uniformly for $-\infty<\sigma_{1}\leq\sigma\leq\sigma_{2}<\infty$.
A similar formula can be written for $t<0$ as $t$ tends to $-\infty$
by using the fact that $\Gamma(\overline{s})=\overline{\Gamma(s)}$. 

\medskip{}

To estimate the order of $\phi(s)$ at the line $\text{Re}(s)=\sigma$
we shall need a version of the classical Phragm\'en-Lindel\"of theorem
given in {[}\cite{titchmarsh_theory_of_functions}, p. 180, 5.65{]}. Since, for
$\sigma>\sigma_{b}$, $\psi(\sigma+it)=O(1)$ as $|t|\rightarrow\infty$,
it follows from the functional equation (\ref{functional equation multi})
and Stirling's formula (\ref{Stirling exact form on Introduction})
that, for $\sigma<r-\sigma_{b}$, $\phi(s)$ satisfies
\begin{align}
\phi(\sigma+it) & =O\left(\frac{\Delta(r-s)}{\Delta(s)}\,\psi(r-s)\right)=O\left(\frac{\Delta(r-s)}{\Delta(s)}\right)\nonumber \\
 & =\begin{cases}
O\left(|t|^{r-2\sigma}\right) & \text{if }\Delta(s)=\Gamma(s)\\
O\left(|t|^{\frac{1-2\sigma}{2}}\right) & \text{if }\Delta(s)=\Gamma\left(\frac{s+\delta}{2}\right)
\end{cases},\label{order phragmen introduction}
\end{align}
as $|t|$ tends to $\infty$. Since $\phi(s)=O(1)$ for $\sigma=\text{Re}(s)>\sigma_{a}$,
it follows from property 2. in Definition \ref{definition 1.1.} that: 
\begin{enumerate}
\item If $\phi(s)$ satisfies Hecke's functional equation (\ref{Hecke Dirichlet series Functional}),
then for any $\delta>0$ and $r-\sigma_{a}-\delta\leq\sigma:=\text{Re}(s)\leq\sigma_{a}+\delta$,
\begin{equation}
\phi(\sigma+it)=O\left(|t|^{\sigma_{a}+\delta-\sigma}\right),\,\,\,\,|t|\rightarrow\infty.\label{Lindelof Phragmen for Hecke}
\end{equation}
\item If $\phi(s)$ satisfies Bochner's functional equation (\ref{This is the first Bochner ever}),
then for any $\epsilon>0$ and $1-\sigma_{a}-\epsilon\leq\sigma:=\text{Re}(s)\leq\sigma_{a}+\epsilon$,
\begin{equation}
\phi(s)=O\left(|t|^{\frac{\sigma_{a}+\epsilon-\sigma}{2}}\right),\,\,\,\,\,\,|t|\rightarrow\infty.\label{Lindelof Pgragmen for Bochner}
\end{equation}
\end{enumerate}

Since Hardy's short note \cite{hardy_note} and the subsequent quantitative
proofs given by Landau, Fekete, Hardy and Littlewood \cite{ landau_handbuch, hardy_littlewood_contributions, fekete_zeros}, as well as later
adaptations to other Dirichlet series by Kober \cite{kober_zeros} and Hecke \cite{Hecke_middle_line}, the study
of the zeros of $\zeta(s)$ and other general Dirichlet series is
often dependent on the asymptotic behavior of the associated Jacobi
$\theta-$function as a function in the upper half-plane. From now on, if $\phi(s)$ satisfies Hecke's functional
equation (\ref{Hecke Dirichlet series Functional}) in the sense of Definition \ref{definition 1.1.} and it is representable by
the first Dirichlet series in (\ref{representable as Dirichlet series in first definition ever}),
we shall use $\Theta(z;\,a;\,\lambda)$ to denote the generalized
$\theta-$function 
\begin{equation}
\Theta(z;\,a;\,\lambda):=\sum_{n=1}^{\infty}a(n)\,e^{-\lambda_{n}z},\,\,\,\,\,\,\text{Re}(z)>0.\label{definition generalized Theta function nnnn.}
\end{equation}

Following Bochner \cite{bochner_modular_relations}, for $\text{Re}(z)>0$, let $P(z)$ denote the residual function \cite{dirichletserisIII}
\begin{equation}
P(z)=\frac{1}{2\pi i}\,\int_{C}\chi(s)\,z^{-s}ds,\label{residual function definition Bochner modular}
\end{equation}
where $C$ denotes a curve, or curves, encircling the singularities
of $\chi (s)$ given in Definition \ref{definition 1.1.}. It was proved by Bochner \cite{bochner_modular_relations}
that Hecke's functional equation for $\phi(s)$ and $\psi(s)$ (\ref{Hecke Dirichlet series Functional})
and the modular relation 
\begin{equation}
\sum_{n=1}^{\infty}a(n)\,e^{-\lambda_{n}z}=z^{-r}\sum_{n=1}^{\infty}b(n)\,e^{-\mu_{n}/z}+P(z).\label{Bochner Modular relation at intro}
\end{equation}
are equivalent. It is interesting that the observation of this equivalence
had its genesis in B. Riemann's revolutionary memoir, where
one of the implications was proved for the first time. There, a second
proof of the functional equation for $\pi^{-s}\zeta(2s)$ (which satisfies
Hecke's functional equation with $r=\frac{1}{2}$) was given by observing
the reflection formula for the classical Jacobi $\theta-$function,
which is a particular case of (\ref{Bochner Modular relation at intro}). 

The converse is obtained upon the use of the Cahen-Mellin integral {[}\cite{hardy_littlewood_contributions},
p. 120, eq. (1.II){]}
\begin{equation}
e^{-z}=\frac{1}{2\pi i}\,\int_{c-i\infty}^{c+i\infty}\Gamma(s)\,z^{-s}ds,\label{Cahen Mellin integral for applications}
\end{equation}
valid for $c>0$, $\text{Re}(z)>0$ and $z^{-s}$ having its principal
value. 
\\

We will also need a slightly more general version of (\ref{Bochner Modular relation at intro})
when $\phi(s)$ satisfies one of Bochner's functional equations
(\ref{This is the first Bochner ever}). This will be done in Lemma \ref{lemma 3.1} of the paper. In order to establish it, we will require the representation of the confluent hypergeometric function $_{1}F_{1}$ as the following Mellin transform {[}\cite{ryzhik},
p. 503, eq. 3.952.8{]}, valid for $\text{Re}(\alpha)>0$, $\text{Re}(s)>0$
and $\beta\in\mathbb{C}$, 
\begin{equation}
\int_{0}^{\infty}x^{s-1}\,e^{-\alpha x^{2}}\cos(\beta x)\,dx=\frac{\alpha^{-\frac{s}{2}}e^{-\frac{\beta^{2}}{4\alpha}}}{2}\,\Gamma\left(\frac{s}{2}\right)\,_{1}F_{1}\left(\frac{1-s}{2};\,\frac{1}{2};\,\frac{\beta^{2}}{4\alpha}\right).\label{invoking a relation on Erdeliy table not ryzhik}
\end{equation}

Since the real function $e^{-\alpha x^{2}}\cos(\beta x)$ is smooth,
we are allowed to integrate the left-hand side of (\ref{invoking a relation on Erdeliy table not ryzhik})
by parts an arbitrary number of times. Doing so, we see that its asymptotic
estimate must be of the form $O(|s|^{-N})$ as $|s|\rightarrow\infty$
for any $N\in\mathbb{N}$. A simple argument by continuation shows that an estimate like this holds for every $s\in \mathbb{C}$. This gives a suitable decaying behavior for
the right-hand side of (\ref{invoking a relation on Erdeliy table not ryzhik})
which will be useful in the proof of Lemma \ref{lemma 3.1}. Of course, stronger estimates for the right-hand side of (\ref{invoking a relation on Erdeliy table not ryzhik}) could be given by invoking asymptotic expansions for the Whittaker function \cite{ryzhik}, but since we will not need these expansions we shall not write them. 

Since we will apply Mellin's inversion formula to the right-hand side
of (\ref{invoking a relation on Erdeliy table not ryzhik}) (see eq. (\ref{Mellin inverse 1F1}) below), the confluent hypergeometric function
will have to satisfy a reflection formula compatible with Bochner's
functional equation (\ref{This is the first Bochner ever}). As we shall see, the right symmetries are expected due to Kummer's transformation formula for $_{1}F_{1}$ {[}\cite{roy}, p. 191, eq. (4.1.11){]}
\begin{equation}
_{1}F_{1}\left(a;\,c;\,x\right)=e^{x}\,_{1}F_{1}\left(c-a;\,c;\,-x\right).\label{Kummer confluent transformation}
\end{equation}

In the sequel, we will employ several times well-known integral representations
for the modified Bessel function $K_{\nu}(z)$. For a matter of clarity
in our exposition, we will invoke them solely when they are needed.

\begin{center}\section{The Selberg-Chowla formula and the analytic continuation of the Diagonal Epstein zeta function} \label{section 2} \end{center}


Before stating the properties of the special class $\mathcal{A}$, for which functional equations will be given, 
we will consider the following general Theorem, which establishes
the analytic continuation of the Diagonal Epstein zeta function (\ref{Epstein as double series}). As usual, the 
following result provides a formula valid in a region contained in the plane of absolute
convergence of the Double series (\ref{Epstein as double series}). However, as one should expect, they may be taken to other regions of $\mathbb{C}$ by analytic continuation.
\bigskip{}

\begin{theorem}[A general Selberg-Chowla formula {[}\cite{dirichletserisVI}, p. 166 {]}] \label{theorem 2.1}
Let $\phi_{1}$ and $\phi_{2}$ be the pair of Dirichlet series (\ref{Dirichlet Series in Definition}) satisfying the above definition \ref{definition 1.1.} and obeying to Hecke's functional equation (\ref{Hecke Dirichlet series Functional}). Also, let $s$ be a complex number such that $\text{Re}(s)>\mu>\max\left\{ 0,\,2\sigma_{a},\,2\sigma_{b},\,\frac{r^{\prime}}{2}\right\},$
where $\sigma_{a}=\max\{\sigma_{a_{1}},\,\sigma_{a_{2}}\}$,
$\sigma_{b}=\max\{\sigma_{b_{1}},\,\sigma_{b_{2}}\}$
and $r^{\prime}=\max\left\{ r_{1},\,r_{2}\right\} $. Let $H_{r_{1}}\left(s;\,b_{1},\,a_{2};\,\mu,\,\lambda^{\prime}\right)$
denote the double series 
\begin{equation}
H_{r_{1}}(s;\,b_{1},\,a_{2};\,\mu,\,\lambda^{\prime})=\sum_{m,n=1}^{\infty}b_{1}(m)\,a_{2}(n)\,\left(\frac{\mu_{m}}{\lambda_{n}^{\prime}}\right)^{\frac{s-r_{1}}{2}}\,K_{r_{1}-s}\left(2\sqrt{\mu_{m}\lambda_{n}^{\prime}}\right).\label{H1}
\end{equation}
Then, for $\text{Re}(s)>\mu$, the following identity for the generalized
Epstein zeta function holds 
\begin{align}
\Gamma(s)\,\mathcal{Z}_{2}\left(s;\,a_{1},\,a_{2};\,\lambda,\,\lambda^{\prime}\right)&=a_{1}(0)\,\Gamma(s)\,\phi_{2}(s)\,+\,a_{2}(0)\,\Gamma(s)\,\phi_{1}(s)\,+\,\sum_{n=1}^{\infty}\frac{a_{2}(n)}{\lambda_{n}^{\prime s}}\,R_{1}\left(s,\,\frac{1}{\lambda_{n}^{\prime}}\right)\,\label{First Selberg Chowla-1}\nonumber\\
&+\,2\,H_{r_{1}}\left(s;\,b_{1},\,a_{2};\,\mu,\,\lambda^{\prime}\right),
\end{align}
where $b_{1}(m)$ is the arithmetical function associated
to the Dirichlet series $\psi_{1}(s)$ and $R_{1}(s,\,x)$ denotes
the sum of the residues of the meromorphic function $R_{1}(z)=\Gamma(z)\,\phi_{1}(z)\,\Gamma(s-z)\,x^{-z}$ 
at the poles of $\Gamma(z)\,\phi_{1}(z)$.
\medskip{}
Equivalently, if $H_{r_{2}}\left(s;\,b_{2},\,a_{1};\,\mu^{\prime},\,\lambda\right)$
denotes the double infinite series, 
\begin{equation}
H_{r_{2}}(s;\,b_{2},\,a_{1};\,\mu^{\prime},\,\lambda)=\sum_{m,n=1}^{\infty}b_{2}(m)\,a_{1}(n)\,\left(\frac{\mu_{m}^{\prime}}{\lambda_{n}}\right)^{\frac{s-r_{2}}{2}}\,K_{r_{2}-s}\left(2\,\sqrt{\mu_{m}^{\prime}\lambda_{n}}\right),\label{H2}
\end{equation}
then $\mathcal{Z}_{2}\left(s;\,a_{1},\,a_{2};\,\lambda,\,\lambda^{\prime}\right)$
can be also described by the formula
\begin{align}
\Gamma(s)\,\mathcal{Z}_{2}\left(s;\,a_{1},\,a_{2};\,\lambda,\,\lambda^{\prime}\right)&=a_{1}(0)\,\Gamma(s)\,\phi_{2}(s)\,+\,a_{2}(0)\,\Gamma(s)\,\phi_{1}(s)\,+\,\sum_{m=1}^{\infty}\frac{a_{1}(m)}{\lambda_{m}^{s}}\,R_{2}\left(s,\,\frac{1}{\lambda_{m}}\right)\,\label{Second Selberg CHOWLA} \nonumber\\
&+\,2\,H_{r_{2}}\left(s;\,b_{2},\,a_{1};\,\mu^{\prime},\,\lambda\right),
\end{align}
whenever $\text{Re}(s)>\mu$. Here, in analogy with $R_{1}(s,\,x)$,
$R_{2}(s,\,x)$ denotes the sum of the residues of the meromorphic
function $R_{2}(z)=\Gamma(z)\,\phi_{2}(z)\,\Gamma(s-z)\,x^{-z}$ at the poles of $\Gamma(z)\,\phi_{2}(z)$.
\end{theorem}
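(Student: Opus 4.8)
The plan is to prove the Selberg–Chowla formula for the diagonal Epstein zeta function by a Mellin-transform / Cahen–Mellin argument, exploiting the exponential integral representation of $\Gamma(s)\,\Lambda^{-s}$. The starting point is the elementary identity
\[
\Gamma(s)\,(\lambda_m+\lambda_n')^{-s}=\int_0^\infty x^{s-1}e^{-(\lambda_m+\lambda_n')x}\,dx,\qquad \text{Re}(s)>0,
\]
so that, for $\text{Re}(s)>2\sigma_a$, absolute convergence lets me interchange summation and integration to obtain
\[
\Gamma(s)\,\mathcal{Z}_2(s;a_1,a_2;\lambda,\lambda')=\int_0^\infty x^{s-1}\,\Theta(x;a_1;\lambda)\,\Theta(x;a_2;\lambda')\,dx,
\]
where $\Theta$ is the generalized theta function from (\ref{definition generalized Theta function nnnn.}). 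The idea is then to transform one of the two theta factors by its modular relation while leaving the other as a Dirichlet series; this asymmetry between the two factors is precisely what produces the two inequivalent formulas (\ref{First Selberg Chowla-1}) and (\ref{Second Selberg CHOWLA}).

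\medskip

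First I would split $\Theta(x;a_1;\lambda)=\phi_1\text{-part}+$ correction and apply Bochner's modular relation (\ref{Bochner Modular relation at intro}) to $\Theta(x;a_1;\lambda)$, writing
\[
\sum_{m=1}^\infty a_1(m)e^{-\lambda_m x}=x^{-r_1}\sum_{m=1}^\infty b_1(m)e^{-\mu_m/x}+P_1(x),
\]
which is legitimate since $(\phi_1,\psi_1)$ satisfies Hecke's functional equation with parameter $r_1$. Substituting this into the integral and interchanging the order of summation and integration again (justified by absolute convergence once $\text{Re}(s)$ is large), the main term $x^{-r_1}\sum b_1(m)e^{-\mu_m/x}$ paired against $\Theta(x;a_2;\lambda')=\sum a_2(n)e^{-\lambda_n'x}$ produces integrals of the shape
\[
\int_0^\infty x^{s-r_1-1}\,e^{-\mu_m/x-\lambda_n' x}\,dx,
\]
which by the standard integral representation $\int_0^\infty x^{\nu-1}e^{-a/x-bx}\,dx=2\,(a/b)^{\nu/2}K_\nu(2\sqrt{ab})$ evaluate to exactly the Bessel terms making up $H_{r_1}(s;b_1,a_2;\mu,\lambda')$. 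The residual function $P_1$ contributes, through the Cahen–Mellin integral (\ref{Cahen Mellin integral for applications}) and a contour-shifting argument, the sum of residues $R_1(s,1/\lambda_n')$ of $\Gamma(z)\phi_1(z)\Gamma(s-z)(\lambda_n')^{z}$; this is where the definition of $P(z)$ via $\frac{1}{2\pi i}\int_C\chi(s)z^{-s}ds$ feeds directly into the residue sum. The two boundary/constant terms $a_1(0)\Gamma(s)\phi_2(s)$ and $a_2(0)\Gamma(s)\phi_1(s)$ arise from the conventions $\lambda_0=\lambda_0'=0$ together with the omitted $m=0$ and $n=0$ contributions in the double sum. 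The second formula (\ref{Second Selberg CHOWLA}) follows by the same computation with the roles of $\phi_1$ and $\phi_2$ interchanged, transforming the $\Theta(x;a_2;\lambda')$ factor instead.

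\medskip

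\textbf{The main obstacle} will be justifying the interchanges of summation and integration and, more delicately, the contour shift that converts the contribution of $P_1(x)$ into the finite residue sum $R_1(s,1/\lambda_n')$. The honest way to handle this is to work with the Mellin transform of $P_1$: using (\ref{residual function definition Bochner modular}) one has $P_1(x)=\frac{1}{2\pi i}\int_{C}\chi_1(w)x^{-w}\,dw$ with $\chi_1(w)=\Gamma(w)\phi_1(w)$ near its poles, so that pairing $P_1$ against $\sum_n a_2(n)e^{-\lambda_n' x}$ and integrating term-by-term gives $\sum_n a_2(n)\,\frac{1}{2\pi i}\int_C \Gamma(w)\phi_1(w)\,\Gamma(s-w)\,(\lambda_n')^{w-s}\,dw$, and closing the contour to collect residues at the poles of $\Gamma(w)\phi_1(w)$ yields precisely $\sum_n \frac{a_2(n)}{\lambda_n'^s}R_1(s,1/\lambda_n')$. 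The decay hypothesis $\lim_{|\text{Im}(s)|\to\infty}\chi(s)=0$ in Definition \ref{definition 1.1.} together with Stirling's formula (\ref{Stirling exact form on Introduction}) guarantees that the horizontal segments in the contour shift vanish and that the resulting Bessel series $H_{r_1}$ converges absolutely for $\text{Re}(s)>\mu$, since $K_\nu(2\sqrt{\mu_m\lambda_n'})$ decays exponentially in $\sqrt{\mu_m\lambda_n'}$. Finally, having established the identity in the half-plane $\text{Re}(s)>\mu$, I would note that each term on the right-hand side continues meromorphically to all of $\mathbb{C}$ (the $H_{r_1}$ series being entire by the exponential decay of $K$), so the formula furnishes the claimed analytic continuation of $\mathcal{Z}_2$; this last remark is essentially the point of the theorem and requires only the convergence estimate already in hand rather than any new argument.
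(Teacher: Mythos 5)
Your argument is correct in substance, but it takes a genuinely different route from the paper's own proof of Theorem \ref{theorem 2.1} --- indeed, it is essentially the alternative proof that the authors themselves sketch in Remark \ref{remark Theta-function} (and which, they note, is the route Suzuki follows for the one-dimensional analogues). The paper works entirely on the Mellin--Barnes side: for each fixed $n$ it isolates the generalized Dirichlet series $\ell_{n}(s,\,a_{1})=\sum_{m\geq1}a_{1}(m)\,(\lambda_{m}+\lambda_{n}^{\prime})^{-s}$, represents $\lambda_{n}^{\prime s}\,\Gamma(s)\,\ell_{n}$ by the Beta kernel \eqref{Mellin Representation Beta}, shifts the vertical contour from $\operatorname{Re}(z)=\mu$ to $\operatorname{Re}(z)=r_{1}-\mu$ (the poles of $\Gamma(z)\,\phi_{1}(z)$ crossed in the shift produce exactly $R_{1}(s,\,1/\lambda_{n}^{\prime})$, the horizontal segments being killed by condition 2 of Definition \ref{definition 1.1.}, while $\Gamma(s-z)$ contributes no poles since $\operatorname{Re}(s)>\mu$), invokes the functional equation \eqref{Hecke Dirichlet series Functional} inside the integrand, and recognizes the Bessel factor through the Mellin--Barnes representation \eqref{MacDonald Representation}; only at the very end does it sum over $n$. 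You work instead on the theta side: the Mellin transform of a product of theta functions, Bochner's modular relation \eqref{Bochner Modular relation at intro} applied to one factor, and the convolution-of-exponentials evaluation \eqref{convolution exponentials equals K nu} of the resulting integrals. The two routes are Mellin-dual, and since Bochner's relation is itself equivalent to \eqref{Hecke Dirichlet series Functional} via the Cahen--Mellin integral \eqref{Cahen Mellin integral for applications}, your approach front-loads the same contour shift that the paper performs inside the proof rather than avoiding it. What your version buys is transparency: the three residual terms of \eqref{First Selberg Chowla-1} are visibly the $m=0$, $n=0$ and $P_{1}$ contributions, and the asymmetry between the two formulas is exactly the choice of which theta factor to reflect. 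What the paper's version buys is that all interchanges are justified once and for all by Stirling estimates \eqref{Stirling's formula in the first proof} on a single vertical line, uniformly in $n$, without ever confronting the two-sided behaviour of the theta product.

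Two points you should tighten. First, your opening display is not quite correct as written: since $\Theta$ in \eqref{definition generalized Theta function nnnn.} sums over $n\geq1$ only, the integral $\int_{0}^{\infty}x^{s-1}\,\Theta(x;\,a_{1};\,\lambda)\,\Theta(x;\,a_{2};\,\lambda^{\prime})\,dx$ captures only the interior double sum, and the terms $a_{1}(0)\,\Gamma(s)\,\phi_{2}(s)+a_{2}(0)\,\Gamma(s)\,\phi_{1}(s)$ must be added separately, as in \eqref{writing as theta functiiiionas}; you acknowledge this in prose, but the displayed identity should carry them. Second, the convergence of that integral near $x=0$ deserves an explicit word: before the modular relation is applied, the product $\Theta(x;\,a_{1};\,\lambda)\,\Theta(x;\,a_{2};\,\lambda^{\prime})$ can grow like the product of the residual functions (of order $x^{-r_{1}-r_{2}}$ when both $\phi_{i}$ have poles), so one should either verify that $\operatorname{Re}(s)>\mu$ suffices in each configuration of singularities (using $\sigma_{a_{i}}\geq r_{i}$ whenever $\phi_{i}$ has a pole at $s=r_{i}$), or prove the identity first for $\operatorname{Re}(s)$ sufficiently large and extend by analytic continuation, both sides being analytic for $\operatorname{Re}(s)>\mu$ once the absolute convergence of $H_{r_{1}}$ --- which you correctly deduce from the exponential decay \eqref{Classical Estimate Bessel} of $K_{\nu}$ --- is in hand.
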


\begin{proof}
The idea of the proof follows [\cite{dirichlet and hecke}, p. 311] and its sketched
version given in \cite{dirichletserisVI}. Let $\mu>\max\left\{ 0,\,2\sigma_{a},\,2\sigma_{b},\,\frac{r^{\prime}}{2}\right\} $
as in the statement and consider a fixed $s\in\mathbb{C}$ such that
$\text{Re}(s)>\mu$. Under this condition, the Beta transform
integral [\cite{ERDELIY}, p. 349, 7.3.15]
\begin{equation}
\frac{1}{2\pi i}\,\int_{\mu-i\infty}^{\mu+i\infty}\Gamma(z)\,\Gamma(s-z)\,x^{-z}dz=\frac{\Gamma(s)}{(1+x)^{s}},\,\,\,\,\,\,x>0\label{Mellin Representation Beta}
\end{equation}
holds. Since our Epstein series (\ref{Epstein as double series}) is defined as a double infinite series
which converges absolutely for $\text{Re}(s)>\mu\geq2\,\sigma_{a}$,
we may write it as 
\begin{equation}
\mathcal{Z}_{2}\left(s;\,a_{1},\,a_{2};\,\lambda,\,\lambda^{\prime}\right)=a_{1}(0)\phi_{2}(s)\,+\,a_{2}(0)\phi_{1}(s)\,+\,\sum_{n=1}^{\infty}a_{2}(n)\,\ell_{n}\left(s,\,a_{1}\right),\label{Series representation Z2}
\end{equation}
where $\ell_{n}(s,\,a_{1})$ denotes the generalized Dirichlet
series \cite{dirichlet and hecke} given by 
\[
\ell_{n}\left(s,\,a_{1}\right)=\sum_{m=1}^{\infty}\frac{a_{1}(m)}{\left(\lambda_{m}+\lambda_{n}^{\prime}\right)^{s}},\,\,\,\,\,\,\text{Re}(s)>\sigma_{a_{1}}.
\]

By the Mellin representation (\ref{Mellin Representation Beta}),
we see that we can write $\ell_{n}\left(s,\,a_{1}\right)$
as the contour integral 
\begin{equation}
\lambda_{n}^{\prime s}\,\Gamma(s)\,\ell_{n}\left(s,\,a_{1}\right)=\sum_{m=1}^{\infty}\frac{a_{1}(m)\,\Gamma(s)}{\left(1+\frac{\lambda_{m}}{\lambda_{n}^{\prime}}\right)^{s}}=\sum_{m=1}^{\infty}\frac{a_{1}(m)}{2\pi i}\,\int_{\mu-i\infty}^{\mu+i\infty}\Gamma(z)\,\Gamma(s-z)\,\left(\frac{\lambda_{m}}{\lambda_{n}^{\prime}}\right)^{-z}dz,\label{1st equality}
\end{equation}
valid whenever $\text{Re}(s)>\sigma_{a_{1}}$, which is
the case as $\text{Re}(s)>\mu>\max\left\{ 0,\,2\sigma_{a}\right\} \geq\max\left\{ 0,\,2\sigma_{a_{1}}\right\} $.
From the definition of $\mu$, we know that $\sum a_{1}(m)\,\lambda_{m}^{-\mu}$
converges absolutely. Moreover, from a corollary of Stirling's formula (\ref{Stirling exact form on Introduction}),
\begin{equation}
|\Gamma(\mu+it)\,\Gamma(s-\mu-it)|=O\left(|t|^{\text{Re}(s)-1}e^{-\pi|t|}\right),\,\,\,\,\,\,|t|\rightarrow\infty,\label{Stirling's formula in the first proof}
\end{equation}
we are allowed to interchange the orders of integration and summation
in (\ref{1st equality}). This gives the integral representation for
the generalized Dirichlet series $\ell_{n}(s,\,a_{1})$,
\[
\lambda_{n}^{\prime s}\Gamma(s)\,\ell_{n}(s,\,a_{1})=\frac{1}{2\pi i}\,\int_{\mu-i\infty}^{\mu+i\infty}\phi_{1}(z)\,\Gamma\left(z\right)\Gamma(s-z)\,\left(\frac{1}{\lambda_{n}^{\prime}}\right)^{-z}\,dz,\,\,\,\,\mu>\max\left\{ 0,\,\sigma_{a},\,\sigma_{b},\,\frac{r_{1}}{2}\right\} .
\]

Let us now move the line of integration to $z=r_{1}-\mu+it$ (which
is on the left of $\text{Re}(z)=\mu$ because $\mu>\frac{r^{\prime}}{2}\geq\frac{r_{1}}{2}$
by hypothesis) and integrate along a positively oriented rectangular contour $\mathcal{R}_{1}$
containing the vertices $\mu\pm iT$ and $r_{1}-\mu\pm iT$, where
$T>\sup_{\rho\in\mathscr{S}}\,|\text{Im}(\rho)|$, with $\mathscr{S}$
denoting the set of singularities\footnote{Note that this supremum taken over $\mathscr{S}$ is finite from the
assumption that the singularities of $\chi_{1}(z)$ are contained
in a compact set and the observation that all the singularities of
$\Gamma(s-z)$ have their imaginary part equal to $\text{Im}(s)$.} of $\chi_{1}(z)\,\Gamma(s-z)$. By the Residue Theorem, we have the
equality 
\begin{align}
\frac{1}{2\pi i}\,\int_{\mu-iT}^{\mu+iT}\phi_{1}(z)\,\Gamma\left(z\right)\Gamma(s-z)\,\lambda_{n}^{\prime z}\,dz & =\frac{1}{2\pi i}\,\int_{r_{1}-\mu-iT}^{r_{1}-\mu+iT}\phi_{1}(z)\,\Gamma\left(z\right)\Gamma(s-z)\,\lambda_{n}^{\prime z}\,dz\,\nonumber \\
+\frac{1}{2\pi i}\int_{\mu\pm iT}^{r_{1}-\mu\pm iT}\phi_{1}(z)\,\Gamma\left(z\right)\Gamma(s-z)\,\lambda_{n}^{\prime z}\,dz & +\sum_{\rho\in\mathcal{R}_{1}}\,\,\text{Res}_{z=\rho}\left(\phi_{1}(z)\,\Gamma\left(z\right)\Gamma(s-z)\,\left(1/\lambda_{n}^{\prime}\right)^{-z}\right),\label{T goes to infinity?}
\end{align}
where the second integral represents the integrals over the horizontal
lines. We now see that 
\begin{equation}
\int_{\mu\pm iT}^{r_{1}-\mu\pm iT}\phi_{1}(z)\,\Gamma\left(z\right)\Gamma(s-z)\,{\lambda_{n}^{\prime}}^{z}\,dz\rightarrow0\,\,\,\,\text{as }\,T\rightarrow\infty,\label{vanishing}
\end{equation}
by condition 2 on Definition \ref{definition 1.1.} \footnote{Despite this imposition, we should
note that (\ref{vanishing}) could easily come from an immediate application
of Stirling's formula (\ref{Stirling exact form on Introduction}) together with the Phragm\'en-Lindel\"of estimate (\ref{Lindelof Phragmen for Hecke})
and a weaker condition of the form $\phi(s)=O\left(\exp\,|s|^{K}\right)$, for
some $K>0$, as $|s|\rightarrow\infty$ \cite{dirichlet and hecke, berndt_zeros_(i), berndt_zeros_(ii)}.
However, by the adoption of Definition \ref{definition 1.1.}, this justification becomes
even easier.}. Letting $T\rightarrow\infty$ in (\ref{T goes to infinity?}),
we obtain 
\begin{align}
\lambda_{n}^{\prime s}\Gamma(s)\,\ell_{n}(s,\,a_{1}) & =\frac{1}{2\pi i}\,\int_{r_{1}-\mu-i\infty}^{r_{1}-\mu+i\infty}\phi_{1}(z)\,\Gamma\left(z\right)\Gamma(s-z)\,\left(\frac{1}{\lambda_{n}^{\prime}}\right)^{-z}\,dz\nonumber \\
 & +\sum_{\rho\in\mathcal{R}_{1}}\,\,\text{Res}_{z=\rho}\left(\phi_{1}(z)\,\Gamma\left(z\right)\Gamma(s-z)\,\left(1/\lambda_{n}^{\prime}\right)^{-z}\right).\label{Poles equation}
\end{align}

We now claim that the last term of the previous equality is precisely
$R_{1}(s,\,1/\lambda_{n}^{\prime})$, whose definition is given at
the statement of this result. It is easy to check by the functional
equation for $\phi_{1}(z)$, (\ref{Hecke Dirichlet series Functional}), and the definition of $\mu$ that all
poles of $\chi_{1}(z)=\Gamma(z)\,\phi_{1}(z)$ are on the interior
of $\mathcal{R}_{1}$. Moreover, since $\text{Re}(s)>\mu$ by hypothesis,
we clearly have that $\text{Re}(s-z)>0$ for any $z\in\mathcal{R}_{1}$,
so that $\Gamma(s-z)$ has no poles inside $\mathcal{R}_{1}$ and
the only poles taken into account in the sum in (\ref{Poles equation})
are precisely the ones coming from the function $\Gamma(z)\,\phi_{1}(z)$.
Thus, the previous equality reduces to 
\[
\lambda_{n}^{\prime s}\,\Gamma(s)\,\ell_{n}(s,\,a_{1})=\frac{1}{2\pi i}\,\int_{r_{1}-\mu-i\infty}^{r_{1}-\mu+i\infty}\phi_{1}(z)\,\Gamma\left(z\right)\Gamma(s-z)\,\left(\frac{1}{\lambda_{n}^{\prime}}\right)^{-z}\,dz+R_{1}\left(s,\,\frac{1}{\lambda_{n}^{\prime}}\right).
\]

Let us now invoke the functional equation for the Dirichlet series
$\phi_{1}(z)$ (\ref{Hecke Dirichlet series Functional}) and use it in the contour integral given above. Doing
this and taking the change of variables $z\leftrightarrow r_{1}-z$,
we arrive at 
\begin{align}
\lambda_{n}^{\prime}{}^{s}\Gamma(s)\,\ell_{n}(s,\,a_{1}) & =\lambda_{n}^{\prime r_{1}}\,\frac{1}{2\pi i}\,\int_{\mu-i\infty}^{\mu+i\infty}\psi_{1}(z)\,\Gamma\left(z\right)\Gamma(s-r_{1}+z)\,\lambda_{n}^{\prime-z}\,dz+R_{1}\left(s,\,\frac{1}{\lambda_{n}^{\prime}}\right)\nonumber \\
=\lambda_{n}^{\prime r_{1}}\,\sum_{m=1}^{\infty}\frac{b_{1}(m)}{2\pi i} & \,\int_{\mu-i\infty}^{\mu+i\infty}\Gamma\left(z\right)\Gamma(s-r_{1}+z)\,\left(\mu_{m}\lambda_{n}^{\prime}\right)^{-z}\,dz+R_{1}\left(s,\,\frac{1}{\lambda_{n}^{\prime}}\right),\label{NIIIINE}
\end{align}
where in the last step we have also used the absolute convergence
of the Dirichlet series $\psi_{1}(z)$ for $\text{Re}(z)=\mu>\sigma_{b}\geq\text{\ensuremath{\sigma}}_{b_{1}}$
and Stirling's formula for the product of $\Gamma-$functions. 

Recalling the Mellin representation for the Modified Bessel function [\cite{ERDELIY}, p. 349, 7.3 (17)] 
\begin{equation}
K_{\nu}(x)=\left(\frac{x}{2}\right)^{\nu}\frac{1}{4\pi i}\,\int_{\lambda-i\infty}^{\lambda+i\infty}\Gamma(z-\nu)\,\Gamma(z)\,\left(\frac{x^{2}}{4}\right)^{-z}dz,\,\,\,\,\lambda>\max\{0,\,\,\text{Re}(\nu)\},\label{MacDonald Representation}
\end{equation}
we see that, since $\mu>\max\left\{ 0,\,r_{1}-\text{Re}(s)\right\} $,
we may represent the contour integral on the right-hand side of (\ref{NIIIINE}) as
\begin{equation}
\frac{1}{2\pi i}\int_{\mu-i\infty}^{\mu+i\infty}\Gamma\left(z\right)\Gamma(s-r_{1}+z)\,\left(\mu_{m}\lambda_{n}^{\prime}\right)^{-z}\,dz=2\,\left(\mu_{m}\lambda_{n}^{\prime}\right)^{-\frac{r_{1}-s}{2}}\,K_{r_{1}-s}\left(2\,\sqrt{\mu_{m}\lambda_{n}^{\prime}}\right).\label{Almost Final Selberg Chowla}
\end{equation}

Combining (\ref{NIIIINE}) and (\ref{Almost Final Selberg Chowla}),
we arrive at the following representation of the generalized Dirichlet
series $\ell_{n}(s,\,a_{1})$ (c.f. \cite{dirichlet and hecke}) 
\[
\lambda_{n}^{\prime s}\,\Gamma(s)\,\ell_{n}(s,\,a_{1})=2\,\lambda_{n}^{\prime r_{1}}\,\sum_{m=1}^{\infty}b_{1}(m)\,\left(\mu_{m}\lambda_{n}^{\prime}\right)^{-\frac{r_{1}-s}{2}}\,K_{r_{1}-s}\left(2\,\sqrt{\mu_{m}\lambda_{n}^{\prime}}\right)+R_{1}\left(s,\,\frac{1}{\lambda_{n}^{\prime}}\right).
\]

We can easily see that the double series on the right side of (\ref{Series representation Z2})
converges absolutely if $\text{Re}(s)>\sigma_{a_{1}}+\sigma_{a_{2}}$
and so it must converge absolutely for $\text{Re}(s)>\mu$. Since $\phi_{1}(s)$ is analytic in the region $\text{Re}(s)>\sigma_{a_{1}}$,
it is clear from the expression for $R_{1}(s,\,x)$ that the series
$\sum a_{2}(n)\,\lambda_{n}^{\prime-s}\,R_{1}(s,\,\lambda_{n}^{\prime-1})$
converges absolutely whenever $\text{Re}(s)>\sigma_{a_{2}}+\max\left\{ 0,\,\sigma_{a_{1}}\right\} $.
 Hence, assuming that $\text{Re}(s)>\mu\geq\sigma_{a_{2}}+\max\left\{ 0,\,\sigma_{a}\right\} $
and summing $a_{2}(n)\,\ell_{n}(s,\,a_{1})$
with respect to $n$ as in (\ref{Series representation Z2}), we arrive
at the representation (\ref{First Selberg Chowla-1}).  To finish, we only
need to argue that, under the assumption $\text{Re}(s)>\mu$, the
double series which we have obtained by summing with respect to $n$,
(\ref{H1}), converges absolutely. This is immediate due to the asymptotic
estimate for the modified Bessel function 
\begin{equation}
K_{\nu}(x)=O\left(x^{-\frac{1}{2}}\,e^{-x}\right),\,\,\,\,\,\,\,x\rightarrow\infty\label{Classical Estimate Bessel}
\end{equation}
which implies that, for some positive constant $C$, 
\[
\left|H_{r_{1}}(s;\,b_{1},\,a_{2};\,\mu,\,\lambda^{\prime})\right|\leq C\,\sum_{m,n=1}^{\infty}b_{1}(m)\,a_{2}(n)\,\left(\frac{\mu_{m}}{\lambda_{n}^{\prime}}\right)^{\frac{s-r_{1}}{2}}\left(\mu_{m}\lambda_{n}^{\prime}\right)^{-\frac{1}{4}}\,e^{-2\sqrt{\mu_{m}\lambda_{n}^{\prime}}}\,
\]

For any $\alpha>0$, we know that $e^{-2\sqrt{\mu_{m}\lambda_{n}^{\prime}}}\leq(\mu_{m}\lambda_{n}^{\prime})^{-\alpha}$
for $m,\,n>N$, where $N$ is sufficiently large\footnote{note that $(\nu_{j})_{j\in\mathbb{N}}:=\left(\mu_{m}\lambda_{n}^{\prime}\right)_{m,n\in\mathbb{N}}$
may be arranged into an increasing sequence and we can impose the
inequality $e^{-2\sqrt{\nu_{j}}}\leq\nu_{j}^{-\alpha}$ for $j\geq N_{0}$.}. Choose $\alpha>\max\left\{ \sigma_{a}-\frac{1}{4}+\frac{r_{1}-s}{2},\,\sigma_{b}-\frac{1}{4}+\frac{s-r_{1}}{2}\right\} $:
from this choice and the absolute convergence of $\phi_{2}$ and $\psi_{1}$
for $\text{Re}(s)>\max\left\{ \sigma_{a},\,\sigma_{b}\right\} $,
we immediately see that the series defining $H_{r_{1}}$ converges
absolutely for $\text{Re}(s)>\mu$.

In a completely analogous way, the second formula (\ref{Second Selberg CHOWLA})
may be obtained, with the same choice of $\mu$. In order to ensure
that the identities (\ref{First Selberg Chowla-1}) and (\ref{Second Selberg CHOWLA})
represent the same diagonal Epstein zeta function, we need to impose the possibility
of reversing the order of summation in the double series (\ref{Epstein as double series}).
This is the case when we take $\text{Re}(s)>\max\left\{ 0,\,2\sigma_{a}\right\} $,
so our choice of $\mu$ allows to reverse the order. This concludes
the proof.
\end{proof}
\bigskip{}

Since the number
of poles of $\phi_{1}$ and $\phi_{2}$ is finite (as they are contained
in a compact set, by condition 3. in definition \ref{definition 1.1.}), we can easily
study the continuation of the residual series 
\begin{equation}
\varphi_{1}(s):=\sum_{m=1}^{\infty}\frac{a_{1}(m)}{\lambda_{m}^{s}}\,R_{2}\left(s,\,\frac{1}{\lambda_{m}}\right),\,\,\,\,\varphi_{2}(s):=\sum_{n=1}^{\infty}\frac{a_{2}(n)}{\lambda_{n}^{\prime s}}\,R_{1}\left(s,\,\frac{1}{\lambda_{n}^{\prime}}\right).\label{Residual Functions-1}
\end{equation}



Thus, if we warrant that any one of the Dirichlet series $(\phi_{1},\,\phi_{2},\,\psi_{1},\,\psi_{2})$ has analytic continuation to the entire complex plane, the only remaining step towards the extension
of the Selberg-Chowla formula to any $s\in\mathbb{C}$ is the analytic
continuation of the functions $H_{r_{j}}(s,\,a)$. As in
the case of the classical Selberg-Chowla formula \cite{Bateman_Epstein}, we
will show that $H_{r_{1}}$ and $H_{r_{2}}$ represent entire functions
of $s$. Although the proof by Bateman and Grosswald \cite{Bateman_Epstein} works mutatis mutandis in our case, we add, for completeness, an alternative proof of this fact, which employs the Mellin representation 
(\ref{MacDonald Representation}) of the modified Bessel function.
A similar proof, adapted in a different context, is given in [\cite{dirichletserisVI}, p. 181]. 

\bigskip{}

\begin{proposition}[The analytic continuation of $H$] \label{proposition 2.1}
Let $H_{r_{1}}$ and $H_{r_{2}}$ be the double series given in (\ref{H1})
and (\ref{H2}). Then they represent entire functions in the variable $s\in \mathbb{C}$ and obey to
the functional equation
\begin{equation}
H_{r_{1}}\left(s;\,a_{1},\,b_{2};\,\lambda,\,\mu^{\prime}\right)=H_{r_{2}}\left(r_{1}+r_{2}-s;\,b_{2},\,a_{1};\,\mu^{\prime},\,\lambda\right).\label{Reflection Formula-2}
\end{equation}
\end{proposition}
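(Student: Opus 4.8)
The statement splits into two independent assertions: the entirety of $H_{r_1}$ and $H_{r_2}$ as functions of $s$, and the reflection formula relating them. The plan is to dispatch the functional equation first, since it is a purely formal symmetry, and then to establish entirety through a locally uniform convergence argument, reusing the exponential bound already employed at the close of the proof of Theorem \ref{theorem 2.1}.

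For the functional equation (\ref{Reflection Formula-2}), I would start from the right-hand side $H_{r_2}(r_1+r_2-s;\,b_2,\,a_1;\,\mu^{\prime},\,\lambda)$ and read off, directly from (\ref{H2}), that its Bessel exponent simplifies to $\frac{r_1-s}{2}$ while the order of the Bessel factor becomes $s-r_1$. Invoking the parity relation $K_{\nu}(z)=K_{-\nu}(z)$ to replace $K_{s-r_1}$ by $K_{r_1-s}$, together with the elementary identity $\left(\mu_m^{\prime}/\lambda_n\right)^{\frac{r_1-s}{2}}=\left(\lambda_n/\mu_m^{\prime}\right)^{\frac{s-r_1}{2}}$, and then relabelling the summation indices $m\leftrightarrow n$, one recovers termwise the series $H_{r_1}(s;\,a_1,\,b_2;\,\lambda,\,\mu^{\prime})$ from (\ref{H1}). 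The interchange of the two indices is legitimate because both series converge absolutely, which is exactly what the entirety argument below guarantees.

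To prove that $H_{r_1}$ is entire, I would first note that for each fixed pair $(m,n)$ the summand is an entire function of $s$: the map $\nu\mapsto K_{\nu}(x)$ is entire for fixed $x>0$, and $s\mapsto\left(\mu_m/\lambda_n^{\prime}\right)^{(s-r_1)/2}$ is entire as well. By Weierstrass's theorem it therefore suffices to show uniform convergence on an arbitrary compact set $\mathcal{K}\subset\mathbb{C}$. Fixing such a $\mathcal{K}$, the real part $\text{Re}(s)$ is bounded there and the order $r_1-s$ of the Bessel factor ranges over a compact set, so (\ref{Classical Estimate Bessel}) holds with a constant uniform in $s\in\mathcal{K}$. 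Combining this with the majorant $e^{-2\sqrt{\mu_m\lambda_n^{\prime}}}\leq(\mu_m\lambda_n^{\prime})^{-\alpha}$ for $\mu_m\lambda_n^{\prime}$ large and choosing $\alpha$ larger than the (bounded) quantity $\max\{\sigma_a-\tfrac{1}{4}+\tfrac{r_1-\text{Re}(s)}{2},\,\sigma_b-\tfrac{1}{4}+\tfrac{\text{Re}(s)-r_1}{2}\}$ uniformly over $\mathcal{K}$, the tail is dominated by a convergent double series assembled from $\psi_1$ and $\phi_2$ that is independent of $s$. This is precisely the estimate at the end of the proof of Theorem \ref{theorem 2.1}; the only new point is that $\alpha$ is now chosen uniformly on $\mathcal{K}$ rather than on a half-plane, and the exponential decay makes the argument unconditional, requiring no analytic continuation of the auxiliary series. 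The same reasoning applies verbatim to $H_{r_2}$.

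The only genuinely delicate point I anticipate is the bookkeeping of the uniformity of $\alpha$: one must check that the threshold depends only on the extreme values of $\text{Re}(s)$ on $\mathcal{K}$, so that a single majorant serves all $s\in\mathcal{K}$. For completeness one could instead substitute the Mellin--Barnes representation (\ref{MacDonald Representation}) into each summand and interchange sum and integral to write $H_{r_1}(s)$ as a single contour integral of $\Gamma(z)\,\Gamma(s-r_1+z)\,\psi_1(z)\,\phi_2(s+z-r_1)$; analyticity in $s$ then follows by differentiating under the integral sign, the contour being kept within the common half-plane of absolute convergence of $\psi_1$ and $\phi_2$ as $s$ varies. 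I would nonetheless keep the elementary Weierstrass argument as the primary route, since it needs nothing beyond the bounds already in hand.
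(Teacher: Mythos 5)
Your proposal is correct, and on the reflection formula it coincides with the paper's one-line argument: (\ref{Reflection Formula-2}) is immediate from $K_{\nu}(z)=K_{-\nu}(z)$ together with inverting the power factor and relabelling the indices, exactly as you spell out. On the entirety, however, you take a genuinely different primary route. The paper does \emph{not} estimate the Bessel summands directly; it returns to the Mellin--Barnes representation (\ref{Returning to the classics}) obtained from (\ref{MacDonald Representation}), splits the contour integral at height $T+1$, and uses Stirling's formula to bound the integral by $C'\,(\mu_{m}\lambda_{n}^{\prime})^{-\mu}$ uniformly on expanding rectangles $\mathcal{R}_{\epsilon,\eta}(T)$, after which the Weierstrass M-test applies with the majorant $\sum|b_{1}(m)|\mu_{m}^{-\mu}\cdot\sum|a_{2}(n)|\lambda_{n}^{\prime-\mu-\sigma+r_{1}}$. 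Your primary argument --- the crude decay (\ref{Classical Estimate Bessel}) with constant uniform for the order in a compact set, combined with $e^{-2\sqrt{\mu_{m}\lambda_{n}^{\prime}}}\leq(\mu_{m}\lambda_{n}^{\prime})^{-\alpha}$ and a choice of $\alpha$ uniform over $\mathcal{K}$ --- is precisely the Bateman--Grosswald route \cite{Bateman_Epstein}, which the paper explicitly acknowledges works ``mutatis mutandis'' but deliberately sidesteps in favour of the Mellin proof; your fallback suggestion (substituting (\ref{MacDonald Representation}) into each summand and working with the resulting contour integral) is essentially the paper's actual method, except that the paper bounds the integral on rectangles rather than differentiating under the integral sign. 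Your route buys elementarity, needing nothing beyond the tail estimate already present at the end of Theorem \ref{theorem 2.1}; the paper's route buys uniform bounds that are reused later (cf.\ the integral representations in Section \ref{section 4}). Two points of bookkeeping you rightly flag should be made explicit if written up: the uniformity of (\ref{Classical Estimate Bessel}) in the order, which follows for instance from $|K_{\nu}(x)|\leq K_{M}(x)$ with $M=\max_{s\in\mathcal{K}}|\mathrm{Re}(r_{1}-s)|$ via the integral representation $K_{\nu}(x)=\int_{0}^{\infty}e^{-x\cosh t}\cosh(\nu t)\,dt$; and the $s$-dependence of the prefactor $\left(\mu_{m}/\lambda_{n}^{\prime}\right)^{(\mathrm{Re}(s)-r_{1})/2}$, which must be majorized by its values at the two extreme abscissas of $\mathcal{K}$ (summing the two bounds suffices) so that a single $s$-independent majorant serves the M-test. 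With these routine insertions your argument is complete.
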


\begin{proof}
As in the previous Theorem, let $\mu>\max\left\{ 0,\,2\sigma_{a},\,2\sigma_{b},\,\frac{r^{\prime}}{2}\right\} $
and take $\eta>\mu$. For any $\epsilon>0$, consider a rectangle
$\mathcal{R}_{\epsilon,\eta}(T)$, $T>0$, whose vertices are $r^{\prime}-\mu+\sigma_{a}+\epsilon\pm iT$
and $\eta\pm iT$. Assuming that $s\in\mathcal{R}_{\epsilon,\eta}(T)$
and returning to the representation (\ref{NIIIINE}), we can write
the function $H_{r_{1}}$ as 
\begin{equation}
H_{r_{1}}\left(s;\,a_{1};\,b_{2};\,\lambda,\,\mu^{\prime}\right):=\sum_{m,n=1}^{\infty}b_{1}(m)\,a_{2}(n)\,\lambda_{n}^{\prime r_{1}-s}\,\frac{1}{2\pi i}\,\int_{\mu-i\infty}^{\mu+i\infty}\Gamma\left(z\right)\Gamma(s-r_{1}+z)\,\left(\mu_{m}\lambda_{n}^{\prime}\right)^{-z}\,dz.\label{Returning to the classics}
\end{equation}

It is readily seen from (\ref{Returning to the classics}) that we
can argue the uniform convergence of the previous double series in
$\mathcal{R}_{\epsilon,\eta}(T)$ by estimating the contour integral
appearing on the right-hand side of (\ref{Returning to the classics}).
To do this, we will set $s:=\sigma+it\in\mathcal{R}_{\epsilon,\eta}(T)$
and split the integral concerning the vertical line $(\mu-i\infty,\,\mu+i\infty)$ into three parts:
$(\mu-i\infty,\,\mu-i(T+1))$, $(\mu-i(T+1),\,\mu+i(T+1))$ and $(\mu+i(T+1),\,\mu+i\infty)$. The bound for the first of these integrals is easy to obtain and is simply given by 
\begin{equation}
\left|\,\int_{\mu-i(T+1)}^{\mu+i(T+1)}\Gamma\left(z\right)\Gamma(s-r_{1}+z)\,\left(\mu_{m}\lambda_{n}^{\prime}\right)^{-z}\,dz\,\right|\leq C\,\left(\mu_{m}\lambda_{n}^{\prime}\right)^{-\mu}\Gamma(\sigma-r_{1}+\mu)\,\Gamma(\mu)\leq C^{\prime}\,\left(\mu_{m}\lambda_{n}^{\prime}\right)^{-\mu}\label{Bound (1)}
\end{equation}
where $C^{\prime}$ only depends on the maximum value that $\Gamma(x)$
attains in the interval $\sigma_{a}+r^{\prime}-r_{1}+\epsilon<x<\eta+\mu-r_{1}$ and thus (\ref{Bound (1)}) does not depend of $s\in\mathcal{R}_{\epsilon,\eta}(T)$.
From Stirling's formula, it is also immediate to arrive at the second
bound 
\begin{align}
\left|\int_{\mu+i(T+1)}^{\mu+i\infty}\Gamma\left(z\right)\Gamma(s-r_{1}+z)\,\left(\mu_{m}\lambda_{n}^{\prime}\right)^{-z}\,dz\right| & \leq C\,\left(\mu_{m}\lambda_{n}^{\prime}\right)^{-\mu}\,\int_{T+1}^{\infty}y^{\mu-\frac{1}{2}}\,e^{-\pi y}\,(y+t)^{\sigma-r_{1}+\mu-\frac{1}{2}}\,dy\nonumber \\
\leq C\,\left(\mu_{m}\lambda_{n}^{\prime}\right)^{-\mu}\,\int_{T+1}^{\infty}y^{\mu-\frac{1}{2}}\,e^{-\pi y}\, & (y+t)^{\eta-r_{1}+\mu-\frac{1}{2}}\,dy\leq C^{\prime}\,(\mu_{m}\lambda_{n}^{\prime})^{-\mu},\label{Bound (2)}
\end{align}
where, once more, $C^{\prime}$ does not depend also on $s\in\mathcal{R}_{\epsilon,\eta}(T)$.
Due to the symmetry $\Gamma(\overline{z})=\overline{\Gamma(z)}$,
it is simple to see that a similar estimate to (\ref{Bound (2)}) holds when we integrate from $\mu-i\infty$ to $\mu-i(T+1)$. By (\ref{Returning to the classics}),
we see that 
\[
\left|H_{r_{1}}\left(s;\,a_{1};\,b_{2};\,\lambda,\,\mu^{\prime}\right)\right|\leq C\,\sum_{m=1}^{\infty}\frac{|b_{1}(m)|}{\mu_{m}^{\mu}}\,\cdot\,\sum_{n=1}^{\infty}\frac{|a_{2}(n)|}{\lambda_{n}^{\prime\mu+\sigma-r_{1}}},
\]
where $\sigma>r^{\prime}-\mu+\sigma_{a}+\epsilon$ (since
$s\in\mathcal{R}_{\epsilon,\eta}(T)$) and $C$ does not depend on $s$.
Since the right-hand side of the previous inequality is the product
of two absolutely convergent series, an application of the Weierstrass M-test leads to the conclusion that the infinite series appearing in (\ref{Returning to the classics}) converges
absolutely and uniformly on $\mathcal{R}_{\epsilon,\eta}(T)$. Since
$\mu$ can be taken sufficiently large, we know that every bounded
subset of $\mathbb{C}$ is contained in some rectangle of the form
$\mathcal{R}_{\epsilon,\eta}(T)$ and this shows that the double series (\ref{H1-1}) converges absolutely and uniformly in every bounded subset
of the complex plane. From the fact that the double sequence of functions
$f_{m,n}(s)=2\,\left(\mu_{m}/\lambda_{n}^{\prime}\right)^{\frac{s-r_{1}}{2}}\,K_{r_{1}-s}\left(2\sqrt{\mu_{m}\lambda_{n}^{\prime}}\right)$
represents, for each pair of integers $(m,\,n)\in\mathbb{N}^{2}$,
an analytic function of $s$, we thus have that $H_{r_{1}}(s;\,b_{1},\,a_{2};\,\mu,\,\lambda^{\prime})$
is an entire function. 


Analogously, one may see that $H_{r_{2}}$ converges uniformly on
any rectangle $\mathcal{R}_{\epsilon,\eta}(T)$. The same can be concluded
after an application of the reflection formula (\ref{Reflection Formula-2}).
By analytic continuation, this shows that both representations (\ref{First Selberg Chowla-1})
and (\ref{Second Selberg CHOWLA}) hold for the Epstein zeta function (\ref{Epstein as double series}). Finally, it is clear that
(\ref{Reflection Formula-2}) comes immediately from the property for
the Modified Bessel function $K_{\nu}(z)=K_{-\nu}(z)$. 
\end{proof}
\medskip{}

\begin{remark}
It should be also pointed out that, even if one of the Dirichlet series
$\phi_{1}$ or $\phi_{2}$ does not possess a functional equation
of Hecke type (\ref{Hecke Dirichlet series Functional}), one of the formulas (\ref{First Selberg Chowla-1})
or (\ref{Second Selberg CHOWLA}) is still valid, since the only tool
required to prove these is the Functional equation of only one of
the Dirichlet series. Although the appropriate Selberg-Chowla formula
provides the analytic continuation of $\mathcal{Z}_{2}$ in this case,
as it will be clear, we cannot obtain a suitable functional equation
for it. 


\end{remark}

\medskip{}


\begin{definition}[A Generalized Divisor function] \label{definition generalized divisor 2.1.}
Let $\{\nu_{j}\}_{j\in\mathbb{N}}$ represent the "product sequence" $\nu_{j}=\left\{ \mu_{m}\cdot\lambda_{n}^{\prime}\right\} _{m,n\in\mathbb{N}}$ and then arrange this product in increasing order. We define the generalized divisor function as 
\begin{equation}
\sigma_{z}\left(\nu_{j};\,b_{1},\,a_{2};\,\mu,\,\lambda^{\prime}\right)=\sum_{(m,\,n)\in \mathbb{N}^{2}\,:\,\mu_{m} \lambda_{n}^{\prime}=\nu_{j}}b_{1}(m)\,a_{2}(n)\,\mu_{m}^{z}.\label{On a generalized Divisor Function}
\end{equation}
\end{definition}

\medskip{}

With the notation introduced in Definition \ref{definition generalized divisor 2.1.}, we have that $H_{r_{1}}$ can be described by
the series 
\begin{equation}
H_{r_{1}}(s;\,b_{1},\,a_{2};\,\mu,\,\lambda^{\prime})=\,\sum_{j=1}^{\infty}\sigma_{s-r_{1}}\left(\nu_{j};\,b_{1},\,a_{2};\,\mu,\,\lambda^{\prime}\right)\,\nu_{j}^{\frac{r_{1}-s}{2}}\,K_{r_{1}-s}\left(2\sqrt{\nu_{j}}\right), \label{Representation with divisotr}
\end{equation}
which strongly resembles the entire part
appearing in the classical Selberg-Chowla formula (\ref{Selberg Chowla Formula}). Analogously, if we write $\{\nu_{j}^{\prime}\}_{j\in\mathbb{N}}=\{\mu_{m}^{\prime}\lambda_{n}\}_{m,n\in\mathbb{N}}$,
it is also evident that we can write $H_{r_{2}}$ as 
\[
H_{r_{2}}(s;\,b_{2},\,a_{1};\,\mu^{\prime},\,\lambda)=\,\sum_{j=1}^{\infty}\sigma_{s-r_{2}}\left(\nu_{j}^{\prime};\,b_{2},\,a_{1};\,\mu^{\prime},\,\lambda\right)\,\nu_{j}^{\prime\frac{r_{2}-s}{2}}\,K_{r_{2}-s}\left(2\,\sqrt{\nu_{j}^{\prime}}\right).
\]

\medskip{}

Taking $z=0$ in (\ref{On a generalized Divisor Function}), we also get
a generalization of the standard divisor function $d(n)=\sum_{d|n}1$, 
\begin{equation}
d\left(\nu_{j};\,b_{1},\,a_{2};\,\mu,\,\lambda^{\prime}\right)=\sum_{(m,\,n)\in \mathbb{N}^{2}\,:\,\mu_{m}\lambda_{n}^{\prime}=\nu_{j}}b_{1}(m)\,a_{2}(n).\label{generalized Divisor function}
\end{equation}

\medskip{}


Having proved that $H_{r_{i}}$, for each $i=1,\,2$, is entire, we
are ready to establish the analytic continuation of the Epstein series
$\mathcal{Z}_{2}\left(s;\,a_{1},\,a_{2};\,\lambda,\,\lambda^{\prime}\right)$.
To do this, we need to understand the continuation of the residual
functions given in the proof of Theorem \ref{theorem 2.1}, $\varphi_{1}(s)$ and $\varphi_{2}(s)$, appearing in (\ref{Residual Functions-1}). Although we can proceed the study of the analytic continuation of $\mathcal{Z}_{2}(s;\,\cdot)$ in a general form (see Remark \ref{remark with general residual}), we state all the next corollaries in the following class of Dirichlet series: 

\begin{definition} \label{class A definition}
Let $\phi(s)$ be a Dirichlet series satisfying Definition \ref{definition 1.1.} with $\Delta(s)=\Gamma(s)$. We say
that $\phi$ belongs to the class $\mathcal{A}$ if additionally: 
\begin{enumerate}
\item $\phi$ and $\psi$ have analytic continuations to the entire complex plane and are analytic on $\mathbb{C}$ except
for possible simple poles located at $s=r$ with residues $\rho$
and $\rho^{\star}$ respectively.
\item $a(n)$ can be defined at $0$ and has the value $-\phi(0)$. $b(n)$ can be analogously extended to $n=0$ as $-\psi(0)$. 
\end{enumerate}
\end{definition}
\begin{remark} \label{easy computations class A}
Note that, by the functional equation (\ref{Hecke Dirichlet series Functional})
and condition 1 in the previous definition, $\phi_{i}(0)=-\rho_{i}^{\star}\Gamma(r_{i})$,
while $\psi_{i}(0)=-\rho_{i}\,\Gamma(r_{i})$. Under the symmetry
imposed by the functional equation and the first condition, it is
clear that $\phi\in\mathcal{A}$ if and only if $\psi\in\mathcal{A}$.
The purpose of introducing this class is to mimic as much as possible
the class of Dirichlet series with a given signature, which is a clear
subclass of $\mathcal{A}$. Besides, this class reproduces in a general
form what happens in the classical cases of diagonal Epstein zeta
functions. As a corollary of the Selberg-Chowla formulas given in Theorem \ref{theorem 2.1}, in the next result 
we prove that, if $\phi_{1},\,\phi_{2}\in\mathcal{A}$, then $\mathcal{Z}_{2}\in\mathcal{A}$.
Hence, this class is closed under composition of Dirichlet series attached to diagonal Epstein zeta functions. A similar class was also considered in [\cite{hecke and identities Berndt}, p. 221]. 
\end{remark}

\medskip{}

From the functional equation for $\phi$ and under the hypotheses given for the class $\mathcal{A}$, it is simple to see that the
residual function $R_{1}(s,\,x)$ appearing in (\ref{First Selberg Chowla-1}) is given by 
\begin{equation}
R_{1}\left(s,\,x\right)=\phi_{1}(0)\,\Gamma(s)+\rho_{1}\Gamma\left(r_{1}\right)\,\Gamma\left(s-r_{1}\right)\,x^{-r_{1}},\label{R1}
\end{equation}
so that, for $\text{Re}(s)>\mu>\max\left\{ 0,\,2\sigma_{a},\,2\sigma_{b},\,\frac{r^{\prime}}{2}\right\} $,
we can easily see that 
\begin{equation}
\varphi_{2}(s)=\sum_{n=1}^{\infty}\frac{a_{2}(n)}{\lambda_{n}^{\prime s}}\,R_{1}\left(s,\,1/\lambda_{n}^{\prime}\right)=\phi_{1}(0)\,\Gamma(s)\,\phi_{2}(s)+\rho_{1}\Gamma(r_{1})\,\Gamma(s-r_{1})\,\phi_{2}\left(s-r_{1}\right).\label{Residual series again}
\end{equation}

Likewise, one may find $R_{2}\left(s,\,x\right)$ and $\varphi_{1}(s)$.
Invoking the fact that $a_{i}(0)=-\phi_{i}(0)$, we see that formulas
(\ref{First Selberg Chowla-1}) and (\ref{Second Selberg CHOWLA}),
when considered over the class $\mathcal{A}$, are
\begin{align}
\Gamma(s)\,\mathcal{Z}_{2}\left(s;\,a_{1},\,a_{2};\,\lambda,\,\lambda^{\prime}\right)&=-\phi_{2}(0)\,\Gamma(s)\,\phi_{1}(s)+\rho_{1}\Gamma(r_{1})\,\Gamma(s-r_{1})\,\phi_{2}\left(s-r_{1}\right) \nonumber \\
&+2\,H_{r_{1}}\left(s;\,b_{1},\,a_{2};\,\mu,\,\lambda^{\prime}\right)\label{Again First-1}
\end{align}
and 
\begin{align}
\Gamma(s)\,\mathcal{Z}_{2}\left(s;\,a_{1},\,a_{2};\,\lambda,\,\lambda^{\prime}\right)&=-\phi_{1}(0)\,\Gamma(s)\,\phi_{2}(s)+\rho_{2}\Gamma(r_{2})\,\Gamma(s-r_{2})\,\phi_{1}\left(s-r_{2}\right)\nonumber\\
&+\,2\,H_{r_{2}}(s;\,b_{2},\,a_{1};\,\mu^{\prime},\,\lambda).\label{Again Second-1}
\end{align}

Using both representations (\ref{Again First-1}) and (\ref{Again Second-1})
as well as the type of continuation which they yield, we now establish
the following Corollary. 

\begin{corollary}\label{The Analytic Continuation}
Let $\phi_{1}$ and $\phi_{2}$ satisfy the conditions of the class
$\mathcal{A}$ and having residues $\rho_{1}$ and $\rho_{2}$ at $s=r_{1}$ and $s=r_{2}$ respectively. Then the Selberg-Chowla formula (\ref{First Selberg Chowla-1})
provides the analytic continuation of the diagonal Epstein $\zeta-$function (\ref{Epstein as double series})
as a meromorphic function having a simple pole at $s=r_{1}+r_{2}$
with residue $\frac{\Gamma(r_{1})\Gamma(r_{2})}{\Gamma(r_{1}+r_{2})}\,\rho_{1}\rho_{2}$. Moreover, the continuation of $\mathcal{Z}_{2}$ obeys to the functional
equation 
\begin{equation}
\Gamma\left(s\right)\mathcal{Z}_{2}\left(s;\,a_{1},\,a_{2};\,\lambda,\,\lambda^{\prime}\right)=\Gamma(r_{1}+r_{2}-s)\,\mathcal{Z}_{2}\left(r_{1}+r_{2}-s;\,b_{1},\,b_{2};\,\mu,\,\mu^{\prime}\right).\label{Functional equation our Equation-1}
\end{equation}
\end{corollary}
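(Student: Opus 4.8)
The plan is to read the whole statement off the two Selberg--Chowla representations over the class $\mathcal{A}$, namely (\ref{Again First-1}) and (\ref{Again Second-1}), together with the fact that $H_{r_1},H_{r_2}$ are entire (Proposition \ref{proposition 2.1}). First I would record that, by condition 1 of Definition \ref{class A definition}, each of $\phi_1,\phi_2,\psi_1,\psi_2$ continues meromorphically to $\mathbb{C}$ with a single simple pole at $s=r_i$, and that condition 3 of Definition \ref{definition 1.1.} confines the poles of $\chi_i(s)=\Gamma(s)\phi_i(s)$ to a compact set; since the only finite-plane pole of $\phi_i$ is at $r_i$, the trivial zeros of $\phi_i$ cancel the poles of $\Gamma$ at the negative integers, so $\chi_i$ is holomorphic except for simple poles at $s=0$ and $s=r_i$ (this is exactly what makes $R_1(s,x)$ a two-term expression in (\ref{R1})). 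Consequently every term on the right of (\ref{Again First-1}) is meromorphic on $\mathbb{C}$, which yields the meromorphic continuation of $\Gamma(s)\,\mathcal{Z}_2(s;a_1,a_2;\lambda,\lambda')$, hence of $\mathcal{Z}_2$.

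For the poles I would analyze (\ref{Again First-1}) termwise. The term $-\phi_2(0)\,\Gamma(s)\phi_1(s)=-\phi_2(0)\,\chi_1(s)$ is singular only at $s=0,r_1$, while $\rho_1\Gamma(r_1)\,\Gamma(s-r_1)\phi_2(s-r_1)=\rho_1\Gamma(r_1)\,\chi_2(s-r_1)$ is singular only at $s=r_1,r_1+r_2$, and the $H$-term is entire. At $s=0$ the factor $1/\Gamma(s)$ removes the singularity. At $s=r_1$ the residues of the two $\chi$-terms are $-\phi_2(0)\Gamma(r_1)\rho_1$ and $\rho_1\Gamma(r_1)\phi_2(0)$, which cancel, so $\mathcal{Z}_2$ is regular there. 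The only surviving singularity is at $s=r_1+r_2$, from the pole of $\phi_2(s-r_1)$ at $s-r_1=r_2$; since $\Gamma(s-r_1)\to\Gamma(r_2)$, the residue of $\Gamma(s)\,\mathcal{Z}_2$ is $\rho_1\rho_2\,\Gamma(r_1)\Gamma(r_2)$, and dividing by $\Gamma(r_1+r_2)$ gives the claimed residue $\tfrac{\Gamma(r_1)\Gamma(r_2)}{\Gamma(r_1+r_2)}\rho_1\rho_2$.

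For the functional equation I would represent the left side $\Gamma(s)\,\mathcal{Z}_2(s;a_1,a_2;\lambda,\lambda')$ by (\ref{Again First-1}), and represent the right side $\Gamma(r_1+r_2-s)\,\mathcal{Z}_2(r_1+r_2-s;b_1,b_2;\mu,\mu')$ by applying (\ref{Again Second-1}) to the pair $(\psi_1,\psi_2)$ (legitimate since $\phi_i\in\mathcal{A}\Leftrightarrow\psi_i\in\mathcal{A}$) at the reflected point $w=r_1+r_2-s$, obtaining $-\psi_1(0)\,\Gamma(w)\psi_2(w)+\rho_2^{\star}\Gamma(r_2)\Gamma(r_1-s)\psi_1(r_1-s)+2H_{r_2}(r_1+r_2-s;a_2,b_1;\lambda',\mu)$. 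The two Bessel terms coincide by the instance $H_{r_1}(s;b_1,a_2;\mu,\lambda')=H_{r_2}(r_1+r_2-s;a_2,b_1;\lambda',\mu)$ of the reflection formula (\ref{Reflection Formula-2}). For the remaining terms I would use Hecke's functional equation (\ref{Hecke Dirichlet series Functional}) for $\phi_1,\phi_2$ and the identities $\phi_i(0)=-\rho_i^{\star}\Gamma(r_i)$, $\psi_i(0)=-\rho_i\Gamma(r_i)$ from Remark \ref{easy computations class A}: applying $\Gamma(s)\phi_1(s)=\Gamma(r_1-s)\psi_1(r_1-s)$ turns $-\phi_2(0)\Gamma(s)\phi_1(s)$ into $\rho_2^{\star}\Gamma(r_2)\Gamma(r_1-s)\psi_1(r_1-s)$, and applying $\Gamma(s-r_1)\phi_2(s-r_1)=\Gamma(r_1+r_2-s)\psi_2(r_1+r_2-s)$ turns $\rho_1\Gamma(r_1)\Gamma(s-r_1)\phi_2(s-r_1)$ into $-\psi_1(0)\Gamma(r_1+r_2-s)\psi_2(r_1+r_2-s)$. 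The two representations thus agree term by term, and the functional equation follows on all of $\mathbb{C}$ by the meromorphic continuation already established.

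The routine computations are harmless; the part requiring care is the bookkeeping of the arithmetical-function and frequency labels when (\ref{Reflection Formula-2}) and (\ref{Again Second-1}) are transported to the pair $(\psi_1,\psi_2)$ — one must track the swaps $a_i\leftrightarrow b_i$, $\lambda\leftrightarrow\mu$, $\lambda'\leftrightarrow\mu'$, $\rho_i\leftrightarrow\rho_i^{\star}$ consistently and verify that the instance of (\ref{Reflection Formula-2}) needed is precisely the $H$-term produced by (\ref{Again Second-1}). The only genuinely structural input, as opposed to formal manipulation, is that condition 3 of Definition \ref{definition 1.1.} (equivalently, the trivial zeros of the $\phi_i$) confines the poles of $\chi_i$ to $\{0,r_i\}$; without it the residual terms would contribute a whole ladder of spurious poles at $s=r_1-k$ and the single-pole conclusion would break down.
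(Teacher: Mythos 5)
Your proof is correct and follows essentially the same route as the paper: the pole analysis rests on the two residual terms of (\ref{Again First-1}) with $H_{r_1}$ entire (the paper phrases your observation that $\chi_i(s)=\Gamma(s)\phi_i(s)$ has poles only at $\{0,r_i\}$ as the removability of the singularities of $G_{r_1}$ at $s=r_1-k$), and the functional equation is obtained, exactly as in the paper, from the reflection formula (\ref{Reflection Formula-2}), the identities $\phi_i(0)=-\rho_i^{\star}\Gamma(r_i)$, $\psi_i(0)=-\rho_i\Gamma(r_i)$, and comparison with (\ref{Again Second-1}) applied to the pair $(\psi_1,\psi_2)$. The only cosmetic difference is that you push the representation at $s$ forward through Hecke's functional equation instead of substituting $s\mapsto r_1+r_2-s$ as the paper does; modulo that relabeling the two computations coincide term by term.
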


\begin{proof}
First, we shall use the first representation for $\mathcal{Z}_{2}$
(\ref{First Selberg Chowla-1}), which, under restriction to the class
$\mathcal{A}$, reduces to (\ref{Again First-1}). We have seen in
the previous proposition that $H_{r_{1}}$ is an entire function,
so the meromorphic part of the Epstein $\zeta-$function $\mathcal{Z}_{2}\left(s;\,a_{1},\,a_{2};\,\lambda,\,\lambda^{\prime}\right)$
comes from the first two terms in (\ref{Again First-1}), i.e., from
the meromorphic function 
\begin{equation}
G_{r_{1}}\left(s;\,a_{1},\,a_{2};\,\lambda,\,\lambda^{\prime}\right)=-\phi_{2}(0)\,\phi_{1}(s)+\rho_{1}\Gamma(r_{1})\,\frac{\Gamma(s-r_{1})}{\Gamma(s)}\,\phi_{2}\left(s-r_{1}\right).
\end{equation}
Clearly, from a standard verification, $G_{r_{1}}$ has removable
singularities at $s=r_{1}-k$, $k\in\mathbb{N}_{0}$. Since, by the
definition of the class $\mathcal{A}$, $\phi_{1}(s)$ and $\Gamma\left(s-r_{1}\right)$
are analytic in a neighbourhood of $s=r_{1}+r_{2}$, $G_{r_{1}}$
has at most one pole at $s=r_{1}+r_{2}$ coming from the factor $\phi_{2}(s-r_{1})$.
The residue is easily seen to be $\frac{\Gamma(r_{1})\Gamma(r_{2})}{\Gamma(r_{1}+r_{2})}\,\rho_{1}\rho_{2}.$

\medskip{}

To prove the functional equation (\ref{Functional equation our Equation-1}),
let us use both representations (\ref{Again First-1}, \ref{Again Second-1})
for $\mathcal{Z}_{2}$ and replace $s$ by $r_{1}+r_{2}-s$ on (\ref{Again First-1}).
From the reflection formula for $H_{r_{1}}$ (\ref{Reflection Formula-2}), we see that 
\begin{equation}
H_{r_{1}}\left(r_{1}+r_{2}-s;\,b_{1},\,a_{2};\,\mu,\,\lambda^{\prime}\right)=H_{r_{2}}\left(s;\,a_{2},\,b_{1};\,\lambda^{\prime},\,\mu\right). \label{reflection on proof}    
\end{equation}

Thus, to describe $\mathcal{Z}_{2}(r_{1}+r_{2}-s;\,\cdot)$, we need
to see how $G_{r_{1}}$ behaves under the reflection $s\leftrightarrow r_{1}+r_{2}-s$.
Since $\phi_{i}(0)=-\rho_{i}^{\star}\Gamma(r_{i})$ and $\psi_{i}(0)=-\rho_{i}\,\Gamma(r_{i})$
(by condition 2. on the class $\mathcal{A}$ and the functional equation
for $\phi_{i}(s)$), we have that 
\begin{align}
\Gamma\left(r_{1}+r_{2}-s\right)\,G_{r_{1}}\left(r_{1}+r_{2}-s;\,a_{1},\,a_{2};\,\lambda,\,\lambda^{\prime}\right) & = -\phi_{2}(0)\,\Gamma\left(s-r_{2}\right)\,\psi_{1}\left(s-r_{2}\right)+\rho_{1}\Gamma(r_{1})\,\Gamma(s)\,\psi_{2}(s)\nonumber \\
&=\rho_{2}^{\star}\,\Gamma(r_{2})\,\Gamma\left(s-r_{2}\right)\,\psi_{1}\left(s-r_{2}\right)-\psi_{1}(0)\,\Gamma(s)\,\psi_{2}(s),\label{when Im sixty 3-2}
\end{align}
where in the penultimate equality we have used the functional equation
for $\phi_{i}$ directly. Now, by equations (\ref{Again First-1}), (\ref{reflection on proof}) and (\ref{when Im sixty 3-2}), we see that 
\begin{align}
\Gamma(r_{1}+r_{2}-s)\,\mathcal{Z}_{2}\left(r_{1}+r_{2}-s;\,a_{1},\,a_{2};\,\lambda,\,\lambda^{\prime}\right) & =-\psi_{1}(0)\,\Gamma(s)\,\psi_{2}(s)+\rho_{2}^{\star}\,\Gamma(r_{2})\,\Gamma\left(s-r_{2}\right)\,\psi_{1}\left(s-r_{2}\right)+\nonumber\\
 & +2\,H_{r_{2}}\left(s;\,a_{2},\,b_{1};\,\lambda^{\prime},\,\mu\right).\label{ALMOST OVER THIS-1} 
\end{align}

Using the second representation of the Selberg-Chowla formula
(\ref{Again Second-1}), and replacing there the arithmetical functions $a_{1}(n)$ by $b_{1}(n)$
and $a_{2}(n)$ by $b_{2}(n)$ or, by other words,
using the fact that $\psi_{1},\,\psi_{2}\in\mathcal{A}$ and applying the
work done in Theorem \ref{theorem 2.1} for the diagonal Epstein
$\zeta-$function,  
\begin{equation}
\mathcal{Z}_{2}\left(s;\,b_{1},\,b_{2};\,\mu,\,\mu^{\prime}\right)=\sum_{m,n\neq0}^{\infty}\frac{b_{1}(m)\,b_{2}(n)}{\left(\mu_{m}+\mu_{n}^{\prime}\right)^{s}},\,\,\,\,\,\,\text{Re}(s)>2\,\sigma_{b},
\end{equation}
we find immediately that the right-hand side of (\ref{ALMOST OVER THIS-1})
is precisely the same as the one given by (\ref{Again Second-1})
under the above mentioned substitutions. This establishes the functional
equation (\ref{Functional equation our Equation-1}) and completes
the proof.
\end{proof}


\begin{remark} \label{remark with general residual}
Although the totality of the examples given in the fifth section of
this paper concerns the class $\mathcal{A}$, we should remark that
the previous corollary, as well as the results given in section \ref{section 4}, hold for a
(much more) general class of Dirichlet series satisfying Definition \ref{definition 1.1.}
and admitting analytic continuations into the entire complex plane. However, in order to have a complete description of the analytic continuations of the Epstein zeta functions (\ref{Epstein as double series}) and (\ref{Epstein zeta function non diagonal-1}), we have decided to omit the details of this general case. In this remark we briefly see that the properties of a function satisfying definition \ref{definition 1.1.} are preserved by the Epstein zeta function. Note that, if $C_{1}$ denotes a curve,
or curves, chosen so that it encircles all the singularities of $\Gamma(z)\,\phi_{1}(z)$,
it is then clear that $R_{1}(s,\,x)$ (given in the statement of Theorem
\ref{theorem 2.1}) can be written as 
\[
R_{1}(s,\,x)=\frac{1}{2\pi i}\,\int_{C_{1}}\Gamma(s-w)\,\Gamma(w)\,\phi_{1}(w)\,x^{-w}\,dw,\,\,\,\,\text{Re}(s)>\mu, 
\]
and so the residual series $\varphi_{2}(s)$ appearing in (\ref{Residual Functions-1}) may be expressed as 
\begin{equation}
\varphi_{2}(s)=\frac{1}{2\pi i}\,\int_{C_{1}}\Gamma(s-w)\,\phi_{2}(s-w)\,\Gamma(w)\,\phi_{1}(w)\,dw,\,\,\,\,\text{Re}(s)>2\mu, \label{integral varphi 2}
\end{equation}
which can be extended to other values of $s$ by analytic continuation (with an analogous expression holding for $\varphi_{1}(s)$). Note that the right-hand side of
(\ref{integral varphi 2}) is analytic when $\text{Re}(s)$
is taken sufficiently large. Supposing that each $\phi_{i}(s)$ has an
analytic continuation to the entire complex plane, (\ref{integral varphi 2}) (resp. $\varphi_{1}(s)$)
represents a complex meromorphic function, having its singularities
also confined to a compact set. Thus, the first assertion proved in
Corollary \ref{The Analytic Continuation} holds outside the class $\mathcal{A}$. 

To deduce that we still have the functional equation (\ref{Functional equation our Equation-1})
in this general case, note that if $C_{1}$ encircles the singularities
of $\Gamma(w)\,\phi_{1}(w)$, then $C_{1}^{\prime}:=r_{1}-C_{1}=\left\{ r_{1}-z,\,\,z\in C_{1}\right\} $
encircles all the singularities of $\Gamma(w)\,\psi_{1}(w)$, by virtue
of definition \ref{definition 1.1.}. Note now that $\varphi_{2}(s)$ satisfies a functional equation of the form 
\begin{equation}
\varphi_{2}(r_{1}+r_{2}-s)=\frac{1}{2\pi i}\int_{C_{1}}\Gamma(s-w)\phi_{2}(s-w)\Gamma(w)\,\phi_{1}(w)dw =\frac{1}{2\pi i}\int_{C_{1}^{\prime}}\Gamma(s-w)\,\psi_{2}(s-w)\Gamma(w)\psi_{1}(w)dw,\label{right side general vase}
\end{equation}
where the right-hand side of the previous equality represents the residual series appearing in the first Selberg-Chowla
formula for $\mathcal{Z}_{2}(s;\,b_{1},\,b_{2};\,\mu,\,\mu^{\prime})$. 
\end{remark}
\begin{remark} \label{remark Theta-function}
Alternative proofs of Theorem \ref{theorem 2.1} and Corollary \ref{The Analytic Continuation} can be given
by using Bochner's modular relation (\ref{Bochner Modular relation at intro}) to one of the Dirichlet series
$\phi_{1}(s)$ or $\phi_{2}(s)$. Note that the left-hand side of
(\ref{First Selberg Chowla-1})
can be described, for $\text{Re}(s)>\mu>\max\left\{ 0,\,2\sigma_{a},\,2\sigma_{b},\,\frac{r^{\prime}}{2}\right\} $,
as the Mellin transform 
\begin{align}
\Gamma(s)\,\mathcal{Z}_{2}\left(s;\,a_{1},\,a_{2};\,\lambda,\,\lambda^{\prime}\right) & =\int_{0}^{\infty}x^{s-1}\,\sum_{m,n\neq0}^{\infty}a_{1}(m)\,a_{2}(n)\,e^{-(\lambda_{m}+\lambda_{n}^{\prime})\,x}\,dx\nonumber \\
=a_{1}(0)\,\int_{0}^{\infty}x^{s-1}\Theta\left(x;\,a_{2};\,\lambda^{\prime}\right)dx & +a_{2}(0)\,\int_{0}^{\infty}x^{s-1}\Theta\left(x;\,a_{1};\,\lambda\right)dx+\int_{0}^{\infty}x^{s-1}\Theta\left(x;\,a_{1};\,\lambda\right)\,\Theta\left(x;\,a_{2};\,\lambda^{\prime}\right)\,dx.\label{writing as theta functiiiionas}
\end{align}

It is then evident that, if we invoke the reflection formula for the
Theta function $\Theta(x;\,a_{1};\,\lambda)$ (\ref{Bochner Modular relation at intro}) on the third
integral above and then use the definition of the residual function
$P_{1}(x)$ (\ref{residual function definition Bochner modular}), as well as arguing by absolute convergence, the first two summands
and the residual terms coming on the third integral give the first
three terms appearing on the right-hand side of (\ref{First Selberg Chowla-1}).
Lastly, the remaining integral can be evaluated as follows 
\begin{align*}
\int_{0}^{\infty}x^{s-r_{1}-1}\Theta\left(\frac{1}{x};\,b_{1};\,\mu\right)\,\Theta\left(x;\,a_{2};\,\lambda^{\prime}\right)\,dx & =\sum_{m,n=1}^{\infty}b_{1}(m)\,a_{2}(n)\,\int_{0}^{\infty}x^{s-r_{1}-1}\exp\left(-\lambda_{n}^{\prime}x-\mu_{m}/x\right)\,dx\\
=2\,\sum_{m,n=1}^{\infty}b_{1}(m)\,a_{2}(n) & \left(\frac{\mu_{m}}{\lambda_{n}^{\prime}}\right)^{\frac{s-r_{1}}{2}}\,K_{r_{1}-s}\left(2\sqrt{\mu_{m}\lambda_{n}^{\prime}}\right):=2\,H_{r_{1}}(s;\,b_{1},\,a_{2};\,\mu,\,\lambda^{\prime}),
\end{align*}
where we have used the representation of the Modified Bessel function
$K_{\nu}(x)$ as the Mellin convolution of exponentials {[\cite{ryzhik},
eq. 3.471.9, p. 368]}
\begin{equation}
\int_{0}^{\infty}x^{s-1}e^{-\beta x}\,e^{-\gamma/x}\,dx=2\,\left(\frac{\gamma}{\beta}\right)^{\frac{s}{2}}\,K_{s}\left(2\sqrt{\beta\gamma}\right),\,\,\,\text{Re}(\beta),\,\text{Re}(\gamma)>0.\label{convolution exponentials equals K nu}
\end{equation}

The proof of the functional equation (\ref{Functional equation our Equation-1})
for $\phi_{1},\phi_{2}\in\mathcal{A}$ can be given by invoking the
reflection formula for both theta functions $\Theta(x;\,a_{1};\,\lambda)$
and $\Theta(x;\,a_{2};\,\lambda^{\prime})$. If we denote
the theta function of the Dirichlet series $\mathcal{Z}_{2}(s;\,\cdot)$
by $\Theta_{2}(x;\,a_{1},\,a_{2};\,\lambda,\,\lambda^{\prime})$
we can derive a particular case of the modular relation (\ref{Bochner Modular relation at intro}),
\begin{align}
\Theta_{2}(x;\,a_{1},\,a_{2};\,\lambda,\,\lambda^{\prime}) & =x^{-r_{1}-r_{2}}\Theta_{2}\left(x^{-1};\,b_{1},\,b_{2};\,\mu,\,\mu^{\prime}\right)+\Gamma(r_{1})\Gamma(r_{2})\,\rho_{1}\rho_{2}\,x^{-r_{1}-r_{2}}\nonumber \\
 & -\Gamma(r_{1})\Gamma(r_{2})\,\rho_{1}^{\star}\rho_{2}^{\star},\,\,\,\,\,\,\,\,\,\,\,x>0,\label{reflection formula Bochner Epstein two-dimensional}
\end{align}
which is equivalent to (\ref{Functional equation our Equation-1}) (see also [\cite{dirichletserisIII}, p. 342, Thm. 8.1] for an analogous argument with a single series). In Theorem \ref{selberg-chowla non diagonal theorem} we shall prove the Selberg-Chowla for the non-diagonal Epstein zeta function (\ref{Epstein zeta function non diagonal-1}) using this observation. Note also that Suzuki's proof \cite{suzuki} of the analogues of the Selberg-Chowla formula for $\zeta(s)$ and other Dirichlet series such as $L(s,\,\chi)$ and $L(s,\,a)$ (see example \ref{example cusp forms}) follows the lines given in this remark. 
\end{remark}

\bigskip{}
We now introduce a class of multidimensional Epstein zeta functions
and prove functional equations for them by mimicking the argument
given in Corollary \ref{The Analytic Continuation}. If $\left\{ \phi_{i}\right\} _{i=1,...,k}$
denotes a k-tuple of Dirichlet series satisfying Hecke's functional
equation with abcissas of absolute convergence $\sigma_{a_{i}}$
and Hecke's parameter $r_{i}$, representable by the series
\[
\phi_{i}(s)=\sum_{n=1}^{\infty}\frac{a_{i}(n)}{\lambda_{n,i}^{s}},\,\,\,\,\,\,\,\text{Re}(s)>\sigma_{a_{i}},
\]
then we can define the multidimensional Epstein zeta function as 
\begin{equation}
\mathcal{Z}_{k}\left(s;\,a_{1},...,\,a_{k};\,\lambda_{1},\,...,\,\lambda_{k}\right):=\sum_{n_{1},...,n_{k}\neq0}^{\infty}\frac{a_{1}(n_{1})\cdot...\cdot a_{k}(n_{k})}{\left(\lambda_{n_{1},1}+...+\lambda_{n_{k},k}\right)^{s}},\,\,\,\,\,\text{Re}(s)>k\sigma_{a},\label{Epstein Diagonal many dimensions}
\end{equation}
where $\sigma_{a}=\max_{1\leq i\le k}\,\sigma_{a_{i}}$.
As in representation (\ref{representation Epstein with arithmetica}),
we can write $\mathcal{Z}_{k}$ as the Dirichlet series 
\begin{equation}
\mathcal{Z}_{k}\left(s;\,a_{1},...,\,a_{k};\,\lambda_{1},\,...,\,\lambda_{k}\right)=\sum_{n=1}^{\infty}\frac{\mathfrak{U}_{k}(n)}{\Lambda_{n}^{s}},\,\,\,\,\,\text{Re}(s)>k\sigma_{a}\label{Single Dirichlet series representation}
\end{equation}
where, in analogy with $\mathfrak{U}_{2}(n)$ given in (\ref{definition general 2 square}),
\[
\mathfrak{U}_{k}(n)=\sum_{\lambda_{j_{1},1}+...+\lambda_{j_{k},k}=\Lambda_{n}}a_{1}(j_{1})\cdot...\cdot a_{k}(j_{k}).
\]

Suppose also that $\left\{ \psi_{i}\right\} _{i=1,...,k}$ denotes
the k-tuple of Dirichlet series conjugate to each $\phi_{i}$ via
Hecke's functional equation (\ref{Hecke Dirichlet series Functional}) and being representable by the series 
\[
\psi_{i}(s)=\sum_{n=1}^{\infty}\frac{b_{i}(n)}{\mu_{n,i}^{s}},\,\,\,\,\,\,\,\text{Re}(s)>\sigma_{b_{i}},
\]
then we can write $\mathcal{Z}_{k}(s;\,b_{1},...,\,b_{k};\,\mu_{1},...,\,\mu_{k})$
also as a single Dirichlet series of the form
\begin{equation}
\mathcal{Z}_{k}\left(s;\,b_{1},...,\,b_{k};\,\mu_{1},...,\,\mu_{k}\right)=\sum_{n=1}^{\infty}\frac{\mathfrak{V}_{k}(n)}{\Omega_{n,k}^{s}},\,\,\,\,\,\text{Re}(s)>k\sigma_{b},\label{Single Dirichlet series represntation with b's}
\end{equation}
where $\mathfrak{V}_{k}(n)$ plays the same role as $\mathfrak{U}_{k}(n)$
and $\sigma_{b}=\max_{1\leq i\le k}\,\sigma_{b_{i}}$.
In analogy with (\ref{Functional equation our Equation-1}) one should
expect a functional equation connecting (\ref{Single Dirichlet series representation})
and (\ref{Single Dirichlet series represntation with b's}). In fact, this is the case and the next corollary establishes it. 



\begin{corollary}[The Analytic Continuation of the Multidimensional diagonal Epstein zeta function]\label{analytic continuation multi}
Assume that $\left\{ \phi_{i}\right\} _{i=1,...,k}$ are Dirichlet
series satisfying Hecke's functional equation with parameter $r_{i}>0$
and belonging to the class $\mathcal{A}$. Then
(\ref{Epstein Diagonal many dimensions}) has an analytic continuation
as a Dirichlet series belonging to the class $\mathcal{A}$. This
continuation has at most a simple pole located at $s=\sum_{j=1}^{k}r_{j}$
and with residue 
\begin{equation}
\text{Res}_{s=\sum_{j=1}^{k}r_{j}}\,\mathcal{Z}_{k}\left(s;\,a_{1},...,\,a_{k};\,\lambda_{1},\,...,\,\lambda_{k}\right)=\frac{\prod_{j=1}^{k}\Gamma(r_{j})\,\rho_{j}}{\Gamma\left(\sum_{j=1}^{k}r_{j}\right)}.\label{Residue Multidimensional Epstein}
\end{equation}
Moreover, $\mathcal{Z}_{k}$ obeys to the functional equation 
\begin{equation}
\Gamma\left(s\right)\mathcal{Z}_{k}\left(s;\,a_{1},...,\,a_{k};\,\lambda_{1},\,...,\,\lambda_{k}\right)=\Gamma(r_{1}+...+r_{k}-s)\,\mathcal{Z}_{k}\left(r_{1}+...+r_{k}-s;\,b_{1},...,\,b_{k};\,\mu_{1},...,\mu_{k}\right).\label{Functional equation our Equation-1-1}
\end{equation}
\end{corollary}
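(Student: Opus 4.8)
The plan is to argue by induction on the dimension $k$, exploiting the fact—already recorded in Corollary \ref{The Analytic Continuation} and Remark \ref{easy computations class A}—that the class $\mathcal{A}$ is closed under the diagonal Epstein construction. The base case $k=2$ is precisely Corollary \ref{The Analytic Continuation}, which supplies the analytic continuation, the simple pole at $s=r_1+r_2$ with residue $\frac{\Gamma(r_1)\Gamma(r_2)}{\Gamma(r_1+r_2)}\rho_1\rho_2$, and the functional equation (\ref{Functional equation our Equation-1}). The inductive step will consist in viewing the $k$-fold series as an ordinary two-fold diagonal Epstein zeta function built out of the $(k-1)$-dimensional one.

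Concretely, writing $\tilde r=r_1+\dots+r_{k-1}$ and letting $\Lambda_m$, $\mathfrak{U}_{k-1}(m)$ denote the exponents and coefficients of the single-series representation (\ref{Single Dirichlet series representation}) for $\mathcal{Z}_{k-1}$, I would group the summation in (\ref{Epstein Diagonal many dimensions}) as
\[
\mathcal{Z}_k(s;\,a_1,\dots,a_k;\,\lambda_1,\dots,\lambda_k)=\sum_{m,n_k\neq0}^{\infty}\frac{\mathfrak{U}_{k-1}(m)\,a_k(n_k)}{\left(\Lambda_m+\lambda_{n_k,k}\right)^{s}}.
\]
This is exactly the diagonal Epstein zeta function, in the sense of Definition \ref{definition double diagonal}, attached to the pair $\left(\mathcal{Z}_{k-1},\,\phi_k\right)$. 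By the induction hypothesis $\mathcal{Z}_{k-1}\in\mathcal{A}$ with Hecke parameter $\tilde r$ and residue $\tilde\rho=\prod_{j=1}^{k-1}\Gamma(r_j)\rho_j\big/\Gamma(\tilde r)$, while $\phi_k\in\mathcal{A}$ has parameter $r_k$ and residue $\rho_k$; since $\tilde r>0$, the parameter is admissible in Definition \ref{definition 1.1.}.

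I would then invoke Corollary \ref{The Analytic Continuation} for this pair. It yields at once that $\mathcal{Z}_k\in\mathcal{A}$ with parameter $\tilde r+r_k=r_1+\dots+r_k$ and a simple pole at $s=\tilde r+r_k$ of residue $\frac{\Gamma(\tilde r)\Gamma(r_k)}{\Gamma(\tilde r+r_k)}\tilde\rho\rho_k$. Substituting the value of $\tilde\rho$ cancels the $\Gamma(\tilde r)$ factors,
\[
\frac{\Gamma(\tilde r)\,\Gamma(r_k)}{\Gamma(\tilde r+r_k)}\,\tilde\rho\,\rho_k=\frac{\Gamma(\tilde r)\,\Gamma(r_k)}{\Gamma(\tilde r+r_k)}\cdot\frac{\prod_{j=1}^{k-1}\Gamma(r_j)\rho_j}{\Gamma(\tilde r)}\cdot\rho_k=\frac{\prod_{j=1}^{k}\Gamma(r_j)\rho_j}{\Gamma\left(\sum_{j=1}^{k}r_j\right)},
\]
which is (\ref{Residue Multidimensional Epstein}). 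The same corollary gives
\[
\Gamma(s)\,\mathcal{Z}_k(s;\,a_1,\dots,a_k;\,\lambda_1,\dots,\lambda_k)=\Gamma(\tilde r+r_k-s)\,\mathcal{Z}_2\left(\tilde r+r_k-s;\,\tilde b,\,b_k;\,\tilde\mu,\,\mu_k\right),
\]
where $(\tilde b,\tilde\mu)$ are the data of the Dirichlet series dual to $\mathcal{Z}_{k-1}$; the induction hypothesis identifies this dual with $\mathcal{Z}_{k-1}(s;\,b_1,\dots,b_{k-1};\,\mu_1,\dots,\mu_{k-1})$, so the right-hand side equals $\Gamma\!\left(\sum_j r_j-s\right)\mathcal{Z}_k\!\left(\sum_j r_j-s;\,b_1,\dots,b_k;\,\mu_1,\dots,\mu_k\right)$, establishing (\ref{Functional equation our Equation-1-1}).

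The only point demanding care, rather than a genuine obstacle, is the bookkeeping needed to certify that the re-expansion of $\mathcal{Z}_{k-1}$ as $\sum_m\mathfrak{U}_{k-1}(m)\Lambda_m^{-s}$ meets the structural hypotheses of Definition \ref{definition double diagonal}: the rearranged exponents $\Lambda_m$ must form a positive sequence strictly increasing to $\infty$, and the coefficients $\mathfrak{U}_{k-1}(m)$ must not vanish identically. Both are inherited from the induction hypothesis together with the assumption that each $\lambda_{n,i}\to\infty$. Once this is verified, the induction runs without friction, since every analytic input—meromorphic continuation, the location and simplicity of the pole, and the Hecke-type functional equation matching dual coefficients—is supplied wholesale by Corollary \ref{The Analytic Continuation}, with the parameters adding as $\tilde r+r_k$ and the residues accumulating the Beta-type factor $\Gamma(\tilde r)\Gamma(r_k)/\Gamma(\tilde r+r_k)$ at each stage.
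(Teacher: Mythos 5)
Your proof is correct and follows essentially the same route as the paper: both argue by induction on $k$, viewing $\mathcal{Z}_{k}$ as the two-dimensional diagonal Epstein zeta function attached to the pair $\left(\mathcal{Z}_{k-1},\,\phi_{k}\right)$ via the single-series representation (\ref{Single Dirichlet series representation}), and both obtain the pole, the telescoping residue computation, and the functional equation (\ref{Functional equation our Equation-1-1}) by applying the $k=2$ case (the paper unpacks Corollary \ref{The Analytic Continuation} through the explicit Selberg-Chowla formulas (\ref{Again First-1}) and (\ref{Again Second-1}), which you invoke as a black box, but this is only a presentational difference). Your closing check that $(\Lambda_{m},\,\mathfrak{U}_{k-1}(m))$ satisfies the structural hypotheses of Definition \ref{definition double diagonal} is the same implicit bookkeeping the paper relies on.
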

\begin{proof}
We have seen in Corollary \ref{The Analytic Continuation} that
(\ref{Residue Multidimensional Epstein}) and (\ref{Functional equation our Equation-1-1})
hold for $k=2$. By induction, assume now that $\mathcal{Z}_{k-1}\left(s;\,a_{1},...,\,a_{k-1};\,\lambda_{1},\,...,\,\lambda_{k-1}\right)$
belongs to the class $\mathcal{A}$ and satisfies Hecke's functional
equation with parameter $r=\sum_{j=1}^{k-1}r_{j}$ and has a residue
at $s=\sum_{j=1}^{k-1}r_{j}$ given by
\[
\text{Res}_{s=\sum_{j=1}^{k-1}r_{j}}\,\mathcal{Z}_{k-1}=\frac{\prod_{j=1}^{k-1}\Gamma(r_{j})\,\rho_{j}}{\Gamma\left(\sum_{j=1}^{k-1}r_{j}\right)}.
\]

By the first Selberg-Chowla formula (\ref{Again First-1}) applied
to $\mathcal{Z}_{k-1}$ and $\phi_{k}(s)$ (both satisying Hecke's functional equation (\ref{Hecke Dirichlet series Functional}) by the induction hypothesis) and using the representation (\ref{Single Dirichlet series represntation with b's}), we see that $\mathcal{Z}_{k}$ can
be written as 
\begin{align}
-\phi_{k}(0)\,\mathfrak{\mathcal{Z}}_{k-1}(s;\,a_{1},...,\,a_{k-1};\,\lambda_{1},\,...,\,\lambda_{k-1})+\prod_{j=1}^{k-1}\Gamma(r_{j})\,\rho_{j}\,\frac{\Gamma(s-r_{1}-...-r_{k-1})}{\Gamma(s)}\,\phi_{k}\left(s-r_{1}-...-r_{k-1}\right)\nonumber \\
+\frac{2}{\Gamma(s)}\,\sum_{m,n=1}^{\infty}\mathfrak{V}_{k-1}(m)\,a_{k}(n)\,\left(\frac{\Omega_{m,k-1}}{\lambda_{n}}\right)^{\frac{s-r_{1}-...-r_{k-1}}{2}}\,K_{r_{1}+...+r_{k-1}-s}\left(2\,\sqrt{\Omega_{m,k-1}\lambda_{n}}\right).\label{Selberg Chowla for first Zl}
\end{align}

It is clear from a simple adaptation of Proposition \ref{proposition 2.1} that the last
term in (\ref{Selberg Chowla for first Zl}) represents an entire
function. Hence, the possible poles of (\ref{Epstein Diagonal many dimensions})
come from the sum of the first two terms. It is clear once more that
$\mathcal{Z}_{k}$ has removable singularities at $s=r_{1}+...+r_{k-1}-n$,
$n\in\mathbb{N}_{0}$. This comes from the functional equation for
$\phi_{k}$ and the induction hypothesis on $\mathcal{Z}_{k-1}$.
Thus, the only possible pole of $\mathcal{Z}_{k}$ comes from the factor $\phi_{k}\left(s-r_{1}-...-r_{k-1}\right)$ on the second term of the previous expression
and it is located at $s=r_{1}+...+r_{k}$. The computation of its residue gives  (\ref{Residue Multidimensional Epstein}). 

For proving the functional equation, it suffices to compare the representation
(\ref{Selberg Chowla for first Zl}) with the second representation
coming from (\ref{Again Second-1}), 
\begin{align*}
-\mathfrak{\mathcal{Z}}_{k-1}(0;\,a_{1},...,\,a_{k-1};\,\lambda_{1},\,...,\,\lambda_{k-1})\,\phi_{k}(s)+\rho_{k}\Gamma(r_{k})\,\frac{\Gamma(s-r_{k})}{\Gamma(s)}\, & \times\mathcal{Z}_{k-1}\left(s-r_{k};\,a_{1},...,\,a_{k-1};\,\lambda_{1},\,...,\,\lambda_{k-1}\right)\\
+\frac{2}{\Gamma(s)}\,\sum_{m,n=1}^{\infty}b_{k}(m)\,\mathfrak{U}_{k-1}(n)\,\left(\frac{\mu_{m}}{\Lambda_{n,k-1}}\right)^{\frac{s-r_{k}}{2}}\,K_{r_{k}-s}\left(2\sqrt{\mu_{m}\Lambda_{n,k-1}}\right), 
\end{align*}
and use the induction hypothesis. By adapting the argument
given in the proof of Corollary \ref{The Analytic Continuation} and replacing the roles of $\phi_{1}$ and $\phi_{2}$ there by $\phi_{k}$ and $\mathcal{Z}_{k-1}$,
we arrive immediately at (\ref{Functional equation our Equation-1-1}). 
\end{proof}

\begin{remark}[The dyadic Epstein zeta function] \label{dyadic epstein as remark}
Let $k\geq 0$ and consider the case where $\phi_{1}=\phi_{2}=...=\phi_{2^{k}}:=\phi\in\mathcal{A}$,
with $\phi(s)$ satisfying Hecke's functional equation with parameter $r$ and abcissa of absolute convergence $\sigma_{a}$. One
of the $2^{k+1}-1$ ways of constructing the Epstein zeta function
$\mathcal{Z}_{2^{k+1}}$ is by taking the symmetric construction
\begin{equation}
\mathcal{Z}_{2^{k}}\left(s;\,a;\,\lambda\right)=\sum_{m=1}^{\infty}\frac{\mathfrak{U}_{2^{k}}(m)}{\Lambda_{m}^{s}},\,\,\,\,\,\,\mathcal{Z}_{2^{k+1}}\left(s;\,a;\,\lambda\right)=\sum_{m,\,n\neq0}^{\infty}\frac{\mathfrak{U}_{2^{k}}(m)\,\mathfrak{U}_{2^{k}}(n)}{\left(\Lambda_{m}+\Lambda_{n}\right)^{s}},\label{dyadic Epstein 2^k}
\end{equation}
with each series converging absolutely for $\text{Re}(s)>2^{k}\sigma_{a},\,\,2^{k+1}\sigma_{a}$ respectively (note the convention $\mathcal{Z}_{2^{0}}(s;\,a;\,\lambda):=\phi(s)$). 
Each $\mathcal{Z}_{2^{k}}(s;\,a;\,\lambda)$ plays
the role of $\phi_{1}$ and $\phi_{2}$ in Theorem \ref{theorem 2.1} and Corollary
\ref{The Analytic Continuation}. By an application of Corollary \ref{analytic continuation multi}, it is clear that we can
write a Selberg-Chowla formula in the form
\begin{align}
\Gamma(s)\,\mathcal{Z}_{2^{k+1}}\left(s;\,a;\,\lambda\right) & =-\mathcal{Z}_{2^{k}}\left(0;\,a;\,\lambda\right)\,\Gamma(s)\,\mathcal{Z}_{2^{k}}\left(s;\,a;\,\lambda\right)+\Gamma^{2^{k}}(r)\,\rho^{2^{k}}\,\Gamma(s-2^{k}r)\,\mathcal{Z}_{2^{k}}\left(s-2^{k}r;\,a;\,\lambda\right)\,\nonumber \\
 & +2\,H_{2^{k}r}(s;\,a;\,\lambda),\label{Selberg Chowla for diagonal 2k}
\end{align}
where $H_{2^{k}r}(s;\,a;\,\lambda)$ denotes the entire function
\begin{equation}
H_{2^{k}r}(s;\,a;\,\lambda)=\sum_{m,n=1}^{\infty}\mathfrak{V}_{2^{k}}(m)\,\mathfrak{U}_{2^{k}}(n)\,\left(\frac{\Omega_{m}}{\Lambda_{n}}\right)^{\frac{s}{2}-2^{k-1}r}\,K_{2^{k}r-s}\left(2\sqrt{\Omega_{m}\Lambda_{n}}\right),\label{H1-1}
\end{equation}
with the arithmetical functions $\mathfrak{U}_{2^{k}}(n)$ and $\mathfrak{V}_{2^{k}}(n)$
appearing respectively in the Dirichlet series representations (\ref{Single Dirichlet series representation})
and (\ref{Single Dirichlet series represntation with b's}). 
From Corollary \ref{analytic continuation multi}, we know that the multidimensional Epstein
zeta function $\mathcal{Z}_{2^{k}}\left(s;\,a;\,\lambda\right)$
satisfies Hecke's functional equation (\ref{Functional equation our Equation-1-1})
with parameter $2^{k}r$ and so its critical line is $\text{Re}(s)=2^{k-1}r$. From the Phragm\'en-Lindel\"of principle (\ref{Lindelof Phragmen for Hecke}), we have that, for any $\delta>0$ and $2^{k}r-2^{k}\sigma_{a}-\delta\leq\sigma\leq2^{k}\sigma_{a}+\delta$,
\begin{equation}
\mathcal{Z}_{2^{k}}\left(\sigma+it;\,a;\,\lambda\right)=O\left(|t|^{2^{k}\sigma_{a}-\sigma+\delta}\right),\,\,\,\,\,\,|t|\rightarrow\infty.\label{Estimate general}
\end{equation}

Equations (\ref{Selberg Chowla for diagonal 2k}) and (\ref{Estimate general}) will
be used in the penultimate section of the paper, where we shall prove
a class of Hardy Theorems for Dirichlet series with narrow critical
strips (see Lemma \ref{lemma integral representation} and Theorem \ref{deuring to hold}).



\end{remark}

\begin{center}\section{The analytic continuation of a Non-Diagonal Epstein Zeta function} \label{section 3} \end{center}
Although the previous section furnished an analogue of the Epstein
$\zeta-$function, the double Dirichlet series there presented is
only useful when our study is reduced to diagonal quadratic forms.

In order to have a proper environment to develop Selberg-Chowla formulas
for Epstein zeta functions attached to non-diagonal quadratic forms, we need to replicate in a certain way the
conditions obeyed by the Riemann $\zeta-$function, which is behind
the analytic continuation of the classical Epstein zeta function $Z_{2}(s,\,Q)$. Doing so requires to introduce the following subclass $\mathcal{B}$ of Bochner
Dirichlet series, which has a similar role as the class $\mathcal{A}$ had in the previous section. 
\bigskip{}


\begin{definition}[A Subclass of Bochner Dirichlet series] \label{definition 3.1}
Let $\phi(s)$ be a Dirichlet series satisfying Definition \ref{definition 1.1.} with
$\Delta(s)=\Gamma\left(\frac{s+\delta}{2}\right)$, $\delta\in\{0,1\}$,
$r=1$ (Bochner class). We say that $\phi$ belongs to the class $\mathcal{B}$
if 
\begin{itemize}
\item For $\delta=0$, $\phi$ and $\psi$ have analytic continuations to the entire complex plane and are analytic everywhere in $\mathbb{C}$
except for possible simple poles located at $s=1$ with residues $\rho$
and $\rho^{\star}$ respectively. For $\delta=1$, $\phi$ and $\psi$
can be analytically continued as entire Dirichlet series. 
\item $a(n)$ can be defined at $0$ as $a(0):=-2\phi(-\delta)$.
Moreover, $a(n)$ can be defined for negative values of $n$ as $a(-n)=(-1)^{\delta}\,a(n)$
and the sequences $\left(\lambda_{n}\right)_{n\in\mathbb{N}}$ and
$\left(\mu_{n}\right)_{n\in\mathbb{N}}$ can be extended to $n\in\mathbb{Z}$
with the values $\lambda_{0}=\mu_{0}=0$ and $\lambda_{-n}=-\lambda_{n}$,
$\mu_{-n}=-\mu_{n}$. A similar extension holds for $b(n)$ with $b(0):=-2\psi(-\delta)$ and $b(-n)$ analogously taken. 
\end{itemize}
\end{definition}
\begin{remark} \label{easy computations class B}
Note that definition \ref{definition 3.1} mimics in some way the properties that the
Dirichlet $L-$functions and the Riemann $\zeta-$function have. By
the functional equation for Bochner Dirichlet series (\ref{This is the first Bochner ever}),
it is effortless to see that $\phi(0)=-\frac{\rho^{\star}}{2}\sqrt{\pi}$,
while $\psi(0)=-\frac{\rho}{2}\sqrt{\pi}$. Moreover, if $\phi(s)$
is entire, then it comes from the functional equation and the second
item that $a(0)=0$. We remark that, in analogy with Theorem \ref{theorem 2.1}, it is possible to prove
a general Selberg-Chowla formula for $\mathcal{Z}_{2}(s;\,Q;\,a_{1},\,a_{2};\,\lambda,\,\lambda^{\prime})$
with the general setting of Definition \ref{definition 1.1.}. However, in order to simplify
the foregoing argument, we shall prove a Selberg-Chowla formula for
(\ref{Epstein zeta function non diagonal-1}) assuming apriori the conditions
of the previous definition. 

\end{remark}


To prove a Selberg-Chowla formula for the non-diagonal Epstein zeta
function $\mathcal{Z}_{2}(s;\,Q;\,a_{1},\,a_{2};\,\lambda,\,\lambda^{\prime})$,
we use a generalization of Bochner's modular relation (\ref{Bochner Modular relation at intro}) for the class
$\mathcal{B}$. In the well-known case $\phi_{1}(s)=\phi_{2}(s)=\pi^{-\frac{s}{2}}\zeta(s)$
the formula given below reduces to Jacobi's reflection formula. We
remark that, when the non-diagonal Epstein zeta function reduces to (\ref{Classical Epstein}), our method of proof of the generalized
Selberg-Chowla formula for $\mathcal{Z}_{2}(s;\,Q;\,a_{1},\,a_{2};\,\lambda,\,\lambda^{\prime})$
reduces to the one presented on the second section of Selberg and
Chowla's paper \cite{selberg_chowla} \footnote{It is also possible to generalize to this scope the proof given by
Taylor \cite{taylor_epstein}, mentioned at the Introduction. We remark
that Taylor's proof is incomplete, since it only provides the Selberg-Chowla
formula for the subset of positive definite binary quadratic forms
satisfying $b^{2}<2ac$ (see page 182 on Taylor's paper and the
assumption on the variable $Y$). However, as remarked in the historical
introduction, the purpose of Taylor's analysis was only to obtain
Kober's result \cite{kober_epstein}, which was itself a particular case
of the now called Selberg-Chowla formula. Nevertheless, a simple argument
of continuation provided by the contiguous relations for the hypergeometric
function $_{2}F_{1}$ {[}\cite{ryzhik}, p. 1009, eq. (9.132.2){]} allows
to extend Taylor's proof to all positive definite binary quadratic
forms. }. 

\begin{lemma}[A generalization of the Theta reflection formula for the Class $\mathcal{B}$] \label{lemma 3.1}
Let $\phi(s)$ be a Dirichlet series belonging to the class $\mathcal{B}$.
If $a(n)$ is an even arithmetical function (i.e., $\delta=0$),
then, for any $x>0$, $\text{Re}(\alpha)>0$ and $\beta \in \mathbb{C}$, the following identity
holds
\begin{equation}
\sum_{n=1}^{\infty}a(n)\,e^{-\alpha\lambda_{n}^{2}x^{2}}\,\cos\left(\beta\lambda_{n}x\right)=\phi(0)+\sqrt{\frac{\pi}{\alpha}}\frac{\rho}{2\,x}\,e^{-\frac{\beta^{2}}{4\alpha}}+\frac{1}{2\sqrt{\alpha}\,x}\,\sum_{n\neq0}b(n)\,e^{-\frac{1}{\alpha}\left(\frac{\mu_{n}}{x}+\frac{\beta}{2}\right)^{2}}.\label{even final form general Jacobi}
\end{equation}

If, by other hand, $a(n)$
is an odd arithmetical function, we have the identity 
\begin{equation}
\sum_{n=1}^{\infty}a(n)\,e^{-\alpha\lambda_{n}^{2}x^{2}}\,\sin\left(\beta\lambda_{n}x\right)=\frac{1}{2\,\sqrt{\alpha}x}\,\sum_{n\neq0}b(n)\,e^{-\frac{1}{\alpha}\left(\frac{\mu_{n}}{x}+\frac{\beta}{2}\right)^{2}}.\label{Reflection formula theta generalized odd case}
\end{equation}
\end{lemma}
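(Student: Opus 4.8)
The plan is to write the left-hand side as a single Mellin--Barnes integral, apply Bochner's functional equation, displace the contour to the left, and recognise the reflected integral as the Gaussian--trigonometric kernel re-summed over the dual frequencies $\mu_n$. I treat the even case $\delta=0$ in detail; the odd case is structurally identical. First I would insert the Mellin representation \eqref{invoking a relation on Erdeliy table not ryzhik} of $e^{-\alpha y^{2}}\cos(\beta y)$ at $y=\lambda_n x$: for $\mathrm{Re}(s)=c>\max\{0,\sigma_a\}$,
\[
e^{-\alpha\lambda_n^2 x^2}\cos(\beta\lambda_n x)=\frac{e^{-\beta^2/(4\alpha)}}{2}\,\frac{1}{2\pi i}\int_{(c)}\alpha^{-s/2}\,\Gamma\!\left(\tfrac{s}{2}\right){}_1F_1\!\left(\tfrac{1-s}{2};\tfrac12;\tfrac{\beta^2}{4\alpha}\right)(\lambda_n x)^{-s}\,ds .
\]
Multiplying by $a(n)$, summing over $n$, and interchanging the sum with the integral --- legitimate because the Mellin transform is $O(|s|^{-N})$ on vertical lines (as recorded after \eqref{invoking a relation on Erdeliy table not ryzhik}), while $\phi$ converges absolutely on $\mathrm{Re}(s)=c>\sigma_a$ --- gives
\[
\sum_{n\geq1}a(n)\,e^{-\alpha\lambda_n^2x^2}\cos(\beta\lambda_n x)=\frac{e^{-\beta^2/(4\alpha)}}{2}\,\frac{1}{2\pi i}\int_{(c)}\alpha^{-s/2}\,\chi(s)\,{}_1F_1\!\left(\tfrac{1-s}{2};\tfrac12;\tfrac{\beta^2}{4\alpha}\right)x^{-s}\,ds ,
\]
with $\chi(s)=\Gamma(s/2)\phi(s)$.

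Next I would displace the contour from $\mathrm{Re}(s)=c$ to a line $\mathrm{Re}(s)=c'$ with $c'<0$ so small that $\mathrm{Re}(1-s)>\sigma_b$ holds on it. By Definition \ref{definition 3.1} the completed series $\chi(s)=\Gamma((s+\delta)/2)\phi(s)$ has, for $\delta=0$, only the simple pole of $\Gamma(s/2)$ at $s=0$ and the pole of $\phi$ at $s=1$; for $\delta=1$ it is entire. Using ${}_1F_1(1/2;1/2;z)=e^z$ and ${}_1F_1(0;1/2;z)=1$, the residues at $s=0$ and $s=1$ evaluate, after the prefactor $\tfrac12 e^{-\beta^2/(4\alpha)}$, to $\phi(0)$ and $\sqrt{\pi/\alpha}\,\tfrac{\rho}{2x}e^{-\beta^2/(4\alpha)}$ respectively --- exactly the first two terms of \eqref{even final form general Jacobi}; their absence when $\delta=1$ explains why \eqref{Reflection formula theta generalized odd case} carries no polar terms. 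The displacement is justified by Stirling's formula \eqref{Stirling exact form on Introduction}, which furnishes the $e^{-\pi|t|/2}$ decay of the gamma factor, together with the $O(|s|^{-N})$ bound on ${}_1F_1$ and the Phragm\'en--Lindel\"of growth \eqref{Lindelof Pgragmen for Bochner} of $\phi$, so that the horizontal connecting segments vanish.

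On the shifted line I would substitute $s\mapsto 1-s$, replace $\chi(s)$ by $\Gamma(\tfrac{1-s}{2})\psi(1-s)$ through Bochner's functional equation \eqref{This is the first Bochner ever}, and apply Kummer's transformation \eqref{Kummer confluent transformation}, which turns ${}_1F_1(\tfrac{1-s}{2};\tfrac12;\tfrac{\beta^2}{4\alpha})$ into $e^{\beta^2/(4\alpha)}{}_1F_1(\tfrac{s}{2};\tfrac12;-\tfrac{\beta^2}{4\alpha})$. This is the crux: Kummer supplies precisely the symmetry under $s\mapsto1-s$ compatible with the functional equation, so the reflected integrand is again --- up to the prefactor $\tfrac{1}{2\sqrt\alpha\,x}$ --- a Mellin transform of the form \eqref{invoking a relation on Erdeliy table not ryzhik}, now with frequency $\mu_m/x$ and with $\beta^2$ carrying the opposite sign, so that the trigonometric factor becomes hyperbolic. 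Expanding $\psi(1-s)=\sum_m b(m)\mu_m^{-(1-s)}$ on $\mathrm{Re}(1-s)>\sigma_b$ and inverting each Mellin integral recovers $2\,e^{-\mu_m^2/(\alpha x^2)}\cosh(\mu_m\beta/(\alpha x))$, whence the shifted integral equals $\tfrac{e^{-\beta^2/(4\alpha)}}{\sqrt\alpha\,x}\sum_{m\geq1}b(m)e^{-\mu_m^2/(\alpha x^2)}\cosh(\mu_m\beta/(\alpha x))$. Completing the square and invoking the conventions $b(-m)=b(m)$, $\mu_{-m}=-\mu_m$ then folds this into $\tfrac{1}{2\sqrt\alpha\,x}\sum_{n\neq0}b(n)e^{-\frac1\alpha(\mu_n/x+\beta/2)^2}$, the final term of \eqref{even final form general Jacobi}.

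The main obstacle will be the analytic bookkeeping around the contour shift and the termwise Mellin inversion on the far-left line: one must verify that the combined kernel decays rapidly enough horizontally --- Stirling \eqref{Stirling exact form on Introduction} for the gamma factor against the $O(|s|^{-N})$ estimate for ${}_1F_1$ --- to license both the shift and the interchange of $\sum_m$ with the integral when $\mathrm{Re}(1-s)>\sigma_b$, and that the folding of the $\cosh$ sum into the symmetric sum over $n\neq0$, together with $b(0)=-2\psi(-\delta)$ and the residue terms, reproduces \eqref{even final form general Jacobi} exactly. The odd case $\delta=1$ runs verbatim with $\cos$ replaced by $\sin$: I would use the companion Mellin transform obtained from \eqref{invoking a relation on Erdeliy table not ryzhik} by differentiating in $\beta$, whose gamma factor is $\Gamma(\tfrac{s+1}{2})$ matching $\Delta(s)$ for $\delta=1$; since $\chi$ is then entire there are no polar contributions, the hyperbolic sine replaces the hyperbolic cosine, and completing the square with $b(-m)=-b(m)$ yields \eqref{Reflection formula theta generalized odd case}.
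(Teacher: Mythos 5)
Your proposal is correct and follows essentially the same route as the paper's own proof: the Mellin--Barnes representation of $e^{-\alpha y^{2}}\cos(\beta y)$ via \eqref{Mellin inverse 1F1}, termwise summation, a leftward contour shift collecting the residues at $s=0$ and $s=1$ (absent when $\delta=1$), the change of variables $s\mapsto 1-s$ combined with Bochner's functional equation and Kummer's transformation \eqref{Kummer confluent transformation}, termwise Mellin inversion producing the Gaussian--$\cosh$ kernel, and symmetrization over $n\neq 0$ using $b(-n)=(-1)^{\delta}b(n)$, $\mu_{-n}=-\mu_{n}$. The only cosmetic deviations --- evaluating the reflected integral by the $\beta\mapsto i\beta$ substitution rather than the paper's explicit power-series expansion \eqref{simple evalaution 1F1}, and deriving the odd-case kernel by differentiating in $\beta$ instead of quoting \eqref{second representation involving sine} directly --- are both noted as alternatives in the paper itself.
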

\begin{proof}
As in the proof of Theorem \ref{theorem 2.1}, we start by choosing a suitable parameter
$\mu$ which allows absolute convergence in every side of the equality
to be proven. In our case, assume $\mu>\max\left\{ \sigma_{a},\,\sigma_{b},\,1\right\} $
and use the observation that, for $\text{Re}(\alpha)>0$, $\sigma=\text{Re}(z)>0$
and $\beta\in\mathbb{C}$, the following Mellin inverse representation
holds {[\cite{ryzhik}, p. 503, eq. 3.952.8]}
\begin{equation}
e^{-\alpha x^{2}}\cos(\beta x)=\frac{e^{-\frac{\beta^{2}}{4\alpha}}}{4\pi i}\,\int_{\sigma-i\infty}^{\sigma+i\infty}\alpha^{-\frac{z}{2}}\Gamma\left(\frac{z}{2}\right)\,_{1}F_{1}\left(\frac{1-z}{2};\,\frac{1}{2};\frac{\beta^{2}}{4\alpha}\right)\,x^{-z}dz,\label{Mellin inverse 1F1}
\end{equation}
which can be proved by invoking (\ref{invoking a relation on Erdeliy table not ryzhik})
together with Kummer's transformation (\ref{Kummer confluent transformation}).
Although the key of this comparison is Mellin's inversion formula,
(\ref{Mellin inverse 1F1}) could be alternatively proved by invoking
the absolutely convergent power series for $_{1}F_{1}$ and then interchanging
the orders of summation and integration. 

From the fact that the integrand in (\ref{Mellin inverse 1F1}) is the Mellin transform of a smooth function and decays faster than any polynomial as $|s|\rightarrow \infty$, we can write the left-hand
side of (\ref{even final form general Jacobi}) as the contour integral 
\begin{equation}
\sum_{n=1}^{\infty}a(n)\,e^{-\alpha\lambda_{n}^{2}x^{2}}\,\cos\left(\beta\lambda_{n}x\right)=\frac{e^{-\frac{\beta^{2}}{4\alpha}}}{4\pi i}\,\int_{\mu-i\infty}^{\mu+i\infty}\Gamma\left(\frac{z}{2}\right)\phi(z)\,_{1}F_{1}\left(\frac{1-z}{2};\,\frac{1}{2};\frac{\beta^{2}}{4\alpha}\right)\,(\sqrt{\alpha}x)^{-z}\,dz.\label{Representation on the way: the first}
\end{equation}

Proceeding once more as in Theorem \ref{theorem 2.1}, let us now move the line of integration
on (\ref{Representation on the way: the first}) to $\text{Re}(z)=1-\mu<0$
and integrate along a positively oriented rectangular contour $\mathcal{R}$
containing the vertices $\mu\pm iT$ and $1-\mu\pm iT$, $T>0$. Once
again, we are allowed to invoke the Phragm\'en-Lindel\"of principle for $\phi(z)$ (\ref{Lindelof Pgragmen for Bochner}), together with the decay properties of the integrand on the right-hand side of (\ref{Mellin inverse 1F1}) in order to conclude that the integrals
along the horizontal segments $\left[1-\mu-iT,\,\mu-iT\right]$ and
$\left[\mu+iT,\,1-\mu+iT\right]$ must vanish when $T\rightarrow\infty$. 

By the residue Theorem, we obtain the equality
\begin{align}
\sum_{n=1}^{\infty}a(n)\,e^{-\alpha\lambda_{n}^{2}\,x^{2}}\cos(\beta\lambda_{n}x) & =\frac{e^{-\frac{\beta^{2}}{4\alpha}}}{4\pi i}\,\,\int_{1-\mu-i\infty}^{1-\mu+i\infty}\,\Gamma\left(\frac{z}{2}\right)\,\phi(z)\,_{1}F_{1}\left(\frac{1-z}{2};\,\frac{1}{2};\,\frac{\beta^{2}}{4\alpha}\right)\,\left(\sqrt{\alpha}x\right)^{-z}\,dz\nonumber \\
+\frac{e^{-\frac{\beta^{2}}{4\alpha}}}{2}\,\sum_{\rho\in\mathcal{R}} & \text{Res}_{z=\rho}\left\{ \Gamma\left(\frac{z}{2}\right)\phi(z)\,_{1}F_{1}\left(\frac{1-z}{2};\,\frac{1}{2};\frac{\beta^{2}}{4\alpha}\right)\,(\sqrt{\alpha}x)^{-z}\,dz\right\} .\label{first sum}
\end{align}

By the conditions of the class $\mathcal{B}$, it is evident that
$\phi(z)$ has simple zeros located at the negative even integers
if $a(n)=a(-n)$. Also, from the fact that $\,_{1}F_{1}\left(\frac{1-z}{2};\,\frac{1}{2};\frac{\beta^{2}}{4\alpha}\right)$
is an entire function of $z$, we observe that all the poles that we need
to take into consideration on the shift of the line
are located at $z=0$ and $z=1$. It is immediate to see that their
residues are
\[
\text{Res}_{z=0}=2\phi(0)\,e^{\frac{\beta^{2}}{4\alpha}},\,\,\,\,\,\,\text{Res}_{z=1}=\sqrt{\frac{\pi}{\alpha}}\frac{\rho}{x}.
\]

Now, invoking the functional equation for $\phi(z)$ (\ref{This is the first Bochner ever})
with $\delta=0$ and performing the change of variables $z\leftrightarrow1-z$,
we see that the first term on the right-hand side of (\ref{first sum})
is equal to 
\begin{equation}
\frac{e^{-\frac{\beta^{2}}{4\alpha}}}{2\sqrt{\alpha}\,x}\,\sum_{n=1}^{\infty}\frac{b(n)}{2\pi i}\,\int_{\mu-i\infty}^{\mu+i\infty}\,\Gamma\left(\frac{z}{2}\right)\,_{1}F_{1}\left(\frac{z}{2};\,\frac{1}{2};\,\frac{\beta^{2}}{4\alpha}\right)\,\left(\frac{\mu_{n}}{\sqrt{\alpha}x}\right)^{-z}\,dz+\phi(0)+\sqrt{\frac{\pi}{\alpha}}\,\frac{\rho}{2x}\,e^{-\frac{\beta^{2}}{4\alpha}}.\label{term reminina g}
\end{equation}

The contour integral inside the infinite series can be evaluated
via (\ref{Mellin inverse 1F1}) upon a substitution of $\beta$ by
$i\beta$ and the use of Kummer's reflection formula (\ref{Kummer confluent transformation}).
Nevertheless, for completeness, we may present a closed evaluation.
Note that the power series defining the confluent Hypergeometric function
is absolutely convergent for arbitrary $\beta\in\mathbb{C}$ and $\alpha$
such that $\text{Re}(\alpha)>0$. Moreover, a simple application of
Stirling's formula (\ref{Stirling exact form on Introduction}) allows to reverse the orders of summation and integration
once we employ this series representation and we get 
\begin{align}
\frac{1}{2\pi i}\,\int_{\mu-i\infty}^{\mu+i\infty}\,\Gamma\left(\frac{z}{2}\right)\,_{1}F_{1}\left(\frac{z}{2};\,\frac{1}{2};\,\frac{\beta^{2}}{4\alpha}\right)\,y^{-z}\,dz & =\sum_{k=0}^{\infty}\frac{\sqrt{\pi}}{k!\,\left(\frac{1}{2}\right)_{k}}\,\left(\frac{\beta^{2}}{4\alpha}\right)^{k}\,\int_{\mu-i\infty}^{\mu+i\infty}\Gamma\left(\frac{z}{2}+k\right)\,y^{-z}dz\nonumber \\
=\sum_{k=0}^{\infty}\frac{\sqrt{\pi}}{k!\,\left(\frac{1}{2}\right)_{k}}\,\left(\frac{\beta^{2}y^{2}}{4\alpha}\right)^{k}\,\int_{\mu+2k-i\infty}^{\mu+2k+i\infty}\Gamma\left(\frac{z}{2}\right)\,y^{-z}dz & =2\,e^{-y^{2}}\sum_{k=0}^{\infty}\frac{1}{(2k)!}\,\left(\frac{\beta y}{\sqrt{\alpha}}\right)^{2k}=2\,e^{-y^{2}}\cosh\left(\frac{\beta y}{\sqrt{\alpha}}\right).\label{simple evalaution 1F1}
\end{align}

Combining (\ref{simple evalaution 1F1}), (\ref{term reminina g})
and (\ref{first sum}) we obtain the generalization of the reflection
formula for Jacobi's $\theta-$function, 
\begin{equation}
\sum_{n=1}^{\infty}a(n)\,e^{-\alpha\lambda_{n}^{2}x^{2}}\,\cos\left(\beta\lambda_{n}x\right)=\phi(0)+\sqrt{\frac{\pi}{\alpha}}\frac{\rho}{2\,x}\,e^{-\frac{\beta^{2}}{4\alpha}}+\frac{e^{-\frac{\beta^{2}}{4\alpha}}}{\sqrt{\alpha}x}\,\sum_{n=1}^{\infty}b(n)\,e^{-\frac{\mu_{n}^{2}}{\alpha\,x^{2}}}\,\cosh\left(\frac{\beta\mu_{n}}{\alpha\,x}\right).\label{Generalized Theta summation}
\end{equation}

Since $b(n)$ is even by hypothesis and we have assumed
that $\mu_{-n}=-\mu_{n}$, we can rewrite the last series in a symmetric
form by summing over the non-zero integers. This gives the desired
reflection formula (\ref{even final form general Jacobi}).

For the
odd case the proof is even simpler: just proceed as before and use an integral
representation similar to (\ref{Mellin inverse 1F1}), i.e., {[}\cite{ryzhik},
p. 503, eq. 3.952.7{]}
\begin{equation}
e^{-\alpha x^{2}}\sin(\beta x)=\frac{\beta e^{-\frac{\beta^{2}}{4\alpha}}}{4\pi i}\int_{\sigma-i\infty}^{\sigma+i\infty}\alpha^{-\frac{s+1}{2}}\Gamma\left(\frac{1+s}{2}\right)\,_{1}F_{1}\left(1-\frac{s}{2};\,\frac{3}{2};\,\frac{\beta^{2}}{4\alpha}\right)\,x^{-s}ds,\label{second representation involving sine}
\end{equation}
valid for $\text{Re}(\alpha)>0$ and $\sigma>-1$. 
\end{proof}


\begin{remark} \label{hurwitz remark}
The previous lemma is connected with N. J. Fine's proof of the functional
equation for the classical Hurwitz zeta function $\zeta(s,\,\alpha)$ {[}\cite{fine},
p. 361, eq. (3){]}. As done by Berndt for the case of Dirichlet series
satisfying Hecke's functional equation \cite{dirichlet and hecke},
we could define the generalized Hurwitz zeta function\footnote{The Dirichlet series appearing in (\ref{Hurwitz bochner class}) is
not a generalized/perturbed Dirichlet series in the sense of [\cite{dirichlet and hecke}, p.309] since it is not constructed from a sequence
$(\lambda_{n})_{n\in\mathbb{N}}$ attached to a Dirichlet series satisfying
Hecke's functional equation. 
} in the following way 
\begin{equation}
\phi(s,\,\alpha)=\sum_{n=1}^{\infty}\frac{a(n)}{(\lambda_{n}+\alpha)^{s}},\,\,\,\,\,\,\text{Re}(s)>\sigma_{a},\,\,\,\alpha>0\label{Hurwitz bochner class}
\end{equation}
and derive the analytic continuation and functional equation for it
by using the previous lemma. This observation connects our forthcoming
proof of the Selberg-Chowla formula with the proofs by Berndt and Kuzumaki,
which naturally employ the functional equation for $\zeta(s,\,\alpha)$ (see {[}\cite{dirichletserisVI}, p. 160{]} and {[}\cite{Kuzumaki},
eq. (16), (24) and (25){]}. 


\end{remark}

\begin{theorem}[Selberg-Chowla for positive definite binary quadratic forms]  \label{selberg-chowla non diagonal theorem} 
Let $\phi_{1}$ and $\phi_{2}$ be two Dirichlet series belonging
to the class $\mathcal{B}$ and $s$ be a complex number such that
$\text{Re}(s)>\mu>\max\left\{ \sigma_{a},\,\sigma_{b}\right\} $.
If $a_{1}(n),\,a_{2}(n)$ are even arithmetical functions,
then we have the following Selberg-Chowla formula 
\begin{align}
a^{s}\,\Gamma(s)\,\mathcal{Z}_{2}\left(s;\,Q;\,a_{1},\,a_{2};\,\lambda,\,\lambda^{\prime}\right) & =-4\mathfrak{\mathfrak{\phi}}_{2}(0)\,\Gamma(s)\phi_{1}(2s)+2\sqrt{\pi}\,\rho_{1}\,\Gamma\left(s-\frac{1}{2}\right)k^{1-2s}\phi_{2}(2s-1)\nonumber \\
+4k^{\frac{1}{2}-s}\, & \sum_{j=1}^{\infty}\sigma_{2s-1}\left(\nu_{j};\,b_{1},\,a_{2};\,\mu,\,\lambda^{\prime}\right)\,\cos\left(\frac{b\nu_{j}}{a}\right)\,\nu_{j}^{\frac{1}{2}-s}\,\,K_{s-\frac{1}{2}}\left(2k\nu_{j}\right),\label{First Selberg Chowla even non diagonal}
\end{align}
where $\left(\nu_{j}\right)_{j\in\mathbb{N}}=\left\{ \mu_{m}\lambda_{n}^{\prime}\right\} _{m,n=1}^{\infty}$
is the product sequence arranged in increasing order and $\sigma_{2s-1}(\nu_{j})$
denotes the generalized weighted divisor function of power $2s-1$
(see Definition \ref{definition generalized divisor 2.1.}). Moreover, $d:=b^{2}-4ac$ is the discriminant
of the quadratic form $Q$ and $k^{2}:=|d|/4a^{2}$. 

Analogously, if $a_{1}(n)$ and $a_{2}(n)$ are odd arithmetical functions,
we have the Selberg-Chowla formula 
\begin{equation}
a^{s}\,\Gamma(s)\,\mathcal{Z}_{2}\left(s;\,Q;\,a_{1},\,a_{2};\,\lambda,\,\lambda^{\prime}\right)=-4\,k^{\frac{1}{2}-s}\sum_{j=1}^{\infty}\sigma_{2s-1}\left(\nu_{j};\,b_{1},\,a_{2};\,\mu,\,\lambda^{\prime}\right)\,\sin\left(\frac{b\nu_{j}}{a}\right)\,\nu_{j}^{\frac{1}{2}-s}\,\,K_{s-\frac{1}{2}}\left(2k\nu_{j}\right).\label{First Selberg Chowla odd}
\end{equation}

Under the same assumptions, $\mathcal{Z}_{2}\left(s;\,Q;\,a_{1},\,a_{2};\,\lambda,\,\lambda^{\prime}\right)$
can be described by the second Selberg-Chowla formulas, 
\begin{align}
c^{s}\,\Gamma(s)\,\mathcal{Z}_{2}\left(s;\,Q;\,a_{1},\,a_{2};\,\lambda,\,\lambda^{\prime}\right) & =-4\mathfrak{\mathfrak{\phi}}_{1}(0)\,\Gamma(s)\phi_{2}(2s)+2\sqrt{\pi}\,\rho_{2}\,\Gamma\left(s-\frac{1}{2}\right)\,k^{\prime1-2s}\phi_{1}(2s-1)\nonumber \\
+4\,k^{\prime\frac{1}{2}-s}\, & \sum_{j=1}^{\infty}\sigma_{2s-1}\left(\nu_{j}^{\prime};\,b_{2},\,a_{1};\,\mu^{\prime},\,\lambda\right)\,\cos\left(\frac{b\nu_{j}^{\prime}}{c}\right)\,\nu_{j}^{\prime\frac{1}{2}-s}\,\,K_{s-\frac{1}{2}}\left(2k^{\prime}\nu_{j}^{\prime}\right),\label{Second Selberg Chowla even non-diagonal}
\end{align}
for $a_{1}(n),\,a_{2}(n)$ being even arithmetical functions and 
\begin{equation}
c^{s}\,\Gamma(s)\,\mathcal{Z}_{2}\left(s;\,Q;\,a_{1},\,a_{2};\,\lambda,\,\lambda^{\prime}\right)=-4\,k^{\prime\frac{1}{2}-s}\sum_{j=1}^{\infty}\sigma_{2s-1}\left(\nu_{j}^{\prime};\,b_{2},\,a_{1};\,\mu^{\prime},\,\lambda\right)\,\sin\left(\frac{b\nu_{j}^{\prime}}{c}\right)\,\nu_{j}^{\prime\frac{1}{2}-s}\,\,K_{s-\frac{1}{2}}\left(2k^{\prime}\nu_{j}^{\prime}\right),\label{2nd selberg Chowla odd}
\end{equation}
for $a_{1}(n),\,a_{2}(n)$ representing odd arithmetical functions. Here, $k^{\prime2}:=|d|/4c^{2}$
and the sequence $\nu_{j}^{\prime}$ is analogously defined by $\nu_{j}^{\prime}=\mu_{m}^{\prime}\lambda_{n}$. 
\end{theorem}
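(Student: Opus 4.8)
The plan is to follow the Mellin-transform/theta-reflection template of Remark \ref{remark Theta-function}, replacing the role played there by Bochner's modular relation with the sharper reflection formula of Lemma \ref{lemma 3.1}, which is exactly engineered to absorb the cross term $b\lambda_m\lambda_n'$. I will treat the even case ($\delta=0$) in detail; the odd case is identical with $\cos$ replaced by $\sin$. First I rescale the Gamma integral $\Gamma(s)\,Q(\lambda_m,\lambda_n')^{-s}=\int_0^\infty x^{s-1}e^{-Q(\lambda_m,\lambda_n')x}\,dx$ by $x\mapsto x/a$, so that
\[
a^{s}\,\Gamma(s)\,\mathcal{Z}_2(s;Q;a_1,a_2;\lambda,\lambda')=\int_0^\infty x^{s-1}\sum_{m,n\neq0}a_1(m)\,a_2(n)\,e^{-\left(\lambda_{m}^{2}+\frac{b}{a}\lambda_{m}\lambda_{n}^{\prime}+\frac{c}{a}\lambda_{n}^{\prime2}\right)x}\,dx,
\]
valid for $\mathrm{Re}(s)>\mu$, where the double series converges absolutely and the interchange of sum and integral is justified exactly as in Theorem \ref{theorem 2.1}.

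Next I isolate the contribution of $n=0$: this piece is $2a_2(0)\sum_{m\geq1}a_1(m)e^{-\lambda_m^2 x}$, whose Mellin transform is $2a_2(0)\,\Gamma(s)\,\phi_1(2s)=-4\phi_2(0)\,\Gamma(s)\,\phi_1(2s)$ by the normalization $a_2(0)=-2\phi_2(0)$ of class $\mathcal{B}$; this is the first term of (\ref{First Selberg Chowla even non diagonal}). For each fixed $n\neq0$ I sum over all $m\in\mathbb{Z}$; writing $a_1(-m)=a_1(m)$ and $\lambda_{-m}=-\lambda_m$ turns the inner sum into $a_1(0)+2\sum_{m\geq1}a_1(m)e^{-\lambda_m^2 x}\cosh\!\left(\frac{b}{a}\lambda_m\lambda_n'x\right)$. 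Recognizing $\cosh(\theta)=\cos(i\theta)$, I apply Lemma \ref{lemma 3.1}, i.e. (\ref{even final form general Jacobi}), with the purely imaginary parameter $\beta=i\frac{b}{a}\lambda_n'\sqrt{x}$, with $\alpha=1$, and with the lemma's free variable equal to $\sqrt{x}$; since (\ref{even final form general Jacobi}) holds for every $\beta\in\mathbb{C}$ it may be invoked pointwise in $x$ and $n$. The constant terms then combine as $a_1(0)+2\phi_1(0)=0$ (again by $a_1(0)=-2\phi_1(0)$), which is precisely what removes the otherwise divergent constant and lets the Mellin integral converge.

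After the reflection the inner sum becomes $x^{-1/2}e^{\frac{b^2}{4a^2}\lambda_{n}^{\prime2}x}\bigl(\sqrt{\pi}\,\rho_1+\sum_{m\neq0}b_1(m)e^{-\mu_m^2/x}e^{-i\frac{b}{a}\mu_m\lambda_n'}\bigr)$; combining the prefactor with the surviving $e^{-\frac{c}{a}\lambda_{n}^{\prime2}x}$ and using $\frac{b^2}{4a^2}-\frac{c}{a}=-k^2$ produces the Gaussian $e^{-k^2\lambda_{n}^{\prime2}x}$ that controls convergence. The term carrying $\sqrt{\pi}\,\rho_1$, integrated against $x^{s-1}$, gives $\int_0^\infty x^{s-3/2}e^{-k^2\lambda_{n}^{\prime2}x}\,dx=\Gamma(s-\tfrac12)\,k^{1-2s}(\lambda_n')^{1-2s}$ and hence, after summing over $n$, the residual term $2\sqrt{\pi}\,\rho_1\,\Gamma(s-\tfrac12)\,k^{1-2s}\phi_2(2s-1)$ of (\ref{First Selberg Chowla even non diagonal}). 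For the genuine double sum I symmetrize over the signs of $m$ and $n$: the even/even parity collapses $e^{-i\frac{b}{a}\mu_m\lambda_n'}$ to $\cos(\frac{b}{a}\mu_m\lambda_n')$ and reduces $\sum_{m,n\neq0}$ to a sum over $m,n\geq1$, while the remaining integral $\int_0^\infty x^{s-3/2}e^{-k^2\lambda_{n}^{\prime2}x-\mu_m^2/x}\,dx$ is evaluated by the convolution formula (\ref{convolution exponentials equals K nu}) to give $K_{s-1/2}(2k\mu_m\lambda_n')$. Collecting $\mu_m\lambda_n'$ into the product sequence $\nu_j$ and the coefficients $b_1(m)a_2(n)\mu_m^{2s-1}$ into the weighted divisor function $\sigma_{2s-1}(\nu_j;b_1,a_2;\mu,\lambda')$ of Definition \ref{definition generalized divisor 2.1.} yields the Bessel series of (\ref{First Selberg Chowla even non diagonal}).

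The main obstacle is analytic bookkeeping rather than a single hard estimate: one must justify the term-by-term Mellin inversion and, above all, verify that the individually divergent constant contributions (the $n=0$ piece, the $m=0$ piece hidden in $a_1(0)$, and the excluded $(0,0)$ term) cancel through the identities $a_i(0)=-2\phi_i(0)$, which is the structural reason class $\mathcal{B}$ is the right hypothesis. Absolute convergence of the final Bessel series in $\mathrm{Re}(s)>\mu$ follows from the decay $K_\nu(x)=O(x^{-1/2}e^{-x})$ exactly as in Theorem \ref{theorem 2.1}. The odd case ($\delta=1$) runs identically using (\ref{Reflection formula theta generalized odd case}) and $\sin$ in place of $\cos$, with no residual term and no $\phi(2s)$ term because the relevant series are entire and $a_i(0)=0$. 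Finally, the second pair of formulas (\ref{Second Selberg Chowla even non-diagonal})--(\ref{2nd selberg Chowla odd}) is obtained by completing the square in $\lambda_n'$ instead of $\lambda_m$, i.e. factoring out $c^{s}$ and interchanging the roles of $(\phi_1,\lambda,\mu)$ and $(\phi_2,\lambda',\mu')$ together with $k\mapsto k'$, the argument being otherwise verbatim.
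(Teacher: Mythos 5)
Your proposal is correct and follows essentially the same route as the paper's own proof: the Gamma-integral (Mellin) representation of the double series, the reflection formula of Lemma \ref{lemma 3.1}, the Bessel evaluation via (\ref{convolution exponentials equals K nu}), and the cancellation of the divergent constant/boundary contributions through the class-$\mathcal{B}$ normalization $a_{i}(0)=-2\phi_{i}(0)$, with the second pair of formulas obtained by interchanging the roles of $(\phi_{1},\lambda,\mu)$ and $(\phi_{2},\lambda^{\prime},\mu^{\prime})$. The only cosmetic difference is that you keep the cross term as a $\cosh$ and apply Lemma \ref{lemma 3.1} in the forward direction with purely imaginary $\beta$, whereas the paper first completes the square in $\lambda_{m}$ and reads the very same identity in the reversed direction (roles of $\phi$ and $\psi$ swapped) with real $\beta=\frac{b}{a}\lambda_{n}^{\prime}$ — these are the same computation, and indeed both yield the Bessel series with the coefficient $8k^{\frac{1}{2}-s}$ produced in the paper's own evaluation of $\ell_{n}$, consistent with the classical formula (\ref{Selberg Chowla Formula}).
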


\begin{proof}
Throughout this proof we shall assume that $a_{i}(n)$,
$i=1,\,2$, are even arithmetical functions, as analogous computations
can be given for the odd case. As in the proof of Theorem \ref{theorem 2.1}, we
start the argument by writing the double series (\ref{Epstein zeta function non diagonal-1})
as 
\[
\mathcal{Z}_{2}\left(s;\,Q;\,a_{1},\,a_{2};\,\lambda,\,\lambda^{\prime}\right)=2\,a_{2}(0)a^{-s}\,\phi_{1}(2s)+2\,a_{1}(0)c^{-s}\,\phi_{2}(2s)+\sum_{m\neq0,\,n\neq0}\frac{a_{1}(m)\,a_{2}(n)}{\left(a\lambda_{m}^{2}+b\lambda_{m}\lambda_{n}^{\prime}+c\lambda_{n}^{\prime2}\right)^{s}}.
\]

Also, we choose a parameter $\mu$ satisfying $\mu>\max\left\{ \sigma_{a},\,\sigma_{b}\right\} $
and a fixed $s\in\mathbb{C}$ such that $\text{Re}(s)>\mu$. Of course,
in order to compute $\mathcal{Z}_{2}$, we need to evaluate the
double series 
\[
\sum_{n\neq0}a_{2}(n)\,\sum_{m\neq0}\frac{a_{1}(m)}{\left(a\lambda_{m}^{2}+b\lambda_{m}\lambda_{n}^{\prime}+c\lambda_{n}^{\prime2}\right)^{s}}.
\]

In analogy to the generalized Dirichlet series appearing in the proof
of Theorem \ref{theorem 2.1}, for a fixed $n$ let us denote by $\ell_{n}\left(s,\,Q,\,a_{1}\right)$
the inner infinite series with respect to $m$ appearing above. For 
$\text{Re}(s)>\mu$, $\ell_{n}(s,\,Q,\,a_{1})$ is explicitly
given by  
\begin{align*}
\ell_{n}\left(s,\,Q,\,a_{1}\right) & =\sum_{m\neq0}\frac{a_{1}(m)}{\left(a\lambda_{m}^{2}+b\lambda_{m}\lambda_{n}^{\prime}+c\lambda_{n}^{\prime2}\right)^{s}}=\frac{1}{\Gamma(s)}\,\sum_{m\neq0}a_{1}(m)\,\int_{0}^{\infty}y^{s-1}e^{-(a\lambda_{m}^{2}+b\lambda_{m}\lambda_{n}^{\prime}+c\lambda_{n}^{\prime2})\,y}dy\\
 & =\frac{1}{\Gamma(s)}\,\int_{0}^{\infty}y^{s-1}\,e^{-\frac{|d|}{4a}\lambda_{n}^{\prime2}\,y}\,\sum_{m\neq0}a_{1}(m)\,e^{-a\,y\,\left(\lambda_{m}+\frac{b}{2a}\lambda_{n}^{\prime}\right)^{2}}\,dy
\end{align*}
where the last step can be justified by the fact that $\text{Re}(s)>\mu>\sigma_{a}$
and by absolute convergence. Invoking the reflection formula (\ref{even final form general Jacobi})
with the roles of $\phi$ and $\psi$ reversed and taking the substitutions
$\alpha=\frac{1}{ay}>0$, $\beta=\frac{b}{a}\lambda_{n}^{\prime}$ and 
$x=1$, we obtain that  
\[
\sum_{m\neq0}a_{1}(m)\,e^{-ay\,\left(\lambda_{m}+\frac{b}{2a}\lambda_{n}^{\prime}\right)^{2}}=\frac{2}{\sqrt{ay}}\,\left\{ \sum_{m=1}^{\infty}b_{1}(m)\,e^{-\frac{\mu_{m}^{2}}{ay}}\,\cos\left(\frac{b}{a}\mu_{m}\lambda_{n}^{\prime}\right)-\psi_{1}(0)-\frac{\rho_{1}^{\star}\sqrt{\pi ay}}{2}\,e^{-\frac{b^{2}}{4a}\lambda_{n}^{\prime2}\,y}\right\} 
\]
and so $\ell_{n}(s,\,Q,\,a_{1})$ can be given by 
\begin{align*}
& \frac{2a^{-\frac{1}{2}}}{\Gamma(s)}\,\int_{0}^{\infty}y^{s-\frac{3}{2}}\,e^{-\frac{|d|}{4a}\lambda_{n}^{\prime2}\,y}\,\left\{ \sum_{m=1}^{\infty}b_{1}(m)\,e^{-\frac{\mu_{m}^{2}}{ay}}\,\cos\left(\frac{b}{a}\mu_{m}\lambda_{n}^{\prime}\right)-\psi_{1}(0)-\frac{\rho_{1}^{\star}\sqrt{\pi ay}}{2}\,e^{-\frac{b^{2}}{4a}\lambda_{n}^{\prime2}\,y}\right\} \,dy\\
 & =\frac{2\phi_{1}(0)}{\lambda_{n}^{\prime2s}}\,c^{-s}-\frac{2a^{-\frac{1}{2}}\,\Gamma\left(s-\frac{1}{2}\right)\psi_{1}(0)}{\Gamma(s)\,\lambda_{n}^{\prime2s-1}}\,\left(\frac{|d|}{4a}\,\right)^{\frac{1}{2}-s}+\frac{2\,a^{-\frac{1}{2}}}{\Gamma(s)}\sum_{m=1}^{\infty}b_{1}(m)\,\cos\left(\frac{b}{a}\mu_{m}\lambda_{n}^{\prime}\right)\,\int_{0}^{\infty}y^{s-\frac{3}{2}}e^{-\frac{\mu_{m}^{2}}{ay}}\,e^{-\frac{|d|}{4a}\lambda_{n}^{\prime2}y}\,dy\\
 & =\frac{2\phi_{1}(0)}{\lambda_{n}^{\prime2s}}\,c^{-s}+\rho_{1}\sqrt{\pi}\,\frac{a^{-\frac{1}{2}}\,\Gamma\left(s-\frac{1}{2}\right)}{\Gamma(s)\,\lambda_{n}^{\prime2s-1}}\,\left(\frac{|d|}{4a}\,\right)^{\frac{1}{2}-s}+\frac{4\,\left(ka\right)^{\frac{1}{2}-s}}{\Gamma(s)\sqrt{a}}\,\sum_{m=1}^{\infty}b_{1}(m)\,\cos\left(\frac{b}{a}\mu_{m}\lambda_{n}^{\prime}\right)\left(\frac{\mu_{m}}{\lambda_{n}^{\prime}}\right)^{s-\frac{1}{2}}K_{s-\frac{1}{2}}\left(2k\mu_{m}\lambda_{n}^{\prime}\right)
\end{align*}
with the second equality being justified again by absolute convergence.
We have also used the fact that $\psi_{1}(0)=-\frac{\rho_{1}}{2}\sqrt{\pi}$
and the integral representation of the modified Bessel function given
in (\ref{convolution exponentials equals K nu}). 

\bigskip{}

It suffices now to multiply the previous equality by $a_{2}(n)$
and to sum over the index $n\in\mathbb{Z}\setminus\{0\}$. Since we
are assuming that $\text{Re}(s)>\mu$, all the Dirichlet series involved
in this process converge absolutely. Similarly to what we have done
in the final part of the proof of Theorem \ref{theorem 2.1}, we can argue that
the resulting double series of the form 
\[
\sum_{m,n=1}^{\infty}b_{1}(m)\,a_{2}(n)\,\cos\left(\frac{b}{a}\mu_{m}\lambda_{n}^{\prime}\right)\,\left(\frac{\mu_{m}}{\lambda_{n}^{\prime}}\right)^{s-\frac{1}{2}}\,\,K_{s-\frac{1}{2}}\left(2k\,\mu_{m}\lambda_{n}^{\prime}\right)
\]
will converge absolutely due to the classical estimate for the Modified
Bessel function (\ref{Classical Estimate Bessel}). From the condition
$\text{Re}(s)>\mu>\max\left\{ \sigma_{a},\,\sigma_{b}\right\} $,
we see that the series defining the non-diagonal Epstein zeta function
is absolutely convergent as a double series and the order of the summation
of both series can be interchanged. This gives the second Selberg-Chowla
formula for the even case (\ref{Second Selberg Chowla even non-diagonal}),
which is obtained after replacing the role of $\phi_{1}$ by the one
of $\phi_{2}$ and then replace $a$ by $c$ and $k$ by $k^{\prime}$.
Invoking the definition of the generalized divisor function (\ref{On a generalized Divisor Function}), 
we obtain all the Selberg-Chowla formulas in the form above stated.
\end{proof}


By analytic continuation, it is possible to establish the validity
of the formulas (\ref{First Selberg Chowla even non diagonal}, \ref{First Selberg Chowla odd},
\ref{Second Selberg Chowla even non-diagonal}, \ref{2nd selberg Chowla odd})
for all complex values of $s$. Mimicking the steps leading to the proof of Proposition \ref{proposition 2.1}, it is
simple to see that the infinite series involving the generalized divisor
function are entire functions of $s$ and obey to a reflection formula
which is analogous to (\ref{Reflection Formula-2}). 

Similarly to the entire functions $H_{r_{1}}(s;\,\cdot)$ and $H_{r_{2}}(s;\,\cdot)$ (\ref{H1}, \ref{H2}), we
may see that the series on the first Selberg-Chowla formulas (\ref{First Selberg Chowla even non diagonal})
and (\ref{First Selberg Chowla odd}) can be written in a unified
way as
\begin{equation*}
H_{1}^{\delta}\left(s;\,Q;\,b_{1},\,a_{2}\right)=\sum_{m,n=1}^{\infty}b_{1}(m)\,a_{2}(n)\,\left\{ (1-\delta)\cos\left(\frac{b}{a}\mu_{m}\lambda_{n}^{\prime}\right)+\delta\sin\left(\frac{b}{a}\mu_{m}\lambda_{n}^{\prime}\right)\right\} \,\left(\frac{\mu_{m}}{\lambda_{n}^{\prime}}\right)^{s-\frac{1}{2}}\,\,K_{s-\frac{1}{2}}\left(2k\,\mu_{m}\lambda_{n}^{\prime}\right).\label{general H 1 with deltinha}
\end{equation*}
Analogously, the divisor
series on the second Selberg-Chowla formulas (\ref{Second Selberg Chowla even non-diagonal})
and (\ref{2nd selberg Chowla odd}) can be also written as 
\begin{equation*}
H_{2}^{\text{\ensuremath{\delta}}}\left(s;\,Q;\,b_{2},\,a_{1}\right)=\sum_{m,n=1}^{\infty}b_{2}(m)\,a_{1}(n)\,\left\{(1-\delta)\cos\left(\frac{b}{c}\mu_{m}^{\prime}\lambda_{n}\right)+\delta\sin\left(\frac{b}{c}\mu_{m}^{\prime}\lambda_{n}\right)\right\} \,\left(\frac{\mu_{m}^{\prime}}{\lambda_{n}}\right)^{s-\frac{1}{2}}\,K_{s-\frac{1}{2}}\left(2k^{\prime}\mu_{m}^{\prime}\lambda_{n}\right).\label{general H2 with other deltinha}
\end{equation*}

To prove a reflection formula connecting both entire functions, substitute
on the argument of $H_{1}^{\delta}(s,\,Q;\,\cdot)$ $s$ by $1-s$
and $Q$ by $Q^{-1}$. Since $Q^{-1}(x,y)=cx^{2}-bxy+ay^{2}$ and $K_{\nu}(z)=K_{-\nu}(z)$, one obtains without effort
\begin{equation}
H_{1}^{\delta}\left(1-s;\,Q^{-1};\,a_{1},\,b_{2};\,\lambda,\,\mu^{\prime}\right)=(-1)^{\delta}\,H_{2}^{\delta}\left(s;\,Q;\,b_{2},\,a_{1},\,\mu^{\prime},\,\lambda\right), \label{reflection formula with delltinha delltinha}
\end{equation}
which is employed to prove our next result. 

\begin{corollary}[The Analytic Continuation of the non-diagonal Epstein zeta function] \label{corollary 3.1}
Under the hypotheses given on the statement of Theorem \ref{selberg-chowla non diagonal theorem}, the non-diagonal Epstein zeta function (\ref{Epstein zeta function non diagonal-1}) can be continued to the entire complex plane as: 
\begin{enumerate}
\item A meromorphic function with a simple pole located at $s=1$ with residue
$\frac{2\pi\,\rho_{1}\rho_{2}}{\sqrt{|d|}}$ if $a_{1}(n)$
and $a_{2}(n)$ are even arithmetical functions and $\rho_{1},\,\rho_{2}$ respectively denote the residues of $\phi_{1},\,\phi_{2}$ at $s=1$. In particular,
if at least one of the Dirichlet series $\phi_{i}$ is entire, then
$\mathcal{Z}_{2}$ is entire. 
\item An entire function if $a_{1}(n),\,\,a_{2}(n)$ are
odd arithmetical functions.
\end{enumerate}

Thus, $\mathcal{Z}_{2}$ is a Dirichlet series belonging to the class
$\mathcal{A}$ and satisfying Hecke's functional equation 
\begin{equation}
\left(\frac{2}{\sqrt{|d|}}\right)^{-s}\,\Gamma(s)\,\mathcal{Z}_{2}\left(s;\,Q;\,a_{1},\,a_{2};\,\lambda,\,\lambda^{\prime}\right)=(-1)^{\delta}\,\left(\frac{2}{\sqrt{|d|}}\right)^{s-1}\,\Gamma(1-s)\,\mathcal{Z}_{2}\left(1-s;\,Q^{-1};\,b_{1},\,b_{2};\,\mu,\,\mu^{\prime}\right).\label{Functional Equation for Non diagonal YEYEYEYEY GENERAL SANDY}
\end{equation}
\end{corollary}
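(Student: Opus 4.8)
The plan is to read off every assertion directly from the four Selberg--Chowla formulas of Theorem~\ref{selberg-chowla non diagonal theorem}, using the fact (established by mimicking Proposition~\ref{proposition 2.1}, as remarked after that theorem) that the divisor series $H_1^{\delta}$ and $H_2^{\delta}$ are entire in $s$. Since the right-hand sides of (\ref{First Selberg Chowla even non diagonal}) and (\ref{Second Selberg Chowla even non-diagonal}) equal $a^{s}\Gamma(s)\mathcal{Z}_2$ and $c^{s}\Gamma(s)\mathcal{Z}_2$ respectively, I would write $\mathcal{Z}_2=(a^{s}\Gamma(s))^{-1}\times(\text{RHS})$; dividing by $a^{s}\Gamma(s)$ can only remove poles, so the only candidate singularities of $\mathcal{Z}_2$ are the poles of the two explicit $\phi$-terms, the $H$-term staying entire because $1/\Gamma(s)$ is entire. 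In the odd case this already settles part~(2): formulas (\ref{First Selberg Chowla odd}) and (\ref{2nd selberg Chowla odd}) contain no $\phi$-terms, whence $\mathcal{Z}_2=-4a^{-s}k^{1/2-s}H_1^{1}(s;Q;b_1,a_2)/\Gamma(s)$ is manifestly entire.

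For the even case I would track the poles of the two explicit terms in (\ref{First Selberg Chowla even non diagonal}). Here $\phi_1(2s)$ has a simple pole at $s=\tfrac12$, $\phi_2(2s-1)$ has one at $s=1$, and $\Gamma(s-\tfrac12)/\Gamma(s)$ contributes poles at the half-integers $s=\tfrac12-n$, $n\ge0$. Three observations finish the analysis: at $s=\tfrac12$ the contributions of the first term $-4\phi_2(0)\Gamma(s)\phi_1(2s)$ and of the second term $2\sqrt{\pi}\rho_1\Gamma(s-\tfrac12)k^{1-2s}\phi_2(2s-1)$ have equal and opposite residues (a one-line check using $\Gamma(\tfrac12)=\sqrt{\pi}$ and $\operatorname{Res}_{s=1/2}\phi_1(2s)=\rho_1/2$), so the apparent pole there cancels; at each $s=\tfrac12-n$ with $n\ge1$ the pole of $\Gamma(s-\tfrac12)$ is killed by the trivial zero $\phi_2(2s-1)=\phi_2(-2n)=0$, which a class-$\mathcal{B}$ series with $\delta=0$ possesses at the negative even integers (exactly as used in the proof of Lemma~\ref{lemma 3.1}); and so the only surviving singularity is the simple pole at $s=1$ coming from $\phi_2(2s-1)$. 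Computing its residue via $ka=\sqrt{|d|}/2$ yields $\tfrac{2\pi\rho_1\rho_2}{\sqrt{|d|}}$, and if either $\phi_i$ is entire then $\rho_i=\phi_i(0)=0$ (Remark~\ref{easy computations class B}) forces all explicit terms to be regular, making $\mathcal{Z}_2$ entire.

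To obtain the functional equation (\ref{Functional Equation for Non diagonal YEYEYEYEY GENERAL SANDY}) I would imitate the proof of Corollary~\ref{The Analytic Continuation}. Set $L:=\sqrt{|d|}/2=ka=k'c$ and apply the first Selberg--Chowla formula to $\mathcal{Z}_2(1-s;Q^{-1};b_1,b_2;\mu,\mu')$; since $Q^{-1}(x,y)=cx^2-bxy+ay^2$ has leading coefficient $c$ and the same discriminant $d$, this amounts to the substitutions $a\to c$, $k\to k'$ and $\phi_i\to\psi_i$. The reflection formula (\ref{reflection formula with delltinha delltinha}) turns the resulting $H_1^{\delta}(1-s;Q^{-1};a_1,b_2;\lambda,\mu')$ into $(-1)^{\delta}H_2^{\delta}(s;Q;b_2,a_1;\mu',\lambda)$, and the second Selberg--Chowla formula (\ref{Second Selberg Chowla even non-diagonal}) lets me re-express that $H_2^{\delta}$ through $c^{s}\Gamma(s)\mathcal{Z}_2(s;Q;a_1,a_2)$ plus two extra $\phi$-terms. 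These extra terms cancel in two pairs: the pair involving $\psi_1(2-2s)$ and $\phi_1(2s-1)$ vanishes on applying (\ref{This is the first Bochner ever}) in the form $\Gamma(s-\tfrac12)\phi_1(2s-1)=\Gamma(1-s)\psi_1(2-2s)$ together with $\psi_2(0)=-\tfrac{\sqrt{\pi}}{2}\rho_2$, and symmetrically the pair involving $\phi_2(2s)$ and $\psi_2(1-2s)$ vanishes using $\Gamma(s)\phi_2(2s)=\Gamma(\tfrac12-s)\psi_2(1-2s)$ and $\phi_1(0)=-\tfrac{\sqrt{\pi}}{2}\rho_1^{\star}$. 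What survives is the clean relation $c^{1-s}\Gamma(1-s)\mathcal{Z}_2(1-s;Q^{-1})=(-1)^{\delta}(k')^{2s-1}c^{s}\Gamma(s)\mathcal{Z}_2(s;Q)$, valid first for $\operatorname{Re}(s)$ small and then everywhere by the continuation above; substituting $(k')^{2s-1}=L^{2s-1}c^{1-2s}$ and cancelling the powers of $c$ gives exactly (\ref{Functional Equation for Non diagonal YEYEYEYEY GENERAL SANDY}). Finally, $\mathcal{Z}_2$ then satisfies a Hecke-type equation with parameter $r=1$ and is meromorphic with at most the simple pole just located, so it lies in the class $\mathcal{A}$, the sign $(-1)^{\delta}$ being absorbed into the conjugate series in the odd case and the zeroth coefficient being fixed by the convention of Definition~\ref{class A definition}.

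The genuinely delicate points are the two cancellation phenomena. In the continuation, one must verify that the half-integer poles of $\Gamma(s-\tfrac12)$ are matched precisely --- at $s=\tfrac12$ by the compensating pole of $\phi_1(2s)$, and at $s=\tfrac12-n$ ($n\ge1$) by the trivial zeros of $\phi_2(2s-1)$ --- which is exactly where the location of poles and the induced trivial zeros of class-$\mathcal{B}$ series are indispensable. In the functional-equation step the main risk is the bookkeeping of the four extra $\phi$-terms and, above all, the correct propagation of the sign $(-1)^{\delta}$, which enters solely through the reflection formula (\ref{reflection formula with delltinha delltinha}); I would therefore carry out those two pairings explicitly before assembling the final identity.
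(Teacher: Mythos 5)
Your proposal is correct and takes essentially the same route as the paper's own proof: both rest on the entirety of the divisor series $H_{1}^{\delta}$, $H_{2}^{\delta}$, locate the even-case singularities in the meromorphic part of (\ref{First Selberg Chowla even non diagonal}) with only the pole at $s=1$ surviving, and derive (\ref{Functional Equation for Non diagonal YEYEYEYEY GENERAL SANDY}) by applying the first Selberg--Chowla formula at $\left(1-s,\,Q^{-1},\,b_{i}\right)$, invoking the reflection formula (\ref{reflection formula with delltinha delltinha}), and matching against the second formula (\ref{Second Selberg Chowla even non-diagonal}) through the class-$\mathcal{B}$ values $\phi_{i}(0)=-\frac{\rho_{i}^{\star}}{2}\sqrt{\pi}$ and $\psi_{i}(0)=-\frac{\rho_{i}}{2}\sqrt{\pi}$. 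Your explicit residue cancellation at $s=\frac{1}{2}$ and the trivial-zero argument at $s=\frac{1}{2}-n$ simply spell out what the paper compresses into the remark that $G(s,\,Q)$ has removable singularities at these points, so nothing needs correcting.
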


\begin{proof}
The proof of the continuation is similar to the proof of Corollary
\ref{The Analytic Continuation}: since $H_{1}^{\delta}(s,\,Q;\,\cdot)$ and $H_{2}^{\delta}(s,\,Q;\,\,\cdot)$ are entire functions
of the complex variable $s$, then it follows from the Selberg-Chowla
formula (\ref{First Selberg Chowla odd}) that $\mathcal{Z}_{2}$
is entire if $a_{1}(n)$ and $a_{2}(n)$ are odd arithmetical
functions. Furthermore, the functional equation (\ref{Functional Equation for Non diagonal YEYEYEYEY GENERAL SANDY})
with $\delta=1$ is an immediate consequence of (\ref{reflection formula with delltinha delltinha}). 

\medskip{}

It remains to show the corollary for the case where $a_{1}(n)$
and $a_{2}(n)$ are even arithmetical functions: of course,
the first terms on (\ref{First Selberg Chowla even non diagonal})
are the only ones contributing to the singularities of $\mathcal{Z}_{2}$
so that we just need to study the meromorphic part
\[
G\left(s,\,Q\right)=-4a^{-s}\mathfrak{\mathfrak{\phi}}_{2}(0)\,\phi_{1}(2s)+2\sqrt{\pi}\,\rho_{1}\,\frac{\Gamma\left(s-\frac{1}{2}\right)}{\Gamma(s)}a^{-s}k^{1-2s}\phi_{2}(2s-1).
\]

It is clear that $G(s,\,Q)$ has removable singularities at the points
$s=\frac{1}{2}-k$, $k\in\mathbb{N}$. This easily comes from the
functional equation and the fact that $\phi_{1}\in\mathcal{B}$. Now,
since $\phi_{1}(2s)$ and $\Gamma\left(s-\frac{1}{2}\right)$ are
analytic in a neighbourhood of $s=1$, we see that $\mathcal{Z}_{2}$
must have a pole at $s=1$ coming from the contribution of $\phi_{2}(2s-1)$.
The residue is very easy to compute and it is given by $\frac{2\pi\,\rho_{1}\rho_{2}}{\sqrt{|d|}}.$ 

As in Corollary \ref{The Analytic Continuation}, the proof of the functional equation is made
by comparing both representations of $\mathcal{Z}_{2}$, (\ref{First Selberg Chowla even non diagonal})
and (\ref{Second Selberg Chowla even non-diagonal}): start by multiplying (\ref{First Selberg Chowla even non diagonal}) by $a^{-s}$ and then 
substitute $s$ by $1-s$, $Q$ by $Q^{-1}$ and $a_{1}(n),\,a_{2}(n)$
by $b_{1}(n),\,b_{2}(n)$. We obtain
\begin{align}
\Gamma(1-s)\,\mathcal{Z}_{2}\left(1-s;\,Q^{-1};\,b_{1},\,b_{2};\,\mu,\,\mu^{\prime}\right) & =-4c^{s-1}\mathfrak{\mathfrak{\psi}}_{2}(0)\,\Gamma(1-s)\psi_{1}(2-2s)+\nonumber \\
+2\sqrt{\pi}\,\rho_{1}\,\Gamma\left(\frac{1}{2}-s\right)c^{s-1}\,k^{\prime2s-1}\,\psi_{2}(1-2s)+4\,\left(k^{\prime}c\right)^{s-\frac{1}{2}}\,&\sqrt{\frac{1}{c}} H_{1}^{0}(1-s;\,Q^{-1};\,a_{1},\,b_{2};\,\lambda,\,\mu^{\prime}),\label{First Selberg Chowla even non diagonal-1}
\end{align}
which, when combined with the reflection formula (\ref{reflection formula with delltinha delltinha})
and the functional equation for $\psi_{1}$ and $\phi_{2}$, yields 
\begin{align}
\Gamma(1-s)\,\mathcal{Z}_{2}\left(1-s;\,Q^{-1};\,b_{1},\,b_{2};\,\mu,\,\mu^{\prime}\right) & =-4c^{s-1}\mathfrak{\mathfrak{\psi}}_{2}(0)\,\Gamma\left(s-\frac{1}{2}\right)\,\phi_{1}\left(2s-1\right)+\nonumber \\
+2\sqrt{\pi}\,\rho_{1}\,c^{s-1}\,k^{\prime2s-1}\,\Gamma(s)\,\phi_{2}(2s)+4\,\left(k^{\prime}c\right)^{s-\frac{1}{2}}\, &\sqrt{\frac{1}{c}} H_{2}^{0}(s;\,Q;\,b_{2},\,a_{1},\,\mu^{\prime},\,\lambda).\label{We are the lucky ones we are we are}
\end{align}

Invoking now the simple properties of the class $\mathcal{B}$, $\phi_{i}(0)=-\frac{\rho_{i}^{\star}}{2}\sqrt{\pi}$
and $\psi_{i}(0)=-\frac{\rho_{i}}{2}\sqrt{\pi}$, and making these
substitutions on the first two terms of (\ref{We are the lucky ones we are we are}),
we arrive precisely at the right-hand side of the second Selberg-Chowla
formula (\ref{Second Selberg Chowla even non-diagonal}) multiplied
by an extra factor of $(2/\sqrt{|d|})^{1-2s}\,c^{-s}$. This completes
the proof.
\end{proof}


\begin{remark} \label{remark odd odd Epstein} 
If $a(n)$ is an even arithmetical function
and $\phi\in\mathcal{B}$, then $\phi(2s)\in\mathcal{A}$ with $r=1$.
Analogously, if $a(n)$ is an odd arithmetical function
and $\phi\in\mathcal{B}$ then $\phi(2s-1)\in\mathcal{A}$ with $r=\frac{3}{2}$.
In fact, by letting $Q(x,\,y)=x^{2}+y^{2}$,
we have that (\ref{First Selberg Chowla even non diagonal}) is a
reformulation of the first Selberg-Chowla formula restricted to the
class $\mathcal{A}$, (\ref{Again First-1}), for $r_{1}=r_{2}=\frac{1}{2}$.
This is why we have named the Epstein zeta function (\ref{Epstein as double series})
as ``diagonal''. Despite this, it is clear that the Selberg-Chowla 
formulas for odd arithmetical functions and some particular quadratic
form $Q(x,\,y)$ cannot be seen as reformulations or as particular cases of (\ref{Again First-1})
and (\ref{Again Second-1}). This happens because the arithmetical 
function associated to the Dirichlet series $\varphi(s):=\phi(2s-1)$
is $\mathfrak{c}(n)=a(n)\,\lambda_{n}$ and not $a(n)$.
Note that, by item 2. in Definition \ref{definition 3.1}, $\mathfrak{c}(n)$ is an even arithmetical function and it is possible to establish a Selberg-Chowla formula for its non-diagonal Epstein zeta function. Although we cannot apply Theorem \ref{selberg-chowla non diagonal theorem} in a direct
way (because $a(n)\,\lambda_{n}$ is not attached to a
Bochner Dirichlet series), it is not difficult to derive an analogue
of Lemma \ref{lemma 3.1} valid for this case. Let  $\mathcal{Z}_{2}(s;\,Q;\,\mathfrak{c}_{1},\,\mathfrak{c}_{2};\,\lambda,\,\lambda^{\prime})$
denote the following version of the non-diagonal Epstein zeta function
\begin{equation}
\mathcal{Z}_{2}(s;\,Q;\,\mathfrak{c}_{1},\,\mathfrak{c}_{2};\,\lambda,\,\lambda^{\prime})=\sum_{m,n\neq0}\frac{a_{1}(m)\lambda_{m}\,a_{2}(n)\lambda_{n}^{\prime}}{Q(\lambda_{m},\,\lambda_{n}^{\prime})^{s}},\,\,\,\,\,\,\text{Re}(s)>\sigma_{a}+1.\label{New Epstein with evenizing of things Berndt style}
\end{equation}

Then, for any $\text{Re}(s)>\mu>\max\left\{ \sigma_{a}+1,\,\sigma_{b}+1\right\} $,
the following Selberg-Chowla formula holds
\begin{align}
a^{s}\Gamma(s)\,\mathcal{Z}_{2}(s;\,Q;\,\mathfrak{c}_{1},\,\mathfrak{c}_{2};\,\lambda,\,\lambda^{\prime}) & =\,4k^{\frac{3}{2}-s}\,\sum_{j=1}^{\infty}\sigma_{2s-3}\left(\nu_{j};\,b_{1},\,a_{2};\,\mu,\,\lambda^{\prime}\right)\,\nu_{j}^{\frac{5}{2}-s}\,\nonumber \\
\times\left\{ \cos\left(\frac{b}{a}\,\nu_{j}\right)\right. & \left.K_{\frac{3}{2}-s}(2k\,\nu_{j})+\frac{2b}{\sqrt{|d|}}\,\sin\left(\frac{b}{a}\,\nu_{j}\right)\,K_{\frac{1}{2}-s}(2k\,\nu_{j})\right\},\label{First Selberg Chowla Berndt Case for analogues Redirus}
\end{align}
which gives the analytic continuation of the Dirichlet series (\ref{New Epstein with evenizing of things Berndt style})
as an entire complex function.  In particular, if $b=0$ and $a=c=1$,
(\ref{First Selberg Chowla Berndt Case for analogues Redirus}) reduces to (\ref{Again First-1})
with $r_{1}=r_{2}=\frac{3}{2}$. 
\end{remark}

\begin{center}\section{The zeros of a class of Dirichlet series} \label{section 4} \end{center} 

It is immediate to check from the Selberg-Chowla (or the functional
equation) that $\mathcal{Z}_{2}(s;\,a_{1},\,a_{2};\,\lambda,\,\lambda^{\prime})$
has trivial zeros located at the negative integers, this is, $s=-n$,
$n\in\mathbb{N}$. A similar comment can be made for $\mathcal{Z}_{2}(s;\,Q;\,a_{1},\,a_{2};\,\lambda,\,\lambda^{\prime})$.
Moreover, in a completely analogous way to the results given in {[}\cite{Bateman_Epstein},
p. 367, Theorem 3{]} and \cite{terras_ternary}, one can exhibit subclasses of the Epstein zeta
functions studied in the previous sections which do not obey to a generalized Riemann hypothesis,
i.e., which fail the condition of having all their nontrivial zeros
located in the critical line $\text{Re}(s)=r$. The next corollary
of the Selberg-Chowla formula gives this result. In order to state
it, we introduce the following notation: for a given $\xi>0$, we write $\mathcal{Z}_{2}(s;\,a_{1},\,a_{2};\,\lambda,\,\xi\lambda^{\prime})$
as the analogue of the diagonal Epstein zeta function given by the
double Dirichlet series 
\begin{equation}
\mathcal{Z}_{2}(s;\,a_{1},\,a_{2};\,\lambda,\,\xi\lambda^{\prime})=\sum_{m,n\neq0}^{\infty}\frac{a_{1}(m)\,a_{2}(n)}{\left(\lambda_{m}+\lambda_{n}^{\prime}\xi\right)^{s}},\,\,\,\,\,\,\text{Re}(s)>2\,\sigma_{a}.\label{Diagonal Epstein with parameter xssssi}
\end{equation}

\bigskip{}

\bigskip{}

\begin{corollary}[Failure of the Riemann Hypothesis for $\mathcal{Z}_{2}$]\label{corollary failure RH}
Suppose that $\phi_{1}$ and $\phi_{2}$ are Dirichlet series with real arithmetical functions $a_{1}(n)$ and $a_{2}(n)$,
satisfying Hecke's functional equation and belonging to the class
$\mathcal{A}$. Assume also that $\phi_{i}(0)<0$ for any $i=1,\,2$.
Then, for a sufficiently large $\xi>0$, $\mathcal{Z}_{2}(s;\,a_{1},\,a_{2};\,\lambda,\,\xi\lambda^{\prime})$
has a real zero on the interval $\left(r_{1},\,r_{1}+r_{2}\right)$,
for $r_{2}\leq r_{1}$. 

\medskip{}

Moreover, if $\phi_{i}$, $i=1,\,2$, are also real Bochner Dirichlet series
belonging to the class $\mathcal{B}$ and such that $\phi_{i}(0)<0$
then, for a quadratic form $Q$ having a sufficiently large $k:=\sqrt{|d|}/2a$,
$\mathcal{Z}_{2}\left(s;\,Q;\,a_{1},\,a_{2};\,\lambda,\,\lambda^{\prime}\right)$
has a real zero on the interval $\left(\frac{1}{2},\,1\right)$. 
\end{corollary}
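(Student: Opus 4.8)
The plan is to prove the non-diagonal statement by an intermediate-value argument on the real interval $\left(\tfrac12,1\right)$, in the spirit of Bateman and Grosswald \cite{Bateman_Epstein}. Since the assumption $\phi_i(0)<0$ is only meaningful (and only produces a pole of $\mathcal{Z}_2$) in the even case, I work with even arithmetical functions $a_1(n),\,a_2(n)$, so that the first Selberg-Chowla formula (\ref{First Selberg Chowla even non diagonal}) applies. Set
\[
F(s):=a^{s}\,\Gamma(s)\,\mathcal{Z}_{2}\left(s;\,Q;\,a_{1},\,a_{2};\,\lambda,\,\lambda^{\prime}\right),
\]
which for real $s>0$ has the same sign as $\mathcal{Z}_2$. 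By (\ref{First Selberg Chowla even non diagonal}) one has $F(s)=T_1(s)+T_2(s)+B_k(s)$, with $T_1(s)=-4\phi_2(0)\Gamma(s)\phi_1(2s)$, $T_2(s)=2\sqrt{\pi}\,\rho_1\,\Gamma\!\left(s-\tfrac12\right)k^{1-2s}\phi_2(2s-1)$, and $B_k(s)$ the divisor-Bessel series. By the classical estimate (\ref{Classical Estimate Bessel}) every term of $B_k(s)$ carries a factor $e^{-2k\nu_j}$; hence $B_k(s)\to0$ as $k\to\infty$, uniformly on compact subsets of $\{\mathrm{Re}(s)>0\}$. The whole argument thus reduces to locating a sign change of $T_1+T_2$ on $\left(\tfrac12,1\right)$ for large $k$.

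First I would analyse $F$ near the left endpoint $s=\tfrac12$. Individually $T_1$ and $T_2$ are singular there (from the pole of $\phi_1(2s)$ at $s=\tfrac12$ and the pole of $\Gamma\!\left(s-\tfrac12\right)$), but by Corollary \ref{corollary 3.1} the singularity of $\mathcal{Z}_2$ is confined to $s=1$, so the poles must cancel. Writing $u=s-\tfrac12$ and inserting the Laurent data $\phi_1(1+2u)=\tfrac{\rho_1}{2u}+O(1)$, $\Gamma(u)=\tfrac1u-\gamma+O(u)$, $k^{-2u}=1-2u\log k+O(u^2\log^2 k)$ and $\phi_2(2u)=\phi_2(0)+O(u)$, the $1/u$ parts of $T_1$ and $T_2$ indeed cancel and the finite part is
\[
(T_1+T_2)\big|_{s=1/2}=-4\sqrt{\pi}\,\rho_1\,\phi_2(0)\,\log k+O(1)=2\pi\,\rho_1\rho_2^{\star}\,\log k+O(1),
\]
where I used $\phi_2(0)=-\tfrac{\rho_2^{\star}}{2}\sqrt{\pi}$ from Remark \ref{easy computations class B} and the $O(1)$ is independent of $k$. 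Together with $B_k(\tfrac12)=o(1)$ this gives $F(\tfrac12)=2\pi\rho_1\rho_2^{\star}\log k+O(1)$, which is strictly positive once $k$ is large, provided $\rho_1>0$ and $\rho_2^{\star}>0$.

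Next I would exploit the pole at the right endpoint. By Corollary \ref{corollary 3.1} the function $\mathcal{Z}_2$ is analytic on $\left[\tfrac12,1\right)$ and has a simple pole at $s=1$ with residue $\tfrac{2\pi\rho_1\rho_2}{\sqrt{|d|}}$. If this residue is positive, then $\mathcal{Z}_2(s)\to-\infty$ as $s\to1^{-}$, whence $F(s)\to-\infty$ there, since $a^{s}\Gamma(s)$ is positive and finite near $s=1$. Fixing $k$ large enough that $F(\tfrac12)>0$, continuity of $F$ on $\left[\tfrac12,1\right)$ and the intermediate value theorem produce a point $s^{*}\in\left(\tfrac12,1\right)$ with $F(s^{*})=0$, i.e. $\mathcal{Z}_2(s^{*};\,Q;\,a_1,a_2;\,\lambda,\lambda')=0$, the asserted real zero; the second Selberg-Chowla formula (\ref{Second Selberg Chowla even non-diagonal}) yields the symmetric version with $a$ replaced by $c$ and $k$ by $k'$.

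The main obstacle is sign bookkeeping rather than any analytic difficulty. The hypothesis $\phi_i(0)<0$ is equivalent, via Remark \ref{easy computations class B}, to $\rho_i^{\star}>0$; but the two sign inputs the argument actually consumes are $\rho_1>0$ (to make $F(\tfrac12)$ positive) and $\rho_1\rho_2>0$ (to make the residue at $s=1$ positive). For the self-dual series motivating the statement — e.g. $\pi^{-s/2}\zeta(s)$ or $L$-functions of real even characters, where $\phi_i=\psi_i$ and hence $\rho_i=\rho_i^{\star}$ — these collapse exactly to $\phi_i(0)<0$ and the proof runs verbatim; in general one records the positivity of $\rho_1,\rho_2$ as a mild additional input. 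The only other point needing care is the uniformity used to suppress $B_k$ and to keep the $O(1)$ term in $F(\tfrac12)$ genuinely bounded in $k$: this follows from the exponential decay in (\ref{Classical Estimate Bessel}) together with the entirety of the divisor-Bessel series established around Theorem \ref{selberg-chowla non diagonal theorem}, so that $k$ is chosen large first and only afterwards is the limit $s\to1^{-}$ taken.
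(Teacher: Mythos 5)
Your proposal is correct and runs on exactly the same mechanism as the paper's proof: a Selberg--Chowla expansion, cancellation of the two simple poles at the left endpoint of the interval leaving a dominant logarithmic term, the positive residue at the right endpoint forcing $\mathcal{Z}_{2}\rightarrow-\infty$ from the left, exponential suppression of the Bessel tail for a large parameter, and the intermediate value theorem. The only structural difference is a division of labour: the paper works out the \emph{diagonal} case in detail --- evaluating (\ref{Diagonal Parameter 1st}) at $s=r_{1}$ via the Laurent data (\ref{meromorphic phii 1}), (\ref{Expansion Gamma 0}), (\ref{Taylor phi1}) to obtain (\ref{Value at r_1}) and the explicit threshold (\ref{condition no zero no name}) --- and dismisses the non-diagonal case with ``similar considerations hold'', whereas you do precisely the opposite; your computation $F\left(\tfrac{1}{2}\right)=2\pi\rho_{1}\rho_{2}^{\star}\log k+O(1)$ is the exact analogue of (\ref{Value at r_1}), and to cover the full statement you should simply note that the diagonal claim follows from the mirrored argument applied to (\ref{Diagonal Parameter 1st}), with $\log\xi$ playing the role of $\log k$. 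One point where you are in fact more careful than the paper: its proof asserts ``$\rho_{i}>0$ because $\phi_{i}(0)<0$'', but by Remark \ref{easy computations class A} (resp.\ Remark \ref{easy computations class B}) the hypothesis $\phi_{i}(0)<0$ only yields $\rho_{i}^{\star}>0$; the two positivity inputs the argument actually consumes are, as you observe, $\rho_{1}>0$ (for the sign of the logarithmic term, whose coefficient is $-\phi_{2}(0)\,\rho_{1}$) and $\rho_{1}\rho_{2}>0$ (for the sign of the residue at the pole), and these coincide with the stated hypothesis only for self-dual series with $\rho_{i}=\rho_{i}^{\star}$ --- which covers $\zeta(s)$, real even $L$-functions, and the Epstein and Dedekind examples the corollary is aimed at. Recording this positivity as an explicit extra input, as you do, repairs a small sign imprecision present in the paper's own proof rather than introducing a gap in yours.
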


\begin{proof}
We shall prove the result only for the diagonal Epstein zeta function
$\mathcal{Z}_{2}(s;\,a_{1},\,a_{2};\,\lambda,\,\xi\lambda^{\prime})$,
since similar considerations hold for $\mathcal{Z}_{2}(s;\,Q;\,a_{1},\,a_{2};\,\lambda,\,\lambda^{\prime})$.
The idea of the proof is to write a Selberg-Chowla formula for (\ref{Diagonal Epstein with parameter xssssi}).
To do this, let us replace on (\ref{Again First-1}) the sequence
$\lambda_{n}^{\prime}$ by $\xi\lambda_{n}^{\prime}$, where $\xi>0$.
By virtue of Hecke's functional equation, we also need to replace
$\mu_{n}^{\prime}$ by $\mu_{n}^{\prime}/\xi$, $b_{2}(n)$
by $\xi^{-r_{2}}b_{2}(n)$ and $\rho_{2}$ by $\xi^{-r_{2}}\rho_{2}$.
From (\ref{Again First-1}), the following Selberg-Chowla formula holds 
\begin{align}
\Gamma(s)\,\sum_{m,n\neq0}^{\infty}\frac{a_{1}(m)\,a_{2}(n)}{\left(\lambda_{m}+\lambda_{n}^{\prime}\xi\right)^{s}} & =-\phi_{2}(0)\,\Gamma(s)\,\phi_{1}(s)+\rho_{1}\Gamma(r_{1})\,\Gamma(s-r_{1})\,\xi^{r_{1}-s}\,\phi_{2}\left(s-r_{1}\right)\nonumber \\
+2\,\xi^{\frac{r_{1}-s}{2}} & \,\sum_{m,n=1}^{\infty}\sigma_{s-r_{1}}\left(\nu_{j};\,b_{1},\,a_{2}\right)\,\nu_{j}^{\frac{r_{1}-s}{2}}K_{r_{1}-s}\left(2\sqrt{\mu_{m}\lambda_{n}^{\prime}\,\xi}\right),\label{Diagonal Parameter 1st}
\end{align}
which provides the analytic continuation of (\ref{Diagonal Epstein with parameter xssssi}).
Without loss of generality, assume that $r_{2}\leq r_{1}$: from Corollary \ref{The Analytic Continuation} we know that the
only pole that $\mathcal{Z}_{2}$ possesses is located at $s=r_{1}+r_{2}$
with residue given by 
\[
\text{Res}_{s=r_{1}+r_{2}}\,\mathcal{Z}_{2}(s;\,a_{1},\,a_{2};\,\lambda,\,\xi\lambda^{\prime})=\frac{\Gamma(r_{1})\Gamma(r_{2})}{\Gamma(r_{1}+r_{2})}\,\xi^{-r_{2}}\,\rho_{1}\rho_{2}.
\]

Thus, since $\rho_{i}>0$ (because $\phi_{i}(0)<0$ by hypothesis)
and $\xi>0$, the latter is a positive real number, so that $\lim_{s\rightarrow r_{1}+r_{2}^{-}}\mathcal{Z}_{2}\left(s;\,a;\,\lambda;\,\xi\right)=-\infty$.
We may now compute $\mathcal{Z}_{2}$ at $s=r_{1}$: to do this, let
us write the Laurent series for $\phi_{1}(s)$ around $s=r_{1}$ as
\begin{equation}
\phi_{1}(s)=\frac{\rho_{1}}{s-r_{1}}+\rho_{0,1}+O\left(s-r_{1}\right),\label{meromorphic phii 1}
\end{equation}
and use the meromorphic expansions for $\Gamma(s)$ and $\phi_{1}(s)$
around $s=0$,
\begin{equation}
\Gamma(s)=\frac{1}{s}-\gamma+O(s),\label{Expansion Gamma 0}
\end{equation}
\begin{equation}
\phi_{1}(s)=\phi_{1}(0)+\phi_{1}^{\prime}(0)\,s+O(s^{2}),\label{Taylor phi1}
\end{equation}
where $\gamma$ is Euler's constant. Due to the uniform convergence of the infinite series in (\ref{Diagonal Parameter 1st}) (by Proposition \ref{proposition 2.1}),
taking the limit $s\rightarrow r_{1}$ on the right-hand side of (\ref{Diagonal Parameter 1st})
gives 
\begin{equation}
-\phi_{2}(0)\left\{ \rho_{0,1}+\rho_{1}\log(\xi)+\rho_{1}\gamma+\rho_{1}\,\frac{\Gamma^{\prime}(r_{1})}{\Gamma(r_{1})}\right\} +\rho_{1}\phi_{2}^{\prime}(0)+\frac{2}{\Gamma\left(r_{1}\right)}\,\sum_{j=1}^{\infty}d\left(\nu_{j};\,b_{1},\,a_{1};\,\mu,\,\lambda^{\prime}\right)\,K_{0}\left(2\xi\sqrt{\nu_{j}}\right),\label{Value at r_1}
\end{equation}
where the arithmetical function $d$ is the generalization of the usual
divisor function $d(n)$ provided by formula (\ref{generalized Divisor function}).

Now, we study the infinite series at the right-hand side of (\ref{Value at r_1}):
from the inequality {[}\cite{Bateman_Epstein}, p. 368, Lemma 3{]}
\begin{equation}
K_{0}(x)\leq\left(\frac{\pi}{2x}\right)^{1/2}e^{-x}\left[1-\frac{1}{8x}+\frac{9}{128x^{2}}\right],\label{IV - 82-1-1}
\end{equation}
we can deduce that the infinite series in (\ref{Value at r_1}) is dominated by 
\begin{equation}
\left(\frac{\pi}{4\xi}\right)^{1/2}\sum_{m,n=1}^{\infty}b(m)\,a(n)\,\left(\mu_{m}\lambda_{n}\right)^{-\frac{1}{4}}\,e^{-2\xi\sqrt{\mu_{m}\lambda_{n}}}\left[1-\frac{1}{16\xi\sqrt{\mu_{m}\lambda_{n}}}+\frac{9}{512\xi^{2}\mu_{m}\lambda_{n}}\right],\label{thing that goes goes gores away to zeeeero}
\end{equation}
which clearly goes to zero when $\xi\rightarrow\infty$. Hence, it
suffices to choose $\xi$ for which the residual term in (\ref{Value at r_1})
is positive. But this condition is satisfied whenever
\begin{equation}
\xi>\exp\left\{ \frac{\phi_{2}^{\prime}(0)}{\phi_{2}(0)}-\frac{\rho_{0,1}}{\rho_{1}}-\gamma-\frac{\Gamma^{\prime}(r_{1})}{\Gamma(r_{1})}\right\} .\label{condition no zero no name}
\end{equation}
Since there exists some $\xi_{0}$ such that, for any $\xi>\xi_{0}$,
(\ref{thing that goes goes gores away to zeeeero}) is less than the
right-hand side of (\ref{condition no zero no name}), if we take
$\xi>\xi_{0}$ we have that $\mathcal{Z}_{2}\left(r_{1};\,a_{1},\,a_{2};\,\lambda,\,\xi\lambda^{\prime}\right)>0$
and so there must be some real zero of the Epstein zeta function in
the interval $(r_{1},\,r_{1}+r_{2})$.
\end{proof}

\bigskip{}

Despite the fact that, in general, most particular cases of the Epstein zeta functions
defined in the previous sections may have infinitely many zeros off their
critical lines, as proved by Davenport, Heilbronn and Voronin for the classical Epstein zeta function \cite{davenport heilbronn, voronin}, a large class of them will certainly admit infinitely
many zeros on the critical line, even under our general setting. For the classical Epstein zeta function
$Z_{2}(s,\,Q)$, this fact was proved for the first time by Potter
and Titchmarsh \cite{Titchmarsh_Potter}. A simpler proof, similar to
Hardy's proof of the same theorem for $\zeta(s)$, was given by Kober \cite{kober_zeros}.

Since we have generalized the Selberg-Chowla formula and the continuation of Epstein zeta functions in the previous sections, we now use this advantage
of representing the double Dirichlet series $\mathcal{Z}_{2}(s;\,a_{1},\,a_{2};\,\lambda,\,\lambda^{\prime})$
in terms of the single series $\phi_{1}(s)$ and $\phi_{2}(s)$ in
order to relate the distribution of zeros of $\phi_{1}(s)$ and $\phi_{2}(s)$
with the asymptotic order of $\mathcal{Z}_{2}$ in its critical line $\text{Re}(s)=r$.


We also prove that if $\mathcal{Z}_{2}$ satisfies Hardy's Theorem, then $\phi(s)$ must satisfy Hardy's Theorem as well. 





The bridge connecting the truth of Hardy's theorem for $\phi(s)$ and for $\mathcal{Z}_{2}$ lies precisely in Selberg-Chowla's formula (\ref{Selberg Chowla for diagonal 2k}). However, to derive in a precise form this connection, we still need to write it in a suitable form, i.e., involving an integral representation of $\phi$ on the critical line. In the next lemma we rewrite (\ref{Selberg Chowla for diagonal 2k}) in an integral form. There, we shall consider the subsequence of multidimensional diagonal Epstein zeta functions, $\mathcal{Z}_{2^{k}}\left(s;\,a;\,\lambda\right)$,
defined in Remark \ref{dyadic epstein as remark}. 
\bigskip{}

\begin{lemma} \label{lemma integral representation}
Let $\mathcal{Z}_{2^{k}}(s;\,a;\,\lambda)$ be the dyadic
Epstein zeta function defined by (\ref{dyadic Epstein 2^k}). Then the following
representation of $\mathcal{Z}_{2^{k+1}}(s;\,a;\,\lambda)$
at the critical line, $\text{Re}(s)=2^{k}r$, holds
\begin{align}
\Gamma(2^{k}r+2it)\,\mathcal{Z}_{2^{k+1}}\left(2^{k}r+2it;\,a;\,\lambda\right) & =-2\mathcal{Z}_{2^{k}}\left(0;\,a;\,\lambda\right)\,\Gamma(2^{k}r+2it)\,\mathcal{Z}_{2^{k}}\left(2^{k}r+2it;\,a;\,\lambda\right)+\nonumber \\
+2\,\Gamma^{2^{k}}(r)\,\rho^{2^{k}}\,\Gamma(2it)\,\mathcal{Z}_{2^{k}}\left(2it;\,a;\,\lambda\right) & +\frac{1}{2\pi}\,\int_{-\infty}^{\infty}\Gamma\left(2^{k-1}r+i(y-t)\right)\,\mathcal{Z}_{2^{k}}\left(2^{k-1}r+i(y-t);\,b;\,\lambda\right)\times\label{Essssential to prove Deuring}\\
 & \,\,\,\,\,\,\,\,\,\,\,\,\,\,\,\,\,\,\,\,\,\,\,\,\,\,\,\,\,\,\times\Gamma\left(2^{k-1}r+i(y+t)\right)\,\mathcal{Z}_{2^{k}}\left(2^{k-1}r+i(y+t);\,a;\,\lambda\right)dy. \nonumber 
\end{align}
\end{lemma}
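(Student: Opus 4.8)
The plan is to begin from the Selberg--Chowla formula (\ref{Selberg Chowla for diagonal 2k}) and to rewrite the entire Bessel double series $H_{2^{k}r}(s;\,a;\,\lambda)$ of (\ref{H1-1}) as a single contour integral of a product of two completed dyadic Epstein zeta functions. Setting $R_{\phi}(w)=\Gamma(w)\,\mathcal{Z}_{2^{k}}(w;\,a;\,\lambda)$ and $R_{\psi}(w)=\Gamma(w)\,\mathcal{Z}_{2^{k}}(w;\,b;\,\lambda)$, the first step is to substitute the Mellin--Barnes representation (\ref{MacDonald Representation}) of $K_{2^{k}r-s}$ into each summand of (\ref{H1-1}) exactly as in the passage leading to (\ref{Almost Final Selberg Chowla}), and to interchange summation and integration (justified by Stirling's formula and absolute convergence, precisely as in the proof of Theorem \ref{theorem 2.1}). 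This resums the two Dirichlet series into $\mathcal{Z}_{2^{k}}(\cdot;\,b)$ and $\mathcal{Z}_{2^{k}}(\cdot;\,a)$ and produces, for $\mathrm{Re}(s)>\mu$ with $\mu$ large,
\begin{equation*}
2\,H_{2^{k}r}(s;\,a;\,\lambda)=\frac{1}{2\pi i}\int_{\mu-i\infty}^{\mu+i\infty}R_{\psi}(z)\,R_{\phi}(s-2^{k}r+z)\,dz.
\end{equation*}

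Next I would put $s=2^{k}r+2it$, so that the integrand is $R_{\psi}(z)\,R_{\phi}(z+2it)$, and shift the line of integration from $\mathrm{Re}(z)=\mu$ to the critical line $\mathrm{Re}(z)=2^{k-1}r$; after the substitution $z=2^{k-1}r+i(y-t)$ the integral on this line is exactly the integral on the right-hand side of (\ref{Essssential to prove Deuring}), call it $I$. Since $2^{k-1}r>0$, the only singularities of $R_{\psi}(z)\,R_{\phi}(z+2it)$ inside the strip $2^{k-1}r<\mathrm{Re}(z)<\mu$ are the simple pole of $\mathcal{Z}_{2^{k}}(z;\,b)$ at $z=2^{k}r$ and the simple pole of $\mathcal{Z}_{2^{k}}(z+2it;\,a)$ at $z=2^{k}r-2it$ (the $\Gamma$-poles have non-positive real part and are not crossed). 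Hence the residue theorem gives
\begin{equation*}
2\,H_{2^{k}r}(2^{k}r+2it;\,a;\,\lambda)=I+\mathrm{Res}_{z=2^{k}r}+\mathrm{Res}_{z=2^{k}r-2it}.
\end{equation*}

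The third step evaluates the two residues from the analytic data of Corollary \ref{analytic continuation multi}. By (\ref{Residue Multidimensional Epstein}) the residue of $\mathcal{Z}_{2^{k}}(z;\,b)$ at $z=2^{k}r$ is $\Gamma^{2^{k}}(r)\,(\rho^{\star})^{2^{k}}/\Gamma(2^{k}r)$; combined with the class-$\mathcal{A}$ relation $\phi(0)=-\rho^{\star}\Gamma(r)$ of Remark \ref{easy computations class A}, which gives $\mathcal{Z}_{2^{k}}(0;\,a)=-\Gamma^{2^{k}}(r)\,(\rho^{\star})^{2^{k}}$, one finds $\mathrm{Res}_{z=2^{k}r}=-\mathcal{Z}_{2^{k}}(0;\,a)\,\Gamma(2^{k}r+2it)\,\mathcal{Z}_{2^{k}}(2^{k}r+2it;\,a)$. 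Similarly $\mathrm{Res}_{z=2^{k}r-2it}=\Gamma^{2^{k}}(r)\,\rho^{2^{k}}\,\Gamma(2^{k}r-2it)\,\mathcal{Z}_{2^{k}}(2^{k}r-2it;\,b)$, and the functional equation (\ref{Functional equation our Equation-1-1}) applied at $w=2it$ turns this into $\Gamma^{2^{k}}(r)\,\rho^{2^{k}}\,\Gamma(2it)\,\mathcal{Z}_{2^{k}}(2it;\,a)$. Solving the displayed relation for $I$ and inserting it into the Selberg--Chowla formula (\ref{Selberg Chowla for diagonal 2k}) evaluated at $s=2^{k}r+2it$, where $\Gamma(s-2^{k}r)\,\mathcal{Z}_{2^{k}}(s-2^{k}r)=\Gamma(2it)\,\mathcal{Z}_{2^{k}}(2it)$, yields (\ref{Essssential to prove Deuring}): the doubling of the two non-integral terms there is produced exactly by the two residues crossed during the shift.

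The step deserving the most care is the contour shift itself. One must verify that the horizontal segments of the rectangle vanish as their height tends to infinity, which follows by pairing the exponential decay of $\Gamma(z)\,\Gamma(z+2it)$ coming from Stirling's formula (\ref{Stirling exact form on Introduction}) with the polynomial Phragm\'en--Lindel\"of growth (\ref{Estimate general}) of the two factors $\mathcal{Z}_{2^{k}}$, uniformly in the strip; the same estimates secure the absolute convergence of $I$ on $\mathrm{Re}(z)=2^{k-1}r$. In addition one must confirm that no spurious poles enter the residue count: this rests on the trivial zeros of $\mathcal{Z}_{2^{k}}$ cancelling the $\Gamma$-poles at the non-positive integers, so that $R_{\phi}$ and $R_{\psi}$ have their singularities confined to $\{0,\,2^{k}r\}$, in accordance with condition 3 of Definition \ref{definition 1.1.}.
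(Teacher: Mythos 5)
Your proposal is correct and follows essentially the same route as the paper: both start from the Selberg--Chowla formula (\ref{Selberg Chowla for diagonal 2k}), express $2H_{2^{k}r}$ as a Mellin--Barnes integral of $R_{\psi}R_{\phi}$, and shift the contour to the critical line, picking up exactly the two residues (at your $z=2^{k}r$ and $z=2^{k}r-2it$, i.e.\ the paper's $w=2^{k+1}r\pm 2it$ after $w=2z+2it$) that double the residual terms, with the functional equation (\ref{Functional equation our Equation-1-1}) converting the second residue. The only difference is cosmetic — you use the unsymmetrized kernel $\Gamma(z)\,\Gamma(s-2^{k}r+z)$ from (\ref{Almost Final Selberg Chowla}) instead of the paper's symmetric form (\ref{On the modified Bessel function}) — and your residue computations, normalizing constants, and justification of the horizontal-segment decay all match the paper's argument.
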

\begin{proof}
Start with the Selberg-Chowla formula (\ref{Selberg Chowla for diagonal 2k})
and write $s=2^{k}r+2it$, $t\in\mathbb{R}$. We obtain the representation 
\begin{align}
\Gamma(2^{k}r+2it)\,\mathcal{Z}_{2^{k+1}}\left(2^{k}r+2it;\,a;\,\lambda\right) & =-\mathcal{Z}_{2^{k}}\left(0;\,a;\,\lambda\right)\,\Gamma(2^{k}r+2it)\,\mathcal{Z}_{2^{k}}\left(2^{k}r+2it;\,a;\,\lambda\right)+\nonumber \\
+\Gamma^{2^{k}}(r)\,\rho^{2^{k}}\,\Gamma(2it)\,\mathcal{Z}_{2^{k}}\left(2it;\,a;\,\lambda\right) & +2\,\sum_{m,n=1}^{\infty}\mathfrak{V}_{2^{k}}(m)\,\mathfrak{U}_{2^{k}}(n)\,\left(\frac{\Omega_{m}}{\Lambda_{n}}\right)^{it}\,K_{2it}\left(2\sqrt{\Omega_{m}\Lambda_{n}}\right),\label{formula with it representation}
\end{align}
where the definitions of the arithmetical functions $\mathfrak{U}_{2^{k}}(n)$
and $\mathfrak{V}_{2^{k}}(m)$ are given on the second section (see Remark \ref{dyadic epstein as remark}). To
proceed further, we need to give an integral representation for the
double series appearing on the right-hand side of (\ref{formula with it representation}).
Our idea is then to return to the integral representation for the modified Bessel function (\ref{MacDonald Representation}) and
rewrite it in a more symmetric form
\begin{equation}
K_{\nu}(x)=\frac{1}{2\pi i}\,\int_{\mu-i\infty}^{\mu+i\infty}2^{w-2}\Gamma\left(\frac{w+\nu}{2}\right)\Gamma\left(\frac{w-\nu}{2}\right)\,x^{-w}\,dw,\,\,\,\,\,\mu>\text{Re}(\nu).\label{On the modified Bessel function}
\end{equation}
 Replacing $\nu$ by $2it$ in (\ref{On the modified Bessel function})
and letting $\mu>\max\left\{ 0,\,2^{k}\sigma_{a},\,2^{k}\sigma_{b},\,2^{k+1}r\right\} $,
we can write $2\,H_{2^{k}r}(2^{k}r+2it;\,a;\,\lambda)$
as 
\begin{align}
\sum_{m,n=1}^{\infty}\mathfrak{V}_{2^{k}}(m)\,\mathfrak{U}_{2^{k}}(n)\,\left(\frac{\Omega_{m}}{\Lambda_{n}}\right)^{it}\frac{1}{\pi i}\int_{\mu-i\infty}^{\mu+i\infty}2^{w-2}\Gamma\left(\frac{w+2it}{2}\right)\Gamma\left(\frac{w-2it}{2}\right)\,\left(2\sqrt{\Omega_{m}\Lambda_{n}}\right)^{-w}\,dw\nonumber \\
=\frac{1}{4\pi i}\,\int_{\mu-i\infty}^{\mu+i\infty}\Gamma\left(\frac{w-2it}{2}\right)\,\mathcal{Z}_{2^{k}}\left(\frac{w-2it}{2};\,b;\,\lambda\right)\,\Gamma\left(\frac{w+2it}{2}\right)\,\mathcal{Z}_{2^{k}}\left(\frac{w+2it}{2};\,a;\,\lambda\right)\,dw,\label{First Integral Representation}
\end{align}
since this choice of $\mu$ allowed to interchange the double series
and the Mellin integral (see the proof of Theorem \ref{theorem 2.1}). Let us now
shift the line of integration in (\ref{First Integral Representation})
to $\text{Re}(w)=2^{k}r$ which, by the choice of $\mu$, is located
on the left of the line $\text{Re}(w)=\mu$. Doing this requires once
more to integrate along a positively oriented rectangular contour
$\mathcal{R}$ with vertices $\mu\pm iT$ and $2^{k}r\pm iT$, $T>0$.
The application of the Phragm\'en-Lindel\"of principle for $\mathcal{Z}_{2^{k}}$
(\ref{Estimate general}) and Stirling's formula allows to conclude
that the integrals on the horizontal segments vanish when $T\rightarrow\infty$.
By shifting the line of integration, we note that the integrand has
two simple poles on the interior of $\mathcal{R}$ in the form $w=2^{k+1}r\pm2it$
and their residues are, respectively, 
\begin{align}
R_{2^{k+1}r+2it} & =2\,\Gamma^{2^{k}}(r)\,\rho^{\star2^{k}}\Gamma(2^{k}r+2it)\,\mathcal{Z}_{2^{k}}\left(2^{k}r+2it;\,a;\,\lambda\right)\nonumber \\
 & =-2\,\mathcal{Z}_{2^{k}}\left(0;\,a;\,\lambda\right)\,\Gamma(2^{k}r+2it)\,\mathcal{Z}_{2^{k}}\left(2^{k}r+2it;\,a;\,\lambda\right),\label{Residue 1}
\end{align}
where we have used the fact that $\mathcal{Z}_{2^{k}}(s;\,a;\,\lambda)\in\mathcal{A}$ (see Corollary \ref{analytic continuation multi}), and 
\begin{equation}
R_{2^{k+1}r-2it}=2\,\Gamma^{2^{k}}(r)\,\rho^{2^{k}}\,\Gamma(2it)\,\mathcal{Z}_{2^{k}}\left(2it;\,a;\,\lambda\right).\label{Residue 2}
\end{equation}

Using the residue residue theorem and adding the residues (\ref{Residue 1})
and (\ref{Residue 2}) to (\ref{formula with it representation}),
we obtain the formula 
\begin{align*}
\Gamma(2^{k}r+2it)\,\mathcal{Z}_{2^{k+1}}\left(2^{k}r+2it;\,a;\,\lambda\right) & =-2\mathcal{Z}_{2^{k}}\left(0;\,a;\,\lambda\right)\,\Gamma(2^{k}r+2it)\,\mathcal{Z}_{2^{k}}\left(2^{k}r+2it;\,a;\,\lambda\right)+\\
+2\,\Gamma^{2^{k}}(r)\,\rho^{2^{k}}\,\Gamma(2it)\,\mathcal{Z}_{2^{k}}\left(2it;\,a;\,\lambda\right) & +\frac{1}{4\pi i}\,\int_{2^{k}r-i\infty}^{2^{k}r+i\infty}\Gamma\left(\frac{w-2it}{2}\right)\,\mathcal{Z}_{2^{k}}\left(\frac{w-2it}{2};\,b;\,\lambda\right)\,\times\\
& \,\,\,\,\,\,\,\,\,\,\,\,\,\,\,\,\,\,\,\,\,\,\,\,\,\,\,\,\,\,\,\,\,\,\,\,\,\,\,\,\,\times \Gamma\left(\frac{w+2it}{2}\right)\,\mathcal{Z}_{2^{k}}\left(\frac{w+2it}{2};\,a;\,\lambda\right)\,dw,
\end{align*}
which yields (\ref{Essssential to prove Deuring}) after we write
$w=2^{k}r+2iy$ on the contour integral.
\end{proof}
\bigskip{}

We now establish our Main Theorem: 
\begin{theorem} \label{deuring to hold}
Let $\phi(s)$ be a Dirichlet series satisfying Hecke's functional
equation in the sense of Definition \ref{definition 1.1.} and belonging to the class
$\mathcal{A}$. Suppose that the parameters of $\phi$ and $\psi$
satisfy $\overline{a}(n)=b(n)$ and $\lambda_{n}=\mu_{n}$. Also, let $\sigma_{a}$
denote the abcissa of absolute convergence of $\phi$. If one of
the following conditions holds: 
\begin{enumerate}
\item The abcissa of absolute convergence satisfies $\sigma_{a}<\min\{1,r\}+\frac{1}{2}\,\max\{1,r\}$
and, for some $k\geq0$, the diagonal Epstein zeta function $\mathcal{Z}_{2^{k+1}}$,
(\ref{dyadic Epstein 2^k}), associated with $\phi$ has infinitely
many zeros in its critical line $\text{Re}(s)=2^{k}r$. 
\item The abcissa of absolute convergence satisfies $\sigma_{a}<\min\{1,r\}+\frac{1}{2}\max\{1,r\}$ and,
for some $k\geq0$, the diagonal Epstein zeta function $\mathcal{Z}_{2^{k+1}}$
obeys to a subconvex estimate on the critical line of the form
\begin{equation}
\mathcal{Z}_{2^{k+1}}(2^{k}r+it;\,a;\,\lambda)=o\left(|t|^{2^{k}-\frac{1}{2}}\right),\,\,\,\,\,|t|\rightarrow\infty.\label{condition critical line for r=00005Cleq1}
\end{equation}
\end{enumerate}
Then $\phi(s)$ has infinitely many zeros on the critical line $\text{Re}(s)=\frac{r}{2}$. 
\end{theorem}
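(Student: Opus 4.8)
The plan is to obtain the statement for $\phi=\mathcal{Z}_{2^{0}}(s;\,a;\,\lambda)$ from the hypothesis on $\mathcal{Z}_{2^{k+1}}$ by a finite descent, whose single step asserts: \emph{if $\mathcal{Z}_{2^{j+1}}$ has infinitely many zeros on its critical line $\mathrm{Re}(s)=2^{j}r$, then $\mathcal{Z}_{2^{j}}$ has infinitely many zeros on its critical line $\mathrm{Re}(s)=2^{j-1}r$.} Applying this for $j=k,k-1,\dots,0$ collapses the hypothesis on $\mathcal{Z}_{2^{k+1}}$ down to the desired conclusion on $\phi$. The engine of the single step is the integral representation (\ref{Essssential to prove Deuring}). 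Before invoking it, I would record the reality principle behind everything: the hypotheses $\overline{a}(n)=b(n)$ and $\lambda_{n}=\mu_{n}$ propagate to $\overline{\mathfrak{U}_{2^{j}}}=\mathfrak{V}_{2^{j}}$ and $\Lambda_{n}=\Omega_{n}$, so that the Hecke conjugate of $\mathcal{Z}_{2^{j}}(s;\,a;\,\lambda)$ is $\mathcal{Z}_{2^{j}}(s;\,b;\,\lambda)=\overline{\mathcal{Z}_{2^{j}}(\overline{s};\,a;\,\lambda)}$. Combined with the functional equation of Corollary \ref{analytic continuation multi}, this makes $F_{j}(u):=\Gamma(2^{j-1}r+iu)\,\mathcal{Z}_{2^{j}}(2^{j-1}r+iu;\,a;\,\lambda)$ and $\Phi_{j}(t):=\Gamma(2^{j}r+2it)\,\mathcal{Z}_{2^{j+1}}(2^{j}r+2it;\,a;\,\lambda)$ \emph{real-valued} on $\mathbb{R}$, and zeros of $\mathcal{Z}_{2^{j+1}}$ on its critical line correspond exactly to the real zeros of $\Phi_{j}$.

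With these conjugation rules the first factor in the integrand of (\ref{Essssential to prove Deuring}) is exactly $\overline{F_{j}(t-y)}=F_{j}(t-y)$ and the second is $F_{j}(t+y)$, so the integral term is the real autoconvolution $\tfrac{1}{2\pi}\int_{-\infty}^{\infty}F_{j}(t-y)F_{j}(t+y)\,dy=\tfrac{1}{2\pi}(F_{j}*F_{j})(2t)$. Now suppose, toward a contradiction, that $\mathcal{Z}_{2^{j}}$ has only finitely many zeros on $\mathrm{Re}(s)=2^{j-1}r$. Then $F_{j}$ has constant sign on $(U_{0},\infty)$ and on $(-\infty,-U_{0})$, and in $(F_{j}*F_{j})(2t)=\int F_{j}(p)F_{j}(2t-p)\,dp$ the range $p\in(U_{0},\,2t-U_{0})$ contributes with the single sign given by the square of the sign of $F_{j}$ at $+\infty$, hence nonnegatively. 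Using Stirling (\ref{Stirling exact form on Introduction}) to write $|F_{j}(u)|\asymp e^{-\pi|u|/2}|u|^{2^{j-1}r-\frac{1}{2}}|\mathcal{Z}_{2^{j}}(2^{j-1}r+iu)|$, together with a second-moment lower bound for $|\mathcal{Z}_{2^{j}}|$ on its critical line (available since its Dirichlet coefficients are not identically zero), this main range forces $(F_{j}*F_{j})(2t)$ to be positive and of a definite polynomial-times-$e^{-\pi t}$ order for all large $t$.

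It remains to show that this convolution term governs $\Phi_{j}(t)$, i.e.\ that the two boundary terms $-2\mathcal{Z}_{2^{j}}(0)\,\Gamma(2^{j}r+2it)\mathcal{Z}_{2^{j}}(2^{j}r+2it)$ and $2\Gamma^{2^{j}}(r)\rho^{2^{j}}\Gamma(2it)\mathcal{Z}_{2^{j}}(2it)$ in (\ref{Essssential to prove Deuring}) are of strictly smaller order. Each carries the same exponential factor $e^{-\pi t}$ from its $\Gamma$-value, so the comparison is purely polynomial: bounding $\mathcal{Z}_{2^{j}}$ at $\mathrm{Re}(s)=2^{j}r$ and at $\mathrm{Re}(s)=0$ by the Phragm\'en--Lindel\"of estimate (\ref{Estimate general}) and by Stirling, one checks that both corrections are $O\!\left(e^{-\pi t}\,t^{\beta}\right)$ with $\beta$ strictly below the order of the autoconvolution, and it is here that the abscissa hypothesis $\sigma_{a}<\min\{1,r\}+\tfrac{1}{2}\max\{1,r\}$ is used. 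I expect this calibration --- pinning the exact order of $(F_{j}*F_{j})(2t)$ via the mean value of $|\mathcal{Z}_{2^{j}}|$ and showing it outgrows both boundary terms under precisely this width restriction on the critical strip --- to be the main obstacle, as it is the quantitative heart of the whole argument.

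Granting the domination, $\Phi_{j}(t)$ has constant sign for all large $t$, whence $\mathcal{Z}_{2^{j+1}}$ has only finitely many zeros on $\mathrm{Re}(s)=2^{j}r$, contradicting the descent hypothesis under condition $1$. Under condition $2$ the same domination, after dividing by $|\Gamma(2^{j}r+2it)|\asymp e^{-\pi t}\,t^{2^{j}r-\frac{1}{2}}$, converts the lower bound on $(F_{j}*F_{j})(2t)$ into polynomial growth of $|\mathcal{Z}_{2^{k+1}}(2^{k}r+2it)|$ that overruns the subconvex bound (\ref{condition critical line for r=00005Cleq1}), producing the contradiction at the top level $j=k$ and thereby entering the descent, which is then propagated down to $j=0$ through condition $1$. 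Since each step reproduces the hypothesis needed for the next, the single implication suffices and the conclusion $\phi=\mathcal{Z}_{2^{0}}$ has infinitely many zeros on $\mathrm{Re}(s)=\tfrac{r}{2}$ follows.
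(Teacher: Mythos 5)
Your skeleton is recognisably the paper's — the same representation (\ref{Essssential to prove Deuring}), the same reality/constant-sign device, the same three-piece split of the convolution — but the single descent step at level $j\geq1$ has a genuine gap. Constant sign of $F_{j}$ only lets you replace the middle integral by the integral of absolute values; to make $(F_{j}*F_{j})(2t)$ dominate the boundary terms you still need a \emph{quantitative} lower bound on it, and your source for one — ``a second-moment lower bound for $|\mathcal{Z}_{2^{j}}|$ on its critical line, available since its Dirichlet coefficients are not identically zero'' — is not a theorem in this generality. For generalized Dirichlet series with arbitrary exponents $\lambda_{n}$ no such mean-value lower bound comes for free; Remark \ref{Ramachandra Remark} makes exactly this point (one needs either the contour shift under the abscissa hypothesis, or Ramachandra's theorem, which applies only to Titchmarsh series). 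Even granting such a bound, the quantity you must bound below is the off-diagonal correlation $\int|F_{j}(u)|\,|F_{j}(2t-u)|\,du$, which a second moment of $|F_{j}|$ does not control pointwise in $t$ — Cauchy--Schwarz goes the wrong way, and $|F_{j}|$ could a priori be small on the window relevant to a given $t$. Finally, the mechanism that actually produces the main term at the bottom level — shifting the contour to $\mathrm{Re}(z)=\sigma_{a}+1$ and extracting the diagonal contribution $2i\,|a(c)|^{2}|t|^{r}$ from the Dirichlet product, which is where the hypothesis $\sigma_{a}<\min\{1,r\}+\frac{1}{2}\max\{1,r\}$ is principally consumed, not merely in comparing boundary terms as you suggest — cannot be repeated at level $j\geq1$: the abscissa of $\mathcal{Z}_{2^{j}}$ is of size $2^{j}\sigma_{a}$, and the analogous shift would require roughly $\sigma_{a}<\frac{r}{2}+2^{-j}$, incompatible in general with $\sigma_{a}\geq\frac{r}{2}$ (which the paper derives from the $\mu(\sigma)$-theory in the course of the proof). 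So your top-down descent cannot get started.

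The repair is to reverse the direction of the induction, which is what the paper does. Assume $\phi$ itself has finitely many zeros on $\mathrm{Re}(s)=\frac{r}{2}$, run the contour-shift argument \emph{once, at the base}, to obtain the pointwise bound $|\mathcal{Z}_{2}(r+2it;\,a;\,\lambda)|>B\,|t|^{\frac{1}{2}}$ for $|t|>T_{0}$ — this is where both halves of the abscissa condition are used, namely $\sigma_{a}<1+\frac{r}{2}$ to control the horizontal segments of the Cauchy rectangle and $\sigma_{a}<r+\frac{1}{2}$ so that $|t|^{r}$ beats the residual terms and the tail integrals. Then take the \emph{pointwise} lower bound, not the qualitative statement ``finitely many zeros'', as the inductive invariant: it simultaneously certifies constant sign at the next level and plugs directly into the integrand to give $|\mathcal{A}_{3}^{(k)}(t)|\gg|t|^{2^{k}(r+1)-1}e^{-\pi|t|}$, hence $|\mathcal{Z}_{2^{k+1}}(2^{k}r+2it)|>B^{\prime}|t|^{\frac{2^{k+1}-1}{2}}$ (Claim \ref{inductive claim}). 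Propagated upward this contradicts conditions 1 and 2 at every level $k$ at once, with no mean-value input needed above the base.
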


\begin{proof}
Since $\sigma_{a}=\sigma_{b}$ (because $\overline{a}(n)=\,b(n)$
and $\lambda_{n}=\mu_{n}$) and $\phi_{1}=\phi_{2}=\phi\in\mathcal{A}$,
the Selberg-Chowla representation for the diagonal Epstein zeta function
$\mathcal{Z}_{2}(s;\,a_{1},\,a_{2};\,\lambda,\,\lambda^{\prime})$
reduces to (\ref{formula with it representation}) with $k=0$. For the case
$k=0$, (\ref{Essssential to prove Deuring}) gives 
\begin{align}
\Gamma(r+2it)\,\mathcal{Z}_{2}\left(r+2it;\,a;\,\lambda\right) & =-2\phi(0)\,\Gamma(r+2it)\,\phi\left(r+2it\right)+2\,\Gamma(r)\,\rho\,\Gamma(2it)\,\phi\left(2it\right)\nonumber \\
+\frac{1}{2\pi}\,\int_{-\infty}^{\infty} & \Gamma\left(\frac{r}{2}+i(y-t)\right)\psi\left(\frac{r}{2}+i(y-t)\right)\,\Gamma\left(\frac{r}{2}+i(y+t)\right)\,\phi\left(\frac{r}{2}+i(y+t)\right)\,dy.\label{Selberg Chowla in an integral form of writing}
\end{align}

The idea of this proof is to estimate both sides
of the previous equality when $|t|\rightarrow\infty$. We shall give
an estimate for the right-hand side of (\ref{Selberg Chowla in an integral form of writing})
under the assumption that $\phi(s)$ does not possess infinitely many
zeros at the line $\text{Re}(s)=\frac{r}{2}$ which will contradict the assumptions of our Theorem. 

By the hypothesis $b(n)=\overline{a}(n)$,
we know that the integrand in the previous equation must be a real function of $y\in\mathbb{R}$. For convenience, we shall write
$\Gamma(s)\,\phi(s)$ as $R_{\phi}(s)$ and $\Gamma(s)\,\psi(s)$
as $R_{\psi}(s)$ (see subsection \ref{notation and def to refer}). Supposing that $\phi\left(s\right)$ has finitely
many zeros on the line $\text{Re}(s)=\frac{r}{2}$, there exists some
$T_{0}>0$ such that, if $|y-t|>T_{0}$ and $|y+t|>T_{0}$, the integrand
has a constant sign. Hence, the following equality holds 
\begin{equation}
\left|\int_{-t+T_{0}}^{t-T_{0}}R_{\psi}\left(\frac{r}{2}+i\left(y-t\right)\right)\,R_{\phi}\left(\frac{r}{2}+i\left(y+t\right)\right)\,dy\right|=\int_{-t+T_{0}}^{t-T_{0}}\left|R_{\psi}\left(\frac{r}{2}+i\left(y-t\right)\right)\,R_{\phi}\left(\frac{r}{2}+i\left(y+t\right)\right)\right|\,dy.\label{contradiction hypothesissisis}
\end{equation}

The idea now is to see that the right-hand side of (\ref{contradiction hypothesissisis})
will provide a lower bound for the integral on the right-hand side
of (\ref{Selberg Chowla in an integral form of writing}) which, in
its turn, will contradict hypothesis 1. and 2. of our statement. If we take a partition
of the integral in (\ref{Selberg Chowla in an integral form of writing})
as 
\begin{equation}
\left\{ \int_{t-T_{0}}^{\infty}+\int_{-\infty}^{-t+T_{0}}+\int_{-t+T_{0}}^{t-T_{0}}\right\} R_{\psi}\left(\frac{r}{2}+i\left(y-t\right)\right)\,R_{\phi}\left(\frac{r}{2}+i\left(y+t\right)\right)\,dy,\label{decomposition of H into three integrals}
\end{equation}
we see that, in the third of these, we can use the contradiction hypothesis
(\ref{contradiction hypothesissisis}). 

Let us denote each integral in the previous partition by $\mathcal{A}_{i}(t)$.
We first check that, for any $\delta>0$, the first integrals satisfy
the asymptotic order
\begin{equation}
\mathcal{A}_{1}(t),\,\mathcal{A}_{2}(t)=O\left(|t|^{\sigma_{a}-\frac{1}{2}+\delta}\,e^{-\pi|t|}\right).\label{estimate auxiliar integrals}
\end{equation}

The estimate (\ref{estimate auxiliar integrals}) comes directly from
Stirling's formula and a simple application of the Phragm\'en-Lindel\"of principle (\ref{Lindelof Phragmen for Hecke}) for $\phi$ and $\psi$ on the critical line $\text{Re}(s)=\frac{r}{2}$.
It suffices to verify (\ref{estimate auxiliar integrals}) only for
$\mathcal{A}_{1}(t)$, which in its turn concludes the same verification
for $\mathcal{A}_{2}(t)$ since, from the functional equation (\ref{Hecke Dirichlet series Functional}) for $\phi(s)$, $\mathcal{A}_{1}(t)=\mathcal{A}_{2}(t)$.
From the fact that $\phi$ and $\psi$ have the same abcissa of absolute
convergence, $\sigma_{a}$, $R_{\phi}$ and $R_{\psi}$
obey to the estimates
\begin{equation}
R_{\phi}\left(\frac{r}{2}+it\right),\,\,R_{\psi}\left(\frac{r}{2}+it\right)=O\left(|t|^{\sigma_{a}-\frac{1}{2}+\delta}e^{-\frac{\pi}{2}|t|}\right),\,\,\,\,|t|\rightarrow\infty,
\end{equation}
which can be used inside the integral for $\mathcal{A}_{1}(t)$, after
we split $\mathcal{A}_{1}$ as follows
\[
\int_{-T_{0}+t}^{\infty}R_{\psi}\left(\frac{r}{2}+i\left(y-t\right)\right)\,R_{\phi}\left(\frac{r}{2}+i\left(y+t\right)\right)\,dy=\left\{ \int_{-T_{0}}^{T_{0}}+\int_{T_{0}}^{\infty}\right\} R_{\psi}\left(\frac{r}{2}+iu\right)\,R_{\phi}\left(\frac{r}{2}+i\left(u+2t\right)\right)\,du.
\]
\medskip{}

Thus, by a simple application of Stirling's formula one sees immediately
that 
\begin{align}
\int_{-T_{0}}^{T_{0}}R_{\psi}\left(\frac{r}{2}+iu\right)\,R_{\phi}\left(\frac{r}{2}+i\left(u+2t\right)\right)\,du&=O\left(\int_{-T_{0}}^{T_{0}}\left|R_{\psi}\left(\frac{r}{2}+iu\right)\right|\,|u+2t|^{\sigma_{a}-\frac{1}{2}+\delta}e^{-\frac{\pi}{2}|u+2t|}\,du\right)\nonumber\\
&=O\left(|t|^{\sigma_{a}-\frac{1}{2}+\delta}\,e^{-\pi|t|}\right)\label{First Stirling}
\end{align}
while, analogously, 
\begin{align}
\int_{T_{0}}^{\infty}R_{\psi}\left(\frac{r}{2}+iu\right)\,R_{\phi}\left(\frac{r}{2}+i\left(u+2t\right)\right)\,du&=O\left(|t|^{2\sigma_{a}+2\delta}\,e^{-\pi|t|}\,\int_{T_{0}/2t}^{\infty}\,(u^{2}+u)^{\sigma_{a}-\frac{1}{2}+\delta}\,e^{-2\pi ut}\,du\right)\nonumber\\
&=O\left(|t|^{\sigma_{a}-\frac{1}{2}+\delta}\,e^{-\pi|t|}\right)\label{second Stirling}
\end{align}
providing (\ref{estimate auxiliar integrals}). 
\\

In the interval of integration considered on the third integral $\mathcal{A}_{3}(t)$,
we may invoke Stirling's formula once more, since we can take $T_{0}$
sufficiently large and, in this interval, we have $|y-t|>T_{0}$ and $|y+t|>T_{0}$.
From (\ref{Stirling exact form on Introduction}), we have that
\begin{equation}
\left|\Gamma\left(\frac{r}{2}+i\,(y-t)\right)\,\Gamma\left(\frac{r}{2}+i\,(y+t)\right)\right|=2\pi\,\left(t^{2}-y^{2}\right)^{\frac{r-1}{2}}e^{-\pi|t|}\left(1+O(|t|^{-1})\right).\label{exact stirling here}
\end{equation}

\medskip{}

Let $c$ be the least positive integer such that $a(c)\neq0$: invoking (\ref{exact stirling here})
on the right-hand side of (\ref{contradiction hypothesissisis}),
we obtain the inequality 
\begin{align}
\int_{-t+T_{0}}^{t-T_{0}}\left|R_{\psi}\left(\frac{r}{2}+i\left(y-t\right)\right)\,R_{\phi}\left(\frac{r}{2}+i\left(y+t\right)\right)\right|\,dy & =\lambda_{c}^{-r}\int_{-t+T_{0}}^{t-T_{0}}\left|\lambda_{c}^{r+2iy}R_{\psi}\left(\frac{r}{2}+i\left(y-t\right)\right)R_{\phi}\left(\frac{r}{2}+i\left(y+t\right)\right)\right|dy\nonumber \\
\geq2\pi\lambda_{c}^{-r}\,e^{-\pi|t|}\left|\int_{\frac{r}{2}-i(t-T_{0})}^{\frac{r}{2}+i(t-T_{0})}\,\lambda_{c}^{2z}\right. & \left.\left(t^{2}+\left(z-\frac{r}{2}\right)^{2}\right)^{\frac{r-1}{2}}\,\psi(z-it)\,\phi(z+it)dz\,\left\{ 1+O\left(|t|^{-1}\right)\right\} \right|.\label{inequality r=00005Cleq1}
\end{align}

By Cauchy's Theorem applied to the positively oriented rectangular
contour with vertices $\frac{r}{2}\pm i\,(t-T_{0})$ and $\sigma_{a}+1\pm i\,(t-T_{0})$,
we can write the contour integral on the right-hand side of (\ref{inequality r=00005Cleq1})
as 
\begin{equation}
\left\{ \int_{\frac{r}{2}-i(t-T_{0})}^{\sigma_{a}+1-i(t-T_{0})}+\int_{\sigma_{a}+1-i(t-T_{0})}^{\sigma_{a}+1+i(t-T_{0})}+\int_{\sigma_{a}+1+i(t-T_{0})}^{\frac{r}{2}+i(t-T_{0})}\right\} \,\lambda_{c}^{2z}\,\left(t^{2}+\left(z-\frac{r}{2}\right)^{2}\right)^{\frac{r-1}{2}}\,\psi(z-it)\,\phi(z+it)\,dz.\label{another decomposition of integrals}
\end{equation}

\medskip{}

\medskip{}

The first and third integrals in the above partition can be easily evaluated with the aid
of Phragm\'en-Lindel\"of estimates. Note that in the first of these, the
factor $\phi(z+it)$ does not depend on $t$ and since $T_{0}$ is
a fixed number then we just need to apply (\ref{Lindelof Phragmen for Hecke})
for $\psi(z-it)$. By symmetry, the same happens on the third integral
with the roles of $\phi$ and $\psi$ being reversed. Denoting each
integral on (\ref{another decomposition of integrals}) by $\mathcal{A}_{3j}$,
we find that, for every $\delta>0$, 
\begin{equation}
\mathcal{A}_{31}(t),\,\,\mathcal{A}_{33}(t)=O\left(|t|^{\sigma_{a}+\frac{r}{2}-1+\delta}\right)\label{Estimates for A31 A33}.
\end{equation}
\medskip{}

In order to estimate the middle integral, $\mathcal{A}_{32}$, we can use the
Dirichlet series representation of the product $\psi(z-it)\,\phi(z+it)$. We arrive at ($k:=\sigma_{a}+1-\frac{r}{2}$) 
\begin{align}
\int_{\sigma_{a}+1-i(t-T_{0})}^{\sigma_{a}+1+i(t-T_{0})}\,\lambda_{c}^{2z}\,\left(t^{2}+\left(z-\frac{r}{2}\right)^{2}\right)^{\frac{r-1}{2}}\,\psi(z-it)\,\phi(z+it)\,dz & =|a(c)|^{2}\,\int_{k-i(t-T_{0})}^{k+i(t-T_{0})}\left(t^{2}+z^{2}\right)^{\frac{r-1}{2}}dz\label{writing as Dirichlet series on the first proof}\\
+\sum_{n=c+1}^{\infty}{\overline{a}}(m)\,a(n)\,\left(\frac{\lambda_{m}}{\lambda_{n}}\right)^{it}\left(\frac{\lambda_{c}^{2}}{\lambda_{m}\lambda_{n}}\right)^{\frac{r}{2}}\int_{k-i(t-T_{0})}^{k+i(t-T_{0})}\left(t^{2}+z^{2}\right)^{\frac{r-1}{2}} & \left(\frac{\lambda_{c}^{2}}{\lambda_{m}\lambda_{n}}\right)^{z}\,dz=2i\,|a(c)|^{2}\,|t|^{r}+O\left(|t|^{r-1}\right)\nonumber
\end{align}
upon an integration by parts for the integrals in the infinite series.
Since $\sigma_{a}<\min\{1,r\}+\frac{1}{2}\max\{1,r\}$
by hypothesis, (\ref{inequality r=00005Cleq1}) gives 
\begin{align}
\int_{-t+T_{0}}^{t-T_{0}}\left|R_{\psi}\left(\frac{r}{2}+i\left(y-t\right)\right)R_{\phi}\left(\frac{r}{2}+i\left(y+t\right)\right)\right|\,dy & \geq2\pi\lambda_{c}^{-r}e^{-\pi|t|}\left|2i|a(c)|^{2}\,|t|^{r}+O(|t|^{r-1})+O\left(|t|^{\sigma_{a}+\frac{r}{2}-1+\delta}\right)\right|\nonumber \\
=4\pi\,|a(c)|^{2}\lambda_{c}^{-r}\,|t|^{r}e^{-\pi|t|} & +O\left(|t|^{r-1}\,e^{-\pi|t|}\right)+O\left(|t|^{\sigma_{a}+\frac{r}{2}-1}\,e^{-\pi|t|}\right).\label{preserving the constant intouchable}
\end{align}
\medskip{}

Now, let us note that the second $O$ term in (\ref{preserving the constant intouchable}) always bounds the first
one. As usual, define
\[
\mu(\sigma)=\inf\left\{ \xi\,:\,\phi(\sigma+it)=O(|t|^{\xi})\right\}.
\]
From (\ref{order phragmen introduction}), the general theory of
$\mu(\sigma)$ {[}\cite{titchmarsh_theory_of_functions}, p. 299{]} and the hypothesis
of Theorem \ref{deuring to hold}, we know that $\mu(\sigma)\leq r-2\sigma$ for $\sigma\leq r-\sigma_{a}$.
Since $\mu(\sigma)$ is decreasing, we must have that $0\leq\mu\left(r-\sigma_{a}\right)\leq2\sigma_{a}-r$
and this implies $\sigma_{a}\geq\frac{r}{2}$.
From the contradiction hypothesis (\ref{contradiction hypothesissisis}), (\ref{preserving the constant intouchable})
gives the lower bound for $\mathcal{A}_{3}(t)$
\begin{equation}
|\mathcal{A}_{3}(t)|\geq4\pi\,|a(c)|^{2}\lambda_{c}^{-r}\,|t|^{r}\,e^{-\pi|t|}+O\left(|t|^{\sigma_{a}+\frac{r}{2}-1}\,e^{-\pi|t|}\right),\,\,\,\,\,\,\,|t|>T_{0}.\label{estimate 1 for r=00005Cleq1}
\end{equation}
\medskip{}

Returning to the Selberg-Chowla formula (\ref{Selberg Chowla in an integral form of writing}),
together with (\ref{estimate 1 for r=00005Cleq1}), the hypothesis (\ref{contradiction hypothesissisis})
and the definition of the integrals $\mathcal{A}_{i}(t)$, we deduce the following inequality
\begin{align}
2\,|a(c)|^{2}\lambda_{c}^{-r}\,|t|^{r}\,e^{-\pi|t|} + O\left(|t|^{\sigma_{a}+\frac{r}{2}-1}\,e^{-\pi|t|}\right) & \leq\frac{1}{2\pi}\,|\mathcal{A}_{3}(t)|=\left|\Gamma\left(r+it\right)\,\mathcal{Z}_{2}(r+it;\,a;\,\lambda)-\right.\nonumber \\
-2\phi(0)\,\Gamma(r+2it)\,\phi\left(r+2it\right) &  \left.+2\,\Gamma(r)\,\rho\,\Gamma(2it)\,\phi\left(2it\right)-\frac{1}{2\pi}\mathcal{A}_{1}(t)-\frac{1}{2\pi}\mathcal{A}_{2}(t)\right|.\label{Middle Inequality already to prove}
\end{align}

To contradict hypothesis 1. and 2. for $k=1$, we just need to estimate
the remaining terms appearing on the right-hand side of (\ref{Middle Inequality already to prove}). We have seen already that $\mathcal{A}_{1}(t)$ and $\mathcal{A}_{2}(t)$
satisfy (\ref{estimate auxiliar integrals}), so we just need to estimate
the remaining two. From (\ref{Lindelof Phragmen for Hecke}) and (\ref{Stirling exact form on Introduction}),
it is immediate to see that, for every positive $\delta$, 
\begin{equation}
2\,\Gamma(r)\,\rho\,\Gamma(2it)\,\phi\left(2it\right)-2\phi(0)\,\Gamma(r+2it)\,\phi\left(r+2it\right)=O\left(|t|^{\sigma_{a}-\frac{1}{2}+\delta}\,e^{-\pi|t|}\right),\,\,\,\,|t|\rightarrow\infty \label{estimate residual}.
\end{equation}

Using (\ref{Middle Inequality already to prove}) and the fact that
$\sigma_{a}<r+\frac{1}{2}$, we obtain the lower bound
for the Epstein zeta function 
\begin{equation}
|\Gamma\left(r+2it\right)\,\mathcal{Z}_{2}\left(r+2it;\,a;\,\lambda\right)|\geq2\,|a(c)|^{2}\lambda_{c}^{-r}\,|t|^{r}\,e^{-\pi|t|}+O\left(|t|^{\sigma_{a}+\frac{r}{2}-1}\,e^{-\pi|t|}\right)+O\left(|t|^{\sigma_{a}-\frac{1}{2}+\delta}\,e^{-\pi|t|}\right)\label{the beginning of the boots}
\end{equation}
valid for any $|t|>T_{0}$. Using Stirling's formula (\ref{Stirling exact form on Introduction}) and enlarging $T_{0}$ if needed, (\ref{the beginning of the boots}) immediately implies that, for some
positive constant $B$, the order of $\mathcal{Z}_{2}(s;\,a;\,\lambda)$
at the critical line satisfies the lower bound
\begin{equation}
|\mathcal{Z}_{2}\left(r+2it;\,a;\,\lambda\right)|>B\,|t|^{\frac{1}{2}},\,\,\,\,\,\,|t|>T_{0},\label{BOOTSTRAP}
\end{equation}
and so $\mathcal{Z}_{2}(s;\,a;\,\lambda)$ cannot have
infinitely many zeros on the critical line $\text{Re}(s)=r$, since
it is always greater than a positive function for every $|\text{Im}(s)|>2T_{0}$.
This also contradicts (\ref{condition critical line for r=00005Cleq1})
for $k=1$. To prove the Theorem for a general $k\in \mathbb{N}$, we shall use
Lemma \ref{lemma integral representation} and prove by induction that (\ref{BOOTSTRAP}) implies
(\ref{condition critical line for r=00005Cleq1}). 
\medskip{}

The following claim
contains the necessary assertion to finish the proof: 
\begin{claim} \label{inductive claim}
Suppose that, for some $T_{k}>0$, the $2^{k}-$th Epstein zeta function
has the lower bound on its critical line
\begin{equation}
|\mathcal{Z}_{2^{k}}(2^{k-1}r+it;\,a;\,\lambda)|>B\,|t|^{\frac{2^{k}-1}{2}},\,\,\,\,\,|t|>T_{k},\label{induction condition}
\end{equation}
for some $B>0$. Then we have that $\mathcal{Z}_{2^{k+1}}$ has a
similar bound of the form 
\begin{equation}
|\mathcal{Z}_{2^{k+1}}(2^{k}r+it;\,a;\,\lambda)|>B^{\prime}\,|t|^{\frac{2^{k+1}-1}{2}},\,\,\,\,\,|t|>2T_{k},\,\,\,B^{\prime}>0.\label{inequality lower r=00005Cleq1 Z 2k+1}
\end{equation}
\end{claim}

\begin{proof} [Proof of Claim \ref{inductive claim}]
The idea is to mimic the argument just given. Use (\ref{Essssential to prove Deuring})
and estimate the residual terms. It is clear from (\ref{Estimate general}) and Stirling's
formula that these are bounded as $O\left(|t|^{2^{k}\sigma_{a}-\frac{1}{2}+\delta}\right)$.
To estimate the integral, we take once more a partition similar to
(\ref{decomposition of H into three integrals}). As before, if we
denote each one of these integrals by $\mathcal{A}_{i}^{(k)}(t)$,
$i=1,\,2,\,3$, we can mimic the proof of (\ref{estimate auxiliar integrals})
to obtain
\[
\mathcal{A}_{1}^{(k)}(t),\,\,\mathcal{A}_{2}^{(k)}(t)=O\left(|t|^{2^{k}\,\sigma_{a}-\frac{1}{2}+\delta}\,e^{-\pi|t|}\right),\,\,\,\,\,\,|t|\rightarrow\infty.
\]

On the third integral, $\mathcal{A}_{3}^{(k)}(t)$, we proceed as
in the equality (\ref{contradiction hypothesissisis}): by the inductive
hypothesis (\ref{induction condition}), we know that $\mathcal{Z}_{2^{k}}(s;\,a;\,\lambda)$
has constant sign on the line $\text{Re}(s)=2^{k-1}r$ for $|\text{Im}(s)|>T_{k}$.
Employing directly (\ref{induction condition}) on the third integral
$\mathcal{A}_{3}^{(k)}(t)$ we obtain, for any $|t|>T_{k}$, 
\begin{align*}
\left|\mathcal{A}_{3}^{(k)}(t)\right| & =\int_{-t+T_{k}}^{t-T_{k}}\left|\Gamma\left(2^{k-1}r+i(y-t)\right)\,\mathcal{Z}_{2^{k}}\left(2^{k-1}r+i(y-t);\,b;\,\lambda\right)\right. \times\\
 & \,\hfill\left. \times \Gamma\left(2^{k-1}r+i(y+t)\right)\,\mathcal{Z}_{2^{k}}\left(2^{k-1}r+i(y+t);\,a;\,\lambda\right)\right|\,dy>\\
> & B\,e^{-\pi|t|}\left|\int_{-t+T_{k}}^{t-T_{k}}\left(t^{2}-y^{2}\right)^{2^{k-1}(r+1)-1}\,dy\right|=D\,\,|t|^{2^{k}(r+1)-1}\,e^{-\pi |t|}+O\left(|t|^{2^{k}(r+1)-2}\,e^{-\pi |t|}\right)
\end{align*}
for some $D>0$. From the Selberg-Chowla formula (\ref{Essssential to prove Deuring})
we have that, for $|t|>T_{k}$ and some positive constant $D^{\prime}$, 
\begin{equation}
|\Gamma(2^{k}r+2it)\,\mathcal{Z}_{^{2^{k+1}}}(2^{k}r+2it;\,a;\,\lambda)|>D^{\prime}\,|t|^{2^{k}(r+1)-1}\,e^{-\pi|t|},\label{inequality yey for product gamma with Epstein}
\end{equation}
which implies (\ref{inequality lower r=00005Cleq1 Z 2k+1}).
\end{proof}

\medskip{}

Returning to the proof of Theorem \ref{deuring to hold}, the assumption that $\phi(s)$
does not have infinitely many zeros on the critical line implies
(\ref{induction condition}) for $k=1$ and $T_{1}:=2T_{0}$. But then the previous claim shows that this contradicts conditions 1. and 2. on the statement and the proof follows. \end{proof}

\begin{remark} \label{theorem 4.1 to general residual}
It is clear from the previous proof that the argument can be extended
to all Dirichlet series satisfying definition \ref{definition 1.1.} such that $\Gamma(s)\,\phi(s)$ is analytic on the critical line $\text{Re}(s)=r/2$. Hence, the imposition
$\phi\in\mathcal{A}$ in the statement of Theorem \ref{deuring to hold} can be removed.
Note that the only time we have used the fact that $\phi\in\mathcal{A}$ (besides in assuming that $R_{\phi}(s)$ is holomorphic on the critical line) 
was in the estimate (\ref{estimate residual}) of the residual terms
appearing in (\ref{Again First-1}), (\ref{Again Second-1}). Writing $s=\sigma+it$ in (\ref{integral varphi 2}), we know that, as $|t|$ tends to infinity, $\Gamma(\sigma+it-w)\,\phi(\sigma+it-w)=O\left(|t|^{\sigma_{a}-\frac{1}{2}+\delta}\,e^{-\frac{\pi}{2}|t|}\right)$
uniformly in the curve $C_{1}$, so that
\begin{align*}
\varphi_{2}(\sigma+it;\,a;\,\lambda) & =O\left(|t|^{\sigma_{a}-\frac{1}{2}+\delta}\,e^{-\frac{\pi}{2}|t|}\,\int_{C_{1}}|\Gamma(w)\,\phi(w)|\,|dw|\right)=O\left(|t|^{\sigma_{a}-\frac{1}{2}+\delta}\,e^{-\frac{\pi}{2}|t|}\right),\,\,\,\,\,|t|\rightarrow\infty,
\end{align*}
which is precisely (\ref{estimate residual}).

\end{remark}

\begin{remark} \label{Ramachandra Remark}
Note that the condition $\sigma_{a}<1+\frac{r}{2}$ was
essential to move the line of integration on the third integral $\mathcal{A}_{3}(t)$
from the critical line $\text{Re}(s)=\frac{r}{2}$ to $\text{Re}(s)=\sigma_{a}+1$.
In fact, this condition is an essential tool to prove that the estimate
(\ref{Estimates for A31 A33}) of the horizontal integrals is actually
bounded by $|t|^{r}$, hence allowing to establish the lower bound
(\ref{estimate 1 for r=00005Cleq1}). If, however, we disregard such generality and assume that the generalized Dirichlet series $\phi(s)$ is a Titchmarsh series {[}\cite{ramachandra}, Chapter 2, p. 39{]},
then a Theorem of Ramachandra {[}\cite{balasubramanian}, p. 570, Thm.
3{]} would still allow to conclude a lower bound of the form (\ref{BOOTSTRAP}). Examples of ``Titchmarsh series'' include the class of Hecke Dirichlet
series with signature $(\lambda,\,r,\,\gamma)$ and so the condition
$\sigma_{a}<1+\frac{r}{2}$ in the statement of the previous theorem can be suspended for this class. 
In any case, the imposition $\sigma_{a}<r+\frac{1}{2}$
needs to be kept since it assures that $|t|^{r}$ exceeds the estimates
of the integrals $\mathcal{A}_{1}(t)$ and $\mathcal{A}_{2}(t)$ (\ref{estimate auxiliar integrals}),
as well as the estimates for the residual terms (\ref{estimate residual})
appearing in the Selberg-Chowla formula. For an application of Ramachandra's result, see \cite{selberg class hardy},
where an adaptation of the Potter-Titchmarsh method \cite{Titchmarsh_Potter},
together with Ramachandra's Theorem, are essential to give a proof
of Hardy's Theorem for certain Dirichlet series belonging to
the Selberg class with degree two. 
\end{remark}
\begin{remark}
Condition 2. (\ref{condition critical line for r=00005Cleq1}) on the previous theorem was invoked by Deuring to prove
the infinitude of zeros of $\zeta(s)$ on $\text{Re}(s)=\frac{1}{2}$. This argument reduces the proof of Hardy's Theorem to the establishment of the order of magnitude of a Dirichlet series (in this case $\zeta_{2}(s)$) on its critical line $\text{Re}(s)=\frac{1}{2}$. Although this may be extraordinarily difficult to establish in general, note that we do not need explicitly (\ref{condition critical line for r=00005Cleq1}) to apply our method. What is actually required to complete the proof of Theorem \ref{deuring to hold} is to verify that one lower bound of the form (\ref{inequality lower r=00005Cleq1 Z 2k+1}) cannot hold, i.e., to find at least one dimension in which the absolute value of the Epstein zeta function, $|\mathcal{Z}_{2^k}(s;\,\cdot)|$, "oscillates" along a curve of the form $C|t|^{\alpha},\,\,0\leq\alpha\leq2^{k-1}-\frac{1}{2}$.  This is why the first condition may add simplicity to the
theorem, as the infinitude of zeros of $\mathcal{Z}_{2}(s;\,a;\,\lambda)$
can be established independently. This is certainly the case for the
Dirichlet series $\phi(s)=\zeta_{2}(s)=\sum r_{2}(n)/n^{s}$, whose
infinitude of zeros on the critical line $\text{Re}(s)=\frac{1}{2}$
can be easily proved by invoking the fact that $\zeta_{4}(s)$ has
infinitely many zeros on the line $\text{Re}(s)=1$. In its turn, the infinitude
of zeros of $\zeta_{4}(s)$ in the line $\text{Re}(s)=1$ may be obtained 
in an independent way, by invoking, for instance, Jacobi's 4-square Theorem, which is a quantitative way of telling what is the projection of the modular form $\theta^{4}(z)$ onto the subspace of Eisenstein series. To prove Hardy's Theorem to multidimensional Epstein zeta functions, a similar reasoning is made by using a version of a formula of Siegel (see
Examples \ref{hardy and jacobi} and \ref{epstein example}). 
\end{remark}

Based upon the previous Theorem, we can generalize several parallel results
which are scattered in the literature, including proofs involving
the behavior of the $\theta-$function. The following Corollary appears to be new.


\begin{corollary}\label{corollary on theta}
Assume that $\phi(s)$ is a Dirichlet series satisfying the conditions
of Theorem \ref{deuring to hold} and $\Theta(z;\,a;\,\lambda)$ denote its generalized 
$\theta-$function (\ref{definition generalized Theta function nnnn.}). If 
\begin{equation}
\Theta\left(e^{i\left(\frac{\pi}{2}-\epsilon\right)};\,a;\,\lambda\right)=o\left(\epsilon^{-\frac{r+1}{2}}\right),\,\,\,\,\,\epsilon\rightarrow0^{+},\label{contradiction hypothesis onceee more}
\end{equation}
then $\phi(s)$ has infinitely many zeros on the critical line $\text{Re}(s)=\frac{r}{2}$.
\end{corollary}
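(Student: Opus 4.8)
The plan is to deduce from the theta bound (\ref{contradiction hypothesis onceee more}) the subconvex estimate required by the second hypothesis of Theorem \ref{deuring to hold} in the case $k=0$. Indeed, for $k=0$ the dyadic Epstein zeta function is $\mathcal{Z}_{2}$ itself, its critical line is $\mathrm{Re}(s)=r$, and condition (\ref{condition critical line for r=00005Cleq1}) reads $\mathcal{Z}_{2}(r+it;\,a;\,\lambda)=o(|t|^{1/2})$ as $|t|\to\infty$. Thus it suffices to establish this bound, after which Theorem \ref{deuring to hold} yields the infinitude of zeros of $\phi$ on $\mathrm{Re}(s)=\tfrac{r}{2}$. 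By Stirling's formula (\ref{Stirling exact form on Introduction}), since $|\Gamma(r+it)|\asymp|t|^{r-\frac12}e^{-\frac{\pi}{2}|t|}$, this target is equivalent to
\[
\Gamma(r+it)\,\mathcal{Z}_{2}(r+it;\,a;\,\lambda)=o\!\left(|t|^{r}e^{-\frac{\pi}{2}|t|}\right),\qquad |t|\to\infty.
\]

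The starting point is the Mellin representation (\ref{writing as theta functiiiionas}) specialized to $\phi_{1}=\phi_{2}=\phi$, namely
\[
\Gamma(s)\,\mathcal{Z}_{2}(s;\,a;\,\lambda)=\int_{0}^{\infty}x^{s-1}\left(\Theta(x;\,a;\,\lambda)^{2}+2a(0)\,\Theta(x;\,a;\,\lambda)\right)dx,\qquad \mathrm{Re}(s)>2r,
\]
with $\Theta$ the generalized theta function (\ref{definition generalized Theta function nnnn.}). Assuming $t>0$ (the case $t<0$ follows by conjugation from $\overline{a}(n)=b(n)$), I would rotate the ray of integration to $\arg x=\theta_{0}:=\tfrac{\pi}{2}-\epsilon$, which is permissible in the region of absolute convergence because $\Theta(x;\,a;\,\lambda)$ decays exponentially as $|x|\to\infty$ in the sector $0\le\arg x\le\theta_{0}$ and is $O(|x|^{-2r})$ as $x\to0$ there. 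The rotation produces the decisive factor $|e^{i\theta_{0}s}|=e^{-(\frac{\pi}{2}-\epsilon)t}=e^{-\frac{\pi}{2}t}e^{\epsilon t}$, while the continuation of the small-$x$ part across the line $\mathrm{Re}(s)=r$ is supplied by the modular relation (\ref{reflection formula Bochner Epstein two-dimensional}) for $\Theta^{2}$. The polar and constant contributions generated in this continuation are, on $\mathrm{Re}(s)=r$, of the shape already estimated in (\ref{estimate residual}), namely $O(|t|^{\sigma_{a}-\frac12+\delta}e^{-\frac{\pi}{2}|t|})$, and hence negligible against $|t|^{r}e^{-\frac{\pi}{2}|t|}$ by the standing hypothesis $\sigma_{a}<\min\{1,r\}+\tfrac12\max\{1,r\}$ inherited from Theorem \ref{deuring to hold}.

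What remains is to bound the rotated integral $\int_{0}^{\infty}\rho^{r-1}\,\Theta(\rho e^{i\theta_{0}};\,a;\,\lambda)^{2}\,d\rho$ (together with the analogous linear term) in terms of the corner value $\Theta(e^{i\theta_{0}};\,a;\,\lambda)=\Theta(e^{i(\pi/2-\epsilon)};\,a;\,\lambda)$. Along this ray $\mathrm{Re}(\rho e^{i\theta_{0}})=\rho\sin\epsilon$, so the oscillatory factors in $\Theta$ produce a single peak of height $\asymp|\Theta(e^{i\theta_{0}})|^{2}$ and width $\asymp\epsilon$ near $\rho=1$, whence the integral is $\asymp\epsilon\,|\Theta(e^{i\theta_{0}})|^{2}$. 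Choosing $\epsilon=A/|t|$ for a fixed constant $A$ makes $e^{\epsilon t}=O(1)$, and (\ref{contradiction hypothesis onceee more}) gives $|\Theta(e^{i\theta_{0}})|^{2}=o(\epsilon^{-(r+1)})$, so the integral is $o(\epsilon\cdot\epsilon^{-(r+1)})=o(\epsilon^{-r})=o(|t|^{r})$. Collecting the prefactor, the rotated integral, and the negligible residual terms then yields exactly $\Gamma(r+it)\mathcal{Z}_{2}(r+it)=o(|t|^{r}e^{-\frac{\pi}{2}|t|})$, as required; the linear term is smaller by a square-root scale and causes no trouble.

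The main obstacle is the rigorous justification of the localization claim in the third step: (\ref{contradiction hypothesis onceee more}) controls $\Theta$ only at the single point $e^{i(\pi/2-\epsilon)}$ on the unit circle, whereas the integral demands control of $\Theta(\rho e^{i\theta_{0}})$ for all $\rho>0$. Making precise that the contribution concentrates in a $\rho$-window of length $\asymp\epsilon$ about $\rho=1$, with the correct height, is the delicate point; I expect to handle it by propagating the corner estimate radially via the modular relation (\ref{reflection formula Bochner Epstein two-dimensional}) for $\rho\le1$ and the exponential decay of $\Theta$ for $\rho\ge1$, or alternatively by integrating by parts in $\rho$ to exploit the oscillation of $\rho^{it}$ after rotation. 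Secondary technical points---the vanishing of the connecting circular arcs at radii $0$ and $\infty$ during the rotation, and the uniformity in $\epsilon$ of all estimates---are routine but must be checked; none of them affects the exponent bookkeeping, which is balanced exactly by the choice $\epsilon\asymp|t|^{-1}$.
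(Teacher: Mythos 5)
There is a genuine gap, and it sits exactly where you flagged it: the localization claim in your third step. Your strategy is to prove the \emph{unconditional} implication ``hypothesis (\ref{contradiction hypothesis onceee more}) $\Rightarrow$ subconvexity (\ref{condition critical line for r=00005Cleq1}) with $k=0$,'' and for that you must bound the rotated Mellin integral $\int_{0}^{\infty}\rho^{r-1}\rho^{it}\,\Theta_{2}\left(\rho e^{i\theta_{0}}\right)d\rho$ \emph{from above} by the value of $\Theta$ at the single corner point $e^{i(\pi/2-\epsilon)}$. But (\ref{contradiction hypothesis onceee more}) is a one-point upper bound, possibly produced by cancellation among the oscillatory terms $a(n)e^{-\lambda_{n}\rho e^{i\theta_{0}}}$; it gives no control of $\Theta(\rho e^{i\theta_{0}})$ for $\rho\neq1$, and pointwise upper bounds at a single point never dominate an integral without some positivity or monotonicity input. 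Neither of your proposed repairs closes this: the modular relation (\ref{reflection formula Bochner Epstein two-dimensional}) sends $\rho e^{i\theta_{0}}$ to $\rho^{-1}e^{-i\theta_{0}}$, i.e.\ to points on the \emph{conjugate} ray at \emph{other} radii, so it cannot propagate the corner estimate radially along the same ray; and integration by parts in $\rho$ transfers derivatives onto $\Theta_{2}'$, i.e.\ onto sums weighted by $\lambda_{n}a(n)$, which the hypothesis likewise does not control. Your claimed peak structure (``height $\asymp|\Theta(e^{i\theta_{0}})|^{2}$, width $\asymp\epsilon$'') would in fact amount to an equivalence between the theta bound and subconvexity of $\mathcal{Z}_{2}$ on $\text{Re}(s)=r$, a much stronger statement than the corollary, and it is simply not derivable from the data you have.

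The paper avoids this obstruction by reversing the logical direction, which is what makes the positivity available. One argues by contradiction: if $\phi$ had finitely many zeros on $\text{Re}(s)=\frac{r}{2}$, the proof of Theorem \ref{deuring to hold} already yields the lower bound (\ref{inequality of the thing}) for $|\Gamma(r+2it)\,\mathcal{Z}_{2}(r+2it;\,a;\,\lambda)|$; since $b(n)=\overline{a}(n)$ makes $\Gamma(s)\,\mathcal{Z}_{2}(s;\,a;\,\lambda)$ real on $\text{Re}(s)=r$, this lower bound forces constant sign beyond height $2T_{0}$, so in the Mellin representation of $\Theta_{2}(e^{i(\pi/2-\epsilon)};\,a;\,\lambda)$ along the critical line the modulus of the integral \emph{equals} the integral of the modulus up to $O(1)$, and (\ref{Contradiction hypothesis for proof corollary Deuring once more}) produces the lower bound $\gg\epsilon^{-(r+1)}$. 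This contradicts $\Theta_{2}=\Theta^{2}+2a(0)\,\Theta=o(\epsilon^{-r-1})$, which follows from (\ref{contradiction hypothesis onceee more}) via (\ref{the identity of square}). Note that positivity is used to convert a pointwise lower bound on the critical line into a lower bound for the theta integral — the direction in which sign information genuinely helps — whereas your route needs the opposite, unavailable direction. Your exponent bookkeeping (the choice $\epsilon\asymp|t|^{-1}$ matching $|t|^{r}\leftrightarrow\epsilon^{-(r+1)}$ through $\int y^{r}e^{-\epsilon y}dy=\Gamma(r+1)\epsilon^{-(r+1)}+\cdots$) is the correct heuristic inverse of the paper's computation, but as written the argument is not a proof, and I do not see how to complete it along the lines you sketch.
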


\begin{proof}
Assume that $\phi(s)$ has finitely many zeros on the critical line
$\text{Re}(s)=\frac{r}{2}$. It follows from the proof of the previous Theorem
that, at its critical line $\text{Re}(s)=r$, the Epstein zeta function $\mathcal{Z}_{2}(s;\,a;\,\lambda)$ satisfies the inequality
\begin{equation}
|\Gamma\left(r+2it\right)\,\mathcal{Z}_{2}\left(r+2it;\,a;\,\lambda\right)|\geq2\,|a(c)|^{2}\lambda_{c}^{-r}\,|t|^{r}\,e^{-\pi|t|}+O\left(|t|^{\sigma_{a}+\frac{r}{2}-1}\,e^{-\pi|t|}\right)+O\left(|t|^{\sigma_{a}-\frac{1}{2}+\delta}\,e^{-\pi|t|}\right),\,\,\,\,|t|>T_{0}.\label{inequality of the thing}
\end{equation}

For $\text{Re}(z)>0$ and $\mu>2\sigma_{a}$, we have from
(\ref{Cahen Mellin integral for applications}) that $\Theta_{2}(z;\,a;\,\lambda)$
admits the representation as the Mellin integral
\begin{align}
\Theta_{2}(z;\,a;\,\lambda) & =\frac{1}{2\pi i}\,\int_{\mu-i\infty}^{\mu+i\infty}\Gamma(s)\,\mathcal{Z}_{2}(s;\,a;\,\lambda)\,z^{-s}ds\nonumber \\
 & =\frac{1}{2\pi i}\,\int_{C_{1}}\Gamma(s)\,\mathcal{Z}_{2}(s;\,a;\,\lambda)\,z^{-s}ds+\frac{1}{2\pi i}\,\int_{C_{2}}\Gamma(s)\,\mathcal{Z}_{2}(s;\,a;\,\lambda)\,z^{-s}ds,\label{Expression Theta 2k+1}
\end{align}
where $C_{1}$ and $C_{2}$ are the paths given by 
\[
C_{1}=\left(r-i\infty,\,r-2iT_{0}\right)\cup\left(r+2iT_{0},\,r+i\infty\right)
\]
and
\[
C_{2}=\left(r-2iT_{0},\,\mu-2iT_{0}\right)\cup\left(\mu-2iT_{0},\,\mu+2iT_{0}\right)\cup\left(\mu+2iT_{0},\,r+2iT_{0}\right).
\]

If we now take $z=\exp\left\{ i\left(\frac{\pi}{2}-\epsilon\right)\right\} $
in (\ref{Expression Theta 2k+1}), we have that 
\begin{equation}
\Theta_{2}\left(e^{i\left(\frac{\pi}{2}-\epsilon\right)};\,a;\,\lambda\right)=\frac{1}{2\pi i}\,\int_{C_{1}}\Gamma(s)\,\mathcal{Z}_{2}(s;\,a;\,\lambda)\,e^{-i\left(\frac{\pi}{2}-\epsilon\right)s}ds+O(1).\label{integral expressing theta once more}
\end{equation}

From the functional equation for $\mathcal{Z}_{2}(s;\,a;\,\lambda)$
(\ref{Functional equation our Equation-1}) and the assumption that
$b(n)=\overline{a}(n)$, $\Gamma(s)\,\mathcal{Z}_{2}(s;\,a;\,\lambda)$
is a real valued function on the line $\text{Re}(s)=r$. From this
observation and combining (\ref{integral expressing theta once more})
with (\ref{inequality of the thing}), we obtain that the inequality
\begin{align}
\left|\Theta_{2}\left(e^{i\left(\frac{\pi}{2}-\epsilon\right)};\,a;\,\lambda\right)\right| & =\frac{1}{2\pi}\,\int_{|y|>2T_{0}}\left|\Gamma\left(r+iy\right)\,\mathcal{Z}_{2}(r+iy;\,a;\,\lambda)\right|\,e^{(\frac{\pi}{2}-\epsilon)y}dy+O(1)\nonumber \\
&\geq\frac{|a(c)|^{2}(2\lambda_{c})^{-r}}{\pi} \,\int_{2T_{0}}^{\infty}\,y^{r}\,e^{-\epsilon y}\,dy+O\left(\int_{2T_{0}}^{\infty}y^{\sigma_{a}+\frac{\max\{1,r\}}{2}-1}e^{-\epsilon y}dy\right)\nonumber\\
&=\frac{|a(c)|^{2}(2\lambda_{c})^{-r}\,\Gamma(r+1)}{\pi}\,\epsilon^{-(r+1)}+O\left(\epsilon^{-\sigma_{a}-\frac{\max\{1,r\}}{2}}\right),\label{Contradiction hypothesis for proof corollary Deuring once more}
\end{align}
is valid for every positive $\epsilon$. However, since 
\begin{equation}
\Theta_{2}\left(z;\,a;\,\lambda\right)=\sum_{m,n\neq0}^{\infty}a(m)\,a(n)\,e^{-(\lambda_{m}+\lambda_{n})\,z}=2a(0)\,\Theta(z;\,a;\,\lambda)+\Theta^{2}(z;\,a;\,\lambda),\label{the identity of square}
\end{equation}
it follows from (\ref{contradiction hypothesis onceee more}) that
$\Theta_{2}\left(e^{i\left(\frac{\pi}{2}-\epsilon\right)};\,a;\,\lambda\right)=o\left(\epsilon^{-r-1}\right)$
as $\epsilon\rightarrow0^{+}$, which contradicts (\ref{Contradiction hypothesis for proof corollary Deuring once more}).
\end{proof}

\begin{remark} \label{remark Hecke condition}
Note that, when restricted to the class $\mathcal{A}$, as well as to the Dirichlet series satisfying definition \ref{definition 1.1.}, the previous corollary extends Berndt's result {[}\cite{berndt_zeros_(ii)}, p. 679,
Theorem 1{]}. When restricted to the class of Dirichlet series satisfying Theorem
\ref{deuring to hold}, (\ref{contradiction hypothesis onceee more}) improves Hecke's condition (see {[}\cite{Hecke_middle_line},
p. 74, Satz 1{]}), 
\begin{equation}
\Theta\left(e^{i\left(\frac{\pi}{2}-\epsilon\right)};\,a;\,\lambda\right)=O(\epsilon^{-\beta}), \,\,\,\,\,\,\, 0\leq\beta<\frac{r+1}{2}. \label{Hecke's Condition in his paper}
\end{equation}
\end{remark}



\begin{corollary}\label{corollary the one with residue}
Let $\phi(s)$ be a Dirichlet series with $r<1$ and satisfying the
conditions of Theorem \ref{deuring to hold}. Assume also that $a(n)$ satisfies $\arg\left\{ a(n)\right\} _{n\in\mathbb{N}}=\text{constant}$.
Then $\phi(s)$ has infinitely many zeros on the critical line $\text{Re}(s)=\frac{r}{2}$.  Moreover, if $r=1$ and if $a(n)$ satisfies the condition 
\begin{equation}
|a(c)|>\sqrt{2\pi\lambda_{c}}\,|\rho|,\label{condition on r=00003D1 for |a(c)|}
\end{equation}
where $\rho$ denotes the residue of $\phi$ at $s=r=1$ and $c$ denotes the least positive integer for which $a(c)\neq 0$, then $\phi(s)$
has infinitely many zeros at the critical line $\text{Re}(s)=\frac{1}{2}$.
\end{corollary}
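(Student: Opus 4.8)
The plan is to deduce both assertions from the theta--function criterion of Corollary \ref{corollary on theta}, the essential input being a sharp bound for the generalized theta function $\Theta(z;\,a;\,\lambda)$ of (\ref{definition generalized Theta function nnnn.}) along the ray $z=e^{i\left(\frac{\pi}{2}-\epsilon\right)}$ as $\epsilon\to0^{+}$. Writing the constant--argument hypothesis as $a(n)=e^{i\vartheta}|a(n)|$, I would introduce the auxiliary Dirichlet series $\widetilde{\phi}(s):=e^{-i\vartheta}\phi(s)=\sum_{n=1}^{\infty}|a(n)|\,\lambda_{n}^{-s}$, which has nonnegative coefficients. Since $\phi\in\mathcal{A}$ has a simple pole at $s=r$ with residue $\rho$, the series $\widetilde{\phi}$ has a simple pole at $s=r$ with residue $\widetilde{\rho}=e^{-i\vartheta}\rho$; by Landau's theorem the residue of a Dirichlet series with nonnegative coefficients at its abscissa of convergence is real and nonnegative, whence $\widetilde{\rho}=|\rho|$ (and in particular $\rho\neq0$, since an entire $\widetilde{\phi}$ would contradict Landau).

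Since $\text{Re}\,\bigl(e^{i(\frac{\pi}{2}-\epsilon)}\bigr)=\sin\epsilon>0$, the triangle inequality gives $\bigl|\Theta(e^{i(\frac{\pi}{2}-\epsilon)};\,a;\,\lambda)\bigr|\le\sum_{n=1}^{\infty}|a(n)|\,e^{-\lambda_{n}\sin\epsilon}$, the right--hand side being the theta function of $\widetilde{\phi}$ evaluated at the real point $\sin\epsilon$. Applying the Cahen--Mellin representation (\ref{Cahen Mellin integral for applications}) and shifting the contour past the poles of $\Gamma(s)\widetilde{\phi}(s)$ at $s=r$ and $s=0$ (legitimate by condition 2 of Definition \ref{definition 1.1.}), I obtain $\sum_{n}|a(n)|e^{-\lambda_{n}x}=\Gamma(r)\,|\rho|\,x^{-r}+O(1)$ as $x\to0^{+}$, and therefore $\bigl|\Theta(e^{i(\frac{\pi}{2}-\epsilon)};\,a;\,\lambda)\bigr|\le\Gamma(r)\,|\rho|\,\epsilon^{-r}\,(1+o(1))$.

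For $r<1$ this already closes the argument: as $r<\frac{r+1}{2}$, the bound reads $\Theta(e^{i(\frac{\pi}{2}-\epsilon)};\,a;\,\lambda)=O(\epsilon^{-r})=o(\epsilon^{-\frac{r+1}{2}})$, which is exactly hypothesis (\ref{contradiction hypothesis onceee more}) of Corollary \ref{corollary on theta}; that corollary then delivers infinitely many zeros of $\phi$ on $\text{Re}(s)=\frac{r}{2}$.

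For $r=1$ the two exponents coincide and Corollary \ref{corollary on theta} no longer applies, so the comparison must be carried out at the level of constants. Suppose, for contradiction, that $\phi$ has only finitely many zeros on $\text{Re}(s)=\frac12$. Then the lower bound (\ref{Contradiction hypothesis for proof corollary Deuring once more}) obtained in the proof of Corollary \ref{corollary on theta} specializes (with $r=1$, $\Gamma(2)=1$) to $\bigl|\Theta_{2}(e^{i(\frac{\pi}{2}-\epsilon)};\,a;\,\lambda)\bigr|\ge\frac{|a(c)|^{2}}{2\pi\lambda_{c}}\,\epsilon^{-2}+O(\epsilon^{-\sigma_{a}-\frac12})$; meanwhile the identity (\ref{the identity of square}) gives $\Theta_{2}=2a(0)\Theta+\Theta^{2}$, so the estimate of the second paragraph (with $\Gamma(1)=1$) yields $\bigl|\Theta_{2}(e^{i(\frac{\pi}{2}-\epsilon)};\,a;\,\lambda)\bigr|\le|\rho|^{2}\,\epsilon^{-2}\,(1+o(1))$. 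Because $\sigma_{a}<\frac32$, the error $O(\epsilon^{-\sigma_{a}-\frac12})$ is $o(\epsilon^{-2})$, so dividing by $\epsilon^{-2}$ and letting $\epsilon\to0^{+}$ forces $\frac{|a(c)|^{2}}{2\pi\lambda_{c}}\le|\rho|^{2}$, i.e. $|a(c)|\le\sqrt{2\pi\lambda_{c}}\,|\rho|$, contradicting the hypothesis. The main obstacle is precisely this borderline case $r=1$: it is a sharp--constant argument in which the leading $\epsilon^{-2}$ coefficients on both sides must be tracked exactly, and it is here that constant argument is indispensable, since otherwise $\widetilde{\phi}$ could have a singularity to the right of $s=1$ (forcing a larger or differently normalized leading term in $\Theta_{2}$) and the comparison would break down.
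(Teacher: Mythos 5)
Your proof is correct and in substance coincides with the paper's: the paper likewise combines the constant-argument hypothesis with the Bochner modular relation to obtain $\left|\Theta_{2}\left(e^{i\left(\frac{\pi}{2}-\epsilon\right)};\,a;\,\lambda\right)\right|\leq\Gamma^{2}(r)\,|\rho|^{2}\sin^{-2r}(\epsilon)+O(1)$, and for $r=1$ it compares constants exactly as you do, pitting $|a(c)|^{2}/(2\pi\lambda_{c})$ from (\ref{Contradiction hypothesis for proof corollary Deuring once more}) against $|\rho|^{2}$ to contradict (\ref{condition on r=00003D1 for |a(c)|}). The only cosmetic differences are that for $r<1$ you bound the one-dimensional theta of $\widetilde{\phi}$ and invoke Corollary \ref{corollary on theta} directly (i.e.\ verify Hecke's condition (\ref{Hecke's Condition in his paper}) with $\beta=r$), whereas the paper runs the contradiction at the level of $\Theta_{2}$ via (\ref{reflection formula Bochner Epstein two-dimensional}); and your Landau--Fekete normalization $\widetilde{\rho}=|\rho|$, $\sigma_{a}=r$ is precisely the paper's Remark \ref{fekete remark}, though only $|\widetilde{\rho}|=|\rho|$ is actually needed for the upper bound.
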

\begin{proof}
Assume first that $0<r<1$ and that $\phi(s)$ does not have infinitely
many zeros on the critical line $\text{Re}(s)=\frac{r}{2}$. Then
an application of (\ref{Contradiction hypothesis for proof corollary Deuring once more}) gives 
\begin{equation}
\epsilon^{r+1}\left|\Theta_{2}\left(e^{i\left(\frac{\pi}{2}-\epsilon\right)};\,a;\,\lambda\right)\right|\geq C,\label{contradition conditions ALMOST THERE}
\end{equation}
for some positive constant $C$. On the
other hand, using Bochner's modular relation for $\Theta_{2}(z;\,a;\,\lambda)$
(\ref{reflection formula Bochner Epstein two-dimensional}) and the
hypothesis of Theorem \ref{deuring to hold}, $b(n)=\,\overline{a}(n)$,
we know that 
\begin{equation}
\Theta_{2}\left(z;\,a;\,\lambda\right)=z^{-2r}\sum_{m,n=1}^{\infty}\overline{a}(m)\,\overline{a}(n)\,e^{-(\lambda_{m}+\lambda_{n})/z}+\Gamma^{2}(r)\,\rho^{2}z^{-2r}+\mathcal{Z}_{2}(0,\,a,\,\lambda).\label{Reflection Formula Here!}
\end{equation}

Using the fact that $\arg_{n\in\mathbb{N}}\{a(n)\}$ is
constant, we have from the previous relation, 
\begin{align}
\left|\Theta_{2}\left(e^{i\left(\frac{\pi}{2}-\epsilon\right)};\,a;\,\lambda\right)\right| & \leq\sum_{m,n\neq0}^{\infty}\left|a(m)\,a(n)\,e^{-(\lambda_{m}+\lambda_{n})\sin(\epsilon)}\right|=\left|\sum_{m,n=1}^{\infty}a(m)\,a(n)\,e^{-(\lambda_{m}+\lambda_{n})\sin(\epsilon)}\right|\nonumber \\
 & =\left|\sin^{-2r}(\epsilon)\,\sum_{m,n\neq0}^{\infty}\overline{a}(m)\,\overline{a}(n)\,e^{-(\lambda_{m}+\lambda_{n})/\sin(\epsilon)}+\Gamma^{2}(r)\,\rho^{2}\sin^{-2r}(\epsilon)+\mathcal{Z}_{2}(0,\,a,\,\lambda)\right|\nonumber \\
 & =\Gamma^{2}(r)\,|\rho|^{2}\sin^{-2r}(\epsilon)+O(1).\label{The application of the Reflection formula for deducing infinitude kober styyyle}
\end{align}

Since, by hypothesis, (\ref{contradition conditions ALMOST THERE})
holds, then we must have
\begin{equation}
0<C\leq\epsilon^{r+1}\left|\Theta_{2}\left(e^{i\left(\frac{\pi}{2}-\epsilon\right)};\,a;\,\lambda\right)\right|=\epsilon^{r+1}\,\Gamma^{2}(r)\,|\rho|^{2}\sin^{-2r}(\epsilon)+O(\epsilon^{r+1})\label{K contradiction}
\end{equation}
for every $\epsilon>0$. However,
since $r<1$, we see that the right-hand side of (\ref{K contradiction})
tends to zero as $\epsilon\rightarrow0^{+}$, contradicting (\ref{contradition conditions ALMOST THERE}).
This concludes the proof for the first case. 

\medskip{}

Assume now that $r=1$: from (\ref{Contradiction hypothesis for proof corollary Deuring once more})
with $r=1$, we know that the constant $C$ given above can be explicitly written as $C=|a(c)|^{2}/2\pi\lambda_{c}$,
and so (\ref{K contradiction}) contradicts (\ref{condition on r=00003D1 for |a(c)|}).
\end{proof}


\begin{remark} \label{fekete remark}
By a Theorem of Fekete \cite{fekete_dirichlet}, which in its turn extends a classical
Theorem of Landau on Dirichlet series, 
the condition $\arg_{n\in\mathbb{N}}\left\{ a(n)\right\} =\text{const.}$
implies that $\phi(s)$ cannot be entire. Moreover, if we assume that
$\phi\in\mathcal{A}$, then the previous Corollary is only valid for
$\sigma_{a}=r$. In any case it is not strictly necessary that $\arg_{n\in\mathbb{N}}\left\{ a(n)\right\} =\text{const.}$
to arrive at the conclusion of Corollary \ref{corollary on theta}. In fact, it suffices to
suppose that $a(n)$ satisfies $|a(n)|\leq\mathfrak{c}(n)$
for every $n\in\mathbb{N}$, with $\mathfrak{c}(n)$ being such that
the Dirichlet series $\varphi(s)=\sum\mathfrak{c}(n)\,\lambda_{n}^{-s}$
satisfies Hecke's functional equation with $r\leq1$, as well as
the conditions of Theorem \ref{deuring to hold} For example, Dirichlet $L$-functions
attached to even primitive Dirichlet characters satisfy this condition. 

\end{remark}
\bigskip{}

It is also of interest to note that (\ref{BOOTSTRAP}) contradicts
any bound of the type $\mathcal{Z}_{2}\left(r+it;\,a;\,\lambda\right)=o\left(|t|^{\frac{1}{2}}\right)$,
which may be obtained independently via more sophisticated methods
employing exponential sums, as those in \cite{exponential_sums_jutilla}. Under our general setting, however, it is not simple to assure these subconvex estimates for the diagonal Epstein zeta functions $\mathcal{Z}_{2^{k}}$
on their critical lines. In general, all the information about the
behavior of $\mathcal{Z}_{2^{k}}$ on the line $\text{Re}(s)=2^{k-1}r$
comes exclusively from the Phragm\'en-Lindel\"of principle. 

Based solely on these classical estimates, in the next result we impose a condition
which implies that, for a sufficiently large $k$, the order of the
dyadic Epstein zeta function $\mathcal{Z}_{2^{k+1}}$ at the critical
line $\text{Re}(s)=2^{k}r$ will eventually contradict (\ref{induction condition}). 

We remark that the next result was already proved by Berndt \cite{berndt_zeros_(i)}
under a different setting. Since our proof is drastically different
from his and it has the advantage of avoiding the exponential
integrals typical in most part of
the proofs of Hardy's Theorem\footnote{see, for instance, \cite{selberg class hardy, Titchmarsh_Potter, chadrasekharan_narasimhan_ideal classes, landau_handbuch}, where exponential integrals are used to deduce analogues of Hardy's Theorem. The idea behind all these proofs, originally due to Landau \cite{landau_hardy},
is to contrast the behaviors of $\left|\int_{T}^{2T}R_{\phi}\left(\frac{r}{2}+it\right)\,dt\right|$
and $\int_{T}^{2T}|R_{\phi}(\frac{r}{2}+it)|\,dt$ as $T\rightarrow\infty$.
Due to the oscillations resulting from Stirling's formula, the integral
$\int_{T}^{2T}R_{\phi}\left(\frac{r}{2}+it\right)\,dt$ has substantial
cancellation and the bound for it is usually found by appealing to
exponential integrals, which cannot be avoided even if we use, for
the case of Riemann's $\zeta-$function, weak versions of the approximate
functional equation and the Riemann-Siegel formula (see {[}\cite{sangale},
p. 43{]} and {[}\cite{ivic_hardyz}, Chpt. 2{]}). By appealing to
the freedom of selecting a suitable dimension for the Epstein zeta function,
our proof of the corollary \ref{narrow abcissa corollary} avoids non-elementary estimates and it seeks
the contradiction by finding an
Epstein zeta function not satisfying the lower bound (\ref{induction condition})
in its critical line. }, we present our alternative short proof. Being an immediate application of the argument developed in the proof of Theorem \ref{deuring to hold}, we remark once more
that the foregoing corollary can be extended to the
Dirichlet series satisfying definition \ref{definition 1.1.}, not necessarily in $\mathcal{A}$. 

\begin{corollary} \label{narrow abcissa corollary}
Let $\phi(s)$ be a Dirichlet series satisfying the conditions of
Theorem \ref{deuring to hold}. Moreover, assume that its abcissa of absolute convergence satisfies
\begin{equation}
\sigma_{a}<\frac{r+1}{2}.\label{Condition Narrow critical strip}
\end{equation}

Then $\phi(s)$ has infinitely many zeros on the critical line $\text{Re}(s)=\frac{r}{2}$.
\end{corollary}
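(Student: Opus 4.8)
The plan is to derive Corollary \ref{narrow abcissa corollary} as a direct consequence of the machinery built in the proof of Theorem \ref{deuring to hold}, exploiting the freedom to choose the dimension $2^{k+1}$ of the Epstein zeta function. The key observation is that the hypothesis $\sigma_a<\frac{r+1}{2}$ gives us \emph{slack} in the Phragmén--Lindelöf bound, and this slack compounds favourably as the dimension grows, so that for $k$ large enough the subconvexity condition \eqref{condition critical line for r=00005Cleq1} of Theorem \ref{deuring to hold} becomes automatically satisfied. Concretely, I would argue by contradiction: assume $\phi(s)$ has only finitely many zeros on $\text{Re}(s)=\frac{r}{2}$. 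By the proof of Theorem \ref{deuring to hold} (specifically Claim \ref{inductive claim}), this assumption forces the lower bound \eqref{induction condition}, i.e.
\[
|\mathcal{Z}_{2^{k}}(2^{k-1}r+it;\,a;\,\lambda)|>B\,|t|^{\frac{2^{k}-1}{2}},\qquad |t|>T_k,
\]
to propagate to every dimension $k\geq 1$.

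First I would write down what the Phragmén--Lindelöf principle \eqref{Estimate general} gives on the critical line $\text{Re}(s)=2^{k-1}r$, namely that for any $\delta>0$,
\[
\mathcal{Z}_{2^{k}}\left(2^{k-1}r+it;\,a;\,\lambda\right)=O\left(|t|^{2^{k}\sigma_a-2^{k-1}r+\delta}\right),\qquad |t|\rightarrow\infty,
\]
obtained by taking $\sigma=2^{k-1}r$ in \eqref{Estimate general}. The plan is then to compare this \emph{upper} bound against the \emph{lower} bound \eqref{induction condition} that the contradiction hypothesis imposes. The exponent on the upper bound is $2^{k}\sigma_a-2^{k-1}r$ whereas the lower bound demands growth of order $\frac{2^{k}-1}{2}$. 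Writing the difference, I want to check that for $k$ sufficiently large
\[
2^{k}\sigma_a-2^{k-1}r<\frac{2^{k}-1}{2},
\]
which, after dividing by $2^{k-1}$, reduces to $2\sigma_a-r<1-2^{-k}$, i.e. $2\sigma_a-r-1<-2^{-k}$. Since condition \eqref{Condition Narrow critical strip} says precisely $2\sigma_a-r-1<0$, the left-hand side is a fixed negative quantity, while $-2^{-k}\to 0^{-}$; hence for all $k$ with $2^{-k}<1+r-2\sigma_a$ the strict inequality holds. Thus for such $k$ the Phragmén--Lindelöf upper bound forces $\mathcal{Z}_{2^{k}}$ to be $o(|t|^{\frac{2^{k}-1}{2}})$ on its critical line, directly contradicting \eqref{induction condition}.

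The cleanest way to phrase the conclusion is to invoke condition 2 of Theorem \ref{deuring to hold}: the computation above shows that \eqref{condition critical line for r=00005Cleq1} holds for the dimension $2^{k+1}$ (after shifting indices appropriately, using that the critical line of $\mathcal{Z}_{2^{k+1}}$ is $\text{Re}(s)=2^{k}r$ and that subconvexity there is $o(|t|^{2^{k}-\frac{1}{2}})$, which follows from the same Phragmén--Lindelöf estimate once $2^{k+1}\sigma_a-2^{k}r<2^{k}-\frac{1}{2}$, the identical inequality). Therefore the hypotheses of Theorem \ref{deuring to hold} are met for this $k$, and the Theorem yields infinitely many zeros of $\phi(s)$ on $\text{Re}(s)=\frac{r}{2}$, completing the proof. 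I expect the main obstacle to be bookkeeping: one must be careful that the constant $B$ in \eqref{induction condition} is \emph{uniform} enough that the contradiction survives, and that the choice of $k$ is made \emph{before} invoking the contradiction hypothesis rather than after — the logical order matters, since \eqref{induction condition} is a \emph{consequence} of assuming finitely many zeros, and we need the single fixed dimension $k$ at which Phragmén--Lindelöf already forbids the lower bound. A secondary subtlety is ensuring the condition $\sigma_a<\min\{1,r\}+\frac{1}{2}\max\{1,r\}$ required by Theorem \ref{deuring to hold} is implied by, or compatible with, \eqref{Condition Narrow critical strip}; since $\sigma_a<\frac{r+1}{2}\le \min\{1,r\}+\frac{1}{2}\max\{1,r\}$ in the relevant ranges of $r$, this should pose no difficulty.
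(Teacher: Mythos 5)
Your proposal is correct and follows essentially the same route as the paper: the paper's proof likewise pits the inductive lower bound of Claim \ref{inductive claim} (forced by assuming finitely many zeros) against the Phragm\'en--Lindel\"of estimate (\ref{Phragmen contradictory here}) on the critical line of $\mathcal{Z}_{2^{k+1}}$, deriving $\sigma_{a}\geq\frac{r+1}{2}-\frac{1}{2^{k+2}}-\delta^{\prime}$ for every $k$ and $\delta^{\prime}>0$, which contradicts (\ref{Condition Narrow critical strip}). Your repackaging --- fixing $k$ large in advance so that Phragm\'en--Lindel\"of directly yields the subconvexity condition (\ref{condition critical line for r=00005Cleq1}) of Theorem \ref{deuring to hold} --- is the same comparison with the quantifiers reordered, and your side-checks (the algebra $2\sigma_{a}-r-1<-2^{-k}$, and $\frac{r+1}{2}\leq\min\{1,r\}+\frac{1}{2}\max\{1,r\}$) are both accurate.
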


\begin{proof}
By the Phragm\'en-Lindel\"of principle applied to the Dirichlet series
$\mathcal{Z}_{2^{k+1}}(s;\,a;\,\lambda)$, we have that, for every positive $\delta$, 
\begin{equation}
\mathcal{Z}_{2^{k+1}}(2^{k}r+2it;\,a;\,\lambda)=O\left(|t|^{2^{k+1}\sigma_{a}-2^{k}r+\delta}\right),\,\,\,\,\,|t|\rightarrow\infty.\label{Phragmen contradictory here}
\end{equation}

However, if $\phi(s)$ has finitely many zeros on the line $\text{Re}(s)=\frac{r}{2}$,
then there is some $T_{0}>0$ such that, for every $k\in\mathbb{N}_{0}$
(see claim \ref{inductive claim})
\begin{equation}
|\mathcal{Z}_{^{2^{k+1}}}(2^{k}r+2it;\,a;\,\lambda)|>B\,|t|^{\frac{2^{k+1}-1}{2}},\,\,\,\,\,|t|>T_{k}:=2^{k}\,T_{0},\label{comparison once more}
\end{equation}
for some $B>0$ depending on $k$. After a direct comparison between
(\ref{Phragmen contradictory here}) and (\ref{comparison once more}),
and letting $|t|$ tend to infinity, we deduce that the inequality $\sigma_{a}\geq\frac{r+1}{2}-\frac{1}{2^{k+2}}-\delta^{\prime}$ must hold for every $k\in\mathbb{N}_{0}$ and every $\delta^{\prime}>0$. This contradicts (\ref{Condition Narrow critical strip})
and the corollary follows.
\end{proof}

\bigskip{}

Another application of the method developed in Theorem \ref{deuring to hold} is the
proof that, for the class of Dirichlet series considered in this section,
every combination of bounded vertical shifts of $R_{\phi}(s)$ has
infinitely many zeros on the critical line $\text{Re}(s)=\frac{r}{2}$. 

Let $\left(c_{j}\right)_{j\in\mathbb{N}}$ be a sequence of real numbers
such that $\sum_{j=1}^{\infty}|c_{j}|<\infty$ and $\left(\tau_{j}\right)_{j\in\mathbb{N}}$
be a bounded sequence of real numbers. For all $s\in\mathbb{C}$,
define
\[
F_{\phi}(s):=\sum_{j=1}^{\infty}c_{j}\,\Gamma\left(s+i\tau_{j}\right)\,\phi(s+i\tau_{j})=\sum^{\infty}_{j=1} c_{j}\,R_{\phi}(s+i\tau_{j}).
\]

Note that, if $\phi\in\mathcal{A}$, $F_{\phi}(s)$ has poles located
at $s=-i\tau_{j}$ and $s=r-i\tau_{j}$ for all $j\in\mathbb{N}$.
From the fact that the poles of $\phi$ are, in general, confined
to a compact set (see definition \ref{definition 1.1.}) and the boundedness of the
sequence $(\tau_{j})_{j\in\mathbb{N}}$, one can show that there exists
a bounded set $D\subset\mathbb{C}$ such that $F_{\phi}(s)$ is analytic
on $\mathbb{C}\setminus D$. For the case where $\phi\in\mathcal{A}$,
$D$ can be taken as the union of two bounded vertical intervals containing
$s=0$ and $s=r$. If $(\tau_{j})_{j\in\mathbb{N}}$ is an infinite
sequence, then $F_{\phi}(s)$ has essential singularities inside $D$,
but despite this it can be seen that the function $F_{\phi}\left(\frac{r}{2}+iy\right)$
is well defined for $y\in\mathbb{R}$ if $\Gamma(s)\,\phi(s)$ is
analytic on the line $\text{Re}(s)=\frac{r}{2}$. Moreover, if the
parameters of $\phi$ and $\psi$ satisfy $\overline{a}(n)=b(n)$
and $\lambda_{n}=\mu_{n}$, then $F_{\phi}\left(\frac{r}{2}+iy\right)$
is real valued for $y\in\mathbb{R}$.

In the next corollary we give sufficient conditions for $F_{\phi}(s)$
to have infinitely many zeros on the critical line $\text{Re}(s)=\frac{r}{2}$.
For the case where $\phi(s)=\pi^{-s}\zeta(2s)$, this result has been
proved by A. Dixit, N. Robles, A. Roy and A. Zaharescu \cite{combinations_dixit}.
Their proof employed a variation of Hardy's method and involved the
computation of the shifted moments of the Riemann $\xi-$function. Due to
the limitations of generalizing the method of the moments to other classes
of Dirichlet series, their proof cannot be easily extended to the class of Dirichlet series satisfying definition \ref{definition 1.1.}. However,
the method employed by us in the proof of Theorem \ref{deuring to hold} and Corollary \ref{corollary on theta} can. Although the following result is stated for the class $\mathcal{A}$
only, by Remarks \ref{remark with general residual} and \ref{theorem 4.1 to general residual}, it can be also expected to hold for a more general class of Dirichlet series. 

\begin{corollary} \label{combination}
Let $\left(c_{j}\right)_{j\in\mathbb{N}}$ be a sequence of non-zero
real numbers such that $\sum|c_{j}|<\infty$ and $\tau_{j}$ be a
bounded sequence of distinct real numbers. Assume also
that $\phi$ satisfies the conditions given in the statement
of Theorem \ref{deuring to hold}. If one of the following condition holds: 
\begin{enumerate}
\item The generalized $\theta-$function, $\Theta(z;\,a;\,\lambda)$, satisfies
$\Theta\left(e^{i\left(\frac{\pi}{2}-\epsilon\right)};\,a;\,\lambda\right)=o\left(\epsilon^{-\frac{r+1}{2}}\right)$
as $\epsilon\rightarrow0^{+}$. 
\item The abcissa of absolute convergence of $\phi$, $\sigma_{a}$, satisfies
the condition $\sigma_{a}<\frac{r+1}{2}$. 
\end{enumerate}
Then the function $F_{\phi}(s):=\sum_{j=1}^{\infty}\,c_{j}\,\Gamma(s+i\tau_{j})\,\phi(s+i\tau_{j})$
has infinitely many zeros on the critical line $\text{Re}(s)=\frac{r}{2}$.
\end{corollary}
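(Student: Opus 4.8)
The plan is to run the contradiction scheme of Theorem \ref{deuring to hold} and Corollaries \ref{corollary on theta}, \ref{narrow abcissa corollary} at the level of the combination $F_\phi$. First recall that, under the hypotheses $\overline{a}(n)=b(n)$ and $\lambda_n=\mu_n$ with $\phi\in\mathcal{A}$, the value $F_\phi(\frac{r}{2}+iy)$ is real for $y\in\mathbb{R}$ and $F_\psi(\frac{r}{2}+iy)=F_\phi(\frac{r}{2}+iy)$ on the critical line (as noted before the statement). Writing $\Phi(y):=F_\phi(\frac{r}{2}+iy)$, I argue by contradiction: if $F_\phi$ had only finitely many zeros on $\text{Re}(s)=\frac{r}{2}$, then $\Phi$ would have constant sign for $y>T_0$ and for $y<-T_0$.

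Next I build the combination analogue of the Selberg--Chowla integral. Summing the $k=0$ case of (\ref{Essssential to prove Deuring}) against the weights $c_jc_k$ and using bilinearity, I set $\mathcal{J}(t):=\frac{1}{2\pi}\int_{-\infty}^{\infty}F_\psi(\frac{r}{2}+i(y-t))\,F_\phi(\frac{r}{2}+i(y+t))\,dy$. After the shift $y\mapsto y-\frac{\tau_j+\tau_k}{2}$ each summand reduces to the single--shift identity evaluated at $r+2i(t+\frac{\tau_j-\tau_k}{2})$, so $\mathcal{J}(t)$ is a combination of the residual terms of (\ref{Essssential to prove Deuring}) together with $\Gamma\mathcal{Z}_2$ at these shifted arguments; this combination plays the role of $\Gamma(r+2it)\mathcal{Z}_2(r+2it;a;\lambda)$. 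Splitting $\mathcal{J}=\mathcal{A}_1^{F}+\mathcal{A}_2^{F}+\mathcal{A}_3^{F}$ exactly as in the proof of Theorem \ref{deuring to hold}, Stirling's formula (\ref{Stirling exact form on Introduction}), the boundedness of $(\tau_j)$ and $\sum_j|c_j|<\infty$ give $\mathcal{A}_1^{F},\mathcal{A}_2^{F}=O(|t|^{\sigma_a-\frac{1}{2}+\delta}e^{-\pi|t|})$, which is of lower order because $\sigma_a<r+\frac{1}{2}$.

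The heart of the argument is the lower bound for the middle integral, and this is the step I expect to be the main obstacle. By the constant sign of $\Phi$, the product $\Phi(y-t)\Phi(y+t)$ keeps a fixed sign on $(-t+T_0,t-T_0)$, whence $|\mathcal{A}_3^{F}(t)|=\int_{-t+T_0}^{t-T_0}|F_\psi(\frac{r}{2}+i(y-t))|\,|F_\phi(\frac{r}{2}+i(y+t))|\,dy$. I then mimic (\ref{inequality r=00005Cleq1})--(\ref{preserving the constant intouchable}): factoring out the Stirling modulus $|\Gamma(\frac{r}{2}+i(y-t))\Gamma(\frac{r}{2}+i(y+t))|\sim 2\pi(t^2-y^2)^{\frac{r-1}{2}}e^{-\pi|t|}$ as a positive weight and isolating the leading Dirichlet coefficient $a(c)$, the ratios of the shifted gamma factors produce the almost--periodic factors $\Sigma_{\pm}(\xi)=\sum_j c_j\,e^{\pm\frac{\pi}{2}\tau_j}\lambda_c^{-i\tau_j}e^{i\tau_j\xi}$, and the bound is reduced to $\int_{-t+T_0}^{t-T_0}(t^2-y^2)^{\frac{r-1}{2}}\,|\Sigma_+(\log\tfrac{t-y}{\lambda_c})|\,|\Sigma_-(\log\tfrac{t+y}{\lambda_c})|\,dy$. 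The difficulty is to show that this is $\gg|t|^{r}$ with a strictly positive implied constant: since the $\tau_j$ are distinct and the $c_j$ nonzero, $\Sigma_{\pm}$ are nonzero almost--periodic functions with positive quadratic means $\sum_j c_j^2 e^{\pm\pi\tau_j}$, and a joint equidistribution argument for the correlated phases $\log\frac{t\mp y}{\lambda_c}$ shows that the weighted average of $|\Sigma_+||\Sigma_-|$ stays bounded below by a positive constant (the surviving diagonal). This gives $|\mathcal{A}_3^{F}(t)|\geq C|t|^{r}e^{-\pi|t|}$ with $C>0$, and therefore a lower bound of the form (\ref{the beginning of the boots}) for the combination.

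Granting this lower bound, the contradiction is reached along the two established routes. Under hypothesis 1, I express the combination theta function through $\Theta(\,\cdot\,;a;\lambda)$: since $R_\phi(s)=\int_0^\infty\Theta(x;a;\lambda)x^{s-1}dx$, one has $F_\phi(s)=\int_0^\infty\Theta(x;a;\lambda)\,\omega(x)\,x^{s-1}dx$ with $\omega(x)=\sum_j c_j x^{i\tau_j}$ bounded by $\sum_j|c_j|$; feeding the lower bound into the Mellin representation (\ref{Expression Theta 2k+1}) as in (\ref{Contradiction hypothesis for proof corollary Deuring once more}) yields a lower bound $\gg\epsilon^{-(r+1)}$ for the corresponding theta at $e^{i(\frac{\pi}{2}-\epsilon)}$, while $\Theta(e^{i(\frac{\pi}{2}-\epsilon)};a;\lambda)=o(\epsilon^{-\frac{r+1}{2}})$ together with the boundedness of $\omega$ force it to be $o(\epsilon^{-(r+1)})$, a contradiction, exactly as in Corollary \ref{corollary on theta}. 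Under hypothesis 2, I iterate the construction dyadically as in Claim \ref{inductive claim} to bootstrap the bound to $\gg|t|^{(2^{k+1}-1)/2}$ at the $2^{k+1}$--th level, and compare it with the Phragm\'en--Lindel\"of estimate (\ref{Phragmen contradictory here}) (which for the combination still follows from (\ref{Estimate general}) and $\sum_j|c_j|<\infty$); letting $k\to\infty$ forces $\sigma_a\geq\frac{r+1}{2}$, contradicting the hypothesis, precisely as in Corollary \ref{narrow abcissa corollary}. Thus $F_\phi$ must have infinitely many zeros on $\text{Re}(s)=\frac{r}{2}$.
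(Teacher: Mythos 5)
Your overall architecture coincides with the paper's: you reconstruct the bilinear identity (\ref{series of series}) by pairing the shifts, split the resulting integral as in (\ref{decomposition of H into three integrals}), bound $\mathcal{A}_{1}^{F},\mathcal{A}_{2}^{F}$ exactly as in Theorem \ref{deuring to hold}, and close hypothesis 1 through the theta endgame of Corollary \ref{corollary on theta} and hypothesis 2 through the dyadic bootstrap. This is the paper's route: there, the constant-sign identity (\ref{contradiction hypothesissisis}) is applied once to the real integrand $F_{\psi}\bigl(\frac{r}{2}+i(y-t)\bigr)F_{\phi}\bigl(\frac{r}{2}+i(y+t)\bigr)$, the estimates of Theorem \ref{deuring to hold} are rerun uniformly in the bounded shifts to get the lower bound (\ref{clearly contradicts hypo}) for the double sum $\sum_{j,k}c_{j}c_{k}\,\Gamma(r+2it+i(\tau_{k}-\tau_{j}))\,\mathcal{Z}_{2}(r+2it+i(\tau_{k}-\tau_{j});\,a;\,\lambda)$, and the contradiction (\ref{immediately implies contradiction on combination}) comes from summing the shifted theta identities with weights $e^{-(\tau_{k}-\tau_{j})(\frac{\pi}{2}-\epsilon)}$ and invoking (\ref{the identity of square}). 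Two small inaccuracies in your endgame: the object carrying the lower bound is $\Theta_{2}$ twisted by the quadratic weight $\sum_{j,k}c_{j}c_{k}\,x^{i(\tau_{k}-\tau_{j})}$, not $\Theta$ twisted by the linear $\omega(x)=\sum_{j}c_{j}x^{i\tau_{j}}$; and on the ray $\arg x=\frac{\pi}{2}-\epsilon$ one has $|x^{i\tau}|=e^{\tau(\frac{\pi}{2}-\epsilon)}$, so the weight is bounded by $\sum_{j}|c_{j}|\,e^{\frac{\pi}{2}|\tau_{j}|}$ (finite since $(\tau_{j})$ is bounded), not by $\sum_{j}|c_{j}|$.

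The genuine gap is your treatment of $\mathcal{A}_{3}^{F}$. The claim that "a joint equidistribution argument for the correlated phases $\log\frac{t\mp y}{\lambda_{c}}$ shows that the weighted average of $|\Sigma_{+}||\Sigma_{-}|$ stays bounded below by a positive constant" is the entire difficulty of the step and is asserted, not proved, and as stated it is doubtful. First, the two phases are functionally dependent — they lie on the curve $e^{\xi}+e^{\xi'}=2t/\lambda_{c}$ and move at very different speeds near $y=\pm t$ — so "joint equidistribution" requires an actual Weyl-type argument; positivity of the separate quadratic means $\sum_{j}c_{j}^{2}e^{\pm\pi\tau_{j}}$ does not lower-bound the mean of the correlated product $|\Sigma_{+}||\Sigma_{-}|$, since Parseval applies to each factor alone and the $c_{j}$ are signed. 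Second, almost periodicity at best yields the bound $|\mathcal{A}_{3}^{F}(t)|\gg|t|^{r}e^{-\pi|t|}$ on a relatively dense set of $t$, not for every $|t|>T_{0}$, while your endgames need more: the theta contradiction integrates the lower bound over all $y>2T_{0}$ (a positive-density version could be patched in, but you do not address this), and — more seriously — the hypothesis-2 bootstrap of Claim \ref{inductive claim} uses an all-$t$ lower bound of type (\ref{induction condition}) to secure constant sign of $\mathcal{Z}_{2^{k}}$ on its critical line; from your bilinear bound one cannot extract constant sign of $\mathcal{Z}_{2}$ itself (only that some bounded shift of it is large at each $t$), so "iterate the construction dyadically" does not go through as written: the induction must instead be run on combinations of $\mathcal{Z}_{2^{k}}$ with shift set $\{\tau_{k}-\tau_{j}\}$ and grouped coefficients (nonzero at shift $0$ because $\sum_{j}c_{j}^{2}>0$), which your proposal does not set up. Note that the paper does not pass through your $\Sigma_{\pm}$ lemma at all: it keeps the modulus-equality at the level of $F_{\psi}F_{\phi}$ and claims the estimates culminating in (\ref{preserving the constant intouchable}) hold uniformly in the shifts; your re-expansion into pairs is what creates the almost-periodic coefficient sums, and your proof is incomplete until that lemma — and the density issue it entails — is actually established.
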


\begin{proof}
Throughout this proof we shall denote $F_{\psi}(s):=\sum_{j=1}^{\infty}\,c_{j}\,\Gamma(s+i\tau_{j})\,\psi(s+i\tau_{j})$.
We just show this corollary for the first item, as for the second
a simple adaptation of the proofs of Claim \ref{inductive claim}  and Corollary \ref{narrow abcissa corollary} will suffice. The proof
starts with the sequence of integral representations (\ref{Essssential to prove Deuring})
provided by Lemma \ref{lemma integral representation}. Recalling (\ref{Selberg Chowla in an integral form of writing}),
it is simple to see that, for any elements $\tau_{j}$ and $\tau_{k}$
belonging to the bounded sequence $(\tau_{j})_{j\in\mathbb{N}}$,
the following representation holds 
\begin{align*}
\Gamma(r+2it+i(\tau_{k}-\tau_{j}))\,\mathcal{Z}_{2}\left(r+2it+i(\tau_{k}-\tau_{j});\,a;\,\lambda\right) & =-2\phi(0)\,\Gamma(r+2it+i(\tau_{k}-\tau_{j}))\,\phi\left(r+2it+i(\tau_{k}-\tau_{j})\right)+\\
+2\,\Gamma(r)\,\rho\,\Gamma(2it+i(\tau_{k}-\tau_{j}))\,\phi\left(2it+i(\tau_{k}-\tau_{j})\right)
+\frac{1}{2\pi}\,\int_{-\infty}^{\infty} & R_{\psi}\left(\frac{r}{2}+i(y+\tau_{j}-t)\right)\,R_{\phi}\left(\frac{r}{2}+i(y+\tau_{k}+t)\right)\,dy.
\end{align*}
Multiplying the previous equality by $c_{j}\,c_{k}$ and summing over
$j$ and $k$ and using the hypothesis $(c_{j})_{j\in\mathbb{N}}\in\ell^{1}$, we arrive at the equality 
\begin{align}
\sum_{j,k=1}^{\infty}c_{j}\,c_{k}\Gamma(r+2it+i(\tau_{k}-\tau_{j}))\,\mathcal{Z}_{2}\left(r+2it+i(\tau_{k}-\tau_{j});\,a;\,\lambda\right)=-2\phi(0)\sum_{j,k=1}^{\infty}c_{j}\,c_{k}\,R_{\phi}\left(r+2it+i(\tau_{k}-\tau_{j})\right) +\nonumber \\ 
+2\,\Gamma(r)\,\rho\,\sum_{j,k=1}^{\infty}\,c_{j}\,c_{k}\,R_{\phi}\left(2it+i(\tau_{k}-\tau_{j})\right)+\frac{1}{2\pi}\,\int_{-\infty}^{\infty}F_{\psi}\left(\frac{r}{2}+i(y-t)\right)\,F_{\phi}\left(\frac{r}{2}+i(y+t)\right)\,dy,\label{series of series}\ 
\end{align}
which is now useful to employ the contradiction hypothesis. Assume
that $F_{\phi}(s)$ does not have infinitely many zeros on the line
$\text{Re}(s)=\frac{r}{2}$. 
Following the proof of Theorem \ref{deuring to hold} and using the fact that
$(\tau_{j})_{j\in\mathbb{N}}$ is bounded and that all infinite series converge
absolutely (so that we can apply all the estimates given in (\ref{Estimates for A31 A33})
(\ref{preserving the constant intouchable}) and (\ref{estimate residual})
uniformly), we arrive at a lower bound similar to (\ref{preserving the constant intouchable})
of the form 
\begin{equation}
\left|\sum_{j,k=1}^{\infty}c_{j}\,c_{k}\,\Gamma(r+2it+i(\tau_{k}-\tau_{j}))\,\mathcal{Z}_{2}\left(r+2it+i(\tau_{k}-\tau_{j});\,a;\,\lambda\right)\right|>C\,|t|^{r}\,e^{-\pi|t|},\,\,\,\,|t|>T_{0},\label{clearly contradicts hypo}
\end{equation}
where $T_{0}$ is such that $F_{\phi}\left(\frac{r}{2}+iy\right)$
has no zeros for $|y|>T_{0}$ and $C$ is some positive constant.
Note also that, by the functional equation for $\mathcal{Z}_{2}(s;\,a;\,\lambda)$,
the double series on the left-hand side of (\ref{clearly contradicts hypo})
is a real function of $t$. 

The inequality (\ref{clearly contradicts hypo}) clearly contradicts
the first hypothesis of our corollary: indeed, from (\ref{integral expressing theta once more}), 
\[
e^{-({\tau_{k}-\tau_{j}})\left(\frac{\pi}{2}-\epsilon\right)}\,\Theta_{2}\left(e^{i\left(\frac{\pi}{2}-\epsilon\right)};\,a;\,\lambda\right)=\frac{1}{2\pi i}\,\int_{C_{1}}\Gamma(s+i(\tau_{k}-\tau_{j}))\,\mathcal{Z}_{2}(s+i(\tau_{k}-\tau_{j});\,a;\,\lambda)\,e^{-i\left(\frac{\pi}{2}-\epsilon\right)s}ds+O(1),
\]
where $C_{1}$ denotes the path $\left(r-i\infty,\,r-2iT_{0}\right)\cup\left(r+2iT_{0},\,r+i\infty\right)$. Following the proof of Corollary \ref{corollary on theta}, (\ref{clearly contradicts hypo}) yields
\begin{equation}
\sum_{j,k=1}^{\infty}|c_{j}\,c_{k}|\,e^{-(\tau_{k}-\tau_{j})\left(\frac{\pi}{2}-\epsilon\right)}\,\left|\Theta_{2}\left(e^{i\left(\frac{\pi}{2}-\epsilon\right)};\,a;\,\lambda\right)\right|>D\,\epsilon^{-(r+1)}+O(1),\label{immediately implies contradiction on combination}
\end{equation}
for every positive $\epsilon$. Since the double sequence $A_{j,k}:=(\tau_{k}-\tau_{j})_{j,k\in\mathbb{N}}$
is bounded and $\sum_{j,k}|c_{j}\,c_{k}|<\infty$ by hypothesis, it
is clear that the double series in the left-hand side of the previous
inequality is convergent. Henceforth, (\ref{immediately implies contradiction on combination})
immediately contradicts the first assumption, by virtue of (\ref{the identity of square}). 
\end{proof}



\bigskip{}


As an application of the method described in Theorem \ref{deuring to hold} and Corollary
\ref{narrow abcissa corollary}, we now give a quantitative result regarding the distribution
of the zeros of $\phi(s)$ on the critical line, i.e., for a sufficiently large
$T$ we find $H=H(T)$ such that the interval $[T,\,T+H]$ contains
the ordinate of a zero of $\phi(s)$ located at the critical line
$\text{Re}(s)=\frac{r}{2}$. This method may be useful in future approaches to the problem of finding $H(T)$ for which a given Dirichlet series possesses a zero on its critical line with ordinate between $T$ and $T+H$. 

\begin{corollary} \label{quantitative Classic}
Let $\phi(s)$ be a Dirichlet series satisfying the conditions of
Corollary \ref{narrow abcissa corollary}. Then, for any fixed $\epsilon>0$, there exists a positive
number $T_{0}(\epsilon)$ such that, for all $T\geq T_{0}(\epsilon)$, 
there is a zero $s=\frac{r}{2}+i\tau$ of $\phi(s)$ with 
\begin{equation}
|\tau-T|\leq T^{\sigma_{a}+\frac{1-r}{2}+\epsilon}.\label{quantitative result at last}
\end{equation}
\end{corollary}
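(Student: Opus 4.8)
The plan is to argue by contradiction, transplanting the sign‑control mechanism from the proof of Theorem~\ref{deuring to hold} but \emph{localized} to a window of ordinates around $T$. Fix $\epsilon>0$, put $H:=T^{\sigma_{a}+\frac{1-r}{2}+\epsilon}$, and suppose that $\phi$ has no zero $\tfrac{r}{2}+i\tau$ with $|\tau-T|\le H$. Since $\overline{a}(n)=b(n)$ and $\lambda_{n}=\mu_{n}$, the function $R_{\phi}(\tfrac{r}{2}+iu)=\Gamma(\tfrac{r}{2}+iu)\,\phi(\tfrac{r}{2}+iu)$ is real for $u\in\mathbb{R}$, its critical–line zeros coincide with those of $\phi$, and they are symmetric about $u=0$. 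Hence $R_{\phi}$ and $R_{\psi}$ keep a constant sign on the two symmetric windows $[T-H,T+H]$ and $[-T-H,-T+H]$. First I would choose $t\asymp T$ so that, in the integral on the right of (\ref{Selberg Chowla in an integral form of writing}), the factors $R_{\psi}(\tfrac{r}{2}+i(y-t))$ and $R_{\phi}(\tfrac{r}{2}+i(y+t))$ are evaluated, on the bulk of the relevant $y$–range, at ordinates $y\pm t$ lying \emph{inside} these zero‑free windows; there the integrand has constant sign, so that this localized piece contributes in absolute value.

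Next I would extract its main term exactly as in (\ref{another decomposition of integrals}): after the substitution $z=\tfrac{r}{2}+iy$ the integrand becomes $\Gamma(z-it)\psi(z-it)\,\Gamma(z+it)\phi(z+it)$, and shifting the (now finite) contour to $\mathrm{Re}(z)=\sigma_{a}+1$ isolates the diagonal Dirichlet contribution $|a(c)|^{2}$, where $c$ is the least index with $a(c)\neq0$. Using the Stirling factorization $|\Gamma(z-it)\Gamma(z+it)|\asymp(t^{2}-y^{2})^{(r-1)/2}e^{-\pi t}$ this produces, as in (\ref{preserving the constant intouchable}), a positive window main term of the form $|a(c)|^{2}\lambda_{c}^{-r}\,W(H,T)\,e^{-\pi t}$, with $W(H,T)$ obtained by integrating the Stirling weight over the admissible $y$–range of length $\asymp H$.

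The third step is to dominate everything else. The residual terms obey (\ref{estimate residual}); the boundary integrals created by the localized contour shift, together with the off‑window part of the integral (whose sign is uncontrolled), are each estimated by the Phragm\'en--Lindel\"of bound (\ref{Lindelof Phragmen for Hecke}) for $\phi$ and $\psi$ on the lines $\tfrac{r}{2}\le\mathrm{Re}(z)\le\sigma_{a}+1$, giving $O\!\left(|t|^{2\sigma_{a}-1+\delta}e^{-\pi|t|}\right)$. The decisive arithmetic is that $W(H,T)\gg |t|^{2\sigma_{a}-1+\delta}$ precisely once $H\gg T^{2\sigma_{a}-r+\delta'}$; and since the hypothesis $\sigma_{a}<\tfrac{r+1}{2}$ is exactly equivalent to $2\sigma_{a}-r<\sigma_{a}+\tfrac{1-r}{2}$, the choice $H=T^{\sigma_{a}+\frac{1-r}{2}+\epsilon}$ is admissible for all sufficiently small $\delta$. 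Thus the main term dominates, yielding a genuine lower bound for $|\Gamma(r+2it)\mathcal{Z}_{2}(r+2it;a;\lambda)|$ and a constant sign for $\mathcal{Z}_{2}$ on a critical‑line window of ordinates near $T$. Propagating this window lower bound and its sign through the dyadic family $\mathcal{Z}_{2^{k}}$ by the inductive mechanism of Claim~\ref{inductive claim} and Lemma~\ref{lemma integral representation}, one reaches, for a fixed level $k$ chosen in terms of $\eta:=\tfrac{r+1}{2}-\sigma_{a}$, a lower bound incompatible with the Phragm\'en--Lindel\"of estimate (\ref{Estimate general}), exactly as in the contradiction driving Corollary~\ref{narrow abcissa corollary}. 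Hence $\phi$ must have a zero in $[T-H,T+H]$, which is (\ref{quantitative result at last}).

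The hard part will be the bookkeeping around the \emph{concentration} of the Stirling weight $(t^{2}-y^{2})^{(r-1)/2}$: for $r<1$ its mass sits near the endpoints $y\approx\pm t$ (so the window must be placed near ordinate $\approx 2t$, dictating the choice of $t\asymp T/2$), while for $r\ge1$ it is centered. Producing one clean evaluation of $W(H,T)$ valid for all admissible $r$, and simultaneously guaranteeing that the sign‑uncontrolled off‑window portion of the integral cannot cancel the localized main term, is exactly where the exponent $\sigma_{a}+\tfrac{1-r}{2}+\epsilon$ is forced; the remaining dyadic propagation is then a routine iteration of the estimates already established for Theorem~\ref{deuring to hold} and Corollary~\ref{narrow abcissa corollary}.
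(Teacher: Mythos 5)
Your proposal follows essentially the same route as the paper's proof: argue by contradiction on a window of length $H=T^{\sigma_{a}+\frac{1-r}{2}+\epsilon}$, place $t$ near $T$ so that in (\ref{Selberg Chowla in an integral form of writing}) both arguments $t\pm y$ fall inside the zero-free window (the paper takes $t=T+\alpha H$ with $\frac{1}{4}<\alpha\le\frac{1}{2}$ and $y\in[-\alpha H,\alpha H]$, so that $t\pm y\in[T,T+H]$), take the modulus inside over that window, shift the finite contour to $\mathrm{Re}(z)=\sigma_{a}+1$ to isolate the diagonal term $|a(c)|^{2}$ as in (\ref{writing as Dirichlet series on the first proof}), and then propagate the resulting lower bound through the dyadic tower via Claim \ref{inductive claim}, contradicting the Phragm\'en-Lindel\"of bound (\ref{Phragmen contradictory here}). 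Your ``decisive arithmetic'' — main term $\asymp|a(c)|^{2}T^{r-1}H\,e^{-\pi t}$ against the horizontal-segment estimate $O(|t|^{2\sigma_{a}-1+\delta}e^{-\pi|t|})$, which is admissible precisely because $2\sigma_{a}-r<\sigma_{a}+\frac{1-r}{2}$ is equivalent to (\ref{Condition Narrow critical strip}) — is exactly the paper's remark that $H(T)$ must exceed the estimates for the horizontal segments of (\ref{another decomposition of integrals}).

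Three caveats. First, your closing paragraph misdiagnoses the ``hard part'': on the centered window $|y|\le\alpha H\ll t$ the Stirling weight $(t^{2}-y^{2})^{\frac{r-1}{2}}$ is uniformly $\asymp t^{r-1}$ for every $r>0$ (for $r<1$ it is even bounded below by $t^{r-1}$, since the weight only grows toward $y=\pm t$), so no case split in $r$ and no concentration analysis of $W(H,T)$ is needed — the paper performs none. The endpoint variant you contemplate ($y\approx t$, forcing $t\asymp T/2$) would fail outright: it places $y-t$ near $0$, where the hypothesis gives no sign control on $R_{\psi}$, so the modulus could not be taken inside. Second, your bound $O(|t|^{2\sigma_{a}-1+\delta}e^{-\pi|t|})$ for the sign-uncontrolled off-window portion does not follow from termwise Phragm\'en-Lindel\"of estimates: on $\alpha H\le |y|\lesssim t$ one has $|y-t|+|y+t|=2t$, so there is no net exponential gain and absolute values cost a factor of the range length, yielding only $O(|t|^{2\sigma_{a}+\delta}e^{-\pi|t|})$. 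The paper disposes of these tails with the bare assertion that they are ``similar to (\ref{estimate auxiliar integrals})'', so your treatment is at the same level of rigor as the source, but this is the one estimate that genuinely needs more care than either sketch supplies. Third, in the final escalation the level $k$ must be chosen in terms of $\epsilon$ — the comparison of (\ref{bound contradictory for H(T)}) with (\ref{Phragmen contradictory here}) requires roughly $(2^{k+1}-1)\epsilon>\sigma_{a}-\frac{r}{2}+\delta$ — not in terms of $\eta=\frac{r+1}{2}-\sigma_{a}$; the contradiction does not already occur at level one.
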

\begin{proof}
We just indicate the main steps: instead of taking a fixed parameter
$T_{0}$ as in the proof of Theorem \ref{deuring to hold}, we will take a sufficiently
large $T$. If one assumes that $\phi(\frac{r}{2}+iy)$ does not possess
a zero whenever $y\in[T,\,T+H]$, then the integrand on (\ref{Selberg Chowla in an integral form of writing})
does not possess a zero when $y\in[t-T-H,\,t-T]$. If we let $t$
vary on the interval $(T+\frac{H}{4},\,T+\frac{H}{2}]$ and take a
partition of the integral exactly as (\ref{decomposition of H into three integrals}),
we will now obtain a partition of the form 
\[
\left\{ \int_{\alpha H}^{\infty}+\int_{-\infty}^{-\alpha H}+\int_{-\alpha H}^{\alpha H}\right\} \,R_{\psi}\left(\frac{r}{2}+i\left(y-t\right)\right)\,R_{\phi}\left(\frac{r}{2}+i\left(y+t\right)\right)\,dy,
\]
where $\frac{1}{4}<\alpha\leq\frac{1}{2}$. Estimates of the first
two integrals will be similar to the already given (\ref{estimate auxiliar integrals})
and by the contradiction hypothesis, we are allowed to take the modulus
inside the third integral as in (\ref{contradiction hypothesissisis})
and then we can invoke Cauchy's Theorem to arrive at a lower bound. At this point, since
the amplitude of the interval considered for $\mathcal{A}_{3}(t)$
is $2\alpha H$, the quantity $H(T)$ needs to be chosen so that it
must be greater than the estimates for the integrals concerning the horizontal
segments in (\ref{another decomposition of integrals}). An immediate adaptation of the steps leading to (\ref{writing as Dirichlet series on the first proof})
will give a lower bound for $\mathcal{Z}_{2}(r+2it;\,a;\,\lambda)$
of the form 
\[
|\mathcal{Z}_{2}\left(r+2it;\,a;\,\lambda\right)|>B^{\prime}\,T^{-\frac{1}{2}}\,H,\,\,\,\,\,T+\frac{H}{4}<t\leq T+\frac{H}{2},
\]
for some positive $B^{\prime}$. An inductive reasoning similar to
the one given in Claim \ref{inductive claim} will also yield
\begin{equation}
|\mathcal{Z}_{2^{k+1}}\left(2^{k}r+2it;\,a;\,\lambda\right)|>B^{\prime}\,T^{-\frac{2^{k+1}-1}{2}}\,H^{2^{k+1}-1},\,\,\,\,\,T+\frac{H}{2^{k+2}}<t\leq T+\frac{H}{2^{k+1}},\label{bound contradictory for H(T)}
\end{equation}
valid for all $k\in\mathbb{N}_{0}$. If our Dirichlet series is such
that (\ref{Condition Narrow critical strip}) holds, the choice $H(T)=T^{\sigma_{a}+\frac{1-r}{2}+\epsilon}$
satisfies all of our requirements. In fact, under the hypothesis (\ref{Condition Narrow critical strip}), (\ref{bound contradictory for H(T)})
will contradict the Phragm\'en-Lindel\"of bound (\ref{Phragmen contradictory here}) for $\mathcal{Z}_{2^{k+1}}$. 
\end{proof}


\begin{remark}
Although (\ref{quantitative result at last}) is valid
in the general conditions of Corollary \ref{narrow abcissa corollary}, in most of the particular cases direct adaptations
of the method described in Theorem \ref{deuring to hold} and Corollary \ref{quantitative Classic} yield far better estimates than (\ref{quantitative result at last}) (see Examples \ref{hardy and jacobi} and \ref{epstein example} below). This happens when we have enough information regarding the Epstein zeta functions associated with $\phi(s)$. Once we have such information, the condition (\ref{Condition Narrow critical strip}) used in Corollary \ref{narrow abcissa corollary} may be discarded, since we can find a lower bound for $\mathcal{A}_{3}(T)$ by invoking Ramachandra's Theorem \cite{balasubramanian}. 
\end{remark}

\begin{center}\section{A class of examples and identities arising from the Selberg-Chowla formula} \label{section 5} \end{center}

\begin{example} \label{Watsonformula as Example} (A Generalized Watson formula and its corollaries)
On Taylor's paper regarding the functional equation for the Epstein zeta function
and its Selberg-Chowla representation, formula (\ref{Selberg Chowla Formula})
was attributed to Kober {[}\cite{taylor_epstein}, p. 182{]}. Although Kober \cite{kober_epstein} hadn't the explicit purpose of studying the analytic
continuation of the Epstein zeta function, he did study a generalization of a previous formula due to Watson {[}\cite{watson_reciprocal}, eq. (4){]}. By using the Poisson summation formula, G. N. Watson proved that, for $\text{Re}(s)>0$ and $x>0$, the following
identity holds
\begin{equation}
\sum_{n=1}^{\infty}n^{s}K_{s}\left(2\pi nx\right)+\frac{1}{4}(\pi x)^{-s}\Gamma(s)-\frac{\sqrt{\pi}}{4}(\pi x)^{-s-1}\Gamma\left(s+\frac{1}{2}\right)=\frac{\sqrt{\pi}}{2x}\left(\frac{x}{\pi}\right)^{s+1}\Gamma\left(s+\frac{1}{2}\right)\,\sum_{n=1}^{\infty}\frac{1}{(n^{2}+x^{2})^{s+\frac{1}{2}}}.\label{WATSON FORMULA FORMULA FORMULA}
\end{equation}

In this first example, we look at generalizations of (\ref{WATSON FORMULA FORMULA FORMULA}). These generalizations also include Kober's formulas given in \cite{kober_epstein}. Looking at our derivation of formulas (\ref{First Selberg Chowla even non diagonal})
and (\ref{First Selberg Chowla odd}) on Theorem
\ref{selberg-chowla non diagonal theorem}, it is not hard
to write generalizations of Watson's formula for Bochner Dirichlet
series. Let $\phi(s)=\sum a(n)\lambda_{n}^{-s}$, $\text{Re}(s)>\sigma_{a}$,
be a Bochner Dirichlet series belonging to the class $\mathcal{B}$.
If $\nu$ is a complex number satisfying $\text{Re}(\nu)>\frac{\sigma_{a}}{2}$,
consider the infinite series
\[
\mathcal{S}(\nu):=\sum_{n\in\mathbb{Z}}\frac{a(n)}{\left(a\lambda_{n}^{2}+b\lambda_{n}+c\right)^{\nu}},
\]
where the denominator is positive definite, i.e., $-d=4ac-b^{2}>0$,
$a>0$. By looking at the proof of Theorem \ref{selberg-chowla non diagonal theorem}, if we perform a summation
with respect to only one variable, we obtain the following formulas
\begin{align}
-2\phi(0)\,c^{-\nu}+\sum_{n\neq0}\frac{a(n)}{\left(a\lambda_{n}^{2}+b\lambda_{n}+c\right)^{\nu}} & =\rho\sqrt{\pi}\,\frac{\Gamma\left(\nu-\frac{1}{2}\right)}{\Gamma(\nu)}a^{-\nu}k^{1-2\nu}\nonumber \\
+\frac{4\,k^{\frac{1}{2}-\nu}a^{-\nu}}{\Gamma(\nu)}\,\sum_{n=1}^{\infty}b(n) & \,\cos\left(\frac{b}{a}\mu_{n}\right)\,\mu_{n}^{\nu-\frac{1}{2}}\,\,K_{\nu-\frac{1}{2}}\left(2k\,\mu_{n}\right),\,\,\,\,\,\text{\ensuremath{a}}(n)\,\,\text{even},\label{Watson formula for even}
\end{align}

\begin{equation}
\sum_{n\neq0}\frac{a(n)}{\left(a\lambda_{n}^{2}+b\lambda_{n}+c\right)^{\nu}}=-\frac{4\,k^{\frac{1}{2}-\nu}a^{-\nu}}{\Gamma(\nu)}\,\sum_{n=1}^{\infty}b(n)\,\sin\left(\frac{b}{a}\mu_{n}\right)\,\mu_{n}^{\nu-\frac{1}{2}}\,\,K_{\nu-\frac{1}{2}}\left(2k\,\mu_{n}\right),\,\,\,\,\,a(n)\,\,\text{odd}.\label{Watson formula for oddddd}
\end{equation}
The first of these identities generalize Watson's formula and its
extension by Kober [\cite{kober_epstein}, p. 614, eq. (2b)]. Furthermore, the Selberg-Chowla formula
(\ref{First Selberg Chowla Berndt Case for analogues Redirus}) given
in Remark \ref{remark odd odd Epstein} gives another formula of Watson-type
\begin{align}
\sum_{n\neq0}\frac{a(n)\,\lambda_{n}}{\left(a\lambda_{n}^{2}+b\lambda_{n}+c\right)^{\nu}}=\,\frac{4k^{\frac{3}{2}-\nu}a^{-\nu}}{\Gamma(\nu)}\,\sum_{n=1}^{\infty}\,b(n)\mu_{n}^{\nu-\frac{1}{2}}\,\nonumber \\
\times\left\{ \cos\left(\frac{b}{a}\,\mu_{n}\right)\,K_{\frac{3}{2}-\nu}(2k\,\mu_{n})+\frac{2b}{\sqrt{|d|}}\,\sin\left(\frac{b}{a}\,\mu_{n}\right)\,K_{\frac{1}{2}-\nu}(2k\,\mu_{n})\right\}, & \,\,\,\,\,a(n)\,\,\,\text{odd}.\label{another version watson fooooormula for odddd}
\end{align}

Let $\chi$ be a non-principal and primitive Dirichlet character modulo
$\ell$: since $\zeta(s)$ and $L(s,\,\chi)$ both satisfy functional
equations of Bochner-type (\ref{This is the first Bochner ever}), we can replace the sequences appearing
in (\ref{Watson formula for even}) and (\ref{another version watson fooooormula for odddd})
by those of these Dirichlet series. For example, if $\phi(s)=\zeta(s)$
and $b=0$, (\ref{Watson formula for even}) reduces to an equivalent
form of Watson's formula (\ref{WATSON FORMULA FORMULA FORMULA}). Under the same hypothesis of diagonal $Q(x,y)$, if we replace, respectively, $a(n)$ by $\chi(n)$
in (\ref{Watson formula for even}) and (\ref{another version watson fooooormula for odddd})
we obtain, respectively, formulas (2.9) and (2.10) given in [\cite{koshliakov_ramanujan_character},
p. 3, Thm 2.1]. 

\medskip{}

There are several known applications of the classical Watson formula. In his Lost Notebook \cite{Guinand_Ramanujan}, Ramanujan
recorded the following formula. Let $s\in\mathbb{C}$ and assume that
$\alpha,\beta>0$ are such that $\alpha\beta=\pi^{2}$. Then the following
identity holds
\begin{align}
\sqrt{\alpha}\sum_{n=1}^{\infty}\sigma_{-s}(n)\,n^{s/2}K_{\frac{s}{2}}(2n\alpha) & -\sqrt{\beta}\sum_{n=1}^{\infty}\sigma_{-s}(n)\,n^{s/2}K_{\frac{s}{2}}(2n\beta)=\nonumber \\
\frac{1}{4}\Gamma\left(-\frac{s}{2}\right)\zeta(-s)\left\{ \beta^{(1+s)/2}-\alpha^{(1+s)/2}\right\}  & +\frac{1}{4}\Gamma\left(\frac{s}{2}\right)\zeta(s)\left\{ \beta^{(1-s)/2}-\alpha^{(1-s)/2}\right\} .\label{Guinand Summation Equivalent}
\end{align}

This formula was rediscovered by Guinand in 1955 \cite{guinand_rapidly_convergent, Guinand_Ramanujan}, who
employed Watson's formula (\ref{WATSON FORMULA FORMULA FORMULA}) in order to derive it. 

By using a similar idea, we can also provide a generalization of (\ref{Guinand Summation Equivalent}) with the generalized divisor function introduced in this paper. To do so, for $\xi>0$, recall the analogue
of the Epstein zeta function $\mathcal{Z}_{2}(s;\,a_{1},\,a_{2};\,\lambda,\,\xi\lambda^{\prime})$
given in (\ref{Diagonal Epstein with parameter xssssi}). By using
the first Selberg-Chowla representation (\ref{Again First-1}), we
have deduced (\ref{Diagonal Parameter 1st}). By using the same substitutions
there mentioned and using the second Selberg-Chowla formula (\ref{Again Second-1}),
we can analogously derive
\begin{align}
\Gamma(s)\,\sum_{m,n\neq0}^{\infty}\frac{a_{1}(m)\,a_{2}(n)}{\left(\lambda_{m}+\lambda_{n}^{\prime}\,\xi\right)^{s}} & =-\xi^{-s}\,\phi_{1}(0)\,\Gamma(s)\,\phi_{2}(s)+\xi^{-r_{2}}\rho_{2}\,\Gamma(r_{2})\,\Gamma(s-r_{2})\,\phi_{1}\left(s-r_{2}\right)\nonumber \\
 & +\,2\,\xi^{-\frac{r_{2}+s}{2}}\,\sum_{m,n=1}^{\infty}b_{2}(m)\,a_{1}(n)\,\left(\frac{\mu_{m}^{\prime}}{\lambda_{n}}\right)^{\frac{s-r_{2}}{2}}\,K_{r_{2}-s}\left(2\,\sqrt{\frac{\mu_{m}^{\prime}\lambda_{n}}{\xi}}\right).\label{Diagonal parameter 2nd}
\end{align}

Viewed in this way, the Selberg-Chowla formula not only provides the
analytic continuation of a Dirichlet series $\mathcal{Z}_{2}(s,;\,a_{1},\,a_{2};\,\lambda,\,\xi \lambda^{\prime})$
depending on $\xi>0$ but it also gives a reformulation of this continuation in terms of a formula of modular type. A comparison between (\ref{Diagonal Parameter 1st}) and 
(\ref{Diagonal parameter 2nd}) yields a generalization of the Ramanujan-Guinand
formula (\ref{Guinand Summation Equivalent}), valid for all $s\in\mathbb{C}$ by Proposition \ref{proposition 2.1}
and the assumptions on the class $\mathcal{A}$.  

Stated in a clear way, let $\phi_{i}(s)$, $i=1,\,2$ represent the
pair of Dirichlet series given in (\ref{Dirichlet Series in Definition}) which
also satisfy Hecke's functional equation (\ref{Hecke Dirichlet series Functional})
and belong to the class $\mathcal{A}$. Denote also by $\rho_{i}$
the residue that $\phi_{i}$ has at $s=r_{i}$. Then, for all $s\in\mathbb{C}$
and $x>0$, the following generalization of Guinand's formula holds
\begin{align}
2x^{r_{1}-s}\sum_{j=1}^{\infty}\sigma_{s-r_{1}}\left(\nu_{j};b_{1},a_{2}\right)\nu_{j}^{\frac{r_{1}-s}{2}}\,K_{r_{1}-s}\left(2x\,\sqrt{\nu_{j}}\right)-\frac{2}{x^{r_{2}+s}}\sum_{j=1}^{\infty}\sigma_{s-r_{2}}\left(\nu_{j}^{\prime};b_{2},a_{1}\right)\,\nu_{j}^{\prime\frac{r_{2}-s}{2}}K_{r_{2}-s}\left(\frac{2\sqrt{\nu_{j}^{\prime}}}{x}\right)\nonumber \\
=\Gamma(s)\left\{ \phi_{2}(0)\phi_{1}(s)-\phi_{1}(0)\phi_{2}(s)x^{-2s}\right\} +\rho_{2}\,x^{-2r_{2}}\,\Gamma(r_{2})\,\Gamma(s-r_{2})\,\phi_{1}(s-r_{2})-\rho_{1}x^{2r_{1}-2s}\Gamma(r_{1})\,\Gamma(s-r_{1})\,\phi_{2}(s-r_{1}),\label{Guinand avec Parameter}
\end{align}
where $\sigma_{z}$ denotes the generalized weighted divisor function
described by (\ref{On a generalized Divisor Function}) \footnote{Berndt [\cite{dirichletserisIII}, p. 343, eq. (9.1)] established other
analogues of Guinand's formula. But these formulas of Berndt are only valid for arithmetical functions whose Dirichlet series satisfy Hecke's functional equation, while in general the arithmetical function of divisor type $c_{z}(j):=\sigma_{z}(\nu_{j};\,b_{1},\,a_{2})$
does not give rise to a Dirichlet series satisfying (\ref{Hecke Dirichlet series Functional}). In fact, the Dirichlet series attached to this general divisor function
is actually $\varphi_{z}(s):=\psi_{1}(s-z)\,\phi_{2}(s)$
and the assumption that $\psi_{1}$ and $\phi_{2}$ satisfy Hecke's
functional equation (\ref{Hecke Dirichlet series Functional}) allows
to write a functional equation (involving a product of two $\Gamma-$functions)
for $\varphi_{z}(s)$. Thus, (\ref{Guinand avec Parameter}) is actually equivalent to this
functional equation that $\varphi_{z}(s)$ possesses (see also
[\cite{dirichletserisIII}, p. 324] and [\cite{Guinand_Ramanujan}, pp. 39-40]).}. 


\medskip{}

From (\ref{Guinand avec Parameter}) it is possible to derive generalizations
of the classical Koshliakov formula \cite{koshliakov_Voronoi}, by mimicking the steps given
in \cite{Guinand_Ramanujan}. Writing the Laurent series for $\phi_{i}(s)$
around $s=r_{i}=r$ as 
\begin{equation}
\phi_{i}(s)=\frac{\rho_{i}}{s-r}+\rho_{0,i}+O\left(s-r\right),\,\,\,\,\,i=1,\,2,\label{meromorphic phii}
\end{equation}
assuming the same condition as the ones given in Ramanujan-Guinand's
formula (\ref{Guinand avec Parameter}) and taking $r_{1}=r_{2}=r$
we have, for any $x>0$, that the following formula of Koshliakov
type holds
\begin{align}
2\,\sum_{j=1}^{\infty}d\left(\nu_{j};\,b_{1},\,a_{2}\right)\,K_{0}\left(2x\,\sqrt{\nu_{j}}\right)-2\,x^{-2r}\,\sum_{j=1}^{\infty}d\left(\nu_{j}^{\prime};\,b_{2},\,a_{1}\right)\,K_{0}\left(\frac{2}{x}\,\sqrt{\nu_{j}^{\prime}}\right)\nonumber \\
=\Gamma(r)\,\left\{ \phi_{2}(0)\rho_{0,1}+\phi_{2}(0)\,\rho_{1}\,\frac{\Gamma^{\prime}(r)}{\Gamma(r)}-\rho_{1}\,\left\{ \rho_{2}^{\star}\Gamma^{\prime}(r)+\rho_{0,2}^{\star}\Gamma(r)\right\} +2\,\log(x)\,\phi_{2}(0)\,\rho_{1}\right\} +\label{a simpler some how Koshhhh}\\
+\,x^{-2r}\Gamma(r)\left\{ \left\{ \rho_{1}^{\star}\Gamma^{\prime}(r)+\rho_{0,1}^{\star}\Gamma(r)\right\} \rho_{2}-\phi_{1}(0)\,\rho_{0,2}-\phi_{1}(0)\,\rho_{2}\,\frac{\Gamma^{\prime}(r)}{\Gamma(r)}+2\log(x)\,\phi_{1}(0)\,\rho_{2}\right\},\nonumber 
\end{align}
where $\rho_{i}^{\star}$ and $\rho_{0,i}^{\star}$ denote the coefficients
of the meromorphic expansion (\ref{meromorphic phii}) for $\psi_{i}(s)$ and $d(\nu_{j};\,b_{1}, a_{2})$ is defined by (\ref{generalized Divisor function}). 
We obtain (\ref{a simpler some how Koshhhh}) by letting $s\rightarrow r$ in (\ref{Guinand avec Parameter}) and
using the meromorphic expansions for $\Gamma(s)$ and $\phi_{i}(s)$
around $s=0$, (\ref{Expansion Gamma 0}) and (\ref{Taylor phi1}), as well as (\ref{meromorphic phii}). 


\bigskip{}

Special cases of (\ref{Guinand avec Parameter}) and (\ref{a simpler some how Koshhhh})
are also possible to obtain for Dirichlet series in the Bochner class. Another important corollary of Koshliakov's formula (or, in fact,
a reformulation of it for $r=\frac{1}{2}$ \cite{Koshliakov_Soni, relations_equivalent})
is due to K. Soni. According to an account in \cite{Guinand_Ramanujan}, Soni's formula
already appeared in Ramanujan's lost notebook, written more than fifty years earlier (see also a multidimensional analogue of Soni's formula in [\cite{Yakubovich_Voronoi}, p. 813, eq. (3.12)]). 

\medskip{}


Indeed, taking $r=\frac{1}{2},\,\frac{3}{2}$ in (\ref{a simpler some how Koshhhh})
and mimicking the steps in \cite{Koshliakov_Soni, Guinand_Ramanujan}, it is possible to generalize
Soni's formula {[}\cite{Koshliakov_Soni}, p. 543, eq. (4){]} and some character analogues appearing in \cite{koshliakov_ramanujan_character}. For the case where
$r=\frac{1}{2}$ and assuming convergence of all the infinite series involved, we have the following formula 
\begin{align}
2\,\sum_{j=1}^{\infty}d\left(\nu_{j};\,b_{1},\,a_{2}\right)\,\frac{\log\left(\alpha/2\sqrt{\nu_{j}}\right)}{\alpha^{2}-4\nu_{j}}-\frac{2\pi}{\alpha}\,\sum_{j=1}^{\infty}d\left(\nu_{j}^{\prime};\,b_{2},\,a_{1}\right)\,K_{0}\left(2\sqrt{2}\nu_{j}^{\prime\frac{1}{4}}\alpha^{\frac{1}{2}}\right)\nonumber \\
=\frac{\sqrt{\pi}}{\alpha^{2}}\left\{ \phi_{2}(0)\,\rho_{0,1}-\sqrt{\pi}\rho_{1}\rho_{0,2}^{\star}-2\rho_{1}\phi_{2}(0)\left(2\gamma+\log(2\alpha)\right)\right\} \label{Soni Statement final version}\\
+\frac{\pi^{\frac{3}{2}}}{2\alpha}\,\left\{ -\phi_{1}(0)\rho_{0,2}+\sqrt{\pi}\rho_{2}\rho_{0,1}^{\star}+2\rho_{2}\phi_{1}(0)\,\log\left(\frac{2}{\alpha}\right)\right\},\nonumber 
\end{align}
where $\alpha\in\mathbb{R}_{+}\setminus\{2\sqrt{\nu_{j}}\}_{j\in\mathbb{N}}$.
\end{example}

\begin{example}[A character analogue of the Epstein zeta function] \label{example 5.3} In this example we study two particular cases of the non-diagonal
Epstein zeta function considered in section \ref{section 3}. Let $Q$ denote an
integral, positive definite and binary Quadratic form and let $\chi_{1}$
and $\chi_{2}$ be two non-principal, primitive Dirichlet characters
(with the same parity) having moduli $\ell_{1}$ and $\ell_{2}$.
We introduce the character analogue of the Epstein zeta function in
the following form
\begin{equation}
Z_{2}(s,\,Q,\,\chi_{1},\,\chi_{2})=\sum_{(m,n)\neq(0,0)}\frac{\chi_{1}(m)\,\chi_{2}(n)}{Q(m,n)^{s}},\,\,\,\,\text{Re}(s)>1.\label{V-53}
\end{equation}

Let us assume first that $\chi_{1}$ and $\chi_{2}$ are even Dirichlet
characters and let us take in Theorem \ref{selberg-chowla non diagonal theorem} $\phi_{1}(s)=\pi^{-\frac{s}{2}}L(s,\,\chi_{1})$
and $\phi_{2}(s)=\pi^{-\frac{s}{2}}L(s,\,\chi_{2})$. The functional
equation for $L(s,\,\chi)$ is given by \cite{Davenport}
\begin{equation}
\left(\frac{\pi}{\ell}\right)^{-\frac{s}{2}}\Gamma\left(\frac{s}{2}\right)\,L(s,\chi)=\frac{G(\chi)}{\sqrt{\ell}}\,\left(\frac{\pi}{\ell}\right)^{-\frac{1-s}{2}}\Gamma\left(\frac{1-s}{2}\right)\,L(1-s,\,\overline{\chi}),\label{Functional equation L function even}
\end{equation}
where $G(\chi)$ denotes the Gauss sum associated to the character
$\chi$,
\begin{equation}
G(\chi)=\sum_{r=1}^{\ell-1}\chi(r)\,e^{2\pi ir/\ell}.\label{Gaussssss suuuuum once more}
\end{equation}

Thus $\phi_{1}(s),\,\phi_{2}(s)\in\mathcal{B}$ with $\delta=0$ and
in order to apply our Theorem \ref{selberg-chowla non diagonal theorem} we need to perform the following
substitutions in (\ref{First Selberg Chowla even non diagonal}): 
\begin{equation}
a_{j}(n)=\chi_{j}(n),\,\,\,\,b_{j}(n)=\frac{G(\chi_{j})}{\ell_{j}}\,\overline{\chi}_{j}(n),\,\,\,\,\lambda_{n,j}=\sqrt{\pi}n,\,\,\,\mu_{n,j}=\frac{\sqrt{\pi}n}{\ell_{j}}.\label{first set of substitutions}
\end{equation}

\medskip{}

Analogously, if $\chi_{1}$ and $\chi_{2}$ are odd and primitive
Dirichlet characters having moduli $\ell_{1}$ and $\ell_{2}$, let
us take $\phi_{1}(s)=\pi^{-\frac{s}{2}}L(s,\,\chi_{1})$ and $\phi_{2}(s)=\pi^{-\frac{s}{2}}L(s,\,\chi_{2})$.
The functional equation for a Dirichlet $L-$function attached to
an odd and primitive character $\chi$ modulo $\ell$ reads \cite{Davenport}
\begin{equation}
\left(\frac{\pi}{\ell}\right)^{-\frac{s+1}{2}}\Gamma\left(\frac{s+1}{2}\right)\,L(s,\,\chi)=-\frac{iG(\chi)}{\sqrt{\ell}}\left(\frac{\pi}{\ell}\right)^{\frac{s-2}{2}}\,\Gamma\left(\frac{2-s}{2}\right)\,L(1-s,\,\overline{\chi}),\label{Functional equation Dirichlet L function oddddd}
\end{equation}
which implies that $\phi_{1},\,\phi_{2}\in\mathcal{B}$ with $\delta=1$. In order to use Theorem \ref{selberg-chowla non diagonal theorem}, we also take the substitutions 
\begin{equation}
a_{j}(n)=\chi_{j}(n),\,\,\,\,b_{j}(n)=-\frac{i\,G(\chi_{j})}{\ell_{j}}\,\overline{\chi}_{j}(n),\,\,\,\,\lambda_{n,j}=\sqrt{\pi}n,\,\,\,\mu_{n,j}=\frac{\sqrt{\pi}n}{\ell_{j}}.\label{substitutions oddddddd second seeeet}
\end{equation}

\bigskip{}

After a simple application of Theorem \ref{selberg-chowla non diagonal theorem} and Corollary \ref{corollary 3.1}
and using the substitutions outlined in (\ref{first set of substitutions})
and (\ref{substitutions oddddddd second seeeet}), we see that $Z_{2}(s,\,Q,\,\chi_{1},\,\chi_{2})$
has a continuation to the complex plane as an entire function. 

Indeed, it possesses the following Selberg-Chowla formulas: 
\begin{enumerate}
\item If $\chi_{1}$ and $\chi_{2}$ are even, then an application of the
Selberg-Chowla formulas (\ref{First Selberg Chowla even non diagonal})
and (\ref{Second Selberg Chowla even non-diagonal}) yields 
\begin{equation}
Z_{2}(s,Q,\chi_{1},\chi_{2})= \frac{8\pi^{s}a^{-s}}{\Gamma(s)}G(\chi_{1})k^{\frac{1}{2}-s}\ell_{1}^{-\left(s+\frac{1}{2}\right)}
\sum_{n=1}^{\infty}\sigma_{1-2s}(n,\,\overline{\chi}_{1},\chi_{2}) n^{s-\frac{1}{2}}\cos\left(\pi\frac{b}{a}\,\frac{n}{\ell_{1}}\right)K_{\frac{1}{2}-s}\left(\frac{2\pi kn}{\ell_{1}}\right),\label{even characters even even}
\end{equation}
as well as 
\begin{equation}
Z_{2}(s,Q,\chi_{1},\chi_{2})= \frac{8\pi^{s}c^{-s}}{\Gamma(s)}G(\chi_{2})k^{\prime\frac{1}{2}-s}\ell_{2}^{-\left(s+\frac{1}{2}\right)}
\sum_{n=1}^{\infty}\sigma_{1-2s}(n,\overline{\chi}_{2},\chi_{1}) n^{s-\frac{1}{2}}\cos\left(\pi\frac{b}{c}\,\frac{n}{\ell_{2}}\right)K_{\frac{1}{2}-s}\left(\frac{2\pi k^{\prime}n}{\ell_{2}}\right).\label{second characters odd odd example}
\end{equation}
\item If $\chi_{1}$ and $\chi_{2}$ are odd and primitive, then we equivalently
have the Selberg-Chowla formula (from (\ref{First Selberg Chowla odd}))
\begin{equation}
Z_{2}(s,Q,\chi_{1},\chi_{2}) =-\frac{8i\pi^{s}a^{-s}}{\Gamma(s)}G(\chi_{1})k^{\frac{1}{2}-s}\ell_{1}^{-\left(s+\frac{1}{2}\right)}
\sum_{n=1}^{\infty}\sigma_{1-2s}(n,\overline{\chi}_{1},\chi_{2}) n^{s-\frac{1}{2}}\sin\left(\pi\frac{b}{a}\frac{n}{\ell_{1}}\right)K_{\frac{1}{2}-s}\left(\frac{2\pi kn}{\ell_{1}}\right),
\end{equation}
with an analogous second Selberg-Chowla formula taking place, being of the
form
\begin{equation}
Z_{2}(s,\,Q,\,\chi_{1},\,\chi_{2}) =-\frac{8i\,\pi^{s}c^{-s}}{\Gamma(s)}\,G(\chi_{2})\,k^{\prime\frac{1}{2}-s}\ell_{2}^{-\left(s+\frac{1}{2}\right)}
\sum_{n=1}^{\infty}\sigma_{1-2s}(n,\,\overline{\chi}_{2},\,\chi_{1}) n^{s-\frac{1}{2}}\,\sin\left(\pi\,\frac{b}{c}\,\frac{n}{\ell_{2}}\right)\,K_{\frac{1}{2}-s}\left(\frac{2\pi k^{\prime}\,n}{\ell_{2}}\right).\label{second characters odddddddd odddddd}
\end{equation}
\end{enumerate}
\medskip{}

In all of the previous representations, $\sigma_{z}(n,\,\chi_{a},\,\chi_{b})$
is a character analogue of the divisor function, defined by
\begin{equation}
\sigma_{z}(n,\,\chi_{a},\,\chi_{b})=\sum_{d|n}\chi_{a}(d)\,\chi_{b}\left(\frac{n}{d}\right)d^{z}.\label{character analogue double to double characters Epstein once more}
\end{equation}

Moreover, it follows from Corollary \ref{corollary 3.1} that $Z_{2}(s;\,Q;\,\chi_{1},\,\chi_{2})$ satisfies the functional equation 
\begin{equation}
G(\overline{\chi}_{1})\,G(\overline{\chi}_{2})\,\left(\frac{2\pi}{\sqrt{|d|}}\right)^{-s}\Gamma(s)\,Z_{2}\left(s,\,Q,\,\chi_{1},\,\chi_{2}\right)=\left(\frac{2\pi}{\sqrt{|d|}}\right)^{s-1}\Gamma(1-s)\,Z_{2}\left(1-s,\,Q_{\ell_{1},\ell_{2}}^{-1},\,\overline{\chi}_{1},\,\overline{\chi}_{2}\right),
\end{equation}
where $Q_{\ell_{1},\ell_{2}}^{-1}$ denotes the quadratic form $Q^{-1}\left(\frac{x}{\ell_{1}},\,\frac{y}{\ell_{2}}\right)$. 

\bigskip{}

By using the previous example, we can establish analogues of the Ramanujan-Guinand formula for the character analogue of the divisor function (\ref{character analogue double to double characters Epstein once more}). We only do this for the first case, i.e.,
assuming that the characters $\chi_{1}$ and $\chi_{2}$ are even.
It is effortless to see that the comparison of (\ref{even characters even even})
and (\ref{second characters odd odd example}) and the substitution
$a=c=1$, $b=0$, yield the double-weighted Ramanujan-Guinand's formula
\begin{equation}
\sqrt{\alpha}\,G(\chi_{1})\,\ell_{1}^{-\frac{s}{2}-1}\,\sum_{n=1}^{\infty}\sigma_{-s}(n,\,\overline{\chi}_{1},\,\chi_{2})\,n^{s/2}K_{\frac{s}{2}}\left(\frac{2\alpha n}{\ell_{1}}\right)=\sqrt{\beta}\,G(\chi_{2})\,\ell_{2}^{-\frac{s}{2}-1}\,\sum_{n=1}^{\infty}\sigma_{-s}(n,\,\overline{\chi}_{2},\,\chi_{1})\,n^{s/2}\,K_{\frac{s}{2}}\left(\frac{2\beta n}{\ell_{2}}\right),\label{Guinand two characters}
\end{equation}
where $\alpha,\,\beta>0$ are such that $\alpha\beta=\pi^{2}$. Note
that (\ref{Guinand two characters}) can also be obtained from (\ref{Guinand avec Parameter})
and by using the fact that $\pi^{-s}L(2s,\,\chi)\in\mathcal{A}$ and
satisfies Hecke's functional equation with $r=\frac{1}{2}$. 
\medskip{}

Taking the limit $s\rightarrow0$ in (\ref{Guinand two characters})
yields Koshliakov's formula for double characters. To write it in
the form (\ref{a simpler some how Koshhhh}), take the substitution
$\alpha=\pi\sqrt{\frac{\ell_{1}}{\ell_{2}}}\,x$ and replace $\overline{\chi_{1}}$
by $\chi_{1}$. Use also the relation for Gauss sums \cite{Davenport}, 
\begin{equation}
G(\chi)\,G(\overline{\chi})=\chi(-1)\,\ell.\label{formula ayoub for Gausssss summmms}
\end{equation}

Then a particular case of (\ref{a simpler some how Koshhhh}) is formulated as 
\begin{equation}
\sum_{n=1}^{\infty}d_{\chi_{1},\chi_{2}}(n)\,K_{0}\left(\frac{2\pi n\,x}{\sqrt{\ell_{1}\ell_{2}}}\right)=\frac{G(\chi_{1})\,G(\chi_{2})}{x\,\sqrt{\ell_{1}\ell_{2}}}\,\sum_{n=1}^{\infty}d_{\overline{\chi}_{1},\overline{\chi}_{2}}(n)\,K_{0}\left(\frac{2\pi n}{x\,\sqrt{\ell_{1}\ell_{2}}}\right),\,\,\,\,\chi_{1},\,\chi_{2}\,\,\text{even},\label{Koshliakov double appearing in berndt kosh}
\end{equation}
where $d_{\chi_{1},\chi_{2}}(n)$ represents (\ref{character analogue double to double characters Epstein once more})
at $z=0$, this is, the double-weighted divisor function, $\sum_{d|n}\chi_{1}(d)\,\chi_{2}(n/d)$.

\medskip{}

Identity (\ref{Koshliakov double appearing in berndt kosh}) is given
in {[}\cite{koshliakov_berndt}, p. 45, eq. (3.7.){]}. From (\ref{character analogue double to double characters Epstein once more}), one can also
derive character analogues of Soni's formula: applying (\ref{Soni Statement final version})
gives (after replacing also $\alpha$ by $2\pi\sqrt{\ell_{1}\ell_{2}}\,\alpha$)
\begin{equation}
\sum_{n=1}^{\infty}\,\frac{d_{\chi_{1},\chi_{2}}(n)\,\log\left(\frac{\ell_{1}\ell_{2}\alpha}{n}\right)}{(\ell_{1}\ell_{2}\alpha)^{2}-n^{2}}=\frac{2\pi^{2}G(\chi_{1})\,G(\chi_{2})}{\alpha\,\ell_{1}^{2}\,\ell_{2}^{2}}\,\sum_{n=1}^{\infty}d_{\overline{\chi}_{1},\overline{\chi}_{2}}(n)\,K_{0}\left(4\pi\sqrt{\alpha n}\right),\label{Soni double character reformulated}
\end{equation}
which is valid for every $\alpha\neq n/\ell_{1}\ell_{2}$, $n\in\mathbb{N}$. A particular case of (\ref{Soni double character reformulated}) where $\chi_{1}(n)=\chi_{2}(n)=\chi(n)$, $\ell_{1}=\ell_{2}=\ell$, 
has been observed in {[}\cite{koshliakov_ramanujan_character}, p. 5, eq. (3.8){]}
and is equivalent to 
\[
\sum_{n=1}^{\infty}\frac{\chi(n)\,d(n)\,\log\left(\frac{\alpha\ell^{2}}{n}\right)}{\alpha^{2}\ell^{4}-n^{2}}=\frac{2\pi^{2}G(\chi)^{2}}{\alpha\ell^{4}}\,\sum_{n=1}^{\infty}\overline{\chi}(n)\,d(n)\,K_{0}(4\pi\sqrt{\alpha n}).
\]

\medskip{}

Extensions of other formulas given in \cite{koshliakov_ramanujan_character}
are also possible. For instance, it is not hard to derive generalizations
of {[}\cite{koshliakov_ramanujan_character}, eq. (3.15), (3.16) and (3.19){]}
for the character analogue of the divisor function $d_{\chi_{1},\chi_{2}}(n)$. 

Some particular cases of (\ref{Guinand two characters})
include identities derived by Dixit \cite{dixit_RiemannXI} by an entirely
different method {[}\cite{dixit_RiemannXI}, p. 322, Theorem 1.5.,
eq. (1-15){]}, employing the symmetric properties of an integral involving
Riemann's $\Xi-$function and a character analogue of it. Our formula (\ref{Guinand two characters})
applied to the case $\chi_{1}=\chi_{2}=\chi$ yields $F(z,\,\alpha,\,\chi)=F(-z,\,\beta,\,\overline{\chi})$
in formula (1-15) of \cite{dixit_RiemannXI} \footnote{Although Dixit's results may seem to be of a different nature, the integral representation involving the Riemann
$\Xi-$function appearing in his paper {[}\cite{dixit_RiemannXI}, eq. (1-14){]}
can be generalized to our class of Dirichlet series. In fact, it
is a generalization of Dixit's integral representation that we employ
to study the zeros of the Dirichlet series in Theorem \ref{deuring to hold}, since we
can represent the entire function $H_{r_{i}}(s;\,b_{i};\,a_{i})$
in a similar form (c.f. eq. (\ref{Essssential to prove Deuring}) and (\ref{First Integral Representation})
in the previous section).}. 
\end{example}

\begin{example}[Hardy's Theorem and the 4-square Theorem] \label{hardy and jacobi}
Based upon Theorem \ref{deuring to hold}, this example gives a new proof of Hardy's
Theorem for $\zeta(s)$, which is curious as its conclusion is derived
from presumably independent properties of the arithmetical function
$r_{4}(n)$. As it is well known, $r_{4}(n)$ counts the number of
representations of $n$ as a sum of 4 squares with different signs
and different orders of the summands giving distinct representations.
Let us invoke Theorem \ref{deuring to hold} to $\phi(s)=\pi^{-s}\zeta(2s)$, which
satisfies Hecke's functional equation with $r=\frac{1}{2}$. If $\phi(s)$
does not have infinitely many zeros on the critical line $\text{Re}(s)=\frac{1}{2}$,
then all of its dyadic Epstein zeta functions (\ref{dyadic Epstein 2^k}) will only have finitely
many zeros at their critical lines. Consider $\mathcal{Z}_{4}(s)$:
it comes immediately from the definition of multidimensional Epstein
zeta function that, for $\text{Re}(s)>2$, 
\[
\mathcal{Z}_{4}(s)=\pi^{-s}\sum_{m_{1},...,m_{4}\neq0}^{\infty}\frac{1}{\left(m_{1}^{2}+m_{2}^{2}+m_{3}^{2}+m_{4}^{2}\right)^{s}}=\frac{\pi^{-s}}{4}\sum_{n=1}^{\infty}\frac{r_{4}(n)}{n^{s}}=\frac{\pi^{-s}}{4}\zeta_{4}(s).
\]

\medskip{}

Hence, according with our assumption, $\zeta_{4}(s)$ cannot have infinitely
many zeros at its critical line $\text{Re}(s)=1$. However, by the celebrated Jacobi's
four square theorem, $\zeta_{4}(s)$ is described
by 
\begin{equation}
\zeta_{4}(s)=8\left(1-2^{2-2s}\right)\zeta(s)\,\zeta(s-1).\label{Jacobi 4 square theorem}
\end{equation}

Note that the right-hand side of (\ref{Jacobi 4 square theorem})
has infinitely many zeros in the line $\text{Re}(s)=1$, all of them being of the form $1\pm\frac{\pi i}{\log(2)}\,k$, with $k\in\mathbb{N}$.
This means that $\mathcal{Z}_{4}(s)$ has infinitely many zeros on
the line $\text{Re}(s)=1$ and a contradiction is derived, proving Hardy's Theorem in its classical form \footnote{The novelty in this proof is the observation that a purely arithmetical
property such as (\ref{Jacobi 4 square theorem}) implies a deep analytic
theorem regarding the distribution of the zeros of the Riemann zeta
function. One may argue that since (\ref{Jacobi 4 square theorem})
admits proofs using modular forms  and a particular modular form (the $\theta$-function) is used in Hardy's proof of Hardy's Theorem \cite{hardy_note}, the reason should be this, as a look at a generalization of this argument in Example \ref{epstein example} shows. Despite this, the fact that there are elementary proofs of (\ref{Jacobi 4 square theorem}) available, such as in \cite{jacobi_4_square}, adds a certain curiosity to this connection.}. By using a formula due to Siegel \cite{Siegel_lectures_quadratic}, the argument presented here can be generalized to Epstein zeta functions attached to quadratic forms whose matrices have determinant 1.

\medskip{}

This observation can also be used to improve the quantitative result
given in Corollary \ref{quantitative Classic}. For the case where $\phi(s)=\pi^{-s}\zeta(2s)$
or $\pi^{-\frac{s-\delta}{2}}L(2s-\delta,\,\chi)$, for $\chi$ being
a primitive Dirichlet character modulo $\ell$, the result on Corollary \ref{quantitative Classic} gives $H(T)=T^{\frac{3}{4}+\epsilon}$, which is a result attributed
to Landau. The argument invoking the 4-square theorem
can actually improve this result for $\zeta(s)$ to $H(T)=T^{\frac{1}{2}+\epsilon}$, a result which was recorded for the first time in a paper by de La Vall\'ee Poussin \cite{fekete_zeros}. As it is clear from the brief sketch of the proof, the
condition $H(T)=T^{\sigma_{a}+\frac{1-r}{2}+\epsilon}$
is only necessary in order to (\ref{bound contradictory for H(T)}) contradict
the Phragm\'en-Lindel\"of estimates (\ref{Phragmen contradictory here}) combined with the assumption (\ref{Condition Narrow critical strip}).
In fact, for the case where $\phi(s)=\pi^{-s}\zeta(2s)$ it suffices
to assume that $H(T)=T^{\frac{1}{2}+\epsilon}$ until the point we
arrive at the condition (\ref{bound contradictory for H(T)}). In
this particular case, (\ref{bound contradictory for H(T)}) gives the inequality 
\begin{equation}
|\zeta_{4}(1+2it)|>B^{\prime}\,\left(\frac{H}{T^{\frac{1}{2}}}\right)^{3},\,\,\,\,\,\,B^{\prime}>0
\end{equation}
valid for every $T+\frac{H}{8}<t\leq T+\frac{H}{4}$. This obviously
contradicts (\ref{Jacobi 4 square theorem}), which implies that there
is always a zero of $\zeta_{4}(1+it)$ in any interval of length $\frac{\pi}{\log(2)}$. 

Other proofs of Hardy's Theorem for $\zeta(s)$ can be derived from
corollaries \ref{corollary the one with residue} and \ref{narrow abcissa corollary}, yielding even easier arguments. As it was proved by Dixit et al \cite{combinations_dixit}, $\phi(s)=\pi^{-2s}\,\zeta(2s)$ satisfies Corollary \ref{combination}. 

If $\chi$ is a primitive Dirichlet character modulo $\ell$, then
an application of Corollary \ref{combination} (or Corollary \ref{corollary the one with residue} together with remark \ref{fekete remark} for the case where
$\chi$ is even), shows that the function
\[
F(s,\,\chi):=\sum_{j=1}^{\infty}c_{j}\left(\frac{\pi}{\ell}\right)^{-(s+i\tau_{j})}\Gamma\left(\frac{s+i\tau_{j}+\delta}{2}\right)\,L(s+i\tau_{j};\,\chi),\,\,\,\,\delta\in\{0,\,1\},
\]
where $c_{j}\in\ell^{1}(\mathbb{R})$ and $\tau_{j}\in\ell^{\infty}(\mathbb{R})$,
has infinitely many zeros on the critical line $\text{Re}(s)=\frac{1}{2}$. 
\end{example}

\begin{example}[Character analogue for the divisor function and Dedekind zeta function]
Let $\chi$ be an odd and primitive Dirichlet character modulo $\ell$.
By the functional equations for $\zeta(s)$ and $L(s,\chi)$ (\ref{Functional equation Dirichlet L function oddddd}),
one can check that $\zeta(s)\,L(s,\,\chi)$ satisfies the functional
equation
\begin{equation}
\left(\frac{2\pi}{\sqrt{\ell}}\right)^{-s}\Gamma(s)\,\zeta(s)\,L(s,\chi)=-\frac{iG(\chi)}{\sqrt{\ell}}\,\left(\frac{2\pi}{\sqrt{\ell}}\right)^{-(1-s)}\Gamma(1-s)\,\zeta(1-s)\,L(1-s,\overline{\chi}).\label{Functional equation now Hecke for d chi}
\end{equation}

Let us write the associated Dirichlet series satisfying (\ref{Functional equation now Hecke for d chi}) as 
\[
\phi(s)=\zeta(s)\,L(s,\chi)=\sum_{n=1}^{\infty}\frac{d_{\chi}(n)}{n^{s}},\,\,\,\,\,\,\,\text{Re}(s)>1,
\]
where $d_{\chi}(n)$ denotes the character analogue of the divisor
function {[}\cite{koshliakov_berndt}, p. 42, eq. (3.1){]}, 
\[
d_{\chi}(n)=\sum_{d|n}\chi(d).
\]

If $\chi_{1}$ and $\chi_{2}$ denote two odd and primitive Dirichlet
characters modulo $\ell_{1}$ and $\ell_{2}$, we know by (\ref{Functional equation now Hecke for d chi})
that $\phi_{j}(s)=\zeta(s)\,L(s,\,\chi_{j})$ satisfies Hecke's functional
equation with $r_{j}=1$, $j=1,\,2$. Moreover, each $\phi_{i}(s)$ belongs to
the class $\mathcal{A}$ and their residues at $s=1$ are simply $\rho_{j}=\,L(1,\chi_{j})$.
We define the diagonal Epstein zeta function for $d_{\chi_{1}}(n)$
and $d_{\chi_{2}}(n)$ as the double Dirichlet series \begin{equation}
\mathcal{Z}_{2}\left(s;\,d_{\chi_{1}},\,d_{\chi_{2}}\right):=\sum_{m,n\neq0}^{\infty}\frac{d_{\chi_{1}}(m)\,d_{\chi_{2}}(n)}{(m+n)^{s}},\,\,\,\,\,\,\text{Re}(s)>2.\label{double dirichlet series with character}
\end{equation}

As in Example \ref{example 5.3}, in order to match the functional equation (\ref{Functional equation now Hecke for d chi}) with the formulations of Theorem \ref{theorem 2.1} and Corollary \ref{The Analytic Continuation},
we need to perform the following substitutions for $\phi_{j}(s)$, 
\[
a_{j}(n)=d_{\chi_{j}}(n),\,\,\,b_{j}(n)=-\frac{2\pi iG(\chi_{j})}{\ell_{j}}\,d_{\overline{\chi_{j}}}(n),\,\,\,\lambda_{n,j}=n,\,\,\,\mu_{n,j}=\frac{4\pi^{2}}{\ell_{j}}n.
\]

Using the first Selberg-Chowla formula (\ref{Again First-1}), we
obtain the following continuation for (\ref{double dirichlet series with character})
\begin{align*}
\Gamma(s)\,\mathcal{Z}_{2}\left(s;\,d_{\chi_{1}},\,d_{\chi_{2}}\right) & =\frac{1}{2}\,L(0,\,\chi_{2})\Gamma(s)\,\zeta(s)\,L(s,\,\chi_{1})+L(1,\chi_{1})\,\Gamma(s-1)\,L(s-1,\,\chi_{2})\\
-\frac{4\pi iG(\chi_{1})}{\ell_{1}^{\frac{s+1}{2}}} & \,(2\pi)^{s-1}\,\sum_{m,n=1}^{\infty}d_{\overline{\chi_{1}}}\,(m)\,d_{\chi_{2}}(n)\,\left(\frac{m}{n}\right)^{\frac{s-1}{2}}\,K_{s-1}\left(\frac{4\pi}{\ell_{1}}\sqrt{m\,n}\right).
\end{align*}
Analogously, (\ref{Again Second-1}) gives a second
Selberg-Chowla formula of the form
\begin{align*}
\Gamma(s)\,\mathcal{Z}_{2}\left(s;\,d_{\chi_{1}},\,d_{\chi_{2}}\right) & =\frac{1}{2}\,L(0,\,\chi_{1})\Gamma(s)\,\zeta(s)\,L(s,\,\chi_{2})+L(1,\chi_{2})\,\Gamma(s-1)\,L(s-1,\,\chi_{1})\\
-\frac{4\pi iG(\chi_{2})}{\ell_{2}^{\frac{s+1}{2}}} & \,(2\pi)^{s-1}\,\sum_{m,n=1}^{\infty}d_{\chi_{1}}(m)\,d_{\overline{\chi_{2}}}\,(n)\,\left(\frac{m}{n}\right)^{\frac{1-s}{2}}\,K_{s-1}\left(\frac{4\pi}{\ell_{2}}\sqrt{m\,n}\right).
\end{align*}

\medskip{}

Both formulas given above provide the analytic continuation of (\ref{double dirichlet series with character})
as a meromorphic function with a simple pole located at $s=2$ with residue
$L(1,\,\chi_{1})\cdot L(1,\,\chi_{2})$. The functional equation for
(\ref{double dirichlet series with character}) can be written as 
\begin{equation}
\left(\frac{2\pi}{\sqrt{\ell_{1}\ell_{2}}}\right)^{-s}\Gamma(s)\,\mathcal{Z}_{2}\left(s;\,d_{\chi_{1}},\,d_{\chi_{2}}\right)=-G(\chi_{1})\,G(\chi_{2})\,\left(\frac{2\pi}{\sqrt{\ell_{1}\ell_{2}}}\right)^{-(2-s)}\,\Gamma(2-s)\,\mathcal{Z}_{2}\left(2-s;\,\mathbb{I}_{\ell_{1},\ell_{2}};\,d_{\overline{\chi}_{1}},\,d_{\overline{\chi}_{2}}\right),\label{Functional equation for epstein divisor character}
\end{equation}
where $\mathcal{Z}_{2}\left(s;\,\mathbb{I}_{\ell_{1},\ell_{2}};\,d_{\overline{\chi}_{1}},\,d_{\overline{\chi}_{2}}\right)$
is described as the double Dirichlet series 
\[
\mathcal{Z}_{2}\left(s;\,\mathbb{I}_{\ell_{1},\ell_{2}};\,d_{\overline{\chi}_{1}},\,d_{\overline{\chi}_{2}}\right):=\sum_{m,n\neq0}^{\infty}\frac{d_{\overline{\chi}_{1}}(m)\,d_{\overline{\chi}_{2}}(n)}{\left(\ell_{2}m+\ell_{1}n\right)^{s}},\,\,\,\,\,\text{Re}(s)>2.
\]

It is also possible to construct analogues of Guinand's formula via
the above Selberg-Chowla formulas for $\mathcal{Z}_{2}(s,\,d_{\chi_{1}},\,d_{\chi_{2}})$.
Analogues of Koshliakov's formula can be also obtained, although we
need some additional computations of the values of $L^{\prime}(1,\,\chi)$ \cite{denninger}.

\medskip{}
In the same spirit, for $i=1,2$, let $F_{i}(n)$ denote, as usual, the number of
integral ideals of norm $n$ in an imaginary quadratic number field
with discriminant $D_{i}$, $K_{i}=\mathbb{Q}\left(\sqrt{-D_{i}}\right)$. Then it is well-known that the associated Dirichlet
series is the classical Dedekind zeta function
\begin{equation}
\zeta_{K_{i}}(s)=\sum_{n=1}^{\infty}\frac{F_{i}(n)}{n^{s}},\,\,\,\,\,\text{Re}(s)>1,\label{Dedekind imaginary quadratic field}
\end{equation}
which satisfies Hecke's functional equation
\begin{equation}
\left(\frac{2\pi}{\sqrt{D_{i}}}\right)^{-s}\Gamma(s)\,\zeta_{K_{i}}(s)=\left(\frac{2\pi}{\sqrt{D_{i}}}\right)^{s-1}\Gamma(1-s)\,\zeta_{K_{i}}(1-s).\label{functional equation Dedekind}
\end{equation}

Since $\zeta_{K_{i}}(s)$ has a continuation to the entire
complex plane as an analytic function except at a simple pole located at
$s=1$, we see that, for $i=1,\,2$, $\zeta_{K_{i}}(s)\in\mathcal{A}$
with $r_{i}=1$. Moreover, the residue of $\zeta_{K_{i}}(s)$ at $s=1$
can be explicitly computed as
\[
\rho_{i}=\frac{2\pi\,h(K_{i})R(K_{i})}{\sqrt{D_{i}}\,w(K_{i})},
\]
where $h(K_{i}),$ $R(K_{i})$ and $w(K_{i})$ denote, respectively,
the class number of $K_{i}$, the regulator of $K_{i}$ and the number
of roots of unity in $K_{i}$. We now take $\phi_{i}(s):=\zeta_{K_{i}}(s)$:
in order to apply Corollary \ref{The Analytic Continuation}, we need to take the following substitutions
\[
a_{i}(n)=F_{i}(n),\,\,\,b_{i}(n)=\frac{2\pi}{\sqrt{D_{i}}}F_{i}(n),\,\,\,\lambda_{n,i}=n,\,\,\,\mu_{n}=\frac{4\pi^{2}}{D_{i}}n.
\]

By the conditions on the class $\mathcal{A}$, we have $\phi_{i}(0)=-{h(K_{i})\,R(K_{i})}/{w(K_{i})}$,
so that we take $F_{i}(0):=h(K_{i})\,R(K_{i})/w(K_{i})$. We now construct
the diagonal Epstein zeta function attached to $K_{1}$ and $K_{2}$ as follows
\begin{equation}
\mathcal{Z}_{2}\left(s;\,K_{1},\,K_{2}\right):=\sum_{m,n\neq0}^{\infty}\frac{F_{1}(m)\,F_{2}(n)}{\left(m+n\right)^{s}},\,\,\,\,\,\,\,\text{Re}(s)>2.\label{Epstein zeta function associated to Dedekind}
\end{equation}

Under the substitutions above mentioned, we have from the Selberg-Chowla
formula (\ref{Again First-1}) that the following identity holds for (\ref{Epstein zeta function associated to Dedekind}) 
\begin{align*}
\Gamma(s)\,\mathcal{Z}_{2}\left(s;\,K_{1},\,K_{2}\right) & =\frac{h(K_{2})\,R(K_{2})}{w(K_{2})}\,\Gamma(s)\,\zeta_{K_{1}}(s)+\frac{2\pi\,h(K_{1})R(K_{1})}{\sqrt{D_{1}}\,w(K_{1})}\,\Gamma(s-1)\,\zeta_{K_{2}}\left(s-1\right)\\
 & +2\,\left(\frac{2\pi}{\sqrt{D_{1}}}\right)^{s}\,\sum_{m,n=1}^{\infty}F_{1}(m)\,F_{2}(n)\,\left(\frac{m}{n}\right)^{\frac{s-1}{2}}\,K_{s-1}\left(\frac{4\pi\sqrt{m\,n}}{D_{1}}\right),
\end{align*}
giving the analytic continuation of $\mathcal{Z}_{2}(s;\,K_{1},\,K_{2})$
as a meromorphic function with one simple pole located at $s=2$ whose
residue is precisely 
\[
\text{Res}_{s=2}\mathcal{Z}_{2}(s;\,K_{1},\,K_{2})=\frac{4\pi^{2}}{\sqrt{D_{1}D_{2}}}\,\frac{h(K_{1})h(K_{2})\,R(K_{1})R(K_{2})}{w(K_{1})w(K_{2})}.
\]

Moreover, if we write a second Selberg-Chowla for (\ref{Epstein zeta function associated to Dedekind}), we can deduce that it satisfies the following functional equation: 
\[
\left(\frac{2\pi}{\sqrt{D_{1}D_{2}}}\right)^{-s}\Gamma(s)\,\mathcal{Z}_{2}\left(s;\,K_{1},\,K_{2}\right)=\sqrt{D_{1}D_{2}}\,\left(\frac{2\pi}{\sqrt{D_{1}D_{2}}}\right)^{-(2-s)}\Gamma(2-s)\,\mathcal{Z}_{2}\left(s;\,\mathbb{I}_{D_{1},D_{2}},\,K_{1},\,K_{2}\right),
\]
where, just as in (\ref{Functional equation for epstein divisor character}), $\mathcal{Z}_{2}\left(s;\,\mathbb{I}_{D_{1},D_{2}},\,K_{1},\,K_{2}\right)$
denotes the Dirichlet series 
\begin{equation}
\mathcal{Z}_{2}\left(s;\,\mathbb{I}_{D_{1},D_{2}},\,K_{1},\,K_{2}\right):=\sum_{m,n\neq0}^{\infty}\frac{F_{1}(m)\,F_{2}(n)}{\left(D_{2}m+D_{1}n\right)^{s}},\,\,\,\,\,\text{Re}(s)>2.
\end{equation}

If, for the quadratic fields $K_{1}$ and $K_{2}$, we define an analogue of the divisor function, $\sigma_{\nu}(n;\,K_{1},\,K_{2})$, 
in the following way
\begin{equation}
\sigma_{\nu}(n;\,K_{1},\,K_{2})=\sum_{d|n}F_{1}(d)\,F_{2}\left(\frac{n}{d}\right)\,d^{\nu},\label{divisor function quadratic field}
\end{equation}
we have that, for any $x>0$, the identity of Ramanujan-Guinand type (\ref{Guinand avec Parameter}) holds 
\begin{equation}
\left(\frac{2\pi}{\sqrt{D_{1}}}\right)^{s}\,x^{1-s}\,\sum_{n=1}^{\infty}\,\sigma_{s-1}\left(n;\,K_{1},\,K_{2}\right)\,n^{\frac{1-s}{2}}\,K_{s-1}\left(\frac{4\pi\sqrt{n}}{\sqrt{D_{1}}}\,x\right)\nonumber
\end{equation}
\begin{equation}
 -\left(\frac{2\pi}{\sqrt{D_{2}}}\right)^{s}\,x^{-1-s}\sum_{m,n=1}^{\infty}\sigma_{s-1}\left(n;\,K_{2},\,K_{1}\right)\,n^{\frac{1-s}{2}}\, K_{s-1}\left(\frac{4\pi\sqrt{n}}{\sqrt{D_{2}}\,x}\right)\nonumber 
 \end{equation}
 \begin{equation}
=\frac{h(K_{1})\,R(K_{1})\,x^{-2s}}{2\,w(K_{1})}\left\{ \Gamma(s)\,\zeta_{K_{2}}(s)-\frac{2\pi\,x^{2}}{\sqrt{D_{1}}}\Gamma(s-1)\,\zeta_{K_{2}}(s-1)\right\} \, \nonumber
\end{equation}
 \begin{equation}
+\frac{h(K_{2})\,R(K_{2})}{2\,w(K_{2})}\left\{ \frac{2\pi}{\sqrt{D_{2}}\,x^{2}}\,\Gamma(s-1)\,\zeta_{K_{1}}(s-1)-\Gamma(s)\,\zeta_{K_{1}}(s)\right\}.\label{Guinand Ramanujan Dedeekiniiind}
\end{equation}



From the Kronecker limit formula for imaginary quadratic fields, it is also possible to take the limit
$s\rightarrow1$ on (\ref{Guinand Ramanujan Dedeekiniiind}) in order to
derive analogues of Koshliakov's formula for the divisor function
(\ref{divisor function quadratic field}). One dimensional analogues
of (\ref{Guinand Ramanujan Dedeekiniiind}) are also given in \cite{berndt_preprint}. 

\end{example}

\begin{example}[Dirichlet series attached to Eisenstein Series and Cusp Forms]\label{example cusp forms}
Let $k\geq3$ be an odd positive integer and consider the divisor
function $\sigma_{k}(n)$. We know that the associated Dirichlet series
is given by the product of zeta functions 
\begin{equation}
\zeta(s)\,\zeta(s-k)=\sum_{n=1}^{\infty}\frac{\sigma_{k}(n)}{n^{s}},\,\,\,\,\,\text{Re}(s)>k+1,\label{divisor k definition}
\end{equation}
and that $\phi(s)=(2\pi)^{-s}\,\zeta(s)\,\zeta(s-k)$ obeys to the Hecke's functional equation {[}\cite{arithmetical identities}, p. 17, eq. (57){]}
\begin{equation}
(2\pi)^{-s}\,\Gamma(s)\,\zeta(s)\,\zeta(s-k)=(-1)^{\frac{k+1}{2}}(2\pi)^{-(k+1-s)}\Gamma(k+1-s)\,\zeta(k+1-s)\,\zeta(1-s).\label{functional equation Divisor}
\end{equation}
Moreover, since $\zeta(1-k)=0$, we know that $\phi\in\mathcal{A}$, once we take the convention $\sigma_{k}(0):=-B_{k+1}/2k+2$. 

Let $k_{1},\,k_{2}\geq3$ be two odd positive integers and consider
two Dirichlet series $\phi_{i}(s)$, $i=1,\,2$, given by $\phi_{i}(s)=(2\pi)^{-s}\zeta(s)\,\zeta(s-k_{i})$.
We construct the diagonal Epstein zeta function (\ref{Epstein as double series})
as 
\begin{equation}
\mathcal{Z}_{2}(s;\,k_{1},\,k_{2})=(2\pi)^{-s}\sum_{m,n\neq0}^{\infty}\frac{\sigma_{k_{1}}(m)\,\sigma_{k_{2}}(n)}{(m+n)^{s}}:=(2\pi)^{-s}\,Z_{2}(s;\,k_{1},\,k_{2}),\,\,\,\,\,\,\text{Re}(s)>2\,\max\left\{ k_{1},\,k_{2}\right\} +2. \label{Epstein for divisor once more}
\end{equation}

In order to apply directly Theorem \ref{theorem 2.1} and its corollary \ref{The Analytic Continuation}, we take the simple
substitutions
\begin{equation}
{a}_{i}(n)=\sigma_{k_{i}}(n),\,\,\,\ b_{i}(n)=(-1)^{\frac{k_{i}+1}{2}}\sigma_{k_{i}}(n),\,\,\,\lambda_{n,i}=\mu_{n,i}=2\pi n.\label{substitutions divisor k}
\end{equation}

Using Euler's formula for $\zeta(2n),$ it is very simple to find
that
\begin{equation}
\rho_{i}=\frac{(-1)^{\frac{k_{i}-1}{2}}}{2(k_{i}+1)!}\,B_{k_{i}+1},\,\,\,\,\phi_{i}(0):=\zeta(0)\,\zeta(-k_{i})=\frac{B_{k_{i}+1}}{2k_{i}+2}\label{computation residue and value at zero once more sigma k}
\end{equation}
where $\left(B_{n}\right)_{n\in\mathbb{N}}$ denotes the sequence
of Bernoulli numbers. Applying these computations to the first Selberg-Chowla
formula on the class $\mathcal{A}$ (\ref{Again First-1}), we arrive
at the representation 
\begin{align*}
(2\pi)^{-s}\,\Gamma(s)\,Z_{2}(s;\,k_{1},\,k_{2}) & =-\frac{B_{k_{2}+1}\,(2\pi)^{-s}}{2k_{2}+2}\,\Gamma(s)\,\zeta(s)\,\zeta(s-k_{1})+\frac{(-1)^{\frac{k_{1}-1}{2}}\,B_{k_{1}+1}\,(2\pi)^{-s+k_{1}+1}}{2k_{1}+2}\,\Gamma(s-k_{1}-1)\times\\
\times\zeta(s-k_{1}-1) & \,\zeta(s-k_{1}-k_{2}-1)+2\,\sum_{m,n=1}^{\infty}(-1)^{\frac{k_{1}+1}{2}}\sigma_{k_{1}}(m)\,\sigma_{k_{2}}(n)\,\left(\frac{m}{n}\right)^{\frac{s-k_{1}-1}{2}}\,K_{s-k_{1}-1}\left(4\pi\sqrt{m\,n}\right).
\end{align*}
which gives the analytic continuation of $Z_{2}(s;\,k_{1},\,k_{2})$
as a meromorphic function with a simple pole located at $s=k_{1}+k_{2}+2$
whose residue is given by 
\begin{equation}
\text{Res}_{s=k_{1}+k_{2}+2}\,\,Z_{2}(s;\,k_{1},\,k_{2})=\frac{(-1)^{\frac{k_{1}+k_{2}}{2}-1}(2\pi)^{k_{1}+k_{2}+2}}{4\,(k_{1}+1)\,(k_{2}+1)}\,\frac{B_{k_{1}+1}\times B_{k_{2}+1}}{(k_{1}+k_{2}+1)!}.\label{Residue sigma_k}
\end{equation}

The second Selberg-Chowla formula can be analogously written and an
application of Corollary \ref{The Analytic Continuation} shows that $Z_{2}(s;\,k_{1},\,k_{2})$
must satisfy the functional equation 
\begin{equation}
(2\pi)^{-s}\,\Gamma(s)\,Z_{2}(s;\,k_{1},\,k_{2})=(-1)^{\frac{k_{1}+k_{2}+2}{2}}(2\pi)^{-(k_{1}+k_{2}+2-s)}\,\Gamma(k_{1}+k_{2}+2-s)\,Z_{2}(k_{1}+k_{2}+2-s;\,k_{1},\,k_{2}).\label{as it should be expected odd k}
\end{equation}


An application of Theorem \ref{deuring to hold} together with the absence of zeros (at the critical line) of the Dirichlet series
given in (\ref{divisor k definition}), allows to prove some lower
bounds (at the critical line) for some Epstein zeta functions. In {[}\cite{Hecke_middle_line}, Beispiel 11, p. 95{]} it
is constructed an octonary quadratic form whose Epstein zeta function
does not possess a single zero on the critical line $\text{Re}(s)=2$.
Indeed, if $\mathbb{I}_{n}$ denotes the $n-$dimensional identity
matrix and $\mathcal{Q}(x_{1},...,x_{8})$ is the octonary quadratic form
represented by the following matrix
\[
\mathcal{Q}=\left(\begin{array}{cc}
2\cdot\mathbb{I}_{4} & \mathcal{S}\\
-\mathcal{S} & 2\cdot\mathbb{I}_{4}
\end{array}\right)\,\,\,\,\,\,\text{with }\,\,\,\,\,\,\mathcal{S}:=\left(\begin{array}{cccc}
0 & 1 & 1 & 1\\
-1 & 0 & -1 & 1\\
-1 & 1 & 0 & -1\\
-1 & -1 & 1 & 0
\end{array}\right),
\]
then its associated Epstein zeta function, $Z_{8}(s,\,\mathcal{Q})$,
can be written explicitly as 
\begin{equation}
Z_{8}(s,\,\mathcal{Q})=240\cdot2^{-s}\,\zeta(s)\,\zeta(s-3),\label{Hecke's construction}
\end{equation}
which obviously does not possess any zero at the critical line $\text{Re}(s)=2$
\footnote{note that this violation of the Riemann hypothesis is not implied
by the Selberg-Chowla formula in the sense of Corollary 4.1 and that
$Z_{8}(s,\,\mathcal{Q})$ possesses an Euler product, not falling
under the class of Epstein zeta functions studied by Davenport and
Heilbronn \cite{davenport heilbronn}.}. Thus, if we construct the Quadratic form in sixteen dimensions,
with a matrix representation of the form
\[
\mathcal{P}=\left(\begin{array}{cc}
\mathcal{Q} & \mathbf{0}_{8}\\
\mathbf{0}_{8} & \mathcal{Q}
\end{array}\right), 
\]
we see by a simple application of item 2. of Theorem \ref{deuring to hold} that the
order of the Epstein zeta function satisfies a lower bound of the
form $|Z_{16}\left(4+it;\,\mathcal{P}\right)|>C\,|t|^{\frac{7}{2}},$
$C>0$, whenever $|t|$ is taken sufficiently large \footnote{By the functional equation for the Riemann zeta-function and Stirling's
formula, we have that $|Z_{8}(2+it,\,\mathcal{Q})|\gg|t|^{\frac{3}{2}}$,
$|t|\gg1$. Applying this to the Selberg-Chowla formula (\ref{Essssential to prove Deuring}) and proceeding
just as in the proof of Claim \ref{inductive claim}, we can derive the lower bound
$|Z_{16}(4+it,\,\mathcal{P})|\gg|t|^{\frac{7}{2}}$, $|t|\gg1$.}. This proves that, at most, $Z_{16}(s,\,\mathcal{P})$ has a finite number of zeros in the line $\text{Re}(s)=4$. Hence, besides violating
the Riemann hypothesis quite badly, as the construction (\ref{Hecke's construction})
shows, Epstein zeta functions with sufficiently high dimension do
not satisfy the Lindel\"of hypothesis. 

\bigskip{}

As a complementary case of the previous Dirichlet series, let $f$ be a cusp form with weight $k\geq12$ (with $k$ always being
an even integer) for the full modular group and $L(s,\,a)$ the associated
$L-$function, 
\begin{equation}
L(s,\,a)=\sum_{n=1}^{\infty}\frac{a(n)}{n^{s}},\label{Modular group Dirichlet series}
\end{equation}
where the arithmetical functions $a(n)$ represent the Fourier coefficients
of $f$. Then (\ref{Modular group Dirichlet series}) is known to
be absolutely convergent in the half-plane $\text{Re}(s)>\frac{k+1}{2}$
and can be analitically continued to an entire function {[}\cite{apostol_book}, Theorem 6.20{]}. Furthermore, it obeys to Hecke's functional
equation 
\begin{equation}
(2\pi)^{-s}\Gamma(s)\,L(s,\,a)=(-1)^{k/2}(2\pi)^{-(k-s)}\Gamma(k-s)\,L(k-s,\,a).\label{Hekce's functional equation L functions cusp forms}
\end{equation}

Consider two holomorphic cusp forms $f_{1}$ and $f_{2}$ having
weights $k_{1}$ and $k_{2}$ respectively and the pairs of Dirichlet
series $\phi_{i}(s)=(2\pi)^{-s}L(s,\,a_{i})$ and $\psi_{i}(s)=(-1)^{k_{i}/2}(2\pi)^{-s}L(s,\,a_{i})$, $i=1,\,2$, associated with them. Here, the substitutions analogous
to (\ref{first set of substitutions}) are obvious to consider. Since each $\phi_{i}(s)$ is entire,
it follows from Corollary \ref{The Analytic Continuation} that its (diagonal) Epstein zeta function given by 
\begin{equation}
\mathcal{Z}_{2}(s;\,f_{1},\,f_{2})=\,\left(2\pi\right)^{-s}\sum_{m,n\neq0}^{\infty}\frac{a_{1}(m)\,a_{2}(n)}{(m+n)^{s}}=(2\pi)^{-s}Z_{2}(s;\,f_{1},\,f_{2}),\,\,\,\,\,\,\,\,\text{Re}(s)>\max\left\{ k_{1},\,k_{2}\right\} +1,\label{Diagonal Epstein cusp forms}
\end{equation}
is also entire and its continuation satisfies the Selberg-Chowla formulas
\[
(2\pi)^{-s}\,\Gamma(s)\,Z_{2}\left(s;\,f_{1},\,f_{2}\right)=2\,(-1)^{\frac{k_{1}}{2}}\,\sum_{m,n=1}^{\infty}a_{1}(m)\,a_{2}(n)\,\left(\frac{m}{n}\right)^{\frac{s-k_{1}}{2}}\,K_{k_{1}-s}\left(4\pi\sqrt{m\,n}\right)
\]
and 
\[
(2\pi)^{-s}\,\Gamma(s)\,Z_{2}\left(s;\,f_{1},\,f_{2}\right)=2\,(-1)^{\frac{k_{2}}{2}}\,\sum_{m,n=1}^{\infty}a_{1}(m)\,a_{2}(n)\,\left(\frac{m}{n}\right)^{\frac{k_{2}-s}{2}}\,K_{k_{2}-s}\left(4\pi\sqrt{m\,n}\right).
\]

It follows also from Corollary \ref{The Analytic Continuation} that $Z_{2}\left(s;\,f_{1},\,f_{2}\right)$
satisfies the functional equation 
\begin{equation}
(2\pi)^{-s}\,\Gamma(s)\,Z_{2}\left(s;\,f_{1},\,f_{2}\right)=(-1)^{\frac{k_{1}+k_{2}}{2}}\,(2\pi)^{-(k_{1}+k_{2}-s)}\,\Gamma\left(k_{1}+k_{2}-s\right)\,Z_{2}\left(k_{1}+k_{2}-s;\,f_{1},\,f_{2}\right).\label{functional equation cusp cusp double}
\end{equation}

Due to the fact that $\phi_{1}$ and $\phi_{2}$ are entire, it is
now very easy to establish analogues of formulas of Ramanujan-Guinand and Koshliakov type.
In particular, for the case $k_{1}=k_{2}=12$, $\phi_{1}(s)=\phi_{2}(s)=\phi(s)$, 
we could consider the Dirichlet series associated to the Ramanujan
$\tau-$function, 
\[
\phi(s):=(2\pi)^{-s}\,L(s,\,\tau)=(2\pi)^{-s}\,\sum_{n=1}^{\infty}\frac{\tau(n)}{n^{s}},\,\,\,\,\,\,\text{Re}(s)>\frac{13}{2}
\]
and the resulting diagonal Epstein zeta function (\ref{Diagonal Epstein cusp forms})
would satisfy Hecke's functional equation with $r:=k_{1}+k_{2}=24$.
For this case, a curious reflection formula\footnote{A different proof of (\ref{Koshliakov Ramanujan Tau}) is given in
{[}\cite{dirichletserisI}, p. 357, Example 2{]}. This result is obtained from
a generalization of Bochner's modular relation (\ref{Bochner Modular relation at intro}) for Dirichlet series
satisfying the functional equation (\ref{functional equation multi})
with $\Delta(s)=\Gamma^{N}(s)$.} of Koshliakov type takes
place in the form 
\begin{equation}
\sum_{j=1}^{\infty}d_{\tau}\left(n\right)\,K_{0}\left(4\pi\,\sqrt{n}\,x\right)=x^{-24}\,\sum_{j=1}^{\infty}d_{\tau}(n)\,K_{0}\left(\frac{4\pi\sqrt{n}}{x}\right),\,\,\,\,\,\,x>0,\label{Koshliakov Ramanujan Tau}
\end{equation}
where $d_{\tau}(n)$ denotes the divisor function associated with
$\tau(n)$, 
\[
d_{\tau}(n)=\sum_{d|n}\tau(d)\,\tau\left(\frac{n}{d}\right).
\]
\medskip{}
Now assume that $a_{i}(n)$, $i=1,\,2$, is real. By (\ref{functional equation cusp cusp double}),
we know that $Z_{2}\left(s;\,f_{1},\,f_{2}\right)$ is a real and
entire Hecke Dirichlet series with signature $\left(1,\,k_{1}+k_{2},\,(-1)^{\frac{k_{1}+k_{2}}{2}}\right)$. From a classical application of Hecke's theory {[}\cite{Hecke_middle_line}, p.
79, Satz 3{]}, the fact that the signature parameter $\lambda=1$ of $Z_{2}\left(s;\,f_{1},\,f_{2}\right)$ satisfies $0<\lambda<2$,  then one has that $\Theta_{2}(e^{i(\frac{\pi}{2}-\epsilon)};\,f_{1},\,f_{2})=O(\epsilon^{-\beta})$,
with $\beta<\frac{k_{1}+k_{2}+1}{2}$. Thus, according with Remark
\ref{remark Hecke condition}, $Z_{2}\left(s;\,f_{1},\,f_{2}\right)$ must have infinitely
many zeros on the critical line $\text{Re}(s)=\frac{k_{1}+k_{2}}{2}$.
In particular, this shows that $Z_{2}(s;\,f,\,f)$ has infinitely
many zeros at the critical line $\text{Re}(s)=k$ and so, by Theorem
\ref{deuring to hold}, $L(s,\,a)$ has infinitely many zeros at the
critical line $\text{Re}(s)=\frac{k}{2}$. Since the $L$-functions (\ref{Modular group Dirichlet series}) admit Selberg-Chowla expansions (see [\cite{suzuki}, p. 485, eq. (1.24)]), it would be also interesting to study more analogues of Hardy's Theorem for the Dirichlet series composing these .   
\end{example}

\begin{example}[The Epstein zeta function] \label{epstein example} 
Kober [\cite{kober_zeros}, p. 5] established the condition given in Corollary \ref{corollary on theta} for a class
of Epstein zeta functions attached to binary and positive definite
quadratic forms whenever  $a/\sqrt{|d|}$ is irrational. 
For the case where $a/\sqrt{|d|}$ is rational, then by using a set of
conditions similar to that given in Corollary \ref{corollary the one with residue}, we are only left
with a finite set of Epstein zeta functions for which we need to
verify Hardy's Theorem (see [\cite{kober_zeros}, p. 8]). In all of them, a
similar argument mimicking the one using Jacobi's 4-square Theorem
would suffice, but we would have to know apriori a formula for the
representation of a given integer as the particular quaternary forms involved. 

Since the theta function attached to $Z_{2}(s,\,Q)$ satisfies (\ref{contradiction hypothesis onceee more})
whenever $a/\sqrt{|d|}\notin\mathbb{Q}$, it is now possible
to derive from Corollary \ref{combination} that, for any $c_{j}\in\ell^{1}(\mathbb{R})$
and $\tau_{j}\in\ell^{\infty}(\mathbb{R})$, the infinite combination 
\[
F_{Q}(s)=\sum_{j=1}^{\infty}\,c_{j}\,\left(\frac{2\pi}{\sqrt{|d|}}\right)^{-s-i\tau_{j}}\Gamma(s+i\tau_{j})\,Z_{2}(s+i\tau_{j},\,Q)
\]
has infinitely many zeros on the critical line $\text{Re}(s)=\frac{1}{2}$. 

A still larger subclass of Epstein zeta functions satisfy the conditions of Corollary \ref{corollary the one with residue} {[}\cite{kober_zeros}, p. 5{]}, as well as Dedekind zeta functions attached to imaginary
quadratic fields [\cite{berndt_zeros_(ii)}, p. 689]. 


\medskip{}

\bigskip{}
By virtue of their functional equations (see [\cite{terras_epstein}, p. 481, Thm 2] and eq. (\ref{functional equation of Epstein multi multi multi}) below) and remark \ref{Ramachandra Remark}, it is also possible to derive Hardy's Theorem for certain multidimensional Epstein zeta functions satisfying the symmetric conditions given in Theorem \ref{deuring to hold}. Assume that $Q$ is a $n\times n$ matrix attached to a real positive-definite
Quadratic form and let $s$ be a complex number such that $\text{Re}(s)>\frac{n}{2}$.
Then the $n-$dimensional Epstein's $\zeta-$function is defined as the series $Z_{n}\left(s,\,Q\right)=\sum_{\mathbf{m}\in\mathbb{Z}^{n}\setminus0}\,Q(\mathbf{m})^{-s}$,
where $Q(\mathbf{x})=\mathbf{x}^{T}Q\,\mathbf{x}$ and $\mathbf{m}\in\mathbb{Z}^{n}\setminus\{0\}$.\footnote{Note now that this notation slightly differs from the conventions
given in the section \ref{section 3}: here, the Dirichlet series $Z_{n}(s,\,Q)$
is attached to the matrix representation of the quadratic form $Q$
so that $Z_{n}(s,\,Q^{-1})$ represents the Epstein zeta function
attached to Quadratic form having as matrix $Q^{-1}$.}. It is well-known that $Z_{n}(s,Q)$ can be
extended to the complex plane as a meromorphic function with a simple
pole at $s=n/2$ and with residue given by $|Q|^{-\frac{1}{2}}\,\pi^{n/2}/\Gamma(n/2)$,
where $|Q|=\text{det}(Q)$. Moreover, it satisfies Hecke's functional equation
\begin{equation}
\pi^{-s}\Gamma(s)\,Z_{n}\left(s,\,Q\right)=|Q|^{-\frac{1}{2}}\pi^{-\left(\frac{n}{2}-s\right)}\,\Gamma\left(\frac{n}{2}-s\right)\,Z_{n}\left(\frac{n}{2}-s,\,Q^{-1}\right),\label{functional equation of Epstein multi multi multi}
\end{equation}
where $|Q|$ is the determinant of $Q$. Note that, from (\ref{functional equation of Epstein multi multi multi}), $\pi^{-s}\Gamma(s)\,Z_{n}(s,\,Q)$ is real valued on the critical line $r=\frac{n}{2}$ whenever $Q$ is unimodular.

Let $Q_{1}$ and $Q_{2}$ denote, respectively, two matrices
representing two positive-definite quadratic forms in $\mathbb{R}^{m_{1}}$
and $\mathbb{R}^{m_{2}}$ with $m_{1}\leq m_{2}$. We shall apply Theorem \ref{theorem 2.1} with $\phi_{i}(s)=\pi^{-s}\,Z_{m_{i}}(s,\,Q_{i})$ and $\psi_{i}(s)=|Q|^{-\frac{1}{2}}\,\pi^{-s}Z_{n}\left(s,\,Q^{-1}\right)$. By definition of (\ref{Epstein as double series}), the diagonal Epstein zeta function attached
to $\phi_{1}(s)$ and $\phi_{2}(s)$ is 
\[
\mathcal{Z}_{2}\left(s;\,Q_{1},\,Q_{2}\right)=\pi^{-s}\,\sum_{\mathbf{m}\in\mathbb{Z}^{m_{1}+m_{2}}\setminus0}\frac{1}{Q_{1,2}^{(2)}(\mathbf{m})^{s}},\,\,\,\,\text{Re}(s)>\frac{m_{1}+m_{2}}{2}, 
\]
where $Q_{1,2}^{(2)}$ is the $m_{1}+m_{2}-$dimensional quadratic
form represented by the matrix
\begin{equation}
Q_{1,2}^{(2)}=\left(\begin{array}{cc}
Q_{1} & \mathbf{0}\\
\mathbf{0} & Q_{2}
\end{array}\right).\label{diagonal quadratic form}
\end{equation}

A straightforward application of Theorem \ref{theorem 2.1} gives the following
Selberg-Chowla formula (c.f. [\cite{terras_epstein}, p. 480, eq. (2.3)])
\begin{align}
\pi^{-s}\Gamma(s)\,Z_{m_{1}+m_{2}}\left(s,\,Q_{1,2}\right) & =\pi^{-s}\,\Gamma(s)\,Z_{m_{1}}\left(s,\,Q_{1}\right)+|Q_{1}|^{-\frac{1}{2}}\pi^{-s+\frac{m_{1}}{2}}\,\Gamma\left(s-\frac{m_{1}}{2}\right)\,Z_{m_{2}}\left(s-\frac{m_{1}}{2},\,Q_{2}\right)\nonumber \\
+2\,|Q_{1}|^{-\frac{1}{2}} & \,\sum_{\mathbf{m}_{1},\mathbf{m}_{2}}\,\left(\frac{Q_{1}^{-1}(\mathbf{m}_{1})}{Q_{2}\left(\mathbf{m}_{2}\right)}\right)^{\frac{s}{2}-\frac{m_{1}}{4}}\,K_{\frac{m_{1}}{2}-s}\left(2\pi\sqrt{Q_{1}^{-1}(\mathbf{m}_{1})\,Q_{2}\left(\mathbf{m}_{2}\right)}\right),\label{Selberg Chowla diagonal Multidimensional}
\end{align}
where $\mathbf{m}_{1}\in\mathbb{Z}^{m_{1}}\setminus\{0\}$ and $\mathbf{m}_{2}\in\mathbb{Z}^{m_{2}}\setminus\{0\}$.

Using (\ref{Selberg Chowla diagonal Multidimensional}) and the formalism
of Theorem \ref{deuring to hold}, we are ready to prove Hardy's Theorem for the multidimensional
Epstein zeta functions $Z_{k}(s,\,Q$). Assume that $k\geq3$ (the
case $k=2$ is already mentioned above) and let $Q$ be a quadratic
form attached to a matrix with $|Q|=\text{det}(Q)=1$ and having the genus 
of the identity matrix $\mathbb{I}_{k}$, $\gamma_{k}$. Let $\phi(s)=\pi^{-s}Z_{k}(s,\,Q)$:
then the four-dimensional Epstein zeta function (\ref{dyadic Epstein 2^k}) constructed from $\phi$, $\mathcal{Z}_{4}(s;\,\cdot)$, is actually $\pi^{-s}Z_{4k}(s;\,Q^{(4)})$, where $Q^{(4)}$ denotes the $4k-$dimensional matrix 
\[
Q^{(4)}=\left(\begin{array}{cc}
Q^{(2)} & \mathbf{0}\\
\mathbf{0} & Q^{(2)}
\end{array}\right),
\]
with $Q^{(2)}$ being essentially (\ref{diagonal quadratic form}) with $Q_{1}=Q_{2}=Q$. 

It is clear that $Q^{(4)}$ has the same genus, $\gamma_{4k}$, as 
$\mathbb{I}_{4k}$. If $Q_{1},$...,$Q_{h}$ denote the representatives
of all different classes of $\gamma_{4k}$ and $\text{Aut}(Q)$ the automorphism group of $Q$, we can consider the Eisenstein series
\begin{equation}
E(z;\,\gamma_{4k})=\frac{\sum_{\ell=1}^{h}\theta_{Q_{\ell}}(z)/|\text{Aut}(Q_{\ell})|}{\sum_{\ell=1}^{h}1/|\text{Aut}(Q_{\ell})|}\label{Eisenstein final}
\end{equation}
and attach to it a Dirichlet series $Z(s;\,\gamma_{4k})$. It is well-known \cite{Siegel_lectures_quadratic, Siegel_Contributions} that this Dirichlet series is of the form
\[
Z(s;\,\gamma_{4k})=c_{k}\,\left\{ 1-2^{-s}+(-1)^{k}\left(2^{2k-2s}-2^{-s}\right)\right\} \,\zeta(s)\,\zeta(s+1-2k),
\]
where $c_{k}>0$.
It is clear from the expression on the braces that $Z(s;\,\gamma_{4k})$ has infinitely many zeros on the line $\text{Re}(s)=k$, all of them of the form
\[
\begin{cases}
s=k\pm\frac{i\pi n}{\log(2)},\,\,\,n\in\mathbb{N} & k\,\,\,\text{odd}\\
s=k+\frac{i}{\log(2)}\left\{ 2n\pi\pm\arctan\left(\sqrt{2^{2k}-1}\right)\right\} ,\,\,\,n\in\mathbb{Z} & k\,\,\,\text{even.}
\end{cases}
\]

Since $\theta_{Q^{(4)}}(z)-E(z;\,\gamma_{4k})$ is a cusp form \cite{Siegel_Contributions, Siegel_lectures_quadratic},
the Dirichlet series attached to it, $L\left(s;\,S_{4k}\right)$,
satisfies the convex estimate  $L\left(k+it;\,S_{4k}\right)=O(|t|^{\frac{1}{2}+\epsilon}),\,\,\,\,|t|\rightarrow \infty$.
If we assume that $Z_{k}(s,\,Q)$ has only a finite number of zeros
on the line $\text{Re}(s)=\frac{k}{4}$, then from (4.37) we must
have that $|Z_{4k}\left(k+it,\,Q^{(4)}\right)|>C\,|t|^{\frac{3}{2}}$
for $|t|$ sufficiently large. Since 
\[
Z_{4k}\left(k+it;\,Q^{(4)}\right)=Z\left(k+it;\,\gamma_{4k}\right)+O\left(|t|^{\frac{1}{2}+\epsilon}\right),
\]
we have that $|Z\left(k+it;\,\gamma_{4k}\right)|>C^{\prime}\,|t|^{\frac{3}{2}}$
for some positive $C^{\prime}$ and $|t|$ large enough. This is absurd as $Z(s;\,\gamma_{4k})$
possesses infinitely many zeros on the line $\text{Re}(s)=k$. 

Adapting the argument outlined in Example \ref{hardy and jacobi}, we can also show the existence of a zero of $Z_{k}(s,\,Q)$ in any interval of the form $[T,\,T+T^{\frac{1}{2}+\epsilon}]$ for a sufficiently large $T$. Although far from being as sharp as Siegel's results in \cite{Siegel_Contributions} (valid for $k\geq 12$), it seems that this method gives a quantitative estimate not covered by the methods employed in \cite{Siegel_Contributions}. Siegel writes {[}\cite{Siegel_Contributions},
p. 363{]} `` ...\textit{ it is possible to discuss the remaining cases for
$3<k<12$, but this requires some numerical computations and we omit
it. For the function $\zeta_{3}(s)$, however, our method does not
lead to any result}''. Since our method, when applied to the situation described above, gives $N_{0}(T,\zeta_{3}(s))\gg T^{\frac{1}{2}-\epsilon}$, $\epsilon>0$, this result seems, as far as we know, to be the first quantitative estimate regarding the number of zeros of $\zeta_{3}(s)$ on the line $\text{Re}(s)=\frac{3}{4}$. 

\end{example}
\medskip{}

\medskip{}
\textit{Disclosure Statement.} The authors declare that they have no conflict of interest. 

\medskip{}

\textit{Acknowledgements.} The first and second authors were partially supported by CMUP (UIBD/MAT/00144/2020), which is financed by national funds through FCT – Fundação para a Ciência e a
Tecnologia (Portugal). The first author was also supported by the grant 2020.07359.BD, under the FCT PhD Program UC|UP MATH PhD Program.

The authors are also grateful to Professor Bruce C. Berndt for sending them the paper \cite{berndt_preprint}. 

A great amount of the work presented in this paper was revised and discussed with our friend and Professor Jos\'e Carlos Petronilho. To him our eternal gratitude. 
\newpage{}

\footnotesize

\end{document}